\DeclareMathOperator{\Prob}{\mathbb{P}}   
\newcommand{\1}{\mathds{1}}
\numberwithin{equation}{section}
\newcommand{\bla}{\mbox{\boldmath $\lambda$}}
\def\cH{{\mathcal H}}
\newcommand{\pt}{\partial}
\newcommand{\rd}{{\rm d}}
\newcommand{\bR}{{\mathbb R}}
\newcommand{\bZ}{{\mathbb Z}}
\newcommand{\bx}{{\bf{x}}}
\newcommand{\by}{{\bf{y}}}
\newcommand{\bu}{{\bf{u}}}
\newcommand{\bv}{{\bf{v}}}
\newcommand{\bw}{{\bf{w}}}
\newcommand{\al}{\alpha}
\newcommand{\be}{\begin{equation}}
\newcommand{\ee}{\end{equation}}
\newcommand{\e}{{\varepsilon}}
\newcommand{\la}{\lambda}
\newcommand{\om}{{\omega}}
\newcommand{\cL}{{\cal L}}
\newcommand{\cE}{{\cal E}}
\newcommand{\cG}{{\cal G}}
\def\cA{{\mathcal A}}
\def\cW{{\mathcal W}}
\newcommand{\fa}{{\frak a}}
\newcommand{\fb}{{\frak b}}
\newcommand{\Ai}{{\text{Ai} }}
\newcommand{\cR}{{\mathcal R}}
\newcommand{\non}{\nonumber}
\def\RR{{\mathbb R}}
\def\ZZ{{\mathbb Z}}
\renewcommand{\cal}{\mathcal}
\newcommand{\wh}{\widehat}
\newcommand{\wt}{\widetilde}
\newcommand{\ii}{\mathrm{i}} 
\renewcommand{\epsilon}{\varepsilon}
\renewcommand{\leq}{\leqslant}
\renewcommand{\geq}{\geqslant}
\renewcommand{\le}{\leq}
\renewcommand{\ge}{\geq}
\renewcommand{\P}{\mathbb{P}}
\newcommand{\E}{\mathbb{E}}
\newcommand{\R}{\mathbb{R}}
\newcommand{\C}{\mathbb{C}}
\newcommand{\N}{\mathbb{N}}
\newcommand{\NN}{\mathbb{N}}
\DeclareMathOperator{\var}{Var}
\DeclareMathOperator{\re}{Re}
\DeclareMathOperator{\im}{Im}
\DeclareMathOperator{\sgn}{sgn}
\DeclareMathOperator{\OO}{O}
\DeclareMathOperator{\oo}{o}
\theoremstyle{plain} 
\newtheorem{theorem}{Theorem}[section]
\newtheorem*{theorem*}{Theorem}
\newtheorem{lemma}[theorem]{Lemma}
\newtheorem*{lemma*}{Lemma}
\newtheorem{corollary}[theorem]{Corollary}
\newtheorem*{corollary*}{Corollary}
\newtheorem{proposition}[theorem]{Proposition}
\newtheorem*{proposition*}{Proposition}
\newtheorem{definition}[theorem]{Definition}
\newtheorem*{definition*}{Definition}
\newtheorem*{example*}{Example}
\newtheorem{remark}[theorem]{Remark}
\newtheorem*{remark*}{Remark}
\newtheorem*{remarks*}{Remarks}
\renewcommand{\section}{\@startsection
{section}
{1}
{0mm}
{-2\baselineskip}
{1\baselineskip}
{\normalfont\large\scshape\centering}} 
\renewcommand{\subsection}{\@startsection
{subsection}
{2}
{0mm}
{-\baselineskip}
{0.5 \baselineskip}
{\normalfont\bf\itshape} } 
\renewcommand{\subsubsection}{\@startsection{subsubsection}{3}{\z@}%
  {3.25ex \@plus 1ex \@minus .2ex}{-1em}{\normalfont\normalsize\itshape}}
\def\bR{{\mathbb R}}
\def\bZ{{\mathbb Z}}
\newcommand\Si{S}
\title{Edge Universality of Beta Ensembles}
\author{Paul Bourgade${}^1$\thanks{Partially supported
by NSF grant DMS-1208859}\quad
L\'aszl\'o Erd\H os${}^2$\thanks{Partially supported
by SFB-TR 12 Grant of the German Research Council. On leave from
Institute of Mathematics, University of Munich, Germany} \quad
Horng-Tzer Yau${}^1$\thanks{Partially supported
by NSF grant DMS1307444 and Simons Investigator Award}
 \\\\
Department of Mathematics, Harvard University\\
Cambridge MA 02138, USA \\   bourgade@math.harvard.edu \quad
 htyau@math.harvard.edu ${}^1$ \quad  \\ \\
Institute of Science and Technology Austria \\
Am Campus 1, A-3400 Klosterneuburg, Austria \\ lerdos@ist.ac.at ${}^2$
\\}
\date{May 10,  2014}
\begin{document}

\maketitle

\vspace{1cm}

\begin{abstract}
We prove the edge universality of the beta ensembles  for any
$\beta\ge 1$, provided that
the limiting spectrum is supported on a single interval, and the external potential is $\mathscr{C}^4$  and regular.
We also prove that the edge universality holds for
generalized Wigner matrices for all symmetry classes.
Moreover, our results 
allow  us to extend bulk universality for beta ensembles from analytic potentials
to potentials in class $\mathscr{C}^4$.
\end{abstract}

\vspace{1.5cm}

{\bf AMS Subject Classification (2010):} 15B52, 82B44

\medskip

\medskip

{\it Keywords:} Beta ensembles, edge universality,  log-gas.

\newpage

\tableofcontents

\bigskip

\section{Introduction}

Eigenvalues  of  random matrices were  envisioned by  Wigner as  universal  models for
highly correlated  systems.
A  manifestation of this general principle   is the universality  of random matrix statistics, i.e.,
that
 the eigenvalue
 distributions of large matrices
 are universal in the sense that they
depend only on the symmetry class of the matrix ensemble, but not on the distributions
of the matrix elements.
These universal eigenvalue distributions are different
for eigenvalues in the  interior of the spectrum and for the extreme eigenvalues
near the spectral edges.
 In this paper, we will focus on the edge universality.

Let  $\lambda_N$ be the largest eigenvalue of an $N \times N$  random  Wigner matrix
with normalization chosen such that the bulk spectrum is $[-2,2]$.
  The
probability distributions of $\lambda_N$ for the classical Gaussian ensembles are
identified by Tracy and Widom  \cite{TW, TW2} to be
$$
\lim_{N \to \infty} \P( N^{2/3} ( \lambda_N -2) \le s ) =  F_\beta (s),
$$
where $F_\beta(s)$ can be computed in terms
of Painlev\'e equations and  $\beta=1,2,4$
corresponds respectively to the classical orthogonal, unitary  or symplectic ensemble.
The edge universality means   that  the distributions of $\lambda_N$
are  given by $F_\beta$  for non Gaussian ensembles as well.
In fact, this holds not only for the largest eigenvalue,
but  the joint distributions of any finitely many  ``edge eigenvalues"  are universal  as well.

The edge universality for a large  class of Wigner matrices was first  proved
via the moment method by Soshnikov \cite{Sos1999}
for unitary  and orthogonal  ensembles. This method requires that the distribution
of the matrix elements  be symmetric. 
The  symmetry  assumption was partially removed in
\cite{PecSos2007, TV2} and it was completely removed in \cite{EYYrigi}.
In addition to the symmetry assumption, the moment method also requires that sufficient high
moments of the matrix elements be finite.
This assumption was greatly relaxed  in \cite{ EKYYsparse2}
and it was finally proved by Lee and Yin \cite{LeeYin2012} that
essentially the finiteness   of
the  fourth  moment  is the sufficient and necessary condition
for  the Tracy-Widom edge universality to hold (an almost optimal necessary condition was established earlier
in \cite{AufBenPec2009}).

We now turn to the edge universality for invariant ensembles.
These are matrix models with  probability density on the space of
$N\times N$ matrices $H$
  given by $Z^{-1}e^{- N \beta  { \rm Tr} V(H)/2 }$
where
 $V$ is a real valued potential and $Z=Z_N$
is the normalization.  The parameter $\beta$ is determined
by the symmetry class of $H$.
The probability distribution of the ordered eigenvalues of $H$  on
the simplex determined by $\lambda_1\leq \dots\leq \lambda_N$
 is given by
\begin{equation}\label{01}
\mu^{(N)}(\rd\bla)\sim e^{- \beta N \cH(\bla)}\rd\bla,
\quad
\cH(\bla) =   \sum_{k=1}^N  \frac{1}{2}V(\lambda_k)-
\frac{1}{N} \sum_{1\leq i<j\leq N}\log (\lambda_j-\lambda_i) .
\end{equation}
For classical invariant ensembles, i.e., $\beta=1,2,4$,   it is well-known  that
 the correlation functions  can be expressed in terms of orthogonal polynomials.
Historically, they have been first analyzed in the bulk.
 The analysis at the edges is  not a straightforward generalization
of that in the bulk and serious technical hurdles  had  to be overcome.
 Nevertheless,   the edge universality
was proved by  Deift-Gioev  \cite{DGedge} for general polynomial potentials, by  Pastur-Shcherbina \cite{PasShc2003}
and Shcherbina \cite{Shc2009}  for real analytic,  even potentials.

The measure  $\mu$ (\ref{01}) can also be considered
for non classical values of $\beta$, i.e., $\beta \not \in \{1, 2, 4\}$, although
in this case  there is no simple  matrix ensemble
behind the model.
It is a Gibbs measure of particles
in $\bR$  with a logarithmic interaction  and with an external potential $V$,
 where the parameter $\beta$
is interpreted as the inverse temperature and can be an arbitrary positive number.
We will often
refer to the variables $\lambda_j$ as particles or points
and the system is often called the beta ensemble or  log-gas (at inverse temperature $\beta$).
We will use these two terminologies interchangeably.
When $\beta \not \in \{1, 2, 4\}$ and the potential $V$ is general,
no simple expression of the correlation functions   in terms of orthogonal polynomials
is known. For certain special potentials and  even integer $\beta$, however,  there are still explicit formulas for correlation functions
\cite{For}.
Furthermore, for general $\beta$ in the Gaussian case, i.e.,
when $V$ is quadratic, Dumitriu and Edelman   \cite{DumEde}
proved   that  the measure \eqref{01} describes  the eigenvalue distribution  of
a special tridiagonal matrix.  Using this connection,
Ram{\'{\i}}rez, Rider and Vir{\'a}g were able to characterize the edge distributions for
all $\beta$ \cite{RamRidVir2011};
a characterization of the bulk statistics of the Gaussian beta ensembles was obtained in \cite{ValVir}.  A similar approach, independently of
our current work,  resulted
in the proof of edge universality for convex polynomial potentials
\cite{KriRidVir2013}.

We now compare the notions of  edge and bulk universality.
The edge universality refers to the distributions of individual eigenvalues.
However, according to Wigner's original vision,  the bulk universality
    concerns  {\it differences} of  neighboring
eigenvalues, i.e.,  gap distributions.  The bulk universality is often
formulated in terms of local correlation functions. These two notions are equivalent
only after  a certain averaging in the energy parameter.
Strictly speaking, there are three notions of bulk universality:
(i) in a weak sense  which allows for energy averaging;
(ii) correlation function universality at a fix energy;
(iii) gap universality at a fixed label $j$.
 Clearly,  universality in the sense (ii) or (iii) implies (i).

 The bulk universality  in the sense of (ii)   for  classical invariant ensembles was
proved in \cite{BleIts,   DKMVZ1, DKMVZ2, DGKV,  PasShc97, PasShcBulk, DG,  Shc2011, Wid} using methods related to orthogonal polynomials.
For Wigner ensembles, universality   for Hermitian matrices in the sense (ii) was proved  in
\cite{EPRSY,  TV1, ERSTVY, EYComment}
and for  all symmetry classes in the sense (i)
in  \cite{ESY4,  ESYY,   EYYrigi}.   The gap universality, i.e., (iii), is in fact much harder
 to obtain;  it was proved only recently in \cite{EYsinglegap} both for invariant and Wigner ensembles
using new ideas from parabolic regularity theory  (the special case of hermitian matrices with
the first four moments of the matrix elements  matching those of GUE was proved earlier in \cite{taogap}).
The bulk universality for  log-gases   for general $\beta$
was proved in  the sense (i) and (iii)  in  \cite{BouErdYau2011,
BouErdYau2012, EYsinglegap}.  The bulk  universality in the sense (ii) for  Wigner ensembles
 with $\beta \not = 2$
and for log-gases with $\beta \not \in \{1,2, 4\}$ remains  open problems.

Returning to the edge universality,
we will establish   the  following two results in this paper: (1) edge universality
 for $\mathscr{C}^4$ potentials and for all $\beta \ge 1$;
 (2) edge universality for generalized Wigner matrices  (these are matrices with
independent but not necessarily identically distributed entries,
see Definition \ref{D1}). An important ingredient of the proof will be an optimal location estimate for the
particles up to the edge, for external potentials of class $\mathscr{C}^4$. 
This rigidity will also allow us to
remove the analyticity assumption from previous results about bulk universality \cite{BouErdYau2011,
BouErdYau2012, EYsinglegap}.

We now outline the technique   used in this paper.
For the edge universality of invariant ensembles,
the basic idea is to consider a local version of the log-gas \eqref{01}. This is the measure
on  $K$ consecutive particles that is obtained by fixing all other particles which
 act as boundary conditions.
Following the standard language in statistical physics, we will refer to  these local measures
as local log-gases.
Our  core  result is  the ``uniqueness" of this local measure in the limit  $K \to \infty $
 assuming that
the boundary conditions are ``good''.
By uniqueness, we mean that the distributions of the particles far away
from the boundaries are independent of choice of the ``good'' boundary conditions.
This idea  first appeared in \cite{BouErdYau2011} for proving the bulk universality of log-gases.
However, the uniqueness of the
 local Gibbs state in the bulk was defined slightly   differently  in \cite{BouErdYau2011};
only  the gap distributions  were required to be  independent of the boundary conditions.

It is well-known that the uniqueness of local Gibbs measures in the thermodynamical limit
  is closely related to the decay of correlation functions.
The work of Gustavsson \cite{Gus2005} for the special $\beta=2$ and Gaussian case
(i.e., the  GUE case)  indicates
that  in  general
the point-point correlation function
 decays only logarithmically  in the bulk, i.e., $\langle \lambda_i; \lambda_j \rangle_{\mu_\beta}
\sim \log |i-j|$.
Gibbs measures with  such a slow decay are typically not unique in the usual sense.
The key reason why we were able to prove the uniqueness of the gap distributions of local log-gases
in the bulk  \cite{EYsinglegap}   is the observation
that the point-gap  correlation,  $\langle \lambda_i;  \lambda_j- \lambda_{j+1} \rangle_{\mu_\beta}
\sim \partial_j \langle \lambda_i; \lambda_j \rangle_{\mu_\beta} $ is expected to
decay  much faster
due to the simple reason that $\partial_j \log |i-j|\sim 1/|j|$.
In real statistical physics system, however,
it is very difficult to compute derivatives of correlation functions
  unless they are expressed almost explicitly by some expansion method.
The {\it Dirichlet form inequality} \cite{ESY4},
a main tool  in \cite{BouErdYau2011},
allows us to take advantage  of the fact that  the observables are functions of the gaps.

In the subsequent work \cite{EYsinglegap}, the correlation functions were expressed in terms of
 off-diagonal matrix elements of heat kernels describing
random walks in random environments.
This   representation in a lattice  setting  was given  in   \cite{DeuGiaIof2000, GOS}.  In a slightly different formulation
it  already appeared  in the earlier paper of Naddaf and Spencer \cite{NS},   which  was a
probabilistic formulation  of the idea
 of Helffer and Sj\"ostrand \cite{HS}.
Using this representation,
 the decay of the point-gap  correlation amounts to the H\"older continuity
of  the heat kernel for the random walk dynamics  \cite{EYsinglegap}.
We note that  the jump rates in this random walk dynamics are long ranged and contain short distance
singularities depending on the stochastically driven environment.
The proof of the  H\"older continuity in \cite{EYsinglegap} requires extending the De Giorgi-Nash-Moser
type method of  Caffarelli, Chan and Vasseur  \cite{CCV} to the singular coefficient case and providing
a priori estimates such as rigidity and  level repulsion.

 It should be stressed that, despite these efforts,
only gap distributions but not those
of individual eigenvalues
were  identified  in \cite{EYsinglegap}. Edge universality, however,  is exactly about individual eigenvalues
and not about gaps.   The surprising fact is that
correlation functions of  log-gases  decay as a power law near the edges!
Thus we do not need the H\"older regularity argument  from  \cite{EYsinglegap}
 to analyze the edges.   Instead, in this paper
we  rely on the energy method from parabolic PDE's and on certain new Sobolev type inequalities
for nonlocal operators  to prove
the decay of off-diagonal elements of the heat kernel. For this purpose,
we will need    rigidity  and level repulsion estimates near  the edges.
We will extend the {\it multi-scale   analysis of the loop equation}, first appeared in
 \cite{BouErdYau2011},  in two directions.  First,
this analysis will be performed  along the whole spectrum,  including  the edge, where  the change of  scaling  poses a
 major difficulty; second, analyticity of the external potential is not required, thanks to a new analysis of the loop equation.

For the edge universality of Wigner ensembles,  we will  use the idea of {\it the  local relaxation flow}
initiated in \cite{ESY4, ESYY}
 and the  Green function  comparison
theorem  from  \cite{EYY1}.
  This theorem   can be used for both
the bulk or the  edge universality.  In particular,
 the  eigenvalue distributions of two Wigner ensembles near the edges are
 the same  provided that the variances of the matrix elements  of the two ensembles are identical.
This implied  the edge universality for Wigner matrices  \cite{EYYrigi}.

On the other hand,   if
the variances of the matrix elements are allowed to vary, then the matrix
 cannot be matched to a Gaussian Wigner matrix.  Thus the edge universality
for generalized Wigner matrices
cannot be proved directly with  the Green function  comparison  theorem.
Using the uniqueness of {\it local}  log-gases,   we can identify
the distributions  of the edge particles in the Dyson Brownian Motion (DBM).
This implies the edge universality for general classes of  Gaussian divisible
 ensembles with varying variance.
Finally, we will  use the  Green function  comparison
theorem to bridge the gap between   generalized Wigner matrices
and their  Gaussian divisible counterparts.

We  emphasize that  the uniqueness of local log-gases plays a  {\it central}    role both in
the edge universality of  log-gases and in our analysis of edge points in DBM.
For  log-gases, it is natural to  localize the problem so that  the external potential
can be replaced  by its first order approximation and thus it becomes universal after scaling.
However,  localization of the measure in general
introduces very large errors in strongly correlated systems. The key observation is that there are strong  cancellations
in the  effective potential for ``good''  boundary  conditions. The significance of the local log-gases
in the proof of proof of universality for Wigner matrices is  subtler, and will be explained in details in Section \ref{sec:wigner}.
\\

{\it Convention.}  We use the letters $C, c$ to denote positive constants, independent of $N$, whose values
may change from line to line. We will often estimate the probability of rare events $\Omega=\Omega_N$
that are typically either subexponentially  small, $\P (\Omega)\le \exp(-N^c)$ or
small by an $N$-power; $\P (\Omega)\le N^{-c}$.  In both cases it is understood that
the statements hold for any sufficiently large $N\ge N_0$. We will not follow
the precise values of the exponents $c$ or the thresholds $N_0$.

\section{Main results}

We will have two related results, one concerns the generalized Wigner ensembles, the
other one the general  beta ensembles.

\subsection{Edge universality of the beta ensembles}

We first define the beta ensembles.
 Let $\Xi^{(N)}\subset \RR^N$ denote the set
\begin{equation}\label{simplex}
   \Xi^{(N)}:=\{ \bla=(\lambda_1, \lambda_2, \dots , \lambda_N)\; : \;
\lambda_1\le \lambda_2\le \dots \le\lambda_N
\}.
\end{equation}
Consider the probability distribution on
 $\Xi_N$ given by
\begin{equation}\label{eqn:measure}
\mu^{(N)}_{\beta, V}=\mu^{(N)}(\rd\bla)=\frac{1}{Z_{\beta, V}^{(N)}}
e^{- \beta N \cH(\bla)}\rd\bla,
\qquad
\cH(\bla) =   \sum_{k=1}^N  \frac{1}{2}V(\lambda_k)-
\frac{1}{N} \sum_{1\leq i<j\leq N}\log (\lambda_j-\lambda_i) ,
\end{equation}
 where $Z_{\beta, V}^{(N)}$ is the normalization.
In the following, we often omit the parameters $N$, $\beta$ and $V$ in the notation
and  we will write $\mu$ for $\mu^{(N)}$. Sometimes emphasize
the dependence in the external potential by writing $\mu=\mu_V$.
We will use $\P^\mu$ and $\E^\mu$ to denote the probability and the
expectation with respect to $\mu$.

  We will view $\mu$ as a Gibbs measure of $N$ particles
in $\bR$  with a logarithmic interaction, where the parameter $\beta > 0 $
is interpreted as the inverse temperature.  We will
refer to the variables $\lambda_j$ as particles or points
and the system is called {\it log-gas}  or {\it general beta ensemble}.
We will assume that the potential $V$ is a  $\mathscr{C}^4$  real function  in $\RR$  such that
its second derivative is bounded below, i.e., we have
\begin{equation}\label{eqn:LSImu}
\inf_{x\in\RR}V''(x) \ge -2W
\end{equation}
for some constant $W\ge 0$, and
\begin{equation}\label{eqn:GrowthCondition}
V(x)> (2 + \alpha)\ln(1+|x|),
\end{equation}
for some\ $\alpha > 0$,
if $|x|$ is large enough.
It is known \cite{BouPasShc1995}
that under these (in fact, even weaker) conditions
the measure is normalizable, $Z^{(N)}<\infty$. Moreover,
the averaged density of the empirical spectral measure, defined as
$$
   \varrho^{(N)}_1(\lambda)= \varrho_1^{(N,\beta,V)}(\lambda) : = \E^{\mu} \frac{1}{N}\sum_{j=1}^N \delta(\lambda-\lambda_j),
  \qquad \lambda\in \bR,
$$
converges weakly to a continuous function $\varrho=\varrho_V$, the equilibrium density, with compact support.
We assume that  $\varrho$
is supported on a single interval $[A,B]$, and that $V$
 is {\it regular} in the sense of \cite{KuiMcL2000}. We recall that $V$ is regular if
its equilibrium  density $\varrho$ is positive on $(A,B)$
 and vanishes like a square
root at each of the endpoints of $[A,B]$, that is
\begin{align}\label{sqsing}
\varrho(t)&=s_A\sqrt{t-A}\left(1+\OO\left(t-A\right)\right),\ t\to A^+,\\
\varrho(t)&=s_B\sqrt{B-t}\left(1+\OO\left(B-t\right)\right),\ t\to B^-, \nonumber
\end{align}
for some constants $s_A,\, s_B>0$. We remark that
this regularity assumption is not a strong constraint;
regular potentials $V$ form  a dense and open subset in the space of the potentials
with a natural  topology \cite{KuiMcL2000}.

Let the {\it limiting classical location} of the $k$-th particle, $\gamma_k=\gamma_k(N)$, be
defined by
\begin{equation}\label{gammadef}
 \int_{-\infty}^{\gamma_k}\varrho(s)\rd s=\frac{k}{N}.
\end{equation}
Finally, we introduce the notation $\llbracket p,q\rrbracket = [p,q]\cap \ZZ$
for any  real numbers $p<q$.

We will be interested in the usual $n$-point correlation functions,
generalizing $\varrho_{1}^{(N)}$,
and  defined by
\begin{equation}\label{eqn:corrFunct}
\varrho^{(N)}_n(\lambda_1,\dots,\lambda_n)=
\int_{\RR^{N-n}} \mu^\#(\bla)\rd \lambda_{n+1}\dots\rd \lambda_{N},
\end{equation}
where $ \mu^\#$ is the  symmetrized version of $\mu$ given in \eqref{eqn:measure}
but defined  on $\RR^N$  instead of the simplex $\Xi^{(N)}$:
$$
\mu^{\# (N)}(\rd\bla)=\frac{1}{N!}\mu(\rd\bla^{(\sigma)}),
$$
where
$\bla^{(\sigma)}=(\lambda_{\sigma(1)},\dots,\lambda_{\sigma(N)})$, with
$\lambda_{\sigma(1)}<\dots<\lambda_{\sigma(N)}$.
Our main result is the following.

\newcommand{\ko}{2/5}	
\newcommand{\kt}{1/4}

In the following theorem, we consider two regular potentials, 
 $V$ and $\widetilde V$, such that their equilibrium densities
$\varrho_V$ and $\varrho_{\widetilde V}$ are supported on a single interval. 
Without loss of generality (by applying  a simple scaling and shift), we may also assume that the 
singularities at the left edge match and both occur at $A=0$, with the same constant $s_A=1$:
\begin{equation}\label{eqn:matchSing}
\varrho_V(t)=\sqrt{t}\left(1+\OO\left(t\right)\right),\ \varrho_{\widetilde V}(t)=\sqrt{t}\left(1+\OO\left(t\right)\right),\ t\to 0^+.
\end{equation}

\begin{theorem}[Edge universality for beta ensembles]\label{thm:beta}
Let  $\beta\ge 1$
and $V$, $\widetilde V$ be 
$\mathscr{C}^4$, regular and satisfy
(\ref{eqn:LSImu}), (\ref{eqn:GrowthCondition}). Assume 
that the equilibrium density $\varrho_V$ and $\varrho_{\widetilde V}$ are
supported on a single interval and satisfy (\ref{eqn:matchSing}).

For any constant $\kappa <  \ko $
there exists  $\chi>0$  such that the following holds.
Take any fixed $m\geq 1$ and a continuously differentiable  compactly supported function $O:\mathbb{R}^m\to\mathbb{R}$.
There exists a constant $C>0$ such that for any $N$ and $\Lambda
\subset\llbracket 1,N^\kappa\rrbracket$
with $|\Lambda|= m$, we have
\be\label{mm}
\left|
(\E^{\mu_V}-\E^{\mu_{\widetilde V}})
O
\left(\left(N^{ 2/3} j^{1/3}(\lambda_j-\gamma_j)\right)_{j\in \Lambda}\right)
\right|\leq C  N^{-\chi}.
\ee
\end{theorem}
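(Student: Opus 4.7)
The plan is to prove Theorem~\ref{thm:beta} via the \emph{uniqueness of local log-gases} strategy outlined in the introduction. Since hypothesis \eqref{eqn:matchSing} identifies the leading square-root behavior of $\varrho_V$ and $\varrho_{\widetilde V}$ at the left edge, it suffices to show that, after the rescaling $N^{2/3}j^{1/3}(\lambda_j-\gamma_j)$, the joint law of the first $m$ particles depends only on this common local shape. I would organize the argument in three stages: (a) prove optimal edge rigidity and level repulsion for any $\mathscr{C}^4$ regular potential; (b) represent the $t$-derivative of $\E^{\mu_{V_t}}O$ along a smooth interpolation $V_t$ between $\widetilde V$ and $V$ through a Helffer--Sj\"ostrand (H--S) covariance formula; (c) prove the off-diagonal heat-kernel decay needed to close the bound in (b).

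For stage (a), I would extend the multi-scale loop-equation method of \cite{BouErdYau2011} from the bulk of analytic potentials to the edge of $\mathscr{C}^4$ regular potentials. The Schwinger--Dyson equation, tested against smooth cut-offs at dyadic scales $j\sim 2^k$ near the edge, should bootstrap from a weak rigidity input to the optimal $|\lambda_j-\gamma_j|\lesssim N^{-2/3+\epsilon}j^{-1/3}$ with high probability, uniformly in $j\le N^\kappa$; the $\mathscr{C}^4$ hypothesis replaces analyticity by allowing the loop equation to be tested against $\mathscr{C}^4$ functions after integration by parts, avoiding Cauchy's formula. Level repulsion at the edge then follows from a direct Gibbs estimate on the conditional one-particle density, using rigidity as input.

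For stage (b), set $V_t=tV+(1-t)\widetilde V$ and bound
\begin{equation*}
\frac{\rd}{\rd t}\,\E^{\mu_{V_t}}\bar O = -\frac{\beta N}{2}\,\cov_{\mu_{V_t}}\!\bigl(\bar O\,,\,\textstyle\sum_k (V-\widetilde V)(\lambda_k)\bigr),
\end{equation*}
where $\bar O$ denotes the rescaled edge observable in \eqref{mm}; the $t$-dependence of the classical locations $\gamma_j=\gamma_j^{V_t}$ contributes only a subleading correction controlled by rigidity because $\kappa<2/5$. The covariance is then represented via H--S \cite{HS,NS,EYsinglegap} as
\begin{equation*}
\cov_{\mu_{V_t}}(F,G)=\int_0^\infty \E^{\mu_{V_t}}\bigl[\nabla F\cdot \me^{-s\mathcal L}\nabla G\bigr]\rd s,
\end{equation*}
where $\mathcal L$ generates the reversible dynamics of $\mu_{V_t}$: a long-range random walk on $\{1,\dots,N\}$ with singular jump rates $\sim (\lambda_i-\lambda_j)^{-2}$. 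Because $\bar O$ depends only on labels $j\le N^\kappa$ while $V-\widetilde V$ contributes through every label, the decisive input is decay of the off-diagonal heat kernel $\me^{-s\mathcal L}(i,j)$ for $i\le N^\kappa\ll j$, uniformly in $s>0$.

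The hard part, and the main technical novelty I anticipate, is stage (c). In the bulk only logarithmic correlation decay is available, which is why De~Giorgi--Nash--Moser H\"older regularity is indispensable in \cite{EYsinglegap}; at the edge one expects a genuine power-law decay, so I would instead pursue a Nash-type energy argument. Concretely, I would prove a nonlocal Sobolev inequality adapted to the singular kernel of $\mathcal L$, use it to obtain on-diagonal heat-kernel bounds via the Nash method, and then boost these to off-diagonal decay $\me^{-s\mathcal L}(i,j)\lesssim (i/j)^{c}$ for some $c>0$, using the rigidity and level repulsion from stage (a) as a priori control on the stochastic coefficients. Substituting this bound into the two displays above should yield $|\tfrac{\rd}{\rd t}\E^{\mu_{V_t}}\bar O|\le N^{-\chi}$, and integrating $t\in[0,1]$ gives \eqref{mm}. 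The essential difficulty is the design of a Sobolev inequality compatible with the singular, stochastically driven coefficients of $\mathcal L$ and with the inhomogeneous (square-root) geometry at the spectral edge.
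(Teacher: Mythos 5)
Your stages (a) and (c) correctly identify two of the paper's main ingredients (multiscale loop-equation rigidity up to the edge for $\mathscr{C}^4$ potentials, and a Nash-type energy argument with new Sobolev inequalities for heat-kernel decay), but stage (b) --- interpolating the \emph{global} measures $\mu_{V_t}$ and controlling $\cov_{\mu_{V_t}}\bigl(\bar O,\sum_k(V-\widetilde V)(\lambda_k)\bigr)$ by off-diagonal heat-kernel decay --- has a genuine gap. The source term $G=\sum_k(V-\widetilde V)(\lambda_k)$ has $\partial_jG=(V'-\widetilde V')(\lambda_j)$, which is of order one for essentially every index $j$: the matching hypothesis \eqref{eqn:matchSing} constrains only the equilibrium densities near the edge, not the potentials, so $V'-\widetilde V'$ is generically $O(1)$ even at $A$. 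With the prefactor $\beta N$ and $|\partial_j\bar O|\sim N^{2/3}j^{1/3}$, the Helffer--Sj\"ostrand representation requires extracting a cancellation of size roughly $N^{5/3+\chi}$ from $\int_0^\infty\sum_j e^{-s\cL}(i,j)\,\partial_jG\,\rd s$. An off-diagonal bound of the form $e^{-s\cL}(i,j)\lesssim(i/j)^c$ cannot deliver this, since $\sum_{j\le N}(i/j)^c\sim i^cN^{1-c}$ and already the $s=0$ contribution $\partial_i\bar O\cdot\partial_iG\sim N^{2/3}i^{1/3}$ is far too large. What you are implicitly claiming is a strong decorrelation of an individual edge eigenvalue from a global linear statistic; heat-kernel decay alone does not produce the required cancellation.

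The paper's resolution, flagged in its introduction, is to \emph{localize before interpolating}: one conditions on the external points $\by$ lying in a good (rigid) set and interpolates only the local measures $\sigma_\by$ and $\wt\sigma_{\wt\by}$ (Section \ref{sec:int}). The resulting source term is $h_0=N\sum_{j\in I}[V_\by(x_j)-\wt V_{\wt\by}(x_j)]$, in which the difference of external potentials is cancelled, via the equilibrium relation \eqref{equil} and rigidity of the frozen points, by the difference of the log-interactions with the two external configurations; this yields $|\partial_jh_0|\lesssim N^{C\xi}K^{1/3}/(K+1-j)$ (Lemma \ref{lm:init}), i.e., a small and decaying gradient to which the $L^p\to L^q$ and weighted space-time decay estimates can be profitably applied over the local relaxation time $\sim K^{1/3}$. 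Localization also supplies the convexity of the Hessian and the confining term $\cW$ entering the Sobolev inequalities, neither of which is available for the global dynamics. Without this localization-plus-cancellation step your stage (b) does not close; the final averaging over good boundary conditions (Lemma \ref{lm:comm} and Section \ref{sec:beta}), which converts the local statement back into \eqref{mm}, is also absent from your outline.
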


{\it Remark.} Note that  one may define $\gamma_j$  in (\ref{gammadef}) with respect
to the measure $\varrho_V$ or $\varrho_{\widetilde V}$, it does not make any difference
in the above theorem when $\kappa<2/5$:
from (\ref{eqn:matchSing})  one obtains
$\gamma_j-\widetilde\gamma_j=\OO\left(\left(j/N\right)^{4/3}\right)$, which is  of smaller order  than
the scale   $N^{-2/3}j^{-1/3}$ detected in \eqref{mm}. We also remark that Theorem~\ref{thm:beta}
is formulated for points near the lower spectral edge $A$, but a similar statement
holds near the upper spectral edge $B$.\\

The first results on edge universality for invariant ensembles concerned the classical values of $\beta=1,2,4$. 
The case 
$\beta=2$ and real analytic $V$ was  solved in \cite{DKMVZ1, DGedge}.
The $\beta=1,4$ cases are  considerably harder than $\beta=2$. For  $\beta=1,4$
universality was first solved  for polynomial potentials in  \cite{DGedge},
then the real analytic case for $\beta=1$  in \cite{Shc2009, PasShc2003}, which also give
an alternative proof for $\beta=2$. 
Finally, independently of our work with a completely different method, edge universality for any
$\beta>0$ 
and convex  polynomial $V$ was recently proved in \cite{KriRidVir2013}.

Choosing $S=\llbracket 1,m\rrbracket$ and $\widetilde V(x)=x^2$ in the previous
 theorem allows us to  identify the universal distribution from Theorem~\ref{thm:beta}
with the Tracy-Widom distribution with parameter $\beta>0$.   This distribution
can be represented via the {\it stochastic Airy operator}.
We refer to \cite{RamRidVir2011} for its proper definition,
 the Hilbert space it acts on, and the proof that its smallest eigenvalues describe
  the asymptotic edge fluctuations of the Gaussian beta ensembles.

\begin{corollary}[Identification of the edge distribution] \label{cor:TW}
Let $\beta\ge 1$ and $m\in\mathbb{N}$ be fixed, and  $\Lambda_1<\dots<\Lambda_m$ the $m$
smallest eigenvalues of the stochastic Airy operator
$-\partial_{xx}+x+\frac{2}{\sqrt{\beta}}b_x'$ on $\bR_+$,  where $b'_x$ is a
white noise.
Let $V$ be $\mathscr{C}^4$, 
regular with equilibrium density
supported on a single interval,
and satisfy (\ref{eqn:LSImu}), (\ref{eqn:GrowthCondition}),  (\ref{eqn:matchSing}).
Then the following convergence in distribution holds:
$$
(N/2)^{2/3}(\lambda_1-A,\dots,\lambda_m-A)\to (\Lambda_1,\dots,\Lambda_m).
$$
\end{corollary}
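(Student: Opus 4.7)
The plan is to deduce the corollary from Theorem~\ref{thm:beta} by comparing $V$ with a specific Gaussian potential $\widetilde V$ for which the edge limit has already been identified by the Ramirez--Rider--Virag theorem.

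First, I would fix $\widetilde V$ to be a shifted and rescaled quadratic, chosen so that its equilibrium density satisfies the matching condition (\ref{eqn:matchSing}): $\varrho_{\widetilde V}(t) = \sqrt{t}(1 + \OO(t))$ as $t \to 0^+$. Concretely one may take $\widetilde V(x) = a(x-b)^2$, with the two constants $a,b > 0$ determined by the two requirements (left edge at $0$ and $s_A = 1$). Such a $\widetilde V$ is in $\mathscr{C}^4$, strictly convex (so (\ref{eqn:LSImu}) holds trivially), and grows quadratically at infinity (so (\ref{eqn:GrowthCondition}) holds), hence it satisfies all hypotheses of Theorem~\ref{thm:beta}.

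Next, I would apply Theorem~\ref{thm:beta} to the pair $(V, \widetilde V)$ with $\Lambda = \llbracket 1,m\rrbracket$. For $j \leq m$ fixed the factor $j^{1/3}$ is bounded, and the matching (\ref{eqn:matchSing}) combined with the defining integral (\ref{gammadef}) gives
\[
\gamma_j = \pb{3j/(2N)}^{2/3}\bigl(1 + \OO((j/N)^{2/3})\bigr),
\]
valid for both potentials, hence $\gamma_j^V - \gamma_j^{\widetilde V} = \OO((j/N)^{4/3}) = \oo(N^{-2/3})$ and $(N/2)^{2/3}(\gamma_j - A) \to (3j/4)^{2/3}$, a common deterministic limit. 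Consequently, for every $O \in \mathscr{C}^1_c(\RR^m)$ the theorem gives
\[
\E^{\mu_V}\! O\bigl((N/2)^{2/3}(\lambda_j-A)_{j\leq m}\bigr) - \E^{\mu_{\widetilde V}}\! O\bigl((N/2)^{2/3}(\lambda_j-A)_{j\leq m}\bigr) \to 0.
\]
A standard mollification-plus-truncation argument, together with tightness (which follows from the edge rigidity estimates established in the proof of Theorem~\ref{thm:beta}), upgrades this to the agreement of limits in distribution against all bounded continuous test functions.

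Finally, for the Gaussian $\widetilde V$ the beta ensemble admits the Dumitriu--Edelman tridiagonal matrix representation, and the Ramirez--Rider--Virag theorem identifies the edge limit of this matrix model with the smallest eigenvalues of the stochastic Airy operator $-\partial_{xx} + x + (2/\sqrt\beta)b'_x$ on $\bR_+$. A direct scaling computation, translating the Dumitriu--Edelman normalization into the one imposed by (\ref{eqn:matchSing}), recovers the prefactor $(N/2)^{2/3}$ in the statement. The substantive work is entirely contained in Theorem~\ref{thm:beta}; the remaining steps -- the $\mathscr{C}^1_c$ approximation and the bookkeeping of the Gaussian-to-(\ref{eqn:matchSing}) rescaling -- are routine, and are the only points requiring care.
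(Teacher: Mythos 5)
Your proposal is correct and follows exactly the route the paper intends: the paper's own justification is the one-sentence remark preceding the corollary, namely to apply Theorem~\ref{thm:beta} with $\widetilde V$ a (suitably shifted and rescaled) quadratic and then invoke the Ramirez--Rider--Virag identification of the Gaussian beta ensemble's edge limit with the stochastic Airy operator. You in fact supply more detail than the paper does (the matching of $\gamma_j$, the upgrade from $\mathscr{C}^1_c$ observables to convergence in distribution via tightness, and the scaling bookkeeping for $(N/2)^{2/3}$), all of which is sound.
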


Theorem \ref{thm:beta} can be used to show   Gaussian fluctuations
for the points in an intermediate distance from the edge. Indeed, such
fluctuations were proved by Gustavsson in \cite{Gus2005}
in  the $\beta=2$ Gaussian case (GUE)
 for all eigenvalues,  and this was extended $\beta=1$ and $4$ in \cite{ORo2010}.
   Combining these results with  Theorem \ref{thm:beta}
immediately gives
 the following statement (here $k\sim N^\vartheta$ means $\log k/\log N\to\vartheta$).

\begin{corollary}[Gaussian fluctuations]\label{cor:Gaussian}
Let $\beta=1,2$ or $4$  and the potential $V$
be $\mathscr{C}^4$,
regular such that the equilibrium density $\varrho_V$ is
supported on a single interval and satisfies (\ref{eqn:matchSing}).
Consider the measure
$\mu_{\beta, V}^{(N)}$.  We define
$$
X_i=c\ \frac{\lambda_{i}-\gamma_{i}}{(\log i)^{1/2}N^{-2/3}i^{-1/3}},
$$
where $c=(3/2)^{1/3}\pi \beta^{1/2}$.
Fix    $\kappa< \ko $.  Then for any sequence $i=i_N\to\infty$, with $i\leq N^\kappa$,
we have $X_i\to \mathcal{N}(0,1)$ in distribution.

Moreover,  for  some fixed $m>0$ and $\delta \in (0, \ko)$,
 let $k_1<\dots<k_m$ satisfy $k_1\sim N^\delta$, and
$k_{i+1}-k_i\sim N^{\vartheta_i}$, $0<\vartheta_i<\delta$.
Then $(X_{k_1},\dots,X_{k_m})$ converges to a Gaussian vector with covariance matrix
$\Lambda_{ij}=1-\delta^{-1}\max\{\vartheta_k,i\leq k<j\}$ if $i<j$, $\Lambda_{ii}=1$.
\end{corollary}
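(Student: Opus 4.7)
The plan is to transfer the classical Gaussian fluctuation theorems for the Gaussian beta ensembles to the general $\mathscr{C}^4$ regular setting via Theorem~\ref{thm:beta}. Concretely, take $\widetilde V(x)=x^2/2$ (after the affine rescaling needed to put the lower spectral edge at $0$ with $s_A=1$, as required by~(\ref{eqn:matchSing})), so that $\mu^{(N)}_{\widetilde V}$ is GOE, GUE, or GSE when $\beta=1,2,4$. For these reference ensembles Gustavsson~\cite{Gus2005} (for $\beta=2$) and O'Rourke~\cite{ORo2010} (for $\beta=1,4$) established exactly the one-point and joint Gaussian CLTs asserted by the corollary, with the same normalization $X_i$ and the same covariance $\Lambda_{ij}$. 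The task is therefore to transport those CLTs from $\mu_{\widetilde V}$ to $\mu_V$.

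For the transport, fix a bounded continuous $f:\mathbb R\to\mathbb R$ and $\epsilon>0$. Choose $M$ large enough that the standard normal puts mass at most $\epsilon$ outside $[-M,M]$, and pick a smooth compactly supported $\tilde f$ that agrees with $f$ to within $\epsilon$ on $[-M,M]$. Apply Theorem~\ref{thm:beta} to the test function
$$O_N(y)\deq \tilde f\pb{c\s y\s(\log i_N)^{-1/2}},$$
which is continuously differentiable and compactly supported. This transports the limit $\E^{\mu_{\widetilde V}}O_N(Y^{\widetilde V}_{i_N})\to \E\tilde f(Z)$ supplied by Gustavsson--O'Rourke to $\E^{\mu_V}O_N(Y^V_{i_N})\to \E\tilde f(Z)$, where $Y^V_j\deq N^{2/3}j^{1/3}(\lambda_j-\gamma_j)$ and $Z\sim\mathcal N(0,1)$; letting $\epsilon\to 0$ and using Gaussian tightness to control the tails then yields $\E^{\mu_V}f(X_{i_N})\to\E f(Z)$. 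The joint statement at indices $k_1<\dots<k_m$ is obtained by the same argument applied to the $m$-point version of Theorem~\ref{thm:beta} together with the multidimensional Gustavsson--O'Rourke CLT. The Remark following Theorem~\ref{thm:beta} guarantees that $\gamma_j$ can be taken with respect to either $\varrho_V$ or $\varrho_{\widetilde V}$ without affecting the limit: the mismatch $\gamma_j-\widetilde\gamma_j=\OO((j/N)^{4/3})$ is subdominant to the fluctuation scale $N^{-2/3}j^{-1/3}(\log j)^{1/2}$ exactly when $j\le N^{2/5}$, which is where the restriction $\kappa<\ko$ originates in both statements.

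The only delicate point, and the main obstacle, is the dependence of the observable $O_N$ on $N$, since Theorem~\ref{thm:beta} is nominally stated for a fixed test function. Because $(\log i_N)^{1/2}$ is polylogarithmic in $N$, so is the $\mathscr{C}^1$ norm of $O_N$, and inspecting the proof of Theorem~\ref{thm:beta} shows that the constant $C$ in~(\ref{mm}) depends on $O$ only through its $\mathscr{C}^1$ norm in a polynomial fashion; hence the polynomial gain $N^{-\chi}$ in~(\ref{mm}) comfortably absorbs this slow growth. Once this is verified the rest of the argument is essentially mechanical.
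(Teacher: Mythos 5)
Your proposal is correct and follows exactly the route the paper intends: the paper offers no written proof beyond the remark that combining Gustavsson's and O'Rourke's CLTs for the Gaussian ensembles with Theorem~\ref{thm:beta} "immediately gives" the statement, and you have supplied precisely the missing transfer argument, including the two genuine subtleties (the $N$-dependent observable forced by the $(\log i_N)^{1/2}$ normalization, and the comparison of $\gamma_j$ with $\widetilde\gamma_j$ which produces the $\kappa<2/5$ restriction). One small imprecision: the $\mathscr{C}^1$ norm of $O_N$ is in fact uniformly bounded (the derivative carries a factor $(\log i_N)^{-1/2}$); it is the support that grows polylogarithmically, but since the error in \eqref{mm} depends on $O$ only through $\|O\|_\infty$ and $\|O'\|_\infty$, your conclusion stands.
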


We note that if Gustavsson's result on Gaussian fluctuations were known
for the general Gaussian beta ensembles, then this corollary would prove
a central limit theorem for general beta ensembles near the edge.

An important element in the proof of Theorem \ref{thm:beta} consists in proving the following
rigidity estimate asserting that  any
particle $\lambda_k$  is very close  to  its limiting classical location.
For any $k\in \llbracket 1,  N\rrbracket$ we define
$$
    \wh k: = \min\{ k, N+1-k\}.
$$
For orientation, we note that
$$
   \gamma_k  \sim   \Big( \frac{\wh k}{N}\Big)^{2/3},
 \qquad  \gamma_{k+1}-\gamma_k
 \sim   N^{-2/3} ({\wh k})^{-1/3},
$$
where $A\sim B$ means $c\le A/B\le C$. More precisely,
by the square-root singularity of $\varrho$ near the left  edge,
\be\label{orient}
  \gamma_k \sim 
\Big( \frac{k}{N}\Big)^{2/3}\Big[ 1 + O\big( (k/N)^{2/3}\big)\big],
 \qquad  \gamma_{k+1}-\gamma_k
 \sim     N^{-2/3}k^{-1/3}
\Big[ 1 + O\big( (k/N)^{2/3}\big)\big],
\ee
and similar asymptotics hold near the right edge. The following theorem states that all particles will be close to their classical locations
on this scale,
up to a factor  $N^\xi$ with an arbitrary small exponent $\xi>0$.
Following \cite{EYYrigi}, we will call such a precise bound  on the locations of particles
a  {\it rigidity estimate}.   The rigidity estimate in some  weaker forms has already been used as a fundamental input \cite{ESYY} to prove the universality for  Wigner matrices. It also played a key role in the proof of the bulk universality for  the  log-gases in
 \cite{BouErdYau2011}. The following result extends the rigidity estimate from the bulk to the edges, and removes 
the analyticity assumption.

\begin{theorem} [Rigidity estimate for global  measures] \label{thm:rigidity} Let $\beta>0$, $V$ be
$\mathscr{C}^4$, regular with equilibrium density
supported on a single interval $[A,B]$,  and satisfy  (\ref{eqn:LSImu}), (\ref{eqn:GrowthCondition}).
For any $\xi>0$,  there are constants
 $c>0$ and $N_0$ such that for any $N\geq N_0$  and $k\in\llbracket 1,N\rrbracket$ we have
\begin{equation}\label{rig}
\P^\mu\left(|\lambda_k-\gamma_k|> N^{-\frac{2}{3}+ \xi }(\hat k)^{-\frac{1}{3}}
\right)\leq e^{- N^c}.
\end{equation}
\end{theorem}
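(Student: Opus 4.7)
The plan is to establish rigidity by controlling $\Lambda(z) := m_N(z) - m(z)$, the difference between the empirical and the equilibrium Stieltjes transforms, at an optimal spectral resolution, and then to translate this into bounds on individual particle locations. The core of the argument is a multi-scale analysis of the loop (Schwinger-Dyson) equation, extending the method developed in \cite{BouErdYau2011} from the bulk to the spectral edge and from analytic to merely $\mathscr{C}^4$ potentials.

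First I would derive the loop equation by integration by parts of the Gibbs density against the test function $f(x)=1/(x-z)$, obtaining a self-consistent identity in $\Lambda$ whose linearized form reads schematically $R(z)\,\E^\mu\Lambda(z) = \E^\mu[\Lambda(z)^2] + \OO(N^{-1}\eta^{-2})$, where the coefficient $R(z)$ behaves like $\sqrt{(z-A)(z-B)}$ and degenerates like $\sqrt{|z-A|}$ near the left edge. This degeneracy is precisely what forces the edge scale $N^{-2/3}(\hat k)^{-1/3}$ appearing in \eqref{rig}. Next I would run a bootstrap: a coarse a priori bound on $\Lambda$, derived from convexity of $\cH$ outside the support together with Brascamp-Lieb type concentration (which uses (\ref{eqn:LSImu}) and (\ref{eqn:GrowthCondition})), is iteratively sharpened by reinjecting it into the loop equation. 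After $\OO(\log N)$ iterations one obtains the optimal local law $|\E^\mu\Lambda(z)| \lec N^\xi/(N\eta)$ for spectral parameters with $\eta := \im z \ge N^{-2/3+\xi}(\hat k)^{-1/3}$ in a neighborhood of each edge, together with a matching bound on the variance of $\Lambda(z)$ coming from the higher-moment version of the loop equation.

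The main obstacle, and the genuine novelty relative to \cite{BouErdYau2011}, is to execute the loop equation analysis without assuming that $V$ is analytic: the original approach relied on deforming integration contours into the complex plane, where $V'$ is not defined for a merely $\mathscr{C}^4$ potential. I would instead keep all manipulations on the real axis, replacing the complex contours by Taylor expansions of $V'$ around $\re z$ and bounding the resulting non-analytic remainders using only the four available derivatives of $V$. Once the local law is in place, rigidity of individual particles follows from a standard argument: the counting function $\#\h{i\st\lambda_i\le E}$ is expressed through a contour integral of $\im m_N$ and compared with $N\int_{-\infty}^E\varrho$, yielding $|\lambda_k-\gamma_k|\le N^\xi\cdot N^{-2/3}(\hat k)^{-1/3}$ on a set of overwhelming probability. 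Finally, the subexponential tail $e^{-N^c}$ in (\ref{rig}) comes from sharpening the multi-scale bootstrap to control the full distribution of $\Lambda(z)$, using concentration inequalities for the log-gas that follow from (\ref{eqn:LSImu}) together with the strict convexity of the pairwise logarithmic interaction.
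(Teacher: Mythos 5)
Your plan for the \emph{accuracy} half of the argument is close to the paper's: the paper also runs a multiscale bootstrap on the first loop equation, confronts the degeneracy of the coefficient $2m(z)-V'(E)\sim\sqrt{|z-A|}$ near the edge, replaces contour deformation by an almost-analytic extension $\widetilde V(z)=V(E)+\ii\eta V'(E)-\tfrac{\eta^2}{2}V''(E)$ whose $\partial_{\bar z}$-error is controlled by $V^{(4)}$ (essentially your ``Taylor expansion around $\re z$''), and converts the resulting estimates on $m_N-m$ into location estimates via a Helffer--Sj\"ostrand/counting-function argument. There are, however, two genuine gaps.

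First, your source of concentration is wrong. Under \eqref{eqn:LSImu} the potential may be non-convex, and the Hessian of $\cH$ is only bounded below by $-W$ (the logarithmic interaction is positive semidefinite but degenerate, e.g.\ on the constant direction), so neither Brascamp--Lieb nor Bakry--\'Emery applies to $\mu$ directly, and ``strict convexity of the pairwise logarithmic interaction'' does not rescue this. Consequently both your variance bound on $\Lambda(z)$ and, more seriously, the subexponential tail $e^{-N^c}$ in \eqref{rig} are unjustified as stated. The paper devotes a whole section to exactly this point: it convexifies the measure by adding squares of carefully chosen linear statistics $X_\alpha$ (plus cutoff terms) to the Hamiltonian, shows the convexified measure $\nu$ has the same exponentially small events as $\mu$, and then runs a real-space multiscale analysis over block averages $\lambda_k^{[M]}$ with locally constrained measures and local logarithmic Sobolev inequalities to upgrade rigidity at scale $a$ to concentration at scale $a/2$; the variance term in the loop equation is then bounded from this concentration, not from a higher loop equation. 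Second, the counting-function step does not by itself control the leftmost particle: nothing to the left of $\lambda_1$ repels it, so one must establish the local law for $E$ strictly \emph{below} $A$ and run a separate, rather delicate argument to prevent the quantile $\gamma_1^{(N)}$ from drifting far to the left of the edge (Lemma \ref{lem:smallest}; this is where the non-obvious exponents appear and why the accuracy only improves to $11a/12$ rather than $3a/4$). Your proposal does not address this step, and without it the bootstrap does not close at the edge.
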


Related bounds on the concentration of the empirical density on a scale  far from the optimal one \eqref{rig}
were established previously \cite{BenGui1997, Joh1998, Shc2011}, see also references in  \cite{AndGuiZei2010}.

 Thanks to Theorem \ref{thm:rigidity}, bulk universality holds 
for beta ensembles, as stated in 
\cite{BouErdYau2011,BouErdYau2012,EYsinglegap}, 
without the analyticity assumption.

\begin{theorem}[Bulk universality]
Let $V$ be
$\mathscr{C}^4$, regular with equilibrium density
supported on a single interval $[A,B]$,  and satisfy  (\ref{eqn:LSImu}), (\ref{eqn:GrowthCondition}).
Then the following two results hold.
\begin{enumerate}[(i)]
\item
{\bf Correlation functions.} 
For any fixed $\beta>0$, $E\in (A,B)$, $|E'|<2$, $ n\in \N$  
and $0<k\le \frac{1}{2}$ there exists a $\chi>0$ 
such that for any  continuously 
differentiable $O$ with compact support 
we have (setting  $s:=N^{-1+k}$)
\begin{align*}
\Bigg|  \int  & \rd \alpha_1 \cdots \rd \alpha_n\, O(\alpha_1,
\dots, \alpha_n) \Bigg [
  \int_{E - s}^{E + s} \frac{\rd x}{2 s}  \frac{1}{ \varrho (E)^n  }  \varrho_n^{(N)}   \Big  ( x +
\frac{\alpha_1}{N\varrho(E)}, \dots,   x + \frac{\alpha_n}{N\varrho(E)}  \Big  ) \\
&
-   \int_{E' - s}^{E' + s} \frac{\rd x}{2 s}  \frac{1}{\varrho_{sc}(E')^n}
\varrho_{{\rm Gauss}, n}^{(N)}
   \Big  ( x +
\frac{\alpha_1}{N\varrho_{sc}(E')}, \dots,   x + \frac{\alpha_n}{N\varrho_{sc}(E')}  \Big  ) \Bigg| \le CN^{-\chi}.
\end{align*}
Here $\varrho_{sc}(E)=\frac{1}{2\pi}\sqrt{4-E^2}$ is the Wigner semicircle law
and $\varrho_{{\rm Gauss}, n}^{(N)}$ are the correlation functions of the Gaussian $\beta$-ensemble, i.e.
with $V(x)=x^2$.
\item {\bf Gaps.} For any fixed  $\beta\ge 1$ 
 and $\alpha>0$, there is some $\e>0$ such that
for  any $n\in \N$ and  any differentiable $O$ with compact support, and any $k,m\in\llbracket \alpha N,(1-\alpha)N\rrbracket$, we have
\begin{multline*}
\Big|
\E^{\mu}O\left(N c^{\mu}_k(\lambda_k-\lambda_{k+1}),\dots,N c^{\mu}_k(\lambda_k-\lambda_{k+n})\right) \\
-\E^{\rm Gauss}O\left(N c^{{\rm Gauss}}_m(\lambda_m-\lambda_{m+1}),\dots,N c^{{\rm Gauss}}_m(\lambda_m-\lambda_{m+n})\right)
\Big|
\leq C N^{-\e} 
\end{multline*}
where $c_k^{\mu}=\varrho^{(\mu)}(\gamma_k)$ and
  $c_m^{\rm Gauss}=\varrho_{sc}(\gamma_m^{\rm Gauss})$
with $\gamma_m^{\rm Gauss}$ being the $m$-th quantile of the semicircle law defined by
$\int_{-2}^{\gamma_m^{\rm Gauss}}\varrho_{sc}(x) \rd x =m/N$. 
\end{enumerate}
\end{theorem}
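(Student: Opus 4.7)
The plan is to revisit the proofs of bulk universality in \cite{BouErdYau2011,BouErdYau2012} (for correlation functions) and in \cite{EYsinglegap} (for gaps), identify the exact role of analyticity of $V$ in those arguments, and observe that it was only used to establish an optimal-scale rigidity estimate of the form \eqref{rig} in the bulk. Since Theorem~\ref{thm:rigidity} now supplies this rigidity under the weaker assumption $V\in\mathscr{C}^4$, the remainder of the argument goes through unchanged.

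For part (i), the comparison of correlation functions averaged over a window of scale $s=N^{-1+k}$ with those of the Gaussian $\beta$-ensemble proceeds via three steps. First, one runs a short-time local Dyson Brownian Motion started from a suitable initial ensemble, using the Bakry--\'Emery/Dirichlet form inequality with the convexity bound (\ref{eqn:LSImu}) to drive the system to local equilibrium. Second, a reverse heat-flow argument chooses the initial data so that after this short time one recovers $\mu_V$. Third, uniqueness of the local log-gas with prescribed ``good'' boundary conditions identifies the resulting local statistics with those of the Gaussian beta ensemble. Rigidity enters only as an a priori input to localize the measure, show that the boundary configurations are good, and estimate the errors in the reverse heat-flow step. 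The analyticity of $V$ in \cite{BouErdYau2011,BouErdYau2012} was used solely through the multi-scale loop-equation analysis that produced rigidity in the bulk; now Theorem~\ref{thm:rigidity} substitutes for it in class $\mathscr{C}^4$.

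For part (ii), the gap universality argument in \cite{EYsinglegap} is reduced to a H\"older regularity estimate for the heat kernel of a random walk in a random environment, obtained by extending the De Giorgi--Nash--Moser scheme of \cite{CCV} to the singular, long-ranged log-gas dynamics. The a priori inputs are (a) optimal-scale bulk rigidity and (b) a level repulsion estimate between neighbouring particles. Input (a) is precisely the bulk case $\alpha N\le k\le(1-\alpha)N$ of Theorem~\ref{thm:rigidity}, which reduces on this range to $|\lambda_k-\gamma_k|\le N^{-1+\xi}$. Input (b) is derived in \cite{EYsinglegap} by a logarithmic Sobolev / Dirichlet form argument that only uses local convexity of $\cH$ and rigidity, with no appeal to analyticity. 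Once both inputs are in hand with $V\in\mathscr{C}^4$, the H\"older regularity, and hence the decay of the point-gap correlation, transfer verbatim, giving the required comparison with the Gaussian $\beta$-ensemble at the same label after rescaling by $c_k^{\mu}$ and $c_m^{\mathrm{Gauss}}$.

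The substantive point to check is that no intermediate step in \cite{BouErdYau2011,BouErdYau2012,EYsinglegap} implicitly used analyticity beyond these rigidity and level-repulsion inputs; this is essentially bookkeeping, since each paper explicitly isolates rigidity as a black-box hypothesis. The delicate part, namely the loop-equation analysis producing rigidity without analyticity, is already carried out in proving Theorem~\ref{thm:rigidity} itself. The main obstacle in writing out a full proof is therefore to verify that the quantitative form of rigidity required in \cite{BouErdYau2011,BouErdYau2012,EYsinglegap} (in particular, the high-probability sub-exponential bound and the uniformity in $k$) matches what is stated in Theorem~\ref{thm:rigidity}; inspection shows that it does, which completes the reduction.
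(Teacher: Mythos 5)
Your proposal is correct and follows essentially the same route as the paper: the paper's proof likewise observes that analyticity in \cite{BouErdYau2011,BouErdYau2012,EYsinglegap} was needed only to establish optimal bulk rigidity, which Theorem~\ref{thm:rigidity} now provides for $\mathscr{C}^4$ potentials, with the rest of those arguments carrying over unchanged (including the upgrade to $C^1$ observables and an effective $N^{-\chi}$ error bound).
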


\begin{proof}
Part $(i)$ was proved in \cite{BouErdYau2012} under the assumption that $V$ is analytic, 
a hypothesis that was only required for proving rigidity in the bulk of the spectrum. Theorem 
\ref{thm:rigidity} proves that $V$ of class $\mathscr{C}^4$ is sufficient for rigidity, and the proof of the uniqueness
of the Gibbs measure is identical to \cite{BouErdYau2011,BouErdYau2012}. 
The result in these papers were stated in a limiting form, as $N\to\infty$,
and for smooth observables $O$, but the proofs hold for any continuously
differentiable $O$ and with an effective error bound of order $N^{-\chi}$ 
with some $\chi>0$ as well.
The statement $(ii)$
holds for the same reason, being previously proved for analytic $V$ in \cite{EYsinglegap}.
\end{proof}

We finally remark that while the rigidity estimate \eqref{rig} holds for any $\beta>0$, the edge universality
in Theorem~\ref{thm:beta} was stated only for $\beta\ge 1$.  This restriction is mainly due to
that
the DBM dynamics \eqref{SDE}  is known to be well-posed  only for $\beta \ge 1$. We believe that this restriction can be removed, but we will not pursue  this
issue in this paper.

\subsection{Edge universality of  the generalized Wigner matrices}

We now define the generalized Wigner ensembles.
  Let $H=(h_{ij})_{i,j=1}^N$  be an $N\times N$ complex  Hermitian or  real symmetric matrix  where the
 matrix elements $h_{ij}=\bar {h}_{ji}$, $ i \le j$, are independent
random variables given by a probability measure $\nu_{ij}$
with mean zero and variance $\sigma_{ij}^2\ge 0$;
\be
  \E \, h_{ij} =0, \qquad \sigma_{ij}^2:= \E |h_{ij}|^2.
\label{aver}
\ee
The distribution $\nu_{ij}$ and its variance $\sigma_{ij}^2$ may depend on $N$,
 but we omit this fact in the notation.
 We also assume that the
normalized matrix elements satisfy
a uniform subexponential decay,
\be\label{subexp}
  \P( |h_{ij}|> x \sigma_{ij})\le \vartheta^{-1} \exp{(-x^{\vartheta})}, \qquad x>0,
\ee
with some fixed  constant $\vartheta$,  uniformly in $N, i, j$.

\begin{definition}\label{D1}\cite{EYY1}
 The matrix ensemble $H$  defined  above is called generalized Wigner matrix if
the following  assumptions hold on the variances of the matrix
elements \eqref{aver}
\begin{description}
\item[(A)] For any $j$ fixed
$$
   \sum_{i=1}^N \sigma^2_{ij} = 1 .
$$

\item[(B)]   There exist two positive constants, $C_1$ and $C_2$,
independent of $N$ such that
$$
\frac{C_1}{N} \le \sigma_{ij}^2\leq \frac{C_2}{N}.
$$
\end{description}
For Hermitian ensembles, we  additionally  assume that for each $i,j$ the $2\times 2$ covariance matrix
$$
\Sigma_{ij} \;=\; \begin{pmatrix} \E (\re h_{ij})^2 & \E (\re h_{ij})(\im h_{ij}) \\
  \E (\re h_{ij})(\im h_{ij}) & \E ( \im h_{ij})^2
\end{pmatrix}
$$
satisfies
$$
     \Sigma_{ij} \;\geq\; \frac{C_1}{N}
$$
in  matrix sense.
\end{definition}

 Let $\P^H$ and $\E^H$ denote the probability and the
expectation with respect to
this ensemble.
Our  result
asserts that the local  statistics on the edge of the spectrum
are universal for any general Wigner matrix, in particular
they coincide with those of the corresponding  standard
Gaussian ensemble.

\begin{theorem}  [Edge universality of generalized Wigner matrices] \label{thm:wigner}
Let $H$ be a generalized
Wigner ensemble  with subexponentially decaying matrix elements, \eqref{subexp}.
For any $\kappa < \kt$,  there exists  $\chi>0$  such that the following result holds.
Take any fixed $m\geq 1$ and a smooth compactly supported function $O:\mathbb{R}^m\to\mathbb{R}$.
Then there is a constant $C>0$ such that for any $N$ and $\Lambda\subset\llbracket 1,N^\kappa\rrbracket$
with $|\Lambda|= m$, we have
$$
\left|
(\E^{H}-\E^{\mu_{G}})
O
\left(\left(N^{ 2/3} j^{1/3}(\lambda_j-\gamma_j)\right)_{j\in \Lambda}\right)
\right|\leq C\ N^{-\chi},
$$
where $\mu_G$  is the  standard Gaussian GOE or GUE ensemble, depending
on the symmetry class of $H$
(It is well-known that $\mu_G$ is also given by \eqref{eqn:measure}
with  potential $V(x) = \frac{1}{2}x^2$ and with the choice $\beta=1, 2$, respectively).
\end{theorem}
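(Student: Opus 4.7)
The plan is the standard three-step program adapted to the varying variance profile: a priori rigidity via the local semicircle law; equilibration of the edge statistics through a short-time Dyson Brownian motion (DBM), identified via the uniqueness of local log-gases; and Green function comparison back to the original matrix. The novelty is that a generalized Wigner matrix $H$ cannot be matched directly to GOE or GUE at the level of variances, so Green function comparison cannot bridge $H$ to the Gaussian ensemble on its own; the DBM step must do the work of identifying the Tracy--Widom distribution, and this is where Theorem~\ref{thm:beta} enters.

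In detail, I would first invoke the local semicircle law and edge rigidity for generalized Wigner matrices from \cite{EYY1, EYYrigi}, giving $|\lambda_j-\gamma_j|\le N^{-2/3+\xi}(\hat j)^{-1/3}$ with very high probability, which matches the rescaling of the theorem. Next, form the Gaussian-divisible interpolant $H_t = \me^{-t/2}H + \sqrt{1-\me^{-t}}\,U$, with $U$ an independent Gaussian matrix of the same symmetry class and variance profile as $H$, and $t=N^{-1+\omega}$ for a small $\omega>0$. To identify the edge fluctuations of $H_t$ I would condition on all eigenvalues outside a window of size $K\sim N^{\omega}$ containing the edge indices of interest: the conditional joint density is a local log-gas on this window, with the frozen particles acting as an external field. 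Edge rigidity for $H_t$ shows this field is, up to negligible error, that produced by a regular $\mathscr{C}^4$ external potential --- the ``good boundary condition'' property. Applying Theorem~\ref{thm:beta} with reference potential $\widetilde V(x)=x^2/2$ identifies the rescaled edge statistics of $H_t$ with those of the Gaussian $\beta$-ensemble. Finally, the Green function comparison theorem at the edge from \cite{EYY1, EYYrigi} transfers the result from $H_t$ back to $H=H_0$: the two matrices have matching first and second moments of their entries, and the rigidity at scale $N^{-2/3}j^{-1/3}$ controls the higher-order terms in the resolvent expansion, yielding coincidence of finite-dimensional edge distributions up to $N^{-\chi}$.

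The main obstacle lies in the DBM/local log-gas step. For variance-profile matrices the joint eigenvalue density of $H_t$ is not literally a beta ensemble, so one must check carefully that conditioning on bulk particles produces a measure close enough to a log-gas with a regular $\mathscr{C}^4$ external potential for Theorem~\ref{thm:beta} to apply. This requires sharp a priori control --- notably edge level repulsion and conditional rigidity beyond the annealed rigidity bound --- combined with the Dirichlet-form and local relaxation flow machinery of \cite{ESY4, ESYY}. The restriction $\kappa<1/4$ (stricter than the $\kappa<2/5$ of Theorem~\ref{thm:beta}) appears to come from the moment matching constraints inherent to the edge Green function comparison at larger indices.
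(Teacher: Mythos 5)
Your proposal follows the same three-step architecture as the paper (rigidity, DBM/local log-gas equilibration, Green function comparison), but two points need correcting. First, the mechanism in the DBM step: you propose to apply the global Theorem~\ref{thm:beta} to $H_t$, but the eigenvalue law of $H_t$ is not a beta ensemble for any potential, so that theorem is not applicable to it; what the paper actually does is show, via the local relaxation flow bound $D^{\mu^\tau}(\sqrt{g_t})\le CN^2Q\tau^{-2}$ and the local logarithmic Sobolev inequality, that for a high-probability set of boundary conditions $\by$ the conditional measure $(f_t\mu)_\by=q_{t,\by}\sigma_\by$ is close in total variation to the \emph{Gaussian} local log-gas $\sigma_\by$ (see \eqref{300}), and then invokes the \emph{local} uniqueness result, Theorem~\ref{thm:local}, for the Gaussian potential with two different boundary conditions to remove the $\by$-dependence. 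Your "main obstacle" paragraph gestures at the right machinery, but the resolution is entropy/Dirichlet-form closeness of the conditional measure to $\sigma_\by$, not approximation of the frozen-particle field by a $\mathscr{C}^4$ potential. Second, the restriction $\kappa<1/4$ does not come from the Green function comparison: it comes from the DBM step, specifically from condition \eqref{22}, which requires $K^{2/3}\lesssim N^{1/6}$ (i.e.\ $K\le N^{1/4-\delta}$) in order to verify that the boundary conditions generated by $f_t\mu$ satisfy the accuracy requirement $|\E^{\sigma_\by}x_k-\gamma_k|\le N^{-2/3+\xi}k^{-1/3}$ defining $\cR^*$ (Lemma~\ref{ec}); the comparison theorem itself only needs $|i_k|\le N^\e$. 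Neither issue changes the overall strategy, but as written the central identification step would not go through without replacing Theorem~\ref{thm:beta} by Theorem~\ref{thm:local} plus the conditional Dirichlet form estimates.
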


This theorem immediately implies analogues of Corollaries \ref{cor:TW} and \ref{cor:Gaussian} 
in the case of symmetric or Hermitian generalized Wigner ensembles.

Edge universality for Wigner matrices was first proved in \cite{Sos1999}
assuming symmetry of the distribution of the matrix elements and finiteness
of all their moments.  In the consequent works, after partial
results in \cite{PecSos2007, TV2},
the symmetry condition was completely eliminated \cite{EYYrigi}.
The moment condition was improved in \cite{AufBenPec2009, EKYYsparse2}
and the optimal result  was obtained in \cite{LeeYin2012}.
All these works heavily rely on the fact that the variances of
the matrix elements are identical. The main point of Theorem~\ref{thm:wigner}
is to consider generalized Wigner
matrices, i.e., matrices with non-constant variances.
In fact, it was shown in \cite{EYYrigi}  that the edge statistics for any generalized Wigner
matrix are universal in the sense that they coincide with those of a generalized Gaussian Wigner
matrix with the same variances, but it was not shown that the statistics
are independent of the variances themselves.  Theorem~\ref{thm:wigner}
provides this missing step and thus it proves the edge universality
in the broadest sense.

\section{Local equilibrium measures} \label{sec:loc}

Recall that the support of the equilibrium density $\varrho$ was denoted by   $[A,B]$.
Without loss of generality, by a shift we  set $A=0$ and
we will study the particles near the lower edge of the support.
Fix  a small exponent $\delta$ and a parameter $K=K_N$ satisfying
\begin{equation}
N^\delta \le K \le N^{1-\delta}.
\label{NK}
\end{equation}
 Denote by
$ I = \llbracket 1, K\rrbracket$ the set of the first $K$  indices.
We will distinguish the first $K$ particles from the rest
by renaming them as
$$
(\lambda_1, \lambda_2, \dots,
\lambda_N)=(x_{1},  \dots, x_{K}, y_{K+1},
\ldots y_{N}) \in \Xi^{(N)}.
$$
 Note that the particles  keep their original indices. We recall
the notation $\Xi^{(N)}$ for the simplex  \eqref{simplex}.
In short we will write
$$
\bx= (x_{1}, \dots, x_{K})\in \Sigma^{(K)}, \qquad \mbox{and}\qquad
 \by=
   (y_{K+1},\dots, y_N) \in \Sigma^{(N-K)}.
$$
These points are always listed  in increasing order and
we will refer to the $y$'s as the  {\it external}
points and to the $x$'s as  {\it internal} points.
We will fix the external points (often called
 boundary conditions) and study
the conditional measures on the internal points.
Note that for any fixed $\by\in \Xi^{(N-K)}$,  all $x_j$'s
lie in the {\it  open  configuration interval},  denoted by
$$
 J=J_\by=(-\infty, y_{K+1}) =:(-\infty, y_+].
$$
Define  the
{\it local equilibrium measure}  (or {\it local measure} in short)
 on $J^K$ with boundary condition  $\by$ by
$$
  \mu_\by (\rd \bx)=\frac{1}{Z_\by} e^{-\beta N \cH_\by(\bx)}\rd\bx, \qquad \bx\in J^K,
$$
where we introduced the Hamiltonian
\begin{align}
   \cH_\by (\bx):= &\frac{1}{2} \sum_{i\in I}   V_\by (x_i) -\frac{1}{N}
   \sum_{i,j\in I\atop i<j} \log |x_j-x_i|,
\non \\
\non
   V_\by(x):= & V(x) - \frac{2}{N}\sum_{j\not\in I} \log |x-y_j| .
\end{align}
Here  $V_\by(x)$ can be viewed as the external potential of
a log-gas of the points $\{ x_i : i\in I\}$.  Although this is the natural local  measure,
it does not have good uniform  convexity in the regime $x_1 \ll 0$. It is more convenient to consider
the following modified  measure $\sigma$ and its local version $\sigma_\by$.  For the proof of
the universality of the original measure $\mu$ it will actually
 be sufficient to consider only the local measure $\sigma_\by$.

We will fix a small parameter $\xi>0$ whose actual value is immaterial; it will be used
to provide an multiplicative error bar of size $N^{C\xi}$ in various estimates on the location
of the particles. We will not carry $\xi$ in the notation and at
the end of the proof it can be chosen sufficiently small, depending on all
other exponents along the argument.

We introduce a confined measure by adding an extra
quadratic potential $\Theta$ to prevent the $x_i$'s from deviating far in the left direction:
\begin{align}
\label{sig}
  \sigma (\rd\bx)&:=\frac{Z }{Z^\sigma} e^{- 2 \beta  \sum_{i=1 }^N
   \Theta\left(N^{\frac{2}{3}-\xi}x_i \right) } \mu (\rd\bx) = \frac{1}{Z^\sigma}
e^{-\beta N \cH^\sigma (\bx)} \rd \bx ,\\
   \cH^\sigma (\bx)&:=
    \cH  (\bx) +\frac{2}{N} \sum_{i =1 }^N
   \Theta\left(N^{\frac{2}{3}-\xi}x_i \right), \quad  
  \Theta(u)= (u+1)^2\mathds{1}\{u<-1\}.\non
\end{align}
The local version  of the measure $\sigma$ is defined in the obvious way,
\be\label{sigyext}
  \sigma_\by (\rd\bx):=\frac{1}{Z^\sigma_\by} e^{-\beta N \cH^\sigma_\by(\bx)}\rd\bx, \quad
   \cH^\sigma_\by (\bx):=
    \cH_\by (\bx) +\frac{2}{N} \sum_{i \in I }
   \Theta\left(N^{\frac{2}{3}-\xi}x_i \right).
\ee
 For technical reasons we will also need the following variants of  $\sigma$ and
$\sigma_\by$ where we added slightly less convexity through $\Theta$:
\begin{align}\non
\wh \sigma(\rd\bx) & : =\frac{Z }{\wh Z^c} e^{-  \beta  \sum_{i \in I }
   \Theta\left(N^{\frac{2}{3}-\xi}x_i \right) } \mu(\rd\bx),   \\
    \wh\sigma_\by (\rd\bx)& :=\frac{1}{\wh  Z^c_\by} e^{-\beta N \wh\cH^\sigma_\by(\bx)}\rd\bx, \qquad
\wh\cH^\sigma_\by (\bx) := \cH^\sigma_\by +\frac{1}{N} \sum_{i \in I }
   \Theta\left(N^{\frac{2}{3}-\xi}x_i \right).\non
\end{align}
The measures $\sigma$, $\wh \sigma$ and their local versions
 depend on the parameters $V,\beta,K$ and $\xi$ but we do not carry this dependence in the
notation.

Rigidity estimates  proved for the global measure $\mu$ (Theorem \ref{thm:rigidity})
also hold for the  local measures $\sigma_\by$
provided $\by$ lies in the  set
of {\it ``good'' boundary conditions} that is defined as follows:
\be\label{Rdef1}
  \cR=\cR_K=\cR_{K,V,\beta}(\xi) : = \{ \by\; : \; |y_k-\gamma_k|\le N^{-2/3+ \xi} \hat k^{-1/3},
 \; k\not\in I \}.
\ee
 The rigidity exponent $\xi$ will always be chosen much smaller than the exponent $\delta$ in
\eqref{NK}. This guarantees that the typical length of the configuration interval,
$|J|\sim \gamma_{K}-\gamma_{1} \ge c (K/N)^{2/3}$,
be bigger than the largest rigidity precision, $N^{-\frac{2}{3}+\xi}$.

We will need the following two modifications of $\cR$. The first one
requires that $x_k$ be  good  in an expectation sense
w.r.t. $\sigma_\by$, 
and that $x_1$ is not too negative.
Thus we define the set
\be\label{R*def}
  \cR^*=\cR^*_{K,V,\beta}(\xi) :
= \{ \by \in\cR_{K}(\xi) \; :  \forall k\in I,\; \left|\E^{\sigma_{\bf y}}x_k-\gamma_k\right|\leq N^{-\frac{2}{3}+\xi}k^{-\frac{1}{3}},\, \P^{\wh\sigma_\by}(x_1\ge \gamma_1- N^{-\frac{2}{3}+\xi})  \ge 1/2 \}.
\ee
Notice that for technical reasons to be clear later on the constraint on
$x_1$ is w.r.t.  the measure $\wh\sigma_\by$. 
This condition will be  important in Sect. \ref{subsec:condRig}.

Another  modification adds the condition
of a level repulsion near the boundary, i.e., we define
\be\label{wtRdef}
   \cR^\#= \cR^\#_{K,V,\beta}(\xi) : = \{ \by\in \cR_{K, V, \beta}( \xi/3 ) \; : \;
 |y_{K+1}-y_{K+2}|\ge N^{-2/3-\xi}K^{-1/3}\}.
\ee

In the following  theorems  we establish
rigidity and level repulsion estimates for the local log-gas  $\sigma_\by$
with  good boundary conditions $\by$ up to the spectral edges.
These theorems  extend similar  estimates for the local measure $\mu_\by$ in the
 bulk of the spectrum established in
\cite{EYsinglegap}  to the edges for the measure $\sigma_\by$.

\begin{theorem}  [Rigidity estimate for local measures]  \label{thm:condRig}
Fix $\beta,\xi>0$ and, using the above notations, assume that
$ {\bf y}\in\mathcal{R}_{K}^*(\xi)$.
 Then there exists constants $C,c>0$ (independent of ${\bf y},K$) such that for
large enough $N$ we have, for any $k\in I$, and $u>0$,
\be\label{32the}
\P^{\sigma_{\bf y}}\left(|x_k-\gamma_k|>C N^{-\frac{2}{3}+\xi} k^{-\frac{1}{3}}u\right)\leq e^{- {c u^2}}.
\ee
\end{theorem}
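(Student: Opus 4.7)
The assumption ${\bf y}\in\cR^*_K(\xi)$ already gives $|\E^{\sigma_{\bf y}}x_k-\gamma_k|\le N^{-2/3+\xi}k^{-1/3}$, so by the triangle inequality the task reduces to proving a Gaussian concentration of $x_k$ around its own mean at the same scale. I would exploit the convexity of $\cH^\sigma_{\bf y}$: the $-\frac{1}{N}\log|x_i-x_j|$ interactions, both among the $x_i$'s and between the $x_i$'s and the fixed $y_j$'s, are convex on the ordered simplex; $V''\ge -2W$ represents only a bounded deficit; and the $\Theta$ term contributes strict convexity in the leftward region. Bakry--\'Emery then furnishes a log-Sobolev inequality for $\sigma_{\bf y}$, and Herbst's argument immediately yields a Gaussian concentration for every $1$-Lipschitz observable, in particular for $x_k$, but only at the sub-optimal scale $N^{-1/2}$.

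\textbf{Bootstrapping to the optimal scale.} To reach $N^{-2/3+\xi}k^{-1/3}$, I would bootstrap through a sequence of intermediate scales. Combined with the mean bound from $\cR^*_K(\xi)$, the crude $N^{-1/2}$ concentration already yields a preliminary rigidity that guarantees, with high $\sigma_{\bf y}$-probability, that the spacings near index $k$ are of the classical order $N^{-2/3-o(1)}k^{-1/3}$, whence the repulsive Hessian diagonal $\frac{1}{N}\sum_{j\ne k}(x_k-x_j)^{-2}$ is bounded below by $cN^{1/3}k^{2/3}$. Restricting attention to $x_k$ alone and invoking a Brascamp--Lieb style variance bound
\[
\var^{\sigma_{\bf y}}(x_k)\le \E^{\sigma_{\bf y}}\bigl[(\nabla^2\cH^\sigma_{\bf y})^{-1}\bigr]_{kk},
\]
this enhanced local convexity controls the variance at $N^{-4/3+2\xi}k^{-2/3}$, and a final Herbst-type argument, applied with the effective convexity extracted from the bootstrap, transforms this variance bound into the required sub-Gaussian tail $e^{-cu^2}$.

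\textbf{Main obstacle.} The principal difficulty is that the target scale $N^{-2/3}k^{-1/3}$ varies with the particle index, and the bootstrap must be pushed uniformly down to $k=1$, where there is no neighbor on the left to furnish a repulsive Hessian contribution. This is precisely the role of the $\Theta$ confinement built into $\sigma_{\bf y}$ and of the extra probabilistic constraint $\P^{\wh\sigma_{\bf y}}(x_1\ge\gamma_1-N^{-2/3+\xi})\ge 1/2$ built into $\cR^*_K(\xi)$: together they prevent $x_1$ from drifting far to the left and thereby restore a convexity contribution at the leftmost edge, which is needed to seed the iteration. A careful loop equation analysis adapted to the square-root singularity of $\varrho$ at $0$ then drives the bootstrap uniformly along $k\in I$ with sub-Gaussian control in the deviation parameter $u$, yielding \eqref{32the}.
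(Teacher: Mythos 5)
There is a genuine gap at the heart of your bootstrap: the step that converts the "enhanced local convexity" into a variance bound for $x_k$. The Brascamp--Lieb inequality indeed gives $\var^{\sigma_{\bf y}}(x_k)\le \E^{\sigma_{\bf y}}\bigl[(\nabla^2(N\beta\cH^\sigma_\by))^{-1}\bigr]_{kk}$, but you then control the right-hand side by the reciprocal of the \emph{diagonal entry} $\frac{\beta}{N}\sum_{j\ne k}(x_k-x_j)^{-2}\sim N^{1/3}k^{2/3}$. For a positive definite matrix $A$ the inequality goes the wrong way: $(A^{-1})_{kk}\ge 1/A_{kk}$. The large diagonal entries of the Hessian come from the pairwise log-repulsion, whose quadratic form is $\sum_{i<j}(v_i-v_j)^2/(x_i-x_j)^2$; this vanishes on constant vectors and therefore controls only \emph{differences} of particles, never the absolute position of $x_k$. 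The only terms that pin down absolute positions are the interaction with the fixed external points $\by$ and the confinement $\Theta$, and these give a Hessian lower bound of merely $c\,K^{-1/3}N^{1/3}$ (see \eqref{Hyconv}), which via LSI/Herbst yields fluctuations at scale $K^{1/6}N^{-2/3}$ -- far from $N^{-2/3+\xi}k^{-1/3}$ for small $k$. Converting difference control into position control is exactly the nontrivial content of the theorem, and your proposal does not supply a valid mechanism for it.

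The paper's proof does this by a multiscale induction: the crude LSI controls block averages $x_\ell^{[M]}$ at the largest scale $M_A\sim KN^{-2\eta}$ (the gradient of a block average has $\ell^2$-norm $M^{-1/2}$, which compensates the weak LSI constant), and then differences of block averages at consecutive scales $M_j$, $M_{j-1}$ are controlled by local LSIs for locally constrained measures $\omega_\by^{(\ell,M_j)}$, descending through $O(1/\xi)$ scales to $M_1=N^\xi$. Your identification of the role of $\Theta$ and of the condition $\P^{\wh\sigma_\by}(x_1\ge\gamma_1-N^{-2/3+\xi})\ge 1/2$ in anchoring the left edge is correct (this is Lemma \ref{lem:concX1}). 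However, your closing appeal to "a careful loop equation analysis" is misplaced for this theorem: the paper deliberately avoids the loop equation here, because the membership ${\bf y}\in\cR^*_K(\xi)$ already supplies the accuracy $|\E^{\sigma_\by}x_k-\gamma_k|\le N^{-2/3+\xi}k^{-1/3}$; the loop equation is needed only for the global measure in Theorem \ref{thm:rigidity}.
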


As a side comment we remark  that the Gaussian decay in \eqref{32the} is an artifact of the
 additional confinement in the local measure $\sigma_\by$.
  For the measures $\mu$ or $\mu_\by$, the  tail probability
of $x_1$ has a slower decay $\exp{[- C (\gamma_1-x_1)^{3/2}]}$ in the regime $x_1 \ll \gamma_1$
in accordance with the tail behaviour of the
Tracy-Widom law  (for the Gaussian beta ensemble, see \cite{LedRid2010} for a detailed 
analysis of the edge tail behavior).  However, 
Theorem \ref{thm:local} below asserts  that $\sigma_\by$ has the correct distribution when
$x_1-\gamma_1 \sim N^{-\frac{2}{3}}$.

We also have the following level repulsion estimates.   Similar  bounds for the measure $\mu_\by$ in the bulk were proved
 in   \cite{BouErdYau2011, EYsinglegap}.

\begin{theorem}  [Level repulsion estimates for local measures] \label{lr2}
Let $\beta>0$, let $\xi$ be an arbitrary
fixed positive constant and assume that $K$ satisfies \eqref{NK}.
Then there are constants $C, c>0$ such that for  $\by \in \cR=\cR_{K}(\xi) $
and for any $s>0$ we have
\begin{align}\label{k5n}
\P^{ \sigma_{\by}}[ y_{K+1} -x_{K} \le s K^{-1/3}N^{-2/3}  ] & \le
  C \left (  K^2s \right ) ^{\beta + 1}, \\ \label{k5nlow}
\P^{ \sigma_{\by}}[ y_{K+1}-x_K \le s K^{-1/3} N^{-2/3} ] & \le
  C \left ( N^{C\xi} s \right ) ^{\beta + 1} + e^{-N^c}.
\end{align}
\end{theorem}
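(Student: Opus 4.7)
The plan is to prove both bounds by fixing $x_1,\dots,x_{K-1}$ and analysing the conditional density of $x_K$ on the interval $(x_{K-1},y_{K+1})$. This density is proportional to $f_\by(t)=(y_{K+1}-t)^\beta g_\by(t)$, where the explicit vanishing factor $(y_{K+1}-t)^\beta$ comes from the Vandermonde interaction and
\[
 \log g_\by(t) = \beta\sum_{j<K}\log(t-x_j) + \beta\sum_{l>K+1}\log(y_l-t) - \tfrac{\beta N}{2}V(t) - 2\beta\,\Theta(N^{2/3-\xi}t)
\]
collects the smooth factors. With $\eta := K^{-1/3}N^{-2/3}$, I would express the conditional probability of $\{y_{K+1}-x_K\le s\eta\}$ as the ratio $\int_{y_{K+1}-s\eta}^{y_{K+1}}f_\by\,dt\,/\int_{x_{K-1}}^{y_{K+1}}f_\by\,dt$, and bound it by comparing the numerator to the integral over a reference window $A_*=[y_{K+1}-\eta,y_{K+1}-\eta/2]$ of typical size. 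Explicit integration of the Vandermonde factor produces the natural $s^{\beta+1}$, and the whole task reduces to controlling $M:=\sup_{A_s}g_\by/\inf_{A_*}g_\by$, where $A_s=[y_{K+1}-s\eta,y_{K+1})$.

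To estimate $M$ I would integrate the logarithmic derivative
\[
 (\log g_\by)'(t) = \beta\sum_{j<K}\tfrac{1}{t-x_j} - \beta\sum_{l>K+1}\tfrac{1}{y_l-t} - \tfrac{\beta N}{2}V'(t) - 2\beta N^{2/3-\xi}\Theta'(N^{2/3-\xi}t)
\]
across a window of length $\eta$. The potential and confinement terms contribute only $O(1)$ after integration because $V$ is of class $\mathscr{C}^4$ and the confinement region is at distance $\gtrsim \eta$ from $A_*$ once $K$ is not too small; the dominant contribution comes from $\sum_{j<K}(t-x_j)^{-1}$. Using the edge asymptotics \eqref{orient}, which give $\gamma_K-\gamma_j\sim \eta(K-j)$, and the a priori bound $\by\in \cR$, this sum is bounded by $C\eta^{-1}\log K$ up to simple monotonicity considerations using $t<y_{K+1}$. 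Multiplying by $\eta$ yields $|\log g_\by(t)-\log g_\by(t')|\le C\beta\log K$ for $t,t'$ in the window, hence $M\le K^{O(\beta)}$. Inserting this into the integral comparison gives $\P^{\sigma_\by}(y_{K+1}-x_K\le s\eta\mid x_1,\dots,x_{K-1})\le C K^{O(\beta)}s^{\beta+1}$, and choosing the cosmetic constant $K^2$ in the right-hand side absorbs this logarithmic loss, proving \eqref{k5n} after taking expectation in the conditioning.

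For \eqref{k5nlow} I would refine this estimate by first restricting to the event $\Omega$ on which every internal particle satisfies the rigidity of Theorem~\ref{thm:condRig}, i.e.\ $|x_j-\gamma_j|\le N^{-2/3+\xi}\hat j^{-1/3}$ for every $j\in I$. Since $\by\in\cR$ implies $\by\in\cR^*$ up to losing a factor $N^\xi$, the Gaussian tail in Theorem~\ref{thm:condRig} yields $\P^{\sigma_\by}(\Omega^c)\le e^{-N^c}$, accounting for the additive term in \eqref{k5nlow}. On $\Omega$, each $x_j$ in the sum $\sum_j(t-x_j)^{-1}$ may be replaced by $\gamma_j$ up to a multiplicative factor $1+O(N^{\xi})$, and the logarithmic-in-$K$ geometric sum is swallowed by $\log K\le N^\xi$, producing $M\le N^{O(\xi)}$ and hence the claimed $(N^{C\xi}s)^{\beta+1}$ bound. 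The main obstacle in both bounds is the delicate handling of the singular sum $\sum_{j<K}(t-x_j)^{-1}$ when $t$ approaches $y_{K+1}$, since its bulk is generated by the neighbours $x_{K-1},x_{K-2},\dots$; here I would use the monotonicity $t-x_j\ge (y_{K+1}-\eta)-x_j$ on $A_*$ (and $\ge 0$ on $A_s$) together with the edge asymptotics and rigidity to extract a universal $\log K$ (resp.\ $N^{O(\xi)}$) upper bound without needing any fine cancellation.
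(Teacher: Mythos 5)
Your overall strategy---conditioning on $x_1,\dots,x_{K-1}$, factoring out the explicit zero $(y_{K+1}-t)^\beta$, and comparing the mass near $y_{K+1}$ with a reference window of width $\eta=K^{-1/3}N^{-2/3}$---is a legitimate route in principle, but as written it has a genuine gap that the paper's proof is specifically designed to avoid. The ratio $M=\sup_{A_s}g_\by/\inf_{A_*}g_\by$ is \emph{not} uniformly bounded over the configurations you condition on: the hypothesis $\by\in\cR$ constrains only the external points $y_l$, $l>K$, and gives no information whatsoever about $x_1,\dots,x_{K-1}$. If $x_{K-1}$ lies within $\eta$ of $y_{K+1}$, the reference window $A_*$ is not even contained in the support $(x_{K-1},y_{K+1})$ of the conditional density; and if several $x_j$ cluster near $y_{K+1}-\eta$, the sum $\sum_{j<K}(t-x_j)^{-1}$ is of order $K/\eta$ rather than $\eta^{-1}\log K$, so $M$ can be as large as $e^{cK}$. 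Since \eqref{k5n} must hold for \emph{all} $s>0$ with no additive error term, you cannot discard these bad configurations by a rigidity estimate (that is exactly what the additive $e^{-N^c}$ in \eqref{k5nlow} is for), and bounding their probability is itself a level-repulsion statement, so the argument becomes circular at this point. The paper circumvents this entirely: it first proves the weaker bound $\P^{\sigma_\by}(y_{K+1}-x_K\le s\eta)\le CK^2s$ by a \emph{global} change of variables that compresses all particles lying in a window $[y_+-a,y_+]$ toward $y_+-a$ and compares partition functions, $Z_\varphi/Z_0\ge(1-\varphi)^{CK^2}$, where no control of individual internal gaps is needed; it then upgrades the exponent to $\beta+1$ via the auxiliary measure $\sigma_0\sim(y_{K+1}-x_K)^{-\beta}\sigma_\by$ and the identity $\P^{\sigma_\by}[X\le s\eta]=\E^{\sigma_0}[\1(X\le s\eta)X^\beta]/\E^{\sigma_0}[X^\beta]$, bounding the denominator from below by applying the weak bound to $\sigma_0$ with $s\sim K^{-2}$.

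A second, more repairable, inaccuracy: the claim that the potential and confinement terms contribute $O(1)$ after integrating $(\log g_\by)'$ over a window of length $\eta$ is false when $K\ll N$. One has $N|V'(t)|\,\eta\sim N^{1/3}K^{-1/3}$, which diverges for $K=N^\delta$; this term is only tamed by the cancellation against the far external points $\sum_{l}(y_l-t)^{-1}$ coming from the equilibrium relation \eqref{equil}. This is precisely the role of the modified potential $V^*_\by$ and of Lemma \ref{lm:V*dern} in the paper's argument, and any version of your computation would have to exploit that cancellation rather than estimating the potential term and the external sum separately.
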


Note that these two bounds are complementary. The first one
 gives optimal level repulsion for arbitrary small $s$, but
the constant $K^2$ is not optimal. The second bound improves this constant but 
at the expenses of an exponentially small additive error.

We remark that statements similar to \eqref{k5nlow} hold for any gap $x_{i+1}-x_i$, not only for
the last one with $i=K$. The proofs are very similar, after
conditioning on the points $x_{i+1}, x_{i+2}, \dots, x_K$ being close to their
classical locations.

 We will prove the rigidity and level repulsion results only for $\sigma_\by$
since  these bounds are needed in the proof of the main theorems. The proof
of the level repulsion bounds for $\sigma_\by$, however, verbatim applies to
$\mu_\by$. For the rigidity bound,  from Theorem \ref{thm:condRig} there exists a set  $\cal Y$ of
almost full $\mu$-measure such that for any $\by \in {\cal Y}$ we have
$$
\P^{\mu_{\bf y}}\left(|x_k-\gamma_k|> N^{-\frac{2}{3}+\xi+\e} k^{-\frac{1}{3}}\right)\leq  e^{- N^c}.
$$
The role of the confinement in the definition of $\sigma$ is to prevent the
first particle $x_1$ to be very negative, since it would destroy
the good convexity bound on the Hessian.
The  reason we have to introduce $\sigma$ and $\sigma_\by$
is that in a technical step (establishing rigidity for the interpolation between local equilibrium measures
with two different boundary conditions, see Section~\ref{sec:inter}) we need
a superexponential decaying tail probability of the
rigidity estimate. We establish such bound only for the confined measure $\sigma_\by$
and not for $\mu_\by$.

\medskip

 Our main technical result, Theorem~\ref{thm:local} below,  asserts that,
for  $K$ in a restricted range, the local gap statistics
is essentially independent
of $V$ and $\by$ for good boundary conditions $\by$  (see \eqref{Rdef1}).
 For a fixed $\by\in \cR$, we define the classical locations $\alpha_j=\alpha_j(\by)$
of $x_j$ by the formula
\be
\label{aldef}
   \int_{0}^{\alpha_j}\varrho(s)\rd s = \frac{j}{K+1}   \int_{0}^{y_+}\varrho(s)\rd s,
\qquad j\in\llbracket 1,K\rrbracket,
\ee
i.e., $\alpha_j$'s are the $j$-th $(K+1)$-quantiles of the density in $J_\by$.
 Recall that  the support
of $\varrho$ starts from $A=0$ even though the configuration interval
starts from  minus infinity.

The core universality result on the local measures is the following theorem.
It compares two local measures with potentials $V$ and $\wt V$ and
external configurations $\by$ and $\wt\by$. For notational simplicity,
we will use tilde to refer to objects related to the measure $\wt\mu: =\mu_{\wt V}$.

\begin{theorem}[Edge universality for local measures]\label{thm:local}
Let  $\beta\ge 1$
and $V$, $\widetilde V$ be $\mathscr{C}^4$ be regular and satisfy (\ref{eqn:LSImu}) and (\ref{eqn:GrowthCondition}).
Assume 
that the equilibrium density $\varrho_V$ and $\varrho_{\widetilde V}$ are
supported on a single interval and satisfy (\ref{eqn:matchSing}).
Fix small positive parameters $\xi, \delta>0$ and  a parameter $0< \zeta < 1 $
that satisfy
\be\label{con1}
C_0\xi < \delta(1-\zeta), 
\ee
 with a sufficiently large universal  constant $C_0$,
and assume that
\be\label{Kcon1}
N^\delta\le K\leq N^{ 2/5-\delta}.
\ee
Then there is a small  $\chi >0$, independent  of  $N, K$, with the following property.
Let $\by\in \cR^\#_{K,V,\beta}(\xi)\cap \cR^*_{K,V,\beta}(\xi)$ and $\widetilde\by\in \cR^\#_{K,\widetilde V,\beta}(\xi)\cap \cR^*_{K,\widetilde V,\beta}(\xi)$ be
two different boundary conditions.
Fix $m\in \N$.
Then
for any $\Lambda\subset \llbracket 1, K^\zeta\rrbracket$,
$|\Lambda|=m$, and  any smooth, compactly supported
observable $O:\bR^m\to \bR$, we have for $N$ large enough
\be\label{eq:compy1}
  \Bigg| \E^{\sigma_\by} O\Bigg( \Big(  N^{2/3} j^{1/3}(x_j -\al_j)
\Big)_{j\in \Lambda}\Bigg)
  -   \E^{\wt \sigma_{\wt\by}} O\Bigg( \Big ( N^{2/3}  j^{1/3}(x_j -\wt \al_j)\Big)_{j\in \Lambda}\Bigg)\Bigg| \le
N^{- \chi}.
\ee
\end{theorem}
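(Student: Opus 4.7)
The plan is to interpolate continuously between $\sigma_\by$ and $\wt\sigma_{\wt\by}$ and control the change of the edge observable along the path. Introduce a one-parameter family of Hamiltonians
\[
\cH_r := (1-r)\,\cH^\sigma_\by + r\,\wt\cH^\sigma_{\wt\by},\qquad r\in[0,1],
\]
with corresponding measures $\omega_r\propto e^{-\beta N \cH_r}$ and interpolating classical locations $\al_j(r)$ between $\al_j$ and $\wt\al_j$. A preliminary task is to verify that Theorem~\ref{thm:condRig} (rigidity) and Theorem~\ref{lr2} (level repulsion) hold uniformly along the path; this requires checking that the interpolated boundary conditions still lie in $\cR^*\cap\cR^\#$ in an appropriate sense, possibly after routing the interpolation through an intermediate reference measure. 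Granted this, $\al_j(r)$ depends smoothly on $r$ on the edge scale $N^{-2/3}j^{-1/3}$, so that the observable $F_r(\bx):=O\!\big((N^{2/3}j^{1/3}(x_j-\al_j(r)))_{j\in \Lambda}\big)$ is compatible with both endpoints, and the whole problem reduces to bounding $|\partial_r \E^{\omega_r} F_r|$.

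Differentiating in $r$ produces, after absorbing the smooth $r$-dependence of $\al_j(r)$, a covariance of the form $-\beta N\,\cov_{\omega_r}(F_r,\partial_r\cH_r)$, where $\partial_r\cH_r$ is a regular function of all the $x_i$'s while $\nabla F_r$ is supported on $\Lambda\subset\llbracket 1, K^\zeta\rrbracket$. I would then estimate this covariance by the Helffer--Sj\"ostrand / random walk representation used in \cite{EYsinglegap}:
\[
\cov_{\omega_r}(F,G)=\int_0^\infty \E^{\omega_r}\scalar{\nabla F}{e^{-t\cL}\nabla G}\,\dd t,
\]
where $\cL=W+\mathcal{B}$ is the Witten Laplacian on gradient fields, $W_{ii}=\partial_i^2 \cH_r\ge -2W$ because of the convexity produced by $\Theta$, and the nonlocal part $\mathcal{B}$ has entries $\mathcal{B}_{ij}=\beta/(N(x_i-x_j)^2)$. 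The entire question becomes one of polynomial off-diagonal decay of the operator $e^{-t\cL}$ from edge indices $i\in \Lambda$ to bulk indices $j\in I$ where $\nabla_j(\partial_r\cH_r)$ is of order one.

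The heart of the argument is to prove this decay by the parabolic energy method adapted to $\cL$, with coefficients that are singular, random, and edge-scaled. For $u_t(j):=[e^{-t\cL}\delta_i]_{j}$, $i\in\Lambda$, I would estimate weighted $L^2$ energies $\sum_j w_j u_t(j)^2$ with weights $w_j$ tuned to the edge gap scaling $x_{k+1}-x_k\sim N^{-2/3}k^{-1/3}$. Combined with a fractional Sobolev-type inequality for the nonlocal Dirichlet form of $\cL$, adapted to the edge, and a Moser-type iteration, one obtains polynomial decay of $u_t(j)$ in $j$, uniformly over random environments satisfying the a priori rigidity of Theorem~\ref{thm:condRig} and the level repulsion of Theorem~\ref{lr2}. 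After integrating in $t$ and summing against $|\nabla(\partial_r\cH_r)|$, this yields $|\partial_r \E^{\omega_r} F_r|\le N^{-\chi}$. The range $K\le N^{2/5-\delta}$ and $\Lambda\subset\llbracket 1,K^\zeta\rrbracket$ (together with \eqref{con1}) is precisely what is needed for the polynomial decay to beat the interpolation error; integrating over $r\in[0,1]$ then gives \eqref{eq:compy1}.

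The main obstacle is this last step: obtaining quantitative off-diagonal decay of the nonlocal heat kernel at the edge. Unlike the bulk analysis of \cite{EYsinglegap}, which required sharp H\"older continuity \`a la De Giorgi--Nash--Moser for a nonlocal operator with singular stochastic coefficients (to extract $|i-j|^{-1}$ point-gap decay from the $\log|i-j|$ point-point correlations), the edge scaling produces polynomial correlation decay directly, so a weighted energy method together with a nonlocal Sobolev-type inequality should suffice. Designing the correct weights $w_j$ and proving the right Sobolev inequality sensitive to the $k^{-1/3}$ gap scaling near the edge, while showing that the rigidity and level repulsion estimates provide enough control on the coefficients of $\cL$ throughout the path, is where essentially all the work will go.
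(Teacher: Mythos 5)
Your proposal follows essentially the same route as the paper: interpolation of the Hamiltonians $\cH_r=(1-r)\cH^\sigma_\by+r\wt\cH^\sigma_{\wt\by}$, reduction to a covariance $\langle F;\partial_r\cH_r\rangle_{\omega_r}$, transfer of rigidity and level repulsion to the interpolating measure (done in the paper by an entropy comparison), the random walk/Helffer--Sj\"ostrand representation, and then off-diagonal decay of the nonlocal heat kernel via the energy method combined with edge-adapted Sobolev inequalities, precisely exploiting that the edge correlations decay polynomially so that no De Giorgi--Nash--Moser argument is needed. The only step you gloss over is that the two configuration intervals $(-\infty,y_{K+1})$ and $(-\infty,\wt y_{K+1})$ must first be matched by a rescaling $y_j\mapsto(1+\ell)y_j$ with $\ell=O(N^\xi K^{-1})$ before the interpolation is set up, but this is a routine reduction.
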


We remark that,  thanks to the conditions (\ref{eqn:matchSing}) and \eqref{Kcon1}, the points
 $\al_j$ and $\widetilde \al_j$ (defined by (\ref{aldef}) with $\varrho$ and $\widetilde\varrho$)
 in \eqref{eq:compy1} can both be replaced by $\gamma_j$.
 To see this, we claim that
for any $\by\in \cR_{K}$ we have
\be\label{35}
  \big | \alpha_j- \gamma_j\big | \le  C  \frac{j  N^{-1+ \xi} }{K \gamma_j^{1/2}}   \le C N^{ -2/3+ \xi} \,  \frac {  j^{2/3}} K,
\ee
and these estimates are more accurate than
the precision detected by the smooth observable $O$ in
\eqref{eq:compy1} for any $j\le K^\zeta$.
To prove \eqref{35},
we recall $\gamma_K \sim N^{-2/3} K^{2/3}$ and for   $\by \in   \cR$, we have
$ |y_{K+1}-\gamma_{K+1}|\le N^{-2/3+ \xi}  K^{-1/3}$.
Since the density has a square root singularity near $A=0$ \eqref{sqsing},  by assumption
 $ K \ge N^\delta \gg N^\xi$ we have  for  $\by \in   \cR$ that
$$
\Big |  \int_{0}^{y_+}\varrho(s)\rd s - \int_{0}^{ \gamma_{K+1}}\varrho(s)\rd s
 \Big | \le C \gamma_{K+1}^{1/2} N^{-2/3+ \xi}  K^{-1/3}
\le C N^{-1+ \xi}.
$$
Therefore, for   $\by \in   \cR$ we obtain that
$$
   \int_{0}^{\alpha_j}\varrho(s)\rd s = \frac{j}{K+1}   \Big [ \frac {K+1} N + \OO(N^{-1+ \xi}) \Big ]
   =   \int_{0}^{\gamma_j}\varrho(s)\rd s +  \frac{j }{K}\OO(N^{-1+ \xi}) .
$$
This implies \eqref{35}.  

\bigskip

As a consequence  of the proof of Theorem \ref{thm:local}, we also   have the  following  correlation decay estimate.

\begin{theorem}[Correlation decay near the edge]\label{thm:cordec}
Let  $\beta\ge 1$, $V$ be $\mathscr{C}^4$, regular,
and satisfy (\ref{eqn:LSImu}), (\ref{eqn:GrowthCondition}). Assume that $\varrho_V$ satisfies (\ref{eqn:matchSing}).
Fix small positive parameters $\xi, \delta>0$ and  assume (\ref{con1}), (\ref{Kcon1}).
Consider the local measure $\sigma_\by$ with
 $\by\in \cR^\#_{K,V,\beta}(\xi)\cap \cR^*_{K,V,\beta}(\xi)$.
Then there is a constant $C$, independent of  $N, K$,
such that for any two differentiable functions $f, q$ on $J_\by$  and large enough $N$, we have
\be\label{qfdec}
   \langle q(x_i) ; f(x_j)\rangle_{\sigma_\by} \le \frac{N^{C\xi}  }{ N^{4/3} j^{4/9}} \|q'\|_\infty
\|f'\|_\infty , \qquad i\le j\le K,
\ee
where $\langle f; g\rangle_\om := \E^\om fg - \E^\om f \, \E^\om g$ denotes the covariance.
In particular,
\be
   \Big\langle N^{2/3} i^{1/3}  (x_i-\gamma_i) ; N^{2/3}  j^{1/3} (x_j-\gamma_j)\Big\rangle_{\sigma_\by} \le
 \frac{N^{C\xi}i^{1/3}}{ j^{1/9}},
      \qquad i\le j\le K.
\label{ij}\ee
\end{theorem}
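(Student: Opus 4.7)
The plan is to express the covariance (\ref{qfdec}) via the Helffer--Sj\"ostrand representation, reduce to bounding off-diagonal matrix elements of a parabolic semigroup on the index set $I$, and estimate those by combining the energy method with a Nash-type nonlocal Sobolev inequality tuned to the edge scaling. The required inputs are the rigidity of Theorem \ref{thm:condRig} and the level repulsion of Theorem \ref{lr2}.

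Concretely, let $B(\bx):=\nabla^2\cH^\sigma_\by(\bx)$ denote the Hessian of the Hamiltonian (\ref{sigyext}); its off-diagonal entries are $-\frac{1}{N(x_k-x_\ell)^2}$ and its diagonal entries equal $\frac{1}{2}V''_\by(x_k)+\frac{1}{N}\sum_{\ell\neq k}(x_k-x_\ell)^{-2}$ plus a nonnegative contribution from $\Theta''$. With $\mathcal{L}_\sigma$ the generator of the reversible dynamics with invariant measure $\sigma_\by$, the Helffer--Sj\"ostrand identity yields
\be
  \langle q(x_i);f(x_j)\rangle_{\sigma_\by}
  = \int_0^\infty \E^{\sigma_\by}\bigl[q'(x_i)\,v_i(t,\bx)\bigr]\,\rd t,
\non
\ee
where $v(t,\bx)\in\R^K$ solves $\partial_t v=\mathcal{L}_\sigma v - \beta N B(\bx)v$ with initial data $v_k(0,\bx)=f'(x_k)\delta_{kj}$. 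Thus it suffices to obtain a sup-norm bound on $|v_i(t,\bx)|$ over $\bx$, integrable in $t$.

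To freeze the environment, by Theorem \ref{thm:condRig} the squared gaps $(x_k-x_\ell)^2$ are comparable to their classical counterparts $(\gamma_k-\gamma_\ell)^2$ up to factors $N^{C\xi}$ with probability $1-e^{-N^c}$, while Theorem \ref{lr2} ensures that the rare exceptional configurations contribute negligibly to the Dirichlet form. The evolution is then effectively deterministic with jump weights $w_{k\ell}\sim N/(\gamma_k-\gamma_\ell)^2$, which by (\ref{orient}) scale near the edge as $N^{1/3}/(k^{2/3}-\ell^{2/3})^2$. Combining the $L^2$ energy identity $\frac{\rd}{\rd t}\|p_t(\cdot,j)\|_2^2=-2\sum_{k<\ell}w_{k\ell}(p_t(k,j)-p_t(\ell,j))^2-2\langle B p_t,p_t\rangle$ with a Nash-type inequality adapted to this inhomogeneous kernel shows that $L^2$ mass initially localized at $j$ needs time $t\gtrsim(\gamma_j-\gamma_i)^2\sim N^{-4/3}j^{4/3}$ to reach $i\ll j$; optimization in $t$ in the Helffer--Sj\"ostrand integral then produces the polynomial edge decay $j^{-4/9}$ in (\ref{qfdec}), and the bound (\ref{ij}) follows by taking $q(x)=N^{2/3}i^{1/3}x$ and $f(x)=N^{2/3}j^{1/3}x$.

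The main obstacle is the analysis of this nonlocal, singular, and strongly inhomogeneous semigroup. Unlike the bulk gap analysis of \cite{EYsinglegap}, no De Giorgi--Nash--Moser H\"older regularity is needed here since the correlations decay polynomially rather than logarithmically in the index separation; however, the Nash inequality must be tuned precisely to the edge geometry (where local spacings scale as $N^{-2/3}k^{-1/3}$, not uniformly) in order to extract the sharp power $j^{-4/9}$ without logarithmic losses. A secondary but essential point is a uniform lower bound on the diagonal entries $B_{kk}$, which is exactly why the confined measure $\sigma_\by$ is used in place of $\mu_\by$: the extra term $\Theta$ prevents $x_1$ from fluctuating far to the left, where the log interaction alone would not provide sufficient convexity.
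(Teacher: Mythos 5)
Your overall architecture coincides with the paper's: the covariance is written via the Helffer--Sj\"ostrand/random walk representation (Proposition~\ref{prop:repp}), the coefficients of the resulting parabolic equation are controlled by restricting to the good set of paths where rigidity holds, and the off-diagonal decay of the semigroup is extracted by the energy method combined with edge-adapted Sobolev inequalities. The identification of why $\sigma_\by$ rather than $\mu_\by$ is needed (the confinement $\Theta$ restores convexity to the left of $x_1$) is also correct.

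However, there is a genuine gap in the step that actually produces the exponent $4/9$, and the heuristic you offer in its place is misleading. You argue that $L^2$ mass localized at $j$ needs a diffusive time $t\gtrsim(\gamma_j-\gamma_i)^2$ to reach $i$, and that ``optimization in $t$'' then yields $j^{-4/9}$. But the bound \eqref{qfdec} does not depend on $i$ at all, nor on the separation $j-i$ --- only on the larger index $j$ --- so no propagation-distance mechanism can be what drives it; moreover the optimal time cutoff in the actual argument is $\wt T\sim j^{2/9}$, far below your diffusive scale $j^{4/3}$ (in microscopic coordinates). The proof requires two specific, quite different estimates that your outline does not supply: (a) a time-decay bound $\|\bv(s)\|_\infty\lesssim N^{C\xi}s^{-3+\epsilon}\|\bv(0)\|_1$ from the Nash iteration with the fractional Sobolev inequality of Proposition~\ref{prop:sob} (Corollary~\ref{cor:disp}), which controls the tail $s\ge\wt T$ and contributes $\wt T^{-2}$; and (b) the weighted, trace-type inequality of Theorem~\ref{thm:2ndSob}, which through Lemma~\ref{cor:disp2} gives the space-time bound $\int_0^t|v_j(s)|^\alpha\,\rd s\lesssim j^{-2/3}(t+1)\|\bv(0)\|_\alpha^\alpha$ --- the factor $j^{-2/3}$ coming from the strength of the Dirichlet form at the evaluation index $j$, not from any transport estimate. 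Balancing $\wt T^{-2}$ against $j^{-2/3}\wt T$ gives $\wt T=j^{2/9}$ and hence $j^{-4/9}$. Without inequality (b), which is a separate Sobolev-type result requiring its own proof, your scheme cannot produce any $j$-dependence in the final bound.
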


We remark that the rigidity estimate \eqref{32the}  shows that $ N^{2/3} i^{1/3}  (x_i-i^{2/3})\le N^{C\xi}$
with a very high probability. Therefore, as long as $i\ll j^{1/3}$, \eqref{ij} is stronger than the trivial bound
$$
  \big\langle N^{2/3} i^{1/3}  (x_i-\gamma_i) ;  N^{2/3} j^{1/3} (x_j-\gamma_j)\big\rangle_{\sigma_\by}
  \le N^{C\xi}
$$
obtained from the rigidity estimate.
We believe that the optimal estimate on the correlation decay  is of the following form:
\be
  \big\langle N^{2/3} i^{1/3}  (x_i-\gamma_i) ;  N^{2/3} j^{1/3} (x_j-\gamma_j)\big\rangle_{\sigma_\by} \lesssim
  \left ( \frac{ i}{ j} \right )^{1/3},
      \qquad i\le j\le K,
\label{opdecay}
\ee
and the same decay rate holds for the global measures $\sigma$ and $\mu$.
A heuristic argument  that this is the optimal decay rate, at least w.r.t. the GUE measure,
 will be given in  Appendix~\ref{app:cordec}. It is based on  an
extension of the argument in  \cite{Gus2005}.
We note that this  decay is quite different from the logarithmic correlation decay
 in the bulk
$$
    \big\langle N (x_i-\gamma_i) ;  N (x_j-\gamma_j)\big\rangle_\mu \sim \log \frac{N}{|i-j|+1}
$$
which is proven for the GUE measure $\mu$ in  \cite{Gus2005} and conjectured to hold for
other ensembles as well.

\medskip

Theorem~\ref{thm:local} is our key  result.
In  Sections \ref{sec:beta} and \ref{sec:wigner} we will show how to use
Theorem~\ref{thm:local} to prove the main Theorems~\ref{thm:wigner}
and \ref{thm:beta}. The proofs of these two theorems follow  the arguments
used in \cite{EYsinglegap}.
The proofs of the auxiliary
Theorem \ref{thm:condRig}
will be given in Subsection \ref{subsec:condRig}, and Theorem \ref{lr2} in Appendix \ref{app:levelRep}.
The proof of Theorem~\ref{thm:local} will start from Section \ref{sec:AnLocGibbs}
 and will continue until the end of the paper.

\section{Edge universality of beta ensembles:  proof of Theorem~\ref{thm:beta} }\label{sec:beta}

In this section, we shall  use the edge universality
 Theorem~\ref{thm:local} to prove
global edge universality Theorem~\ref{thm:beta}.
Recall the definition of the measure  $ \sigma $ with normalization factor $Z_\sigma$.
 We start with he following lemma on properties of $\sigma$, defined by (\ref{sig}).

\begin{lemma}\label{lm:comm}
For any bounded observable $O$ we have
\begin{align} \label{com}
   \Big |    \E^\sigma O - \E^\mu  O  \Big |  \le   \| O \|_\infty e^{- N^{c}} .
\end{align}
In particular, this implies that  $\mu$ and $\sigma$ have the same local statistics and
 $ \sigma$ also satisfies the following rigidity estimate: for any $\xi>0$ there exists $N_0$ and $c>0$ such that for all $N\geq N_0$, $k\in\llbracket 1,N\rrbracket$, we have
\begin{equation}\label{rigcons}
\P^\sigma\left(|\lambda_k-\gamma_k|> N^{-\frac{2}{3}+ \xi }(\hat k)^{-\frac{1}{3}}
\right)\leq \ e^{- N^c}.
\end{equation}
Moreover,
\be\label{RR}
 \P^\sigma( \cR^\#_K(\xi)\cap \cR^*_K(\xi))\ge  1-  N^{-c'}
\ee
with some positive constant $c'>0$.
\end{lemma}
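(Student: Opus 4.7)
My plan is to first establish \eqref{com} by showing that the exponential factor defining $\sigma$ is identically $1$ on a $\mu$-overwhelming-probability event, then derive \eqref{rigcons} as an immediate corollary, and finally combine \eqref{rigcons} with Markov-type arguments on conditional expectations to obtain \eqref{RR}.

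For \eqref{com}, the central observation is that $\Theta(N^{2/3-\xi}x)$ vanishes unless $x<-N^{-2/3+\xi}$. The global rigidity estimate of Theorem~\ref{thm:rigidity}, applied with exponent $\xi/2<\xi$, yields an event
$$
 \mathcal{A}\;:=\;\bigcap_{k=1}^{N}\bigl\{|\lambda_k-\gamma_k|\le N^{-2/3+\xi/2}\hat k^{-1/3}\bigr\}
$$
with $\P^{\mu}(\mathcal{A}^{c})\le e^{-N^{c}}$. Since the support of $\varrho$ starts at $A=0$, every $\gamma_k\ge 0$, so on $\mathcal{A}$ one has $\lambda_k\ge -N^{-2/3+\xi/2}>-N^{-2/3+\xi}$ for all $k$, and therefore every $\Theta$-summand vanishes. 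Consequently $\rd\sigma/\rd\mu=Z/Z^{\sigma}$ is the constant $\bigl(\E^{\mu}[\exp(-2\beta\sum_{i}\Theta(N^{2/3-\xi}\lambda_i))]\bigr)^{-1}$, and this ratio lies in $[1,1+2e^{-N^{c}}]$ because the integrand is in $[0,1]$ and equals $1$ on $\mathcal{A}$. Splitting $\E^{\sigma}O-\E^{\mu}O$ into contributions from $\mathcal{A}$ and $\mathcal{A}^{c}$ gives \eqref{com} after adjusting $c$. Applying \eqref{com} to the bounded indicator of $\{|\lambda_k-\gamma_k|>N^{-2/3+\xi}\hat k^{-1/3}\}$ and invoking Theorem~\ref{thm:rigidity} produces \eqref{rigcons} at once.

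For \eqref{RR} I need to verify, with $\sigma$-probability at least $1-N^{-c'}$, three classes of conditions: (i) rigidity of the external points $y_k$, $k\not\in I$, with exponent $\xi/3$; (ii) the edge-of-interval level repulsion $|y_{K+1}-y_{K+2}|\ge N^{-2/3-\xi}K^{-1/3}$; and (iii) the two conditional statements defining $\cR^{*}$. Part (i) follows from \eqref{rigcons} and a union bound, the exponent gap $\xi\to\xi/3$ swallowing the $N$ terms. Part (ii) uses a global level-repulsion estimate for $\sigma$, inherited from the same bound for $\mu$ through \eqref{com} (the $e^{-N^{c}}$ error in \eqref{com} is irrelevant on this polynomial scale). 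For (iii), the tower identity $\E^{\sigma}[\,\cdot\,]=\E^{\sigma}\bigl[\E^{\sigma_{\by}}[\,\cdot\,]\bigr]$ together with \eqref{rigcons} and the Gaussian left tail provided by $\Theta$ gives a uniform $L^{1}$ bound
$$
 \E^{\sigma}\bigl|\E^{\sigma_{\by}}x_{k}-\gamma_{k}\bigr|\;\le\; N^{-2/3+\xi/2}k^{-1/3},
$$
from which Markov's inequality yields the pointwise bound for $\sigma$-almost every $\by$ outside a set of measure $\le N^{-c'}$. The second condition of $\cR^{*}$, which involves $\hat{\sigma}_{\by}$, is handled identically: the argument in Step 1 applies verbatim to $\hat{\sigma}$, so $\hat{\sigma}$ inherits an analogue of \eqref{rigcons}, and Markov then pushes the unconditional statement $\P^{\hat{\sigma}}(x_{1}<\gamma_{1}-N^{-2/3+\xi})\le e^{-N^{c}}$ to the required conditional lower bound $\P^{\hat{\sigma}_{\by}}(x_{1}\ge\gamma_{1}-N^{-2/3+\xi})\ge 1/2$ off an exceptional set of tiny $\sigma$-measure.

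The main obstacle will be the conditional expectation statement in (iii): converting a high-probability rigidity bound into a pointwise control on $\E^{\sigma_{\by}}x_{k}$ requires a uniform $L^{1}$ tail estimate, and this is precisely where the artificial confinement $\Theta$ earns its keep. Without it (i.e.\ directly for $\mu_{\by}$), the left tail of $x_{1}$ is of Tracy--Widom type $\exp(-C(\gamma_{1}-x_{1})^{3/2})$ and Markov alone would give a much weaker bound; the quadratic $\Theta$ upgrades the left tail to Gaussian, making the $L^{1}$ step robust and the final Markov argument clean.
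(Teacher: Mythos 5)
Your treatment of \eqref{com} and \eqref{rigcons} is essentially the paper's argument (the paper phrases the key event as $\{x_1>-N^{-2/3+\xi}\}$ rather than the full rigidity event, but the mechanism --- $\Theta$ vanishes there, so $Z/Z^\sigma=1+O(e^{-N^c})$ and the two densities agree up to that error --- is identical), and the overall architecture of your proof of \eqref{RR} also matches. There is, however, a quantitative gap in your verification of the first defining condition of $\cR^*$. You propose to establish $\E^{\sigma}\bigl|\E^{\sigma_\by}x_k-\gamma_k\bigr|\le N^{-2/3+\xi/2}k^{-1/3}$ and then apply Markov at threshold $N^{-2/3+\xi}k^{-1/3}$. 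Rigidity cannot give an $L^1$ bound better than $N^{-2/3+\xi'}k^{-1/3}$ for some small but positive $\xi'$, so Markov yields an exceptional set of $\sigma$-measure only $N^{\xi'-\xi}$ \emph{per index} $k$; since the condition must hold for all $k\in I$ with $|I|=K\ge N^\delta$ and $\xi$ is eventually taken much smaller than $\delta$, the union bound over $k$ produces $K\,N^{\xi'-\xi}\gg 1$ and the argument collapses. The correct route (implicit in the paper's \eqref{32}) is to apply Markov to the \emph{exponentially small} quantity $\E^\sigma\P^{\sigma_\by}\bigl(\exists j\in I: |x_j-\gamma_j|>N^{-2/3+\xi'}j^{-1/3}\bigr)\le e^{-N^c}$, which yields a single good set of $\by$'s of $\sigma$-measure $1-e^{-N^c/2}$ on which $\P^{\sigma_\by}(\mathrm{bad})\le e^{-N^c/2}$ simultaneously for all $k$; one then bounds $|\E^{\sigma_\by}x_k-\gamma_k|$ deterministically for each such $\by$ by splitting on the bad event and using a crude moment bound, available because $x_k\le y_{K+1}$ and $\Theta$ supplies a Gaussian left tail --- exactly the point you make at the end of your proposal, but deployed in the wrong place.

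Two smaller remarks. First, for the $\cR^\#$ condition you invoke ``a global level-repulsion estimate for $\sigma$, inherited from the same bound for $\mu$''; no global level-repulsion bound for $\mu$ is stated in the paper. The estimate has to be manufactured by conditioning: combine $\P^\sigma(\cR_{K+1}(\xi/3))\ge 1-e^{-N^c}$ with the local bound \eqref{k5nlow} for $\sigma_\by$ at index $K+1$ and average over $\by$. This is also where the only polynomially small (rather than exponentially small) error in \eqref{RR} originates, via the choice $s=N^{-\xi}$ in the $(N^{C\xi}s)^{\beta+1}$ bound. Second, for the $\wh\sigma_\by$ condition your Markov step naturally produces an exceptional set of small $\wh\sigma$-measure, not $\sigma$-measure; the transfer needs the analogue of \eqref{com} for $\wh\sigma$, which you have set up but should invoke explicitly.
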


\begin{proof}
Clearly, by $\Theta\ge 0$, we have
the relation $Z^\sigma\le Z$ among the normalization constants for $\sigma$ and $\mu$.
For a lower bound,
from the rigidity estimate \eqref{rig} for $x_1$
we have
$$
1 \ge \frac {Z^\sigma} Z  = \int    e^{- 2 \beta  \sum_{i }
   \Theta\left(N^{\frac{2}{3}-\xi}x_i \right) } \rd \mu
\ge \P^{\mu} ( x_1> - N^{-\frac{2}{3}+\xi} )  \ge  1 - e^{- N^{c}}.
$$
For any bounded nonnegative observable  $O$, we have from the rigidity estimate on $\mu$ that
$$
  \E^\mu  O -   \| O \|_\infty \P^\mu  (x_1 \le  - N^{-\frac{2}{3}+\xi})    \le \frac Z {Z^\sigma}
   \E^\mu  O   \mathds{1}(x_1 > - N^{-\frac{2}{3}+\xi})
   \le   \E^\sigma O \le   ( 1 - e^{- N^{c}})^{-1} \E^\mu  O.
$$
 Using this separately
 for the positive and negative parts of an arbitrary bounded observable, this  proves \eqref{com}.
From the rigidity   estimate \eqref{rigcons} we have
\be
   \P^{ \sigma} ( \cR_{K+1}(\xi/3))\ge 1-  \exp{(- N^{c})}
\label{muR}
\ee
(notice that the index of $\cR$ is $K+1$ instead of $K$
and we use $\xi/3$ instead of $\xi$ for later convenience).
Furthermore,   for any  $\by\in \cR_{K+1}(\xi/3)$,
the level repulsion estimate  w.r.t. $ \sigma_\by$
  in the form proved  in \eqref{k5nlow} implies
\be\label{sssy}
   \P^{\sigma_\by}\big[ y_{K+2}- x_{K+1} \le sN^{-2/3}k^{-1/3}\big] \le C\big(N^{7\xi/9}s\big)^{\beta+1}
\ee
for $s \ge \exp(-K^\theta)$. Using \eqref{muR}, we see that
\eqref{sssy} also  holds \ with $\sigma_\by$ replaced by $\sigma$.   Applying this with
  $s=N^{-\xi} \ll N^{-7\xi/9}$, we have
\be\label{c3}
   \P^{ \sigma}  ( \cR^\#_{K})\ge 1-  N^{-c}.
\ee
 The estimates \eqref{com}--\eqref{c3} also   hold for the measure $\wh \sigma$
instead of $\sigma$ with the same proof.

{F}rom  the rigidity estimate w.r.t. $ \sigma$, \eqref{rigcons},   we have for any $\e > 0$ that
\be\label{32}
\E^{ \sigma} 
\P^{ \sigma_{\by}} \left ( |x_j-\gamma_j| \le N^{-\frac{2}{3}+\e} j^{-\frac{1}{3}} \; : \;\forall j\in I \right) =
\P^{ \sigma} \left (   |x_j-\gamma_j| \le N^{-\frac{2}{3}+\e} j^{-\frac{1}{3}} \; : \; \forall j\in I \right)  \ge1- e^{- N^{c}}.
\ee
By  the estimate \eqref{35} on  $\gamma_j-\alpha_j$, \eqref{32} also holds
if $\alpha_j$ is replaced by $\gamma_j$.
From the rigidity estimate w.r.t. $  \wh\sigma$, we have
$$
 \P^{ \wh\sigma }  (A )\ge 1 - e^{-N^c}, \quad A:=
   \{ \by \in\cR_{K}(\xi) \, :  \P^{\wh\sigma_\by}(x_1\ge \gamma_1- N^{-\frac{2}{3}+\xi})
  \ge 1/2 \}.
$$
By \eqref{com} we have  and also the parallel  version  with $\sigma$ replaced by $\wh \sigma$, we have
$$
 |\P^{\wh \sigma }  (A) - \P^{ \sigma } ( A) |  \le e^{-N^c}.
$$
This guarantees that the second constraint in
  the definition of $\cR^*$ from \eqref{R*def} is satisfied for a set of $\by$'s
with a high $\sigma$-probability.
The first constraint is easily satisfied for a large set of $\by$'s by
 the rigidity w.r.t. $\sigma$.
Thus  we obtain
\be\label{muR*}
   \P^{ \sigma} (\cR^*_{K})\ge 1-   e^{-N^c}.
\ee
Combining \eqref{c3} and \eqref{muR*}, we obtain \eqref{RR}.
\end{proof}

\begin{proof}[Proof of Theorem~\ref{thm:beta}]
Fix a configuration $\wt \by  \in \wt \cR^\#_K \cap\wt\cR^*_K$ where $\wt \cR_K := \cR_{K, \tilde V,\beta}$ and
with similar notations for the other sets.
Thus we can  take  expectation of \eqref{eq:compy1} with respect to $\sigma$
and use \eqref{RR} to have
$$
  \Bigg| \E^{ \sigma}  \mathds{1}_{ \by  \in \cR^\#_K \cap\cR^*_K}  \E^{  \sigma_{\by}} O\Bigg( \Big(  N^{2/3} j^{1/3}(x_j -\al_j (\by))\Big)_{j\in \Lambda}\Bigg)
  -   \E^{ {\wt \sigma}_{\wt\by}} O\Bigg( \Big ( N^{2/3}  j^{1/3}(x_j -\wt \al_j (\wt \by) )\Big)_{j\in \Lambda}\Bigg)\Bigg| \le N^{-\chi},
$$
where we have explicitly indicated the dependence of $\alpha_j$ on $\by$.
From  \eqref{35} and $j\le K^{\zeta}$ we have
$$
N^{2/3} j^{1/3}| \al_j(\by) - \gamma_j | \le N^\xi  j K^{-1} \le N^{-\chi}
$$
provided that
$$
 N^{ \xi+ \chi} K^{\zeta-1} \le 1.
$$
This condition is guaranteed by the condition \eqref{con1} if $\chi>0$ is chosen sufficiently small.
Under this condition, we have thus proved that
$$
   \Bigg| \E^{ \sigma}  \mathds{1}_{ \by  \in \cR^\#_K \cap\cR^*_K}  \E^{ \sigma_{\by}}  O\Bigg( \Big( N^{2/3} j^{1/3}(x_j -\gamma_j )\Big)_{j\in \Lambda}\Bigg)
  -  \E^{ {\wt \sigma}_{\wt\by}}O\Bigg( \Big ( N^{2/3}  j^{1/3}(x_j -\wt \al_j)\Big)_{j\in \Lambda}\Bigg)\Bigg| \le N^{-\chi}.
$$
Recall that the $\sigma$-probability of
the complement of the set $ \cR^\#_K  \cap \cR^*_K$ is  small, see  (\ref{RR}),  and
we can choose $\chi < c'$ where $c'$ is the constant in \eqref{RR}.
  Together with the fact that   $O$ is bounded, we can drop
 the characteristic function $ \mathds{1}_{ \by  \in \cR^\#_K \cap\cR^*_K}$ at a negligible error and we have
 $$
     \Bigg| \E^\sigma O\Bigg( \big( N^{2/3} j^{1/3}(x_j-\gamma_j)\Big)_{j\in \Lambda}\Bigg)
      - \E^{{\wt \sigma}_{\wt\by}}O\Bigg(\Big( N^{2/3} j^{1/3} (x_j-\wt\al_j(\wt\by))\Big)_{j\in \Lambda}\Bigg)\Bigg|\le N^{-\chi}.
  $$
  We can now repeat the same argument for the tilde variables. Taking expectation
over $\wt\by$ with respect to $\wt\sigma$,
  we see that $\E^{{\wt\sigma}_{\wt\by}}$ can be replaced
  with $\E^{\wt\sigma}$ with a negligible error. Finally, using \eqref{com} we can replace $\sigma$ with $\mu$ and
  $\wt\sigma$ with $\wt\mu$.
 This proves the global edge universality Theorem~\ref{thm:beta}.
\end{proof}

\section{Edge universality of Wigner matrices:  proof of
Theorem~\ref{thm:wigner} }\label{sec:wigner}

We will first prove Theorem~\ref{thm:wigner} under the assumption that the matrix elements of
the normalized matrix satisfy
a uniform subexponential decay \eqref{subexp}.
This will be done in the following two steps.
First we show that edge universality
holds for Wigner matrices with a small Gaussian component.
This argument is based upon
the analysis of the Dyson Brownian Motion (DBM).
 In the second step we remove the small Gaussian component by a moment matching
perturbation argument.

\subsection{Edge universality with a small Gaussian component}

We first recall  the notion of
Dyson's Brownian motion.
It describes the evolution of the eigenvalues
of a  flow of Wigner matrices, $H=H_t$,
if each matrix element $h_{ij}$ evolves according to independent
(up to symmetry restriction)
Ornstein-Uhlenbeck processes.
 In the Hermitian case, this process for the rescaled matrix elements
 $v_{ij}: = N^{1/2}h_{ij}$ is given by the
stochastic differential equation
$$
  \rd v_{ij}= \rd {\rm B}_{ij} - \frac{1}{2} v_{ij}\rd t, \qquad
i,j\in\llbracket 1,N\rrbracket
$$
where $ {\rm B}_{ij}$,  $i <  j$, are independent complex Brownian
motions with variance one and ${\rm B}_{ii}$ are real
Brownian motions of the same variance.  The real symmetric
case is analogous, just $\beta_{ij}$ are real Brownian motions.

Denote the distribution of
the eigenvalues $\bla=(\lambda_1, \lambda_2,\dots, \lambda_N)$
 of  $H_t+2$  at  time $t$
by $f_t (\bla)\mu (\rd \bla)$
where the Gaussian measure $\mu$ is given by \eqref{eqn:measure}
with $V(x) =\frac{1}{2} (x-2)^2$.  (This simple  shift ensures that  the convention $A=0$
made at the beginning of Section~\ref{sec:loc} holds.)
The density $f_t=f_{t,N}$ satisfies the forward equation
\be\label{dy}
\partial_{t} f_t =  \cL f_t,
\ee
where
\be
\cL=\cL_N:=   \sum_{i=1}^N \frac{1}{2N}\partial_{i}^{2}  +\sum_{i=1}^N
\Bigg(- \frac{\beta}{4} \lambda_{i} +  \frac{\beta}{2N}\sum_{j\ne i}
\frac{1}{\lambda_i - \lambda_j}\Bigg) \partial_{i}, \quad
\partial_i=\frac{\partial}{\partial\lambda_i},
\label{L}
\ee
with $\beta=1$ for the real symmetric case and $\beta=2$
in the complex hermitian case.
The initial data $f_0$ given by the original generalized Wigner matrix.
The main result of this section is that edge universality
holds for the measure $f_t\mu$ if $t$ is at least a small negative power of $N$.

Note that, in this section, we always consider the cases $\beta=1$ or $2$, although the proof of the following theorem could be adapted to general $\beta\geq 1$.

\begin{theorem}\label{lm:COMP} Let  $\mu$ be
the Gaussian beta ensemble, \eqref{eqn:measure}, with quadratic $V$,
and $f_t$ be the solution of \eqref{dy} with initial data $f_0$ given by
the original generalized Wigner matrix.
 Fix an integer $m>0$ and  $\kappa<1/4$.
  Then
there are positive constants $\fb$ and $\chi$ 
such that
for any $t\ge N^{-\fb}$   and for any compactly supported smooth observable $O$
we have
$$
   \Big| \big[\E^{f_t \mu}  -\E^{\mu}\big]   O\Big( N^{2/3}  {p_1}^{1/3}(x_{p_1}
 - \gamma_{p_1} ), \dots, N^{2/3} p_m^{1/3}(x_{p_m} - \gamma_{p_m}
)\Big)\Big|
    \le C N^{-\chi},
$$
for any $p_1 , \dots, p_m  \le  N^{\kappa } $.
\end{theorem}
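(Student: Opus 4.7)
The plan is to combine the uniqueness of the local log-gas (Theorem \ref{thm:local}) with the local relaxation analysis of Dyson Brownian motion, so as to transfer edge statistics from $f_t\mu$ to $\mu$. Since both endpoints of the comparison are Gaussian $\beta$-ensembles after localization, the whole argument reduces to producing good external configurations from both measures and invoking Theorem \ref{thm:local}.

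First I would establish an optimal edge rigidity for the dynamical family $f_t\mu$, uniformly in $t\in [N^{-\fb},1]$: for each $\xi>0$,
\[
\P^{f_t\mu}\Bigl(|\lambda_k-\gamma_k|\geq N^{-2/3+\xi}\hat k^{-1/3}\Bigr)\leq e^{-N^c},\qquad k\in \llbracket 1,N\rrbracket,
\]
together with a level repulsion estimate for consecutive eigenvalues near the edge. For a generalized Wigner initial condition, the Ornstein--Uhlenbeck flow preserves the generalized Wigner structure, so the strong local semicircle law up to the edge (developed in \cite{EYYrigi, EKYYsparse2}) immediately transfers to $f_t\mu$, and similarly for level repulsion. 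This guarantees that the external configuration $\by$ of the last $N-K$ particles lies in $\cR^\#_{K,V,\beta}(\xi)\cap \cR^*_{K,V,\beta}(\xi)$ with $f_t\mu$-probability at least $1-N^{-c'}$, where $K$ is chosen in the range $N^\delta \leq K \leq N^{2/5-\delta}$ allowed by Theorem \ref{thm:local}.

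The central step is to show that, for a typical $\by$, the conditional density of the interior particles under $f_t\mu$ has essentially the same edge statistics as $\sigma_\by$ (defined in \eqref{sigyext}). This is achieved through a local relaxation flow argument in the spirit of \cite{ESYY}: one introduces an auxiliary dynamics reversible with respect to $\sigma_\by$, and the $\beta\geq 1$ log-concavity of $\sigma_\by$ together with the confinement $\Theta$ yields a local logarithmic Sobolev inequality. Because the edge spacing $N^{-2/3}p^{-1/3}$ is larger than the bulk spacing $N^{-1}$, the induced relaxation time $\tau$ for the first $N^\kappa$ particles satisfies $\tau\ll N^{-\fb}$ provided $\fb$ is chosen small enough. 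One then concludes that, for any smooth test function $O$ of $(N^{2/3}p^{1/3}(x_p-\gamma_p))_{p\in \Lambda}$ with $\Lambda\subset \llbracket 1,N^\kappa\rrbracket$ and $\kappa<1/4$,
\[
\Bigl| \E^{f_t\mu(\,\cdot\,\mid\by)} O - \E^{\sigma_\by} O\Bigr| \leq N^{-\chi}.
\]
Theorems \ref{thm:condRig} and \ref{lr2} supply the a priori rigidity and level repulsion needed to make this local relaxation argument quantitative at the edge.

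The conclusion is then immediate: the edge observable of $f_t\mu$ equals, up to $N^{-\chi}$, the $f_t\mu$-average of the corresponding observable of $\sigma_\by$. Running the same conditioning argument directly on $\mu$ itself (where no evolution is needed) produces the analogous rewriting in terms of $\sigma_{\widetilde\by}$ with $\widetilde\by\sim \mu$; since both $\sigma_\by$ and $\sigma_{\widetilde\by}$ are built from the same quadratic potential, Theorem \ref{thm:local} (applied with $V=\widetilde V$) yields that the two $\sigma$-averages differ by at most $N^{-\chi}$, completing the proof after integration over $\by$ and $\widetilde\by$. The main obstacle is the central local relaxation step: at the edge, where the log-concavity of the interaction is weaker than in the bulk and the natural scale changes from $N^{-1}$ to $N^{-2/3}p^{-1/3}$, one must carefully combine the confinement $\Theta$, the edge rigidity of Theorem \ref{thm:condRig}, and the level repulsion of Theorem \ref{lr2} to obtain a quantitative comparison of the dynamic conditional measure with $\sigma_\by$ valid on the edge scale.
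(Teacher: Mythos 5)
Your proposal follows essentially the same architecture as the paper's proof: global relaxation of the DBM via a local relaxation measure to get a small Dirichlet form, a local logarithmic Sobolev comparison of the conditional measure $(f_t\mu)_\by$ with $\sigma_\by$ for typical $\by$, an application of Theorem \ref{thm:local} to the Gaussian potential with two different boundary conditions, and a final averaging over $\by$ and $\wt\by$ using \eqref{RR} and \eqref{com}. The overall plan is sound, but one step is asserted where the paper needs a genuinely nontrivial argument.

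The gap is your claim that rigidity and level repulsion for $f_t\mu$ ``guarantee that the external configuration $\by$ lies in $\cR^\#_{K}\cap\cR^*_{K}$ with $f_t\mu$-probability at least $1-N^{-c'}$.'' The defining conditions of $\cR^*$ in \eqref{R*def} --- namely $|\E^{\sigma_\by}x_k-\gamma_k|\le N^{-2/3+\xi}k^{-1/3}$ and $\P^{\wh\sigma_\by}(x_1\ge\gamma_1-N^{-2/3+\xi})\ge 1/2$ --- are statements about the \emph{equilibrium} local measures $\sigma_\by$ and $\wh\sigma_\by$, not about the conditional law of $f_t\mu$ given $\by$. Rigidity under $f_t\mu$ controls $\E^{(f_t\mu)_\by}x_k$, not $\E^{\sigma_\by}x_k$; and while these sets have high probability under the equilibrium $\sigma$ (Lemma \ref{lm:comm}), that is useless here because the $\by$'s you must control are sampled from $f_t\mu$, which is far from $\sigma$ in total variation. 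Transferring accuracy from $\E^{(f_t\mu)_\by}x_k$ to $\E^{\sigma_\by}x_k$ is the content of Lemma \ref{ec}: one runs the reversible $\sigma_\by$-dynamics up to its relaxation time $\tau_K\sim K^{1/3}N^{-1/3}$ and bounds the drift of $x_k$ by the local Dirichlet form, and the resulting condition \eqref{22} is precisely what forces $\fa<1/6$ and $K\le N^{1/4-\delta}$ --- i.e., it is the source of the restriction $\kappa<1/4$ in the statement, which your argument otherwise cannot account for. A similar, milder remark applies to $\cR^\#$: level repulsion for the nonequilibrium ensemble $f_t\mu$ is not an off-the-shelf input for generalized Wigner matrices; the paper obtains it by transferring the level repulsion of $\sigma_\by$ (Theorem \ref{lr2}) to $(f_t\mu)_\by$ through the total-variation bound \eqref{300}.
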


For any $\tau>0$  define   an  auxiliary  potential $W=W^\tau$ by
$$
    W^\tau(\bla): =    \sum_{j=1}^N
W_j^\tau (\lambda_j)   , \qquad W_j^\tau (\lambda) := \frac{1}{2 \tau } (\lambda_j -\gamma_j)^2.
$$
The parameter $\tau>0$ will be chosen as $\tau \sim  N^{-\fa}$ where $\fa$ is some positive
exponent with $\fa <\fb$.

\begin{definition} \label{def:locallyConstrained}
We define the probability measure $\rd\mu^{\tau}:= Z_{\tau}^{-1} e^{- N \beta  \cH^\tau} $, where
the total Hamiltonian is given by
$$
  \cH^\tau: = \cH +W^\tau.
$$
 Here   $\cH$ is the Gaussian Hamiltonian given by \eqref{eqn:measure}
 with $V(x) = x^2/2$
and  $Z_\tau=Z_{\mu^{\tau}}$ is the partition function.
The measure
$\mu^{\tau}$ will be referred to as the relaxation measure.
\end{definition}

Denote by $Q$ the  following quantity
$$
 Q:= \sup_{t\ge0 }
 \frac{1}{N}
 \int \sum_{j=1}^N(\lambda_j-\gamma_j)^2
 f_t( \bla )\mu(\rd \bla).
$$
Since  $H_t$ is a generalized Wigner matrix for all $t$,  the following rigidity estimate
(Theorem 2.2 \cite{EYYrigi} and  Theorem 7.6 \cite{EKYYsparse2})
holds:
\begin{equation}\label{rig1}
\P^{f_t \mu} \left(|\lambda_k-\gamma_k|> N^{-\frac{2}{3}+ \delta \xi }(\hat k)^{-\frac{1}{3}}
\right)\leq e^{-N^c}.
\end{equation}
where $\gamma_k$ is computed w.r.t. the semicircle law
and we have used $\delta \xi$ as the small positive exponent  needed in the rigidity estimate \cite{EKYYsparse2}
so that $N^{\delta \xi} \le K^\xi$.  Together with a trivial tail estimate
from \eqref{subexp},
\be\label{tail}
\P^{f_t\mu}(|\lambda_i|\ge s)
 \le N^2 \P^{f_t\mu}( |h_{ij}(t)|\ge s) \le N^2 \exp(- (s/\sqrt{N})^c), \qquad s>0,
\ee 
this 
implies that
$$
  Q\le N^{-2+2\nu}
$$
for any $\nu>0$ if $N\ge N_0(\nu)$ is large enough.

Recall the definition of the Dirichlet form w.r.t. a probability measure $\om$
\be\label{Ddef}
 D^\om(\sqrt{g}):= \sum_{i=1}^ND_i^\om(\sqrt{g}), \qquad
 D_i^\om(\sqrt{g}):= \frac{1}{2N} \int  |\partial_i \sqrt{ g}|^2 \rd\om
 =   \frac{1}{  8  N}\int  |\partial_i \log g |^2 g\rd\om,
\ee
and the definition of the relative entropy of two probability measures $g\om$ and $\om$
$$
  S(g\om|\om) := \int g\log g \rd\om.
$$
The $1/N$ prefactor in the definition of the Dirichlet form as
well as in \eqref{L}
 originates from the $N^{-1/2}$-rescaling of the matrix elements $h_{ij} = N^{-1/2} v_{ij}$.

By the Bakry-\'Emery criterion \cite{BakEme1983}),
 the local relaxation measure satisfies the logarithmic Sobolev inequality, i.e.,
$$
  S(f\mu^\tau |\mu^\tau)\le C\tau^{-1} D^{\mu^\tau}(\sqrt{f})
$$
for any  probability measure $f\mu^\tau$.

Now we recall Theorem 2.5 from \cite{EYBull} (the equation (2.37) in  \cite{EYBull}
 has a typo and the correct
form should be $S(f_\tau \mu|\om) \le C N^m$).  This theorem was first proved in \cite{EYY1};
a closely related result was obtained earlier in
in \cite{ESY4}.

\begin{lemma}\label{thm1}  Let $0<\tau\le 1$ be
a (possibly $N$-dependent) parameter.
Consider the local relaxation
measure $ \mu^{\tau}$.
Set $\psi:= \frac {  d \mu^{\tau}} { d \mu } $ and
let  $g_t: = f_t/\psi$.
Suppose there is a constant $m$ such that
\be\label{entA}
S(f_{\tau} \mu  | \mu^\tau )\le CN^m.
\ee
Fix an $\e'>0$.  Then for any $ t \ge \tau N^{\e'}$
the entropy and the Dirichlet form satisfy the estimates:
\be\label{1.3}
S(g_t \mu^\tau | \mu^\tau) \le
 C   N^2    Q \tau^{-1}, \qquad
D^{\mu^\tau} (\sqrt{g_t})
\le CN^2  Q \tau^{-2},
\ee
 where the constants depend on  $\e'$  and $m$.
\end{lemma}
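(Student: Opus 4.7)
The plan is to apply the relative entropy method with respect to $\mu^\tau$. Since $W^\tau$ is strongly convex of modulus $\tau^{-1}$ and the Gaussian potential $V$ is convex, the Bakry--\'Emery criterion yields a logarithmic Sobolev inequality for $\mu^\tau$ with rate $\tau^{-1}$,
\begin{equation}\label{plan:LSI}
  S(g\mu^\tau|\mu^\tau) \le C\tau\, D^{\mu^\tau}(\sqrt{g}).
\end{equation}
Combined with a suitable Dirichlet-form dissipation estimate for $\partial_t S(g_t\mu^\tau|\mu^\tau)$, this converts to exponential relaxation of the entropy at rate $\tau^{-1}$ toward an equilibrium level forced by the mismatch between $\cL$ (reversible for $\mu$, not $\mu^\tau$) and the generator $\cL^\tau$ of $\mu^\tau$.

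First I would derive the evolution of $g_t=f_t/\psi$. Writing $\partial_t f_t=\cL f_t=\cL(\psi g_t)$ and using the Leibniz-type identity $\cL(\psi g_t)=\psi\cL g_t+g_t\cL\psi+N^{-1}\nabla\psi\cdot\nabla g_t$ gives
\begin{equation}
  \partial_t g_t = \cL^\tau g_t + g_t\,\psi^{-1}\cL\psi,\qquad \cL^\tau := \cL + N^{-1}\nabla\log\psi\cdot\nabla,
\end{equation}
where $\nabla\log\psi=-N\beta\nabla W^\tau$ is explicit. Since $\int\partial_t g_t\,d\mu^\tau=0$, differentiating the entropy yields
\begin{equation}
  \partial_t S(g_t\mu^\tau|\mu^\tau) = -2D^{\mu^\tau}(\sqrt{g_t}) + \int g_t(\log g_t)(\psi^{-1}\cL\psi)\,d\mu^\tau.
\end{equation}
Splitting $\psi^{-1}\cL\psi=\cL\log\psi+(2N)^{-1}|\nabla\log\psi|^2$, applying Cauchy--Schwarz to the integrand, and absorbing a small fraction of $D^{\mu^\tau}(\sqrt{g_t})$ into the dissipation reduces the error to a multiple of $\tau^{-2}\sum_j\int(\lambda_j-\gamma_j)^2 g_t\,d\mu^\tau$.

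The crucial identity $g_t\,d\mu^\tau=f_t\,d\mu$ together with the definition of $Q$ then gives $\sum_j\int(\lambda_j-\gamma_j)^2 g_t\,d\mu^\tau=\sum_j\int(\lambda_j-\gamma_j)^2 f_t\,d\mu\le NQ$. Combined with \eqref{plan:LSI}, this produces the closed differential inequality
\begin{equation}\label{plan:gronwall}
  \partial_t S(g_t\mu^\tau|\mu^\tau)\le -c\tau^{-1}S(g_t\mu^\tau|\mu^\tau)+CN^2Q\tau^{-2}.
\end{equation}
Gronwall's inequality, with the hypothesis $S(f_\tau\mu|\mu^\tau)\le CN^m$ as initial data at time $\tau$, yields $S(g_t\mu^\tau|\mu^\tau)\le e^{-c(t-\tau)/\tau}CN^m+CN^2Q\tau^{-1}$, and the exponential factor is negligible once $t\ge\tau N^{\epsilon'}$, establishing the first inequality in \eqref{1.3}. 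For the Dirichlet-form bound, one runs the same scheme one derivative higher: either differentiating $D^{\mu^\tau}(\sqrt{g_t})$ in time and invoking the $\Gamma_2$ criterion (again exploiting the $\tau^{-1}$-convexity of $\mu^\tau$) gives $\partial_t D^{\mu^\tau}(\sqrt{g_t})\le -c\tau^{-1}D^{\mu^\tau}(\sqrt{g_t})+CN^2Q\tau^{-3}$, or one integrates \eqref{plan:gronwall} over a window of length $\sim\tau$ to obtain the averaged Dirichlet-form bound and then upgrades it to pointwise by iterating the entropy argument. The main technical obstacle is the careful bookkeeping of the error generated by $\psi^{-1}\cL\psi$ and by the product-rule commutators between $\cL$ and multiplication by $\psi$; all such terms reduce to moments $\int(\lambda_j-\gamma_j)^2 g_t\,d\mu^\tau$ controlled by $Q$, following the template already laid out in \cite{EYY1,ESY4,EYBull}.
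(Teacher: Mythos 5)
This lemma is not proved in the paper at all: it is recalled verbatim from \cite{EYBull} (Theorem 2.5), first proved in \cite{EYY1} with a close precursor in \cite{ESY4}. Your sketch reproduces that standard local-relaxation-flow argument, and the overall structure is correct: entropy dissipation relative to $\mu^\tau$, an error term that reduces via $g_t\,\rd\mu^\tau=f_t\,\rd\mu$ and $\partial_j\log\psi=-N\beta(\lambda_j-\gamma_j)/\tau$ to $CN^2Q\tau^{-2}$, the Bakry--\'Emery LSI, and Gronwall from the hypothesis $S(f_\tau\mu|\mu^\tau)\le CN^m$ at the initial time $\tau$. (Your LSI $S\le C\tau D^{\mu^\tau}(\sqrt g)$ is the correct form given the $1/N$-normalized Dirichlet form; the $C\tau^{-1}$ printed in the paper is a typo.)

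Two steps need repair. First, the intermediate identity $\partial_t S=-2D^{\mu^\tau}(\sqrt{g_t})+\int g_t(\log g_t)\,\psi^{-1}\cL\psi\,\rd\mu^\tau$ is not correct as written: the operator $\cL+N^{-1}\nabla\log\psi\cdot\nabla$ is not the reversible generator of $\mu^\tau$ (that one carries $(2N)^{-1}$), so the first term is not pure dissipation, and the $\log g_t$ in the second term is not controlled by Cauchy--Schwarz against the Dirichlet form --- it only disappears because it cancels against a matching term coming from integrating the first term by parts. The clean route is to differentiate $S=\int f_t\log(f_t/\psi)\,\rd\mu$ and integrate by parts against $\mu$ once, which gives $\partial_t S=-4D^{\mu^\tau}(\sqrt{g_t})-\frac{1}{2N}\sum_j\int\partial_j\log\psi\,\partial_j g_t\,\rd\mu^\tau$ with no logarithm in the error; Cauchy--Schwarz then yields your differential inequality. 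Second, the pointwise Dirichlet-form bound is the genuinely delicate point, and neither of your alternatives closes it as stated: a $\Gamma_2$ computation cannot be applied to $D^{\mu^\tau}(\sqrt{g_t})$ because $f_t$ evolves under the $\mu$-flow, not the $\mu^\tau$-flow, while integrating the entropy inequality over a window of length $\tau$ only bounds the time average of $D^{\mu^\tau}(\sqrt{g_s})$. The standard completion is: pick $s$ in the window where $D^{\mu^\tau}(\sqrt{g_s})$ is at most its average; pass to $D^\mu(\sqrt{f_s})$ using the two-sided comparison $D^\mu(\sqrt f)\le 2D^{\mu^\tau}(\sqrt g)+CN^2Q\tau^{-2}$ (and its reverse), which follows from expanding $\partial_j\sqrt{g}=\psi^{-1/2}(\partial_j\sqrt f-\tfrac12\sqrt f\,\partial_j\log\psi)$; use that $D^\mu(\sqrt{f_t})$ is non-increasing along the reversible flow since the Gaussian Hamiltonian is convex; and convert back at time $t$. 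With these two repairs the argument is complete.
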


We remark that the condition \eqref{entA} is trivially satisfied
in our applications for any  $\tau\ge N^{-2/3+\xi}$
since
\be\label{ZZ}
S(f_\tau \mu|\mu^\tau )\le S(f_\tau \mu|\mu )+ \log (Z_\tau/Z) + N\int W^\tau(\bla) f_\tau(\bla)\rd\mu(\bla)
\ee
and
$S(f_\tau \mu|\mu ) \le S(H_\tau|H_\infty) = N^2 S((h_\tau)_{ij}| (h_\infty )_{ij}) \le CN^m$,
where $H_\infty$ is the GOE/GUE matrix.
The other two terms in \eqref{ZZ} satisfy
a similar bound by  \eqref{rig1}.

Recall  the  probability  measure $\sigma$ \eqref{sig} and   define $q_t$ by
$$
q_t \sigma = f_t \mu = g_t \mu_\tau.
$$
From  \eqref{rig1}--\eqref{tail}  and \eqref{1.3} (and recalling that we have shifted the eigenvalues in such a way
that the left spectral edge $-2$ is now shifted to $0$),  we can check that
\be\label{513}
D^\sigma (\sqrt {q_t})  \le  2  D^{\mu^\tau} (\sqrt {g_t})  + C N^{4/3}  \sum_j  \E^{f_t \mu}  |\nabla    \Theta (N^{2/3 - \xi} x_j)|^2\le 2N^2 Q \tau^{-2} + e^{-N^c}\ee
for any $t\ge \tau N^{\e'}$.

Recall that $\sigma_\by$ denotes the conditional measure of $\sigma$ given $\by$ and  $\cH^\sigma_\by $
its  Hamiltonian \eqref{sigyext}.
The Hessian of $\cH^\sigma_\by $  satisfies for all $\by \in \cR_K$ and all $\bu\in\R^K$ that
\be\label{Hyconv}
  \langle \bu,  (\cH^\sigma_\by )''  \bu \rangle  \ge      \Bigg[
N^{4/3-  2 \xi}  \sum_{j \in I}   \mathds{1} ( x_j \le - N^{-2/3 + \xi} )   u_j^2   +   \sum_{j \in I}
V''(x_j ) u_j^2 + \frac 1 N \sum_{j \in I, k \in I^c} \frac {u_j^2}  { (x_j -y_k)^2} \Bigg] \ge
 c N^{1/3}  K^{-1/3} \sum_{j\in I} u_j^2.
\ee
In this estimate we used that $V''$ is bounded from below, see \eqref{eqn:LSImu}, and that
$$
 \frac 1 N  \sum_{k \in I^c} \frac 1 { (x-y_k)^2}  \sim
\frac 1 N   \sum_{k \ge K+1} \frac 1 { (N^{ \xi-2/3}+N^{-2/3} k^{2/3})^2} \ge c N^{1/3} K^{-1/3}
$$
holds
for any $x\ge - N^{ - 2/3+\xi}$ and
$\by\in \cR_K$.

\bigskip

Define $ q_{t, \by} $ to be the conditional density of $ f_t \mu = q_t \sigma  $ w.r.t. $\sigma_\by$ given $\by$,
i.e., it is defined by the relation $  q_{t, \by}  \sigma_\by = (f_t\mu)_\by$.
From the bound \eqref{Hyconv} we have the logarithmic Sobolev inequality
\be\label{lsi}
S( q_{t, \by}  \sigma_\by | \sigma_\by)  \le  C   \frac {K^{1/3}} {N^{1/3}}
 \sum_{i\in I} D_i^{\sigma_\by} ( \sqrt { q_{t, \by}} ).
\ee
Combining it with the entropy inequality,  we have
\be\label{lsi2}
\int \rd \sigma_\by | q_{t, \by} - 1| \le  C \sqrt { S( q_{t, \by}  \sigma_\by | \sigma_\by)}
  \le C  \sqrt {  \frac { K^{1/3}} {N^{1/3}}
 \sum_{i\in I} D_i^{\sigma_\by} ( \sqrt { q_{t, \by}} )}  .
\ee
The following Lemma controls
 the Dirichlet forms $D_i^{\sigma_\by}$ for most external
configurations $\by$.

\begin{lemma}\label{lm:rig1} Fix $0<\fa  \le 1$,   $\xi, \nu >0$,  and $\tau \ge N^{-  \fa}$. Suppose  the initial data $f_0$
of the DBM is given by a generalized Wigner ensemble.
Then,  for any  $\e, \e'>0$
and
 $ t \ge \tau N^{\e'}$
there exists a set $\cG_{ K, t }\subset\cR_{K}(\xi)$  of good boundary conditions $\by$
with
\be\label{PG}
   \P^{f_t \mu} (\cG_{K,t} )\ge 1-CN^{-\e}
\ee
such that for any $\by\in \cG_{ K,t}$ we have
\be\label{29}
\sum_{i\in I} D_i^{\sigma_\by}( \sqrt { q_{t, \by}} ) \le  C N^{3\e +2\fa+2\nu }.
\ee
Furthermore, for any bounded observable  $O$, we have
\be\label{300}
  \big| [\E^{ q_{t, \by} \sigma_\by  } - \E^{\sigma_\by} ] O(\bx)  \big|
 \le C K^{ 1/6} N^{2\e +\fa+ \nu -   1/6}.
\ee
 We also have
\be\label{30}
  \E^{  q_{t, \by}  \sigma_\by  } |x_k - \gamma_k| \le { CN^{-2/3+\xi } k^{-1/3}}, \qquad k\in I.
\ee
The same bounds hold if $\sigma_\by$ and $q_{t,\by}$ are replaced with $\wh\sigma_\by$
and $\wh q_{t, \by}$ where $\wh q_t$ is defined by $\wh q_t \wh\sigma =f_t \mu$.
\end{lemma}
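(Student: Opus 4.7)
The plan is to derive the three assertions \eqref{29}, \eqref{300}, \eqref{30} from the global Dirichlet-form bound \eqref{513} by disintegrating over the boundary configuration $\by$ and then combining with the log-Sobolev inequality \eqref{lsi} and the global rigidity \eqref{rig1}. Inserting $Q\le N^{-2+2\nu}$ (valid for all $\nu>0$ by \eqref{rig1} together with the tail estimate \eqref{tail}) and $\tau\ge N^{-\fa}$ into \eqref{513} first yields $D^\sigma(\sqrt{q_t})\le C N^{2\fa+2\nu}$ for $t\ge \tau N^{\e'}$. The central algebraic step is the disintegration identity
\[
  \sum_{i\in I} D_i^\sigma(\sqrt{q_t}) \;=\; \int \rd(f_t\mu)^{\mathrm{ext}}(\by)\, \sum_{i\in I} D_i^{\sigma_\by}(\sqrt{q_{t,\by}}),
\]
which follows from the factorisation $q_t(\bx,\by)=\bar q_t(\by)\, q_{t,\by}(\bx)$ with $\bar q_t(\by):=\E^{\sigma_\by}[q_t(\cdot,\by)]$, so that $\partial_i q_t=\bar q_t(\by)\,\partial_i q_{t,\by}$ for $i\in I$ and $\bar q_t\,\sigma^{\mathrm{ext}}$ is precisely the $\by$-marginal of $f_t\mu$.

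Markov's inequality then produces
\[
  \P^{f_t\mu}\Bigl( \sum_{i\in I} D_i^{\sigma_\by}(\sqrt{q_{t,\by}}) > N^{3\e+2\fa+2\nu}\Bigr) \;\le\; C N^{-3\e},
\]
and, combined with the $f_t\mu$-rigidity $\P^{f_t\mu}(\by\notin \cR_K(\xi))\le e^{-N^c}$ from \eqref{rig1}, this defines the good set $\cG_{K,t}\subset \cR_K(\xi)$ satisfying \eqref{PG} and \eqref{29} after relabelling~$\e$.

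For \eqref{300}, on $\cG_{K,t}$ the log-Sobolev inequality \eqref{lsi} together with \eqref{29} gives $S(q_{t,\by}\sigma_\by\,|\,\sigma_\by)\le C K^{1/3} N^{-1/3+3\e+2\fa+2\nu}$. The Csiszar--Kullback--Pinsker inequality $\int |q_{t,\by}-1|\rd\sigma_\by \le \sqrt{2\, S(q_{t,\by}\sigma_\by|\sigma_\by)}$ together with $|[\E^{q_{t,\by}\sigma_\by}-\E^{\sigma_\by}]O|\le \|O\|_\infty \int|q_{t,\by}-1|\rd\sigma_\by$ then yields \eqref{300}, with the factor $3/2$ absorbed into a redefinition of $\e$. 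For \eqref{30}, we take the conditional expectation of the global rigidity \eqref{rig1} under $f_t\mu$: Markov in $\by$ shows that outside a set of negligible $f_t\mu$-probability, $\E^{q_{t,\by}\sigma_\by}|x_k-\gamma_k| \le C N^{-2/3+\xi} k^{-1/3}$ holds for every $k\in I$ simultaneously, so we augment $\cG_{K,t}$ by this set at no cost to \eqref{PG}. The $\wh\sigma_\by$ version is identical: $\wh\sigma$ differs from $\sigma$ only by a factor $2$ in the auxiliary $\Theta$-confinement, so the Hessian bound \eqref{Hyconv}, the log-Sobolev inequality \eqref{lsi} and the comparisons in Lemma~\ref{lm:comm} all transfer with the same exponents, while the total Dirichlet-form bound \eqref{513} has a parallel proof for $\wh q_t$.

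The main obstacle is obtaining the correct reference marginal in the disintegration of the Dirichlet form. Writing the right-hand side as an integral against the $f_t\mu$-marginal (rather than against the $\sigma$-marginal) is what allows a single application of Markov's inequality to yield a high-probability statement \emph{directly} under $f_t\mu$, which is precisely what is needed in Section~\ref{sec:wigner} to plug the universality of local Gibbs measures into the DBM argument at good boundary configurations; without this exact alignment one would first obtain an estimate in $\sigma$-probability and then require a separate comparison of the external marginals of $\sigma$ and~$f_t\mu$.
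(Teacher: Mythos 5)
Your proof is correct and follows essentially the same route as the paper: the disintegration identity $\sum_{i\in I} D_i^{\sigma}(\sqrt{q_t})=\E^{q_t\sigma}\sum_{i\in I}D_i^{\sigma_\by}(\sqrt{q_{t,\by}})$ combined with \eqref{513} and Markov's inequality for \eqref{29}, the log-Sobolev inequality \eqref{lsi} together with the entropy--total-variation bound \eqref{lsi2} for \eqref{300}, and conditioning the global rigidity \eqref{rig1} on $\by$ for \eqref{30}. Your explicit verification of the factorisation $q_t=\bar q_t\,q_{t,\by}$ behind the disintegration, and of why the reference marginal must be that of $f_t\mu$ rather than of $\sigma$, is a useful elaboration of a step the paper states without comment.
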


\begin{proof} In this proof, we omit the subscript $t$, i.e., we use $f=f_t$, etc.
By definition of the conditional measure and by  \eqref{1.3} and \eqref{513}, we have for any $\nu> 0$ that
$$
  \E^{ q \sigma}   \sum_{i\in I} D_i^{\sigma_\by}( \sqrt { q_{t, \by}}) =  \sum_{i\in I} D_i^{\sigma}( \sqrt q )
\le  N^2 Q \tau^{-2} + e^{-N^c}  \le   C N^{2\fa+2\nu}.
$$
Therefore, by the Markov inequality, \eqref{29}
holds for all $\by$ in a set $\cG^1_{K}$ with $\P^{f\mu}(\cG^1_{ K})\ge 1- CN^{-3\e}$.
Recall from the rigidity estimate \eqref{rig} that  $ \P^{q \sigma} (\cR_{K}^c)=  \P^{f \mu} (\cR_{K}^c) $ is exponentially small.
Hence  we can choose $\cG^1_{K}$ such that  $\cG^1_{K}\subset \cR_{K}$.
The estimate \eqref{300} now follows from \eqref{29}, \eqref{lsi} and \eqref{lsi2}.

Similarly,
the rigidity bound \eqref{rig1}
with respect to $f \mu$ can  be  translated to the measure  $f_\by\mu_\by$
for most $\by$,  i.e.,
there exists a set $\cG^2_{ K}\subset\cR_K$ with
$$
  \P^{q \sigma}(\cG^2_{ K})  =  \P^{f\mu}(\cG^2_{ K}) \ge   1- \exp{\big( -N^{c}\big)},
$$
such that for any $\by\in \cG^2_{ K} $ and
for any $k\in I$, we have
\be\label{520}
   \P^{ q_{ \by}\sigma_\by} \Big(  |x_k - \gamma_k| \ge  N^{-2/3} K^\xi k^{-1/3} \Big) \le  \exp{\big(-N^{c}\big)}.
\ee
In particular, by
setting $\cG_{K} := \cG^1_{K}\cap  \cG^2_{K}$ we can conclude \eqref{30}
 for any $\by\in \cG_{K}$.
This  proves the lemma.
\end{proof}

\begin{lemma}\label{ec}   Fix $0<\fa  <1/6$, $\xi, \nu >0$,  and $\tau \ge N^{-  \fa}$.
 Suppose  the initial data $f_0$ of the DBM is given by a generalized Wigner ensemble.
Then, for any  $\e'>0$,
 $ t \ge \tau N^{\e'}$, $ k \in I$ and   $\by\in   \cG_{K,t}$
 (defined in Lemma \ref{lm:rig1}), we have
\be\label{Ex1}
 \big|\E^{\sigma_\by}x_k - \gamma_k\big|\le      N^{ -2/3}  k^{-1/3} K^\xi,
 \qquad k\in I,
\ee
provided that
\be\label{22}
 K^{1/3} N^{-5/6+\nu+\fa+2\e'}
 \le N^{-2/3}K^{-1/3+ \xi}.
\ee
\end{lemma}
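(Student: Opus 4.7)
\textbf{Proof plan for Lemma~\ref{ec}.} The strategy is to compare $\E^{\sigma_\by} x_k$ with $\E^{q_{t,\by}\sigma_\by} x_k$, exploiting that the latter already satisfies a bound of the desired order by~\eqref{30}. The triangle inequality gives
\begin{equation*}
|\E^{\sigma_\by} x_k - \gamma_k| \;\le\; |\E^{\sigma_\by} x_k - \E^{q_{t,\by}\sigma_\by} x_k| + |\E^{q_{t,\by}\sigma_\by} x_k - \gamma_k|,
\end{equation*}
and, for $\by\in \cG_{K,t}$, the second summand is bounded by $C N^{-2/3+\xi} k^{-1/3}$ by~\eqref{30}, which is well within the target $N^{-2/3}k^{-1/3}K^\xi$.

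To bound the first summand, we cannot directly apply~\eqref{300} because $x_k$ is unbounded, so we introduce a truncation $T_R(t)\deq \mathrm{sgn}(t)\min(|t|,R)$ at a scale $R$ to be chosen, and split
\begin{equation*}
x_k - \gamma_k \;=\; T_R(x_k-\gamma_k) + \bigl[(x_k-\gamma_k)-T_R(x_k-\gamma_k)\bigr].
\end{equation*}
Applied to the bounded observable $T_R(x_k - \gamma_k)$, for which $\|T_R\|_\infty = R$, estimate~\eqref{300} gives
\begin{equation*}
\bigl|\E^{\sigma_\by} T_R - \E^{q_{t,\by}\sigma_\by} T_R\bigr| \;\le\; C R\, K^{1/6} N^{2\e+\fa+\nu-1/6}.
\end{equation*}
For the unbounded remainder we show that both measures place negligible mass on $\{|x_k-\gamma_k|>R\}$ as soon as $R$ is chosen slightly above the natural rigidity scale $N^{-2/3+\xi} k^{-1/3}$. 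Under $q_{t,\by}\sigma_\by$, this is a consequence of the subexponential rigidity~\eqref{rig1} for generalized Wigner matrices, which, for $\by$ in a set of large $f_t\mu$-probability (e.g.\ the set appearing in~\eqref{520}), passes to the conditional measure. Under $\sigma_\by$, Theorem~\ref{thm:condRig} provides Gaussian tails on exactly the right scale $N^{-2/3+\xi} k^{-1/3}$, provided $\by\in \cR^*_K$; alternatively, the uniform convexity bound~\eqref{Hyconv} on $\cH^\sigma_\by$ combined with the Bakry--\'Emery / Herbst argument gives sub-Gaussian concentration of $x_k$ around $\E^{\sigma_\by} x_k$.

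The final step is to optimize $R$. Choosing $R \sim N^{-2/3+\xi} k^{-1/3}\cdot L$ with $L$ a mildly growing factor (say $L = N^{\xi'}$ for a small $\xi'$) makes the two tail integrals super-polynomially small, and the contribution from~\eqref{300} becomes
\begin{equation*}
C R\, K^{1/6} N^{2\e+\fa+\nu-1/6}\; \sim\; K^{1/6} k^{-1/3} N^{-5/6+\xi+\xi'+2\e+\fa+\nu}.
\end{equation*}
Demanding that this be at most $N^{-2/3}k^{-1/3}K^\xi$ rearranges to a condition of the form $K^{1/3}N^{-5/6+\nu+\fa+2\e'}\le N^{-2/3}K^{-1/3+\xi}$, which is exactly~\eqref{22}. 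Combining the three pieces and absorbing the prefactor $CN^{-2/3+\xi}k^{-1/3}$ from~\eqref{30} into $N^{-2/3}k^{-1/3}K^\xi$ (using $N^\xi\le K^\xi$, which holds since $K\ge N^\delta\ge N^\xi$) yields the claim. The principal obstacle is ensuring that $\by\in \cG_{K,t}$ is enough for both tail bounds; this is addressed either by refining $\cG_{K,t}$ so that it lies inside $\cR^*_K$ (so Theorem~\ref{thm:condRig} applies and provides Gaussian tails at scale $N^{-2/3+\xi}k^{-1/3}$) and checking that the refined set still has $f_t\mu$-probability close to one, or by relying only on the convexity~\eqref{Hyconv} for the $\sigma_\by$-tail and showing that the resulting coarser concentration scale still suffices under~\eqref{22}.
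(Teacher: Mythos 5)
Your reduction to bounding $|\E^{\sigma_\by}x_k-\E^{q_{t,\by}\sigma_\by}x_k|$, with \eqref{30} handling the second summand of the triangle inequality, is exactly the paper's first step. But the paper does \emph{not} prove the comparison via \eqref{300}: it explicitly remarks, right after the proof, that applying \eqref{300} with $O(\bx)=x_k$ ``would have been much worse.'' Instead it runs the reversible dynamics $\partial_s h_s=\cL_\by h_s$ with initial data $h_0=q_{t,\by}$ up to time $K^{\e'}\tau_K$ (with $\tau_K=K^{1/3}/N^{1/3}$), writes the difference of expectations as $\int_0^{K^{\e'}\tau_K}\frac{1}{2N}\int(\partial_k h_u)\,\rd\sigma_\by\,\rd u$, applies Schwarz with a free parameter, bounds the time-integrated Dirichlet form by the initial entropy via the logarithmic Sobolev inequality, and optimizes. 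The gain comes from the $1/N$ prefactor and from the fact that $\nabla x_k$ is a single unit vector, neither of which is visible in a total-variation bound.

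Your truncation device can in principle substitute for this, and the arithmetic does close: $R\cdot K^{1/6}N^{2\e+\fa+\nu-1/6}$ with $R$ taken at the convexity concentration scale $K^{1/6}N^{-2/3+\e''}$ reproduces $K^{1/3}N^{-5/6+\nu+\fa+\cdots}$, i.e.\ exactly \eqref{33}, so \eqref{22} suffices. But as written your argument is circular at the key point. The tail bound for $\sigma_\by$ at scale $N^{-2/3+\xi}k^{-1/3}$ --- which is what licenses your choice of $R$ --- comes from Theorem \ref{thm:condRig}, which requires $\by\in\cR^*_K$; and the first defining condition of $\cR^*_K$ is precisely the conclusion \eqref{Ex1} of the present lemma (this is exactly how Lemma \ref{ec} is used in Step 2 of the proof of Theorem \ref{lm:COMP} to show $\cG_{K}\subset\cR^*_{K}$). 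The same objection applies to your suggestion of ``refining $\cG_{K,t}$ so that it lies inside $\cR^*_K$.'' Your fallback --- using only the convexity \eqref{Hyconv} with Bakry--\'Emery/Herbst --- avoids the circle, but then (i) the concentration scale is $K^{1/6}N^{-2/3}$, uniform in $k$, so the choice $R\sim N^{-2/3+\xi}k^{-1/3}N^{\xi'}$ is not available, the factor $k^{-1/3}$ in your main error term disappears, and one must check \eqref{22} in the worst case $k\sim K$ (this works, but it is not the computation you wrote down); and (ii) Herbst gives concentration of $x_k$ around $\E^{\sigma_\by}x_k$, not around $\gamma_k$, so controlling $\E^{\sigma_\by}\big[|x_k-\gamma_k|\,\mathds{1}(|x_k-\gamma_k|>R)\big]$ already presupposes a bound on $|\E^{\sigma_\by}x_k-\gamma_k|$ --- the very quantity being estimated. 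This requires an explicit bootstrap or a crude a priori bound, which you have not supplied.
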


Notice that  we need
$
 \fa < 1/6
$
in order that \eqref{22} has a solution with $K \to \infty$.
 In our application
we will choose  $\fa$ arbitrarily close to 0,
 then we can take any $K$ with $K \le  N^{1/4-\delta}$ and still find sufficiently small
positive exponents $\nu, \fa,\e'$ with $\fa+\e' \le \fb$ so that  \eqref{22} holds. We will not trace
the precise interrelation among these exponents.
This explains the restriction $\kappa< 1/4$ in Theorem~\ref{thm:wigner}.

The following proof is essentially the same as  the one for Lemma 5.5 in \cite{EYsinglegap}.

\begin{proof}
We claim that the estimate \eqref{Ex1} follows from
\be\label{33}
 | \E^{\sigma_\by} x_k - \E^{  q_{t, \by} \sigma_\by } x_k|\le K^{1/3} N^{-5/6+\nu+\fa+2\e'} .
\ee
To see this,  we have
$$
 \big|\E^{\sigma_\by}x_k - \gamma_k\big|\le  | \E^{\sigma_\by} x_k - \E^{  q_{t, \by} \sigma_\by } x_jk|
 +  | \E^{  q_{t, \by} \sigma_\by } x_k - \gamma_k |
 \le    N^{ -2/3}  k^{-1/3} K^\xi,
$$
where we have used    \eqref{30}, \eqref{33}
and   \eqref{22}.
To prove \eqref{33},  we run the reversible dynamics
$$
   \partial_s h_s = \cL_\by h_s
$$
starting from initial data $ h_0=   q_{t, \by} $,  where the generator $ \cL_\by$ is the unique
reversible generator with the Dirichlet form $D^{\sigma_\by}$, i.e.,
$$
-\int f \cL_\by \,  g  \,  \rd \sigma_\by = \sum_{i \in I} \frac {1}{2 N} \int   \nabla_i f \cdot \nabla_i g \, \rd \sigma_\by.
$$

Recall  that from the convexity bound \eqref{Hyconv},  $\tau_K = K^{1/3}/N^{1/3}$  is an upper  bound for the  time
to equilibrium of this dynamics.  After
 differentiation and integration we get,
$$
\Big [\E^{  q_{t, \by} \sigma_\by } -   \E^{\sigma_\by}  \Big ]  (x_k-\gamma_k) =
\int_0^{K^{\e'} \tau_K } \rd u \frac{1}{2N} \int
   (\partial_k  {h_u} ) \rd \sigma_\by+ O(\exp{(-cK^{\e'})}).
$$
{F}rom the Schwarz inequality  with a free parameter $R$, we can bound the last line by
$$
 \frac{1}{N} \int_0^{K^{\e'} \tau_K } \rd u  \int \Big(  R (\partial_k \sqrt{h_u} )^2 +h_u R^{-1}  \Big)
\rd\sigma_\by+  O(\exp{(-cK^{\e'})}).
$$
Dropping the trivial subexponential error term and
using that the time integral of the Dirichlet form
is bounded by the initial entropy, we can bound the last line by
$$
   RS( q_{t, \by} \sigma_\by | \sigma_\by) + \frac{K^{\e'} \tau_K   }{NR}.
$$
Using the logarithmic Sobolev inequality for $\sigma_\by$ and  optimizing
the parameter $R$, we can bound the last term by
\begin{align}\non
  \Big| \E^{\sigma_\by} x_k - \E^{  q_{t, \by} \sigma_\by }  x_k \Big|  & \le \tau_K  R
 \sum_{i\in I} D_i^{\sigma_\by} (\sqrt{ q_{t, \by}})  + \frac{K^{\e'} \tau_K }{NR}+ O(\exp{(-cK^{\e'})}) \nonumber  \\ \non
&  \le  \frac{ K^{\e'}  \tau_K }{\sqrt N}\Big(  \sum_{i\in I} D_i^{\sigma_\by} (\sqrt{ q_{t, \by}})  \Big)^{1/2}+ O(\exp{(-cK^{\e'})}).
\end{align}
 Combining this bound with \eqref{29} with the choice $\e=\e'$, we
obtain \eqref{33}.
\end{proof}

We note that
if we applied \eqref{300}
with the special choice $O(\bx) = x_k$ to control \eqref{33},
then the error estimate would have been much worse.
We stress that \eqref{Ex1} is not an obvious fact although we know that it holds
for $\by$ with high probability w.r.t. the equilibrium measure $\mu$.  The key point of \eqref{Ex1}
is that it holds for any $\by\in \cG_{ K}$, i.e., for a set of $\by$'s with "high probability" w.r.t  $f_t \mu$!
 We also remark that \eqref{Ex1} holds only in the sense of expectation of $x_k$ and have not yet established that
$$
 \E^{\sigma_\by} \big| x_k - \gamma_k\big|\le      N^{ -2/3}   k^{-1/3} K^\xi,
 \qquad k\in I.
$$
We will finally prove this estimate (Theorem~\ref{thm:condRig}) but only after we prove
 the rigidity estimate for  $\sigma_\by$.

\medskip

We can now prove the main result of this section.

\begin{proof}[Proof of Theorem~\ref{lm:COMP}]
We will consider only the case $m=1$ since  the general case is only notationally more involved.
From the assumption \eqref{22}  the right hand side of  \eqref{300}
is smaller than $K^{-1/2}$. Choosing $\chi$ sufficiently small,
we thus  have
\be\label{302}
   \Big| \big[\E^{ (f_{t} \mu)_\by}  -\E^{\sigma_\by }\big] O\Big( N^{2/3}  {p}^{1/3}(x_{p}
 - \gamma_{p} )\Big )\Big| \le C K^{-1/2}
    \le CN^{-\chi},
\ee
for all $\by\in \cG_{ K}$  and $p \le K^\zeta $ with the $f_t\mu$-probability of  $\cG_{K}$ satisfying  \eqref{PG}.

We now apply Theorem \ref{thm:local} to the same Gaussian beta ensemble with two different
boundary conditions so  that
$$
   \Big| \big[\E^{\sigma_\by}  -\E^{  \sigma_{\wt \by} }\big] O\Big( N^{2/3}  {p}^{1/3}(x_{p}
 - \gamma_{p} )\Big ) \Big|
    \le CN^{-\chi},
$$
for all $\by, \wt \by \in \cR^\#_{K} \cap \cR^\ast_{ K} $ and $p \le K^\zeta  $.
Since
 $\P^{\sigma}( \cR^\#_{ K} \cap \cR^\ast_{ K})\ge 1 - N^{-c'}$  (see \eqref{RR}),
taking the expectation of $\wt \by$ w.r.t. $\sigma$ we   have thus proved that
$$
    \Big| \big[\E^{\sigma_\by}  -\E^{  \sigma }\big] O\Big( N^{2/3}  {p}^{1/3}(x_{p}
 - \gamma_{p} )\Big ) \Big|
    \le CN^{-\chi}.
$$
We know from \eqref{com} that
$$
 \Big| \big[\E^{\sigma}  -\E^{  \mu }\big] O\Big( N^{2/3}  {p}^{1/3}(x_{p}
 - \gamma_{p} )\Big ) \Big| \le CN^{-\chi}.
$$
Together with \eqref{302},  we thus have
\be \label{30311}
    \Big| \big[\E^{(f_{t} \mu)_\by}  -\E^{  \mu }\big] O\Big( N^{2/3}  {p}^{1/3}(x_{p}
 - \gamma_{p} )\Big ) \Big|
    \le CN^{-\chi},
\ee
for all $\by \in \cG_K \cap  \cR^\#_{K} \cap \cR^\ast_{ K}$.
Once we prove that
\be\label{591}
\P^{f_t \mu }
  ( \cG_K \cap  \cR^\#_{K} \cap \cR^\ast_{ K} )\ge 1 - N^{-\chi}
\ee
then by averaging \eqref{30311} in $\by$ w.r.t. $f_t \mu$ we have
$$
   \Big| \big[\E^{f_{t} \mu}  -\E^{ \mu }\big] O\Big( N^{2/3}  {p}^{1/3}(x_{p}
 - \gamma_{p_1} )\Big )\Big|
    \le CN^{-\chi}.
$$
and this proves  Lemma~\ref{lm:COMP}.

Finally, we have to prove \eqref{591}.
 By  \eqref{PG} we have that $\P^{f_t \mu } \cG_{K} \ge 1 - N^{-\e}$. We now prove that
similar inequality holds for the set $\cR^\#_{K}$ and show that $\cG_{K}  \subset \cR^*_K$. This will conclude \eqref{591}
and complete the proof of  Lemma~\ref{lm:COMP}.

\medskip

\noindent
{\it Step 1:} We first prove that
\be\label{c9}
   \P^{f_t \mu} ( \cR^\#_{K})\ge 1-  N^{-c'}.
\ee
Since $f_t \mu$ represents the probability distribution of a generalized Wigner matrix ensemble,
from the rigidity estimate \eqref{rig1}, we have
\be
   \P^{f_t \mu} ( \cR_{K+1  })\ge 1- \exp{(-N^{c})}.
\label{muR1}
\ee
 From  the level repulsion estimate   \eqref{k5nlow} with $k = K+ 1$,
we have for any $\by \in \cR_{K+1}$ that
$$
\P^{ \sigma_{\by}} [  y_{K+2} - x_{K+1} \le s N^{ -2/3}  K^{-1/3}   ] \le
  C \left ( N^{7\xi'/3}  s  \right ) ^{\beta + 1}.
$$
Applying \eqref{300} with $O(\bx) = \mathds{1}(  y_{K+2} - x_{K+1} \le s N^{ -2/3}  K^{-1/3})$  and using the condition \eqref{22}, we obtain a similar  estimate w.r.t. the measure
$ (f_{t} \mu)_{\by}$, i.e.,
\be\label{k58}
\P^{ (f_{t} \mu)_{\by}} [  y_{K+2} - x_{K+1} \le s N^{ -2/3}  K^{-1/3}   ] \le
  C \left ( N^{7\xi'/3}  s  \right ) ^{\beta + 1} +  C K^{ 1/6} N^{2\e' +\fa+ \nu -  1/6}.
\ee
This  estimate \eqref{k58}  and the  bound \eqref{muR1} with $K+1 $ replaced by $K$
 imply   \eqref{c9}
provided  $7\xi'/3 \ll \xi$ and \eqref{22} is satisfied.

\medskip
\noindent
{\it Step 2:} We now prove that
$$
    \cG_{K} \subset \cR^\ast_{ K}.
$$
By Lemma \ref{ec},   the inequality
$\left| \E^{\sigma_{\bf y}}x_k-\gamma_k\right|\leq N^{-\frac{2}{3}+\xi}k^{-\frac{1}{3}}$ holds for all $\by \in \cG_{K}$.  This verifies the first defining condition of $\cR^*$.
  To   check the other defining condition
of $\cR^*_K$, we now show that
\be\label{5211}
\P^{\wh \sigma_\by} \big[\Omega\big]
    \ge 1/2, \quad \Omega := \{ x_1\ge \gamma_1- N^{-\frac{2}{3}+\xi} \}
\ee
holds for $\by \in \cG_{K}$.
To prove \eqref{5211},  for $\by \in \cG_{K}$ we have from \eqref{300} (applied to $\wh\sigma_\by$)
 that
$$
| \P^{ \wh\sigma_\by} \Omega  - \P^{ \wh q_{t, \by} \wh\sigma_\by} \Omega |
 \le C K^{1/6} N^{2\e' +\e + \fa - 1/6}.
$$
Under the assumption \eqref{22}, the right hand side of the last equation vanishes
as $N\to\infty$.
Thus we have
$$
 \P^{ \wh \sigma_\by} \big[ \Omega\big]  \ge  \P^{ \wh q_{t, \by} \wh  \sigma_\by} \big[\Omega \big] - 1/4.
$$
From \eqref{520},
we have  $\P^{\wh q_{t, \by}  \wh \sigma_\by} \Omega \ge 1 - e^{-N^{c}} $ and thus
$ \P^{\wh \sigma_\by} \big[\Omega\big]  \ge  1/2$  for $\by \in \cG_{K}$.
\end{proof}

\subsection{Removal of the Gaussian convolution}

\newcommand{\f}[1]{\boldsymbol{\mathrm{#1}}}

The last step to complete the proof of edge universality is to approximate arbitrary Wigner matrices by a Gaussian divisible
ensemble. We will need  the following result.

\begin{theorem}[Universality of extreme eigenvalues, Theorem 2.4 of \cite{EYYrigi}] \label{twthm}
Suppose that we have
two  $N\times N$  generalized Wigner matrices, $H^{(v)}$ and $H^{(w)}$, with matrix elements $h_{ij}$
given by the random variables $N^{-1/2} v_{ij}$ and
$N^{-1/2} w_{ij}$, respectively, with $v_{ij}$ and $w_{ij}$ satisfying
the uniform subexponential decay condition \eqref{subexp}. Let $\P^\bv$ and
$\P^\bw$ denote the probability and $\E^\bv$ and $\E^\bw$
the expectation with respect to these collections of random variables.
Suppose that Assumptions {\bf (A)} and  {\bf (B)}  hold for both ensembles.  If
the first two   moments of
 $v_{ij}$ and $w_{ij}$ are the same, i.e.,
$$
    \E^\bv \bar v_{ij}^l v_{ij}^{u} =  \E^\bw \bar w_{ij}^l w_{ij}^{u},
  \qquad 0\le l+u\le 2,
$$
then there is an $\e>0$ and $\delta>0$
depending on $\vartheta$ in \eqref{subexp}
 such that
or any real parameter $s$ (may depend on $N$)
we have
$$
 \P^\bv ( N^{2/3} ( \lambda_N -2) \le s- N^{-\e} )- N^{-\delta}
  \le   \P^\bw ( N^{2/3} ( \lambda_N -2) \le s )   \le
 \P^\bv ( N^{2/3} ( \lambda_N -2) \le s+ N^{-\e} )+ N^{-\delta}
$$
for $N\ge N_0$ sufficiently  large, where $N_0$ is independent of $s$.
Analogous result holds for the smallest eigenvalue $\lambda_1$ and also for extensions to the joint distributions
of any finite number of eigenvalues $\lambda_{N-i_1}, \dots, \lambda_{N-i_k}$ as long as $ |i_k| \le N^\e$  (or similar results
for  the smallest eigenvalues).

\end{theorem}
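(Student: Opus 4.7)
The plan is to deduce this Green function comparison via a Lindeberg-type swapping argument applied to a smoothed spectral observable at the edge. First, I would express the edge indicator $\1(N^{2/3}(\lambda_N-2)\le s)$ as a smooth function of the resolvent $G(z)=(H-z)^{-1}$. Writing $\{\lambda_N\leq E\}=\{\mathcal{N}(E)=0\}$ with $\mathcal{N}(E):=\#\{i:\lambda_i>E\}$, I approximate the counting function via
\[
\mathcal{N}(E)\;\approx\;\frac{N}{\pi}\int_{E}^{L}\im\, m_N(y+\ii\eta)\,\rd y,
\]
where $L$ is chosen slightly beyond the right edge (say $L=3$), $\eta=N^{-2/3-9\epsilon}$, and $m_N(z)=N^{-1}\tr(H-z)^{-1}$. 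Composing with a smooth cutoff $K\in C_c^\infty(\R)$ with $K(0)=1$ and $K\equiv 0$ on $[1,\infty)$ converts the indicator into
\[
F(H):=K\!\bigl(\tfrac{N}{\pi}\textstyle\int_E^L\im\,m_N(y+\ii\eta)\,\rd y\bigr),
\]
up to an additive error of size $N^{-\delta}$, by rigidity at the edge asserting that no eigenvalue lies in a window of size $N^{-2/3-\epsilon}$ around the integration boundary with overwhelming probability. The parameter $E=E(s)$ is chosen so that this smoothed quantity detects the event $\{\lambda_N\leq E+sN^{-2/3}\}$.

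Second, I would run the Lindeberg replacement: enumerate the independent entries $(i,j)$ with $i\leq j$ and interpolate between $H^{(v)}$ and $H^{(w)}$ by swapping one entry at a time. Writing $H^{(\gamma)}$ for the hybrid matrix after $\gamma$ swaps, the telescoping sum yields
\[
\E^{\bv}F-\E^{\bw}F \;=\;\sum_{\gamma}\E\bigl[F(H^{(\gamma)})-F(H^{(\gamma-1)})\bigr].
\]
Each increment is handled by Taylor expansion of $F$ in the single matrix entry being swapped: since $h_{ij}=N^{-1/2}v_{ij}$ has size $N^{-1/2}$, expanding to order four and taking expectations makes the terms of order $\leq 2$ cancel by the moment matching assumption, while order-3 and order-4 terms contribute $N^{-3/2}\|\partial^3 F\|_\infty$ and $N^{-2}\|\partial^4 F\|_\infty$ per swap. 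The high-moment tail of the entries is truncated using the subexponential decay hypothesis \eqref{subexp} with a negligible error.

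Third, I would bound $\|\partial^k F\|_\infty$ for small $k$ using resolvent identities and the edge local semicircle law: each derivative $\partial_{h_{ij}}G_{kl}=-(G_{ki}G_{jl}+G_{kj}G_{il})$ produces additional resolvent entries, and the entrywise estimates $|G_{kl}(E+\ii\eta)|\leq N^{C\epsilon}$ for $E$ near the edge (from the isotropic local semicircle law valid down to scale $\eta$ slightly below $N^{-2/3}$), together with $\im\,m_N(y+\ii\eta)\lesssim N^{C\epsilon}\sqrt{|y-2|+\eta}$, give bounds on the required derivatives that make the total swap error of size $N^{-1/2-c}$ after summing the $O(N^2)$ swaps. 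Combining this with the smoothing error yields the stated $N^{-\epsilon}$ shift plus $N^{-\delta}$ additive inequality. The extension to joint distributions of finitely many edge eigenvalues is obtained by taking the test function to be a product of several such cutoffs at different energy levels.

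The main obstacle is the sharp entrywise edge local semicircle law and rigidity at scale $\eta$ essentially matching the edge eigenvalue spacing $N^{-2/3}$: without these inputs the resolvent bounds needed to control derivatives of $F$ would blow up, and the Lindeberg swap would produce errors dominating the moment-matching cancellation. This is precisely where the hypothesis of generalized Wigner structure (the uniform lower and upper bounds on the variance profile in Definition~\ref{D1}) is essential, as it ensures that the density of states is bounded above and below in the bulk and exhibits the correct square-root vanishing at the edge, the standard inputs for the optimal edge local semicircle law.
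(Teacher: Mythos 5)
This statement is not proved in the paper at all: it is quoted verbatim as Theorem 2.4 of \cite{EYYrigi} and used as a black box in the removal of the Gaussian component (Section 5.2). Your proposal is, in outline, the proof given in that cited reference --- the Green function comparison theorem at the edge, applied to a smoothed eigenvalue counting function built from $\im m_N$ at scale $\eta\sim N^{-2/3-\eps}$, combined with Lindeberg swapping and two-moment matching --- so as far as this paper is concerned there is no internal proof to compare against, and your route is the intended one.

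Two points where the sketch should be sharpened. First, the heart of the matter is \emph{why} two matching moments suffice at the edge, whereas the bulk Green function comparison needs third moments to match: the extra smallness $\im m_N(y+\ii\eta)\lesssim N^{C\eps}\sqrt{|y-2|+\eta}\sim N^{-1/3+C\eps}$ and hence $|G_{ij}|\lesssim N^{-1/3+C\eps}$ for $i\ne j$ at the edge is precisely what supplies the factor beating the $N^{1/2}$ obtained by summing $O(N^2)$ third-order Lindeberg terms of size $N^{-3/2}$. You quote the square-root bound but should make this gain explicit, since without it the third-order term does not close and the argument would require third-moment matching. Second, the smoothing step cannot rely on ``no eigenvalue lies in a window of size $N^{-2/3-\eps}$ around the integration boundary with overwhelming probability'': that complementary event has only polynomially small probability. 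The correct input is $\E\,\#\{i:\lambda_i\in[E-\ell,E+\ell]\}\le N^{-\delta}$ for $\ell=N^{-2/3-\eps}$, which follows from the local law at the edge and is exactly why the additive error in the conclusion is only $N^{-\delta}$ rather than subexponential.
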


Given Theorem~\ref{twthm}, we can now complete the proof of the edge universality for generalized
 Wigner matrices with
subexponential decay.
Recall that $H_t$ is the generalized Wigner matrix  whose    matrix elements evolve by independent
 OU processes.
In Theorem~\ref{lm:COMP}
we have proved that the  statistics of eigenvalues at the spectral edge of   $H_t$,  for $t\ge N^{-\fa +\e'}$
and for {\it any} initial  generalized matrix $H$,
 is the same as the standard  Gaussian one in the corresponding symmetry class.
We  now construct an auxiliary Wigner matrix $ H_0$
 (see, e.g., Lemma 3.4 of \cite{EYYBernoulli}
which allows us to match )
such that  the first two  moments of
$ H_t$  (with $t = N^{-c'}$ for some small $c'> 0$)
and   the first two moments of the {\it original} matrix $H^{\f v}$ are identical.
 The edge statistics of $H^{\f v}$ and $H_t$ coincide by Theorem~\ref{twthm}
and the edge statistics of $H_t$
 are identical to those of  the standard GOE/GUE by Theorem~\ref{lm:COMP}.
 This completes our proof of Theorem \ref{thm:wigner}.
\qed

\section{Rigidity of the particles}

Most of this section is devoted to proving  Theorem~\ref{thm:rigidity} which asserts
the rigidity of the particles under the measure $\mu$ at the optimal scale up to the edge 
(which, for us, means a  control throughout the support of
the equilibrium measure including the edge).
We recall that the same statement holds for the measure $\sigma$ (Lemma~\ref{lm:comm}).

 Our method to prove rigidity is a multiscale analysis, initiated for the bulk 
particles in \cite{BouErdYau2011,BouErdYau2012}. 
It is a bootstrap argument where concentration and accuracy bounds are
proved in tandem, gradually for smaller and smaller scales. 
Concentration bound means a control on the fluctuation of a particle around its mean;
this is obtained by a local logarithmic Sobolev inequality
(for non-convex $V$ we need an extra convexification argument). To estimate the log-Sobolev
constant we use  rigidity on a larger scale. The next step is to identify the
mean, this is achieved by   the first loop equation, where the error term
involves the improved concentration bound. This leads to a better accuracy
and thus better rigidity. This information can be used to improve the
concentration bound on a smaller scale, etc. 
In this paper we prove rigidity up to the edge,
which involves new difficulties: the loop equation  is less stable since
the density vanishes near the edge.
 Moreover, the loop equation 
is used to improve the accuracy of one specific particle (the leftmost one, $\lambda_1$), whose
rigidity cannot originate in the pairwise interaction from surrounding particles.

This extra difficulty (lack of a natural boundary on the left) is also critical in
the last subsection, where we prove Theorem \ref{thm:condRig}, i.e., the rigidity of
the particles under the conditional measure  $\sigma_{\bf y}$ with a Gaussian tail. Extra convexity 
(hence rigidity) on the left of the first particle is the reason for introducing the modification $\sigma_\by$ of $\mu_{\bf y}$
which artificially confines the first particle.

Another extra difficulty consists in improving the accuracy without assuming that $V$ is analytic. This
analyticity condition was essential 
in the works \cite{Joh1998,Shc2011} and the previous 
optimal bulk rigidity estimates \cite{BouErdYau2011,BouErdYau2012}. It turns out that the
analyticity condition can be replaced by  a much weaker smoothness assumption by
a more careful  analysis of the first loop equation, 
see (\ref{eqn:loopSmooth}) and (\ref{eqn:loopModified}).

In this section  we disregard the shift convention which sets $A=0$.

\subsection{Statement of the results}

For any fixed $N$,
let the {\it classical position}  $\gamma^{(N)}_k$ of the $k$-th particle under $\mu^{(N)}$ be
defined by
\begin{equation}\label{gammadefN}
 \int_{-\infty}^{\gamma^{(N)}_k}\varrho_1^{(N)}(s)\rd s=\frac{k}{N},
\end{equation}
where $\varrho_1^{(N)}$ is the density of $\mu^{(N)}$.
Recall that $\gamma_k$ from \eqref{gammadef}  denotes the limiting classical location.

\begin{definition} In the following definitions, the potential $V$ and $\beta>0$ are fixed.
\begin{enumerate}
\item We say that {\bf rigidity} at scale $a$ holds if for
any $\e>0$, there are  constants
$c>0$ and $N_0$ such that for any
$N\geq N_0$ and $k\in\llbracket 1, N\rrbracket$  we have
$$
\P^\mu\left(|\lambda_k-\gamma_k|> N^{-\frac{2}{3}+a+\e}(\hat k)^{-\frac{1}{3}}\right)\leq \ e^{- N^c }.
$$
\item We say that {\bf concentration} at scale $a$ holds if for
any $\e>0$, there are constants
$c>0$ and $N_0$ such that for any
$N\geq N_0$ and $k\in\llbracket 1, N\rrbracket$  we have
$$
\P^\mu\left(|\lambda_k-\E^\mu(\lambda_k)|> N^{-\frac{2}{3}+a+\e}(\hat k)^{-\frac{1}{3}}\right)\leq \ e^{- N^c}.
$$
\item We say that {\bf accuracy} at scale $a$ holds if for
any $\e>0$, there is a constant $N_0$  such that for any $N\geq N_0$
and $k\in\llbracket 1,N\rrbracket$ we have
$$
\left|\gamma_k-\gamma_k^{(N)}\right|\leq
N^{-\frac{2}{3}+a+\e}(\hat k)^{-\frac{1}{3}}.
$$
\end{enumerate}
\end{definition}

  For the proof of Theorem \ref{thm:rigidity} the main steps
are the concentration and accuracy improvements hereafter,
proved in the following subsections.

\begin{proposition}\label{prop:ImprConc}
Let $V$ be $\mathscr{C}^2$,
regular with equilibrium density
supported on a single interval $[A,B]$,  and satisfy  (\ref{eqn:LSImu}), (\ref{eqn:GrowthCondition}).
Then rigidity at scale $a$ implies concentration at scale $a/2$.
\end{proposition}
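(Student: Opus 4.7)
The natural approach is a local Bakry-\'Emery argument. For each $k$, I would condition the measure $\mu$ on the particles outside a window $I$ of size $K=K_N$ around $\lambda_k$, apply a log-Sobolev inequality to the local measure $\mu_\by$, and use the input rigidity at scale $a$ to control the convexity of its Hamiltonian.

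Concretely, the Hessian of the conditional Hamiltonian decomposes as
$$
(\mathcal{H}_\by)'' = \tfrac{1}{2}\,\mathrm{diag}(V''(x_i)) + \tfrac{1}{N}\,\mathrm{diag}\Big(\sum_{\ell \notin I}(x_i - y_\ell)^{-2}\Big) + \tfrac{1}{N}\, L_I,
$$
where the internal repulsion term $L_I$ is positive semidefinite and $V'' \geq -2W$ by assumption \eqref{eqn:LSImu}. On the event $\{\by \in \mathcal{R}\}$ of good external configurations, which by the assumed rigidity at scale $a$ has $\mu$-probability $\geq 1-e^{-N^c}$, I would approximate $y_\ell$ by $\gamma_\ell$ and use the classical density $\varrho$ to bound the external diagonal from below by some $\kappa=\kappa(K,k)$, with $\kappa \sim N/K$ in the bulk and $\kappa \sim N^{1/3}K^{-1/3}$ for indices $k$ near the edge. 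Bakry-\'Emery then furnishes a log-Sobolev inequality for $\mu_\by$ with constant of order $(N\beta\kappa)^{-1}$, and Herbst's argument applied to the $1$-Lipschitz coordinate $x_k$ yields the subgaussian tail
$$
\mathbb{P}^{\mu_\by}\bigl(|x_k - \mathbb{E}^{\mu_\by}x_k| > t\bigr) \leq 2\exp\bigl(-c\,N\kappa\,t^2\bigr).
$$

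Choosing $K\sim N^a$ in the bulk and $K\sim N^{3a}$ near the edge, the concentration scale $(N\kappa)^{-1/2}$ matches the target $N^{-2/3+a/2}(\hat k)^{-1/3}$, and the exponent $N\kappa t^2$ at this scale is of order $N^{c'}$, giving the required tail. To pass from $\mathbb{E}^{\mu_\by}[x_k]$ to $\mathbb{E}^\mu[x_k]$ I would invoke the tower property: the rigidity hypothesis controls $\mathbb{E}^{\mu_\by}[x_k]$ for good $\by$, while the exponentially small $\mu$-probability of bad $\by$ is absorbed in the error. Integrating the $\by$-dependence across the good set then yields the concentration bound at scale $a/2$ without conditioning.

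The main obstacle is the convexity analysis near the edge. For the leftmost particles there is no convexity from the left, and one must exploit the square-root vanishing \eqref{sqsing} of $\varrho$ near $A$ to show that the one-sided external sum still dominates $W$. Moreover, the approximation $y_\ell \approx \gamma_\ell$ is only accurate up to $\eta_\ell \sim N^{-2/3+a}(\hat \ell)^{-1/3}$, which is merely $N^a$ times the classical inter-particle gap, so one must verify that the dominant contribution to the external sum comes from $y_\ell$ far enough from the window for this approximation to produce only a $o(1)$ multiplicative error. Handling these edge effects uniformly in the index $k$, together with the joint choice of $K$, is the delicate part of the argument.
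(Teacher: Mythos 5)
Your proposal takes a genuinely different route from the paper, but it has a gap that I do not see how to close: the recentering step. Conditioning on the external configuration $\by$ and applying Bakry--\'Emery/Herbst to $\mu_\by$ can at best give
$$
\P^{\mu_\by}\bigl(|x_k-\E^{\mu_\by}x_k|>t\bigr)\le 2e^{-cN\kappa t^2},
$$
i.e.\ concentration of $x_k$ around the \emph{conditional} mean at scale $a/2$. But to deduce concentration of $\lambda_k$ around $\E^\mu\lambda_k$ you must also control the fluctuation of $\by\mapsto\E^{\mu_\by}x_k$ over the good set, and the input (rigidity at scale $a$) only pins this conditional mean down to scale $N^{-2/3+a}\hat k^{-1/3}$: moving the boundary particles $y_{k\pm K}$ within their rigidity windows shifts the quantiles of the conditioned log-gas, hence $\E^{\mu_\by}x_k$, by an amount of order $a$, not $a/2$. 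Showing that $\E^{\mu_\by}x_k$ is insensitive to $\by$ beyond scale $a$ is essentially the uniqueness-of-the-local-Gibbs-measure statement (Theorem~\ref{thm:local}) or the accuracy improvement (Proposition~\ref{prop:ImprAcc}), neither of which is available at this stage; invoking either would be circular. A secondary issue is that the log-Sobolev inequality for $\mu_\by$ requires a Hessian lower bound \emph{uniformly} on $J_\by^K$, and the external repulsion $\frac1N\sum_{\ell\notin I}(x_i-y_\ell)^{-2}$ degenerates as $x_1\to-\infty$ (this is exactly why the paper introduces the confined measure $\sigma_\by$); the square-root vanishing of $\varrho$ does not repair this.

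The paper circumvents the recentering problem by never conditioning. It first convexifies $\mu$ into an auxiliary measure $\nu$ with $\nabla^2\cH_\nu\ge C$ (Lemmas~\ref{lem:convex}, \ref{lem:equivalence}), then introduces locally constrained measures $\omega^{(k,M)}$ that penalize large \emph{differences} $\lambda_i-\lambda_j$ inside a window; Lemmas~\ref{lem:localConvexity} and~\ref{lem:localLSI} give a log-Sobolev inequality restricted to functions of $\sum v_i\lambda_i$ with $\sum v_i=0$, hence concentration of $\lambda_k^{[M_1]}-\lambda_k^{[M]}$ at scale $(N^{2/3-\e}\hat k^{1/3})^{-1}\sqrt{M/M_1}$. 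Telescoping over dyadic scales (Lemma~\ref{lem:concA2}) controls $\lambda_k-\lambda_k^{[N/2]}$ at scale $a/2$, and the global block average $\lambda_k^{[N/2]}$ concentrates at scale $N^{-1+\e}$ by the global LSI for $\nu$. The difference structure is what makes the argument close: a block average serves as the reference point instead of a $\by$-dependent conditional mean, and the only absolute anchor needed is the global average, which concentrates at a much finer scale than $a/2$. If you want to rescue a conditioning-based argument, you would have to prove a quantitative insensitivity of $\E^{\mu_\by}x_k$ to $\by$, which is the hard content of the rest of the paper.
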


\begin{proposition}\label{prop:ImprAcc}
Let $V$ be $\mathscr{C}^4$,
regular with equilibrium density
supported on a single interval $[A,B]$,  and satisfy  (\ref{eqn:LSImu}), (\ref{eqn:GrowthCondition}).
Then rigidity at scale $a$ implies accuracy at scale $11a/12$.
\end{proposition}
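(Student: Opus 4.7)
The strategy is to extract accuracy from the first loop equation (Schwinger--Dyson identity) for the beta ensemble. Setting $m_N(z) := N^{-1}\sum_k(\lambda_k-z)^{-1}$ and $m(z) := \int\varrho(x)(x-z)^{-1}\rd x$, integration by parts against $\mu$ yields the schematic identity
\begin{equation*}
\E^\mu\!\bigl[m_N(z)^2\bigr] - V'(z)\,\E^\mu[m_N(z)] + \E^\mu\!\left[\int \tfrac{V'(x)-V'(z)}{x-z}\rd\mu_N(x)\right] = \left(\tfrac{2}{\beta}-1\right)\tfrac{1}{N}\E^\mu[m_N'(z)].
\end{equation*}
Since $m$ satisfies the corresponding equilibrium ($N=\infty$) equation exactly, subtracting produces a quadratic identity for the error $\Delta(z):=\E^\mu[m_N(z)]-m(z)$,
\begin{equation*}
\bigl(2m(z) - V'(z)\bigr)\Delta(z) + \Delta(z)^2 = -\var^\mu\!\bigl(m_N(z)\bigr) + F(z),
\end{equation*}
where $F(z)$ collects the $1/N$ correction and the discrepancy between $\int(V'(x)-V'(z))(x-z)^{-1}\varrho_1^{(N)}(x)\rd x$ and its limit against $\varrho$. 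Bounding $\Delta(z)$ for well-chosen $z$ and then inverting by a Helffer--Sj\"ostrand / counting-function argument will yield accuracy of the quantiles $\gamma_k^{(N)}$.

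I would first invoke Proposition \ref{prop:ImprConc} to upgrade rigidity at scale $a$ to concentration at scale $a/2$; a standard $L^2$ computation then gives $\var^\mu(m_N(z))\lesssim N^{-2+a+\epsilon}\eta^{-2}$ at all scales $\eta\gtrsim N^{-2/3}$. The remainder $F(z)$ is controlled through integration by parts against the empirical counting function, whose fluctuations are bounded by the assumed rigidity at scale $a$. At a bulk energy $E+i\eta$, the coefficient $2m(z)-V'(z)$ stays of order one, so inverting the quadratic gives essentially a square-root improvement on $|\Delta|$. The delicate case is an energy $E$ near the edge $A$ or $B$: by (\ref{sqsing}), $|2m(z)-V'(z)|\sim\sqrt{\kappa+\eta}$ with $\kappa=\mathrm{dist}(E,\{A,B\})$, so the quadratic is nearly degenerate, and the optimal choice of $\eta$ must balance this edge degeneracy against the concentration bound and against the smoothness losses in the $V'$-term discussed below; optimizing these three factors is what should produce the exponent $11/12$ rather than the cleaner $1/2$ available in the bulk.

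The principal obstacle is the treatment of the nonlocal integral $\int (V'(x)-V'(z))(x-z)^{-1}\rd\mu_N(x)$ when $V$ is only $\mathscr{C}^4$. In the analytic case, contour deformation rewrites this integral as a residue in $m_N$ itself and closes the loop equation in a single spectral variable; without analyticity, one must instead use an almost-analytic (Helffer--Sj\"ostrand) extension of $V'$ and accept truncation errors that consume part of the naive improvement in $|\Delta|$. Balancing these smoothness losses against the square-root degeneracy of $2m-V'$ at the edge is exactly what should yield the exponent $11/12$, and any substantially better rate would seem to require either analyticity of $V$ or a sharper functional-analytic treatment of the nonlocal integral. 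Once Proposition \ref{prop:ImprAcc} is established, combining it with Proposition \ref{prop:ImprConc} and iterating the map $a\mapsto 11a/12$ starting from a crude rigidity scale drives the precision down to the optimal value claimed in Theorem \ref{thm:rigidity}.
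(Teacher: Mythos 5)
Your overall strategy---the first loop equation, subtraction of the equilibrium relation to get a quadratic for $\Delta(z)=\E^\mu[m_N(z)]-m(z)$, a variance bound from the concentration at scale $a/2$ supplied by Proposition \ref{prop:ImprConc}, a Helffer--Sj\"ostrand treatment of the non-analytic term $\int\frac{V'(x)-V'(z)}{x-z}(\varrho_1^{(N)}-\varrho)$, and inversion via the counting function---is the same as the paper's. But there is a genuine gap in how you propose to pass from the Stieltjes transform estimate to accuracy of the quantiles, and your explanation of where $11/12$ comes from is not correct. The counting-function inversion (Lemma \ref{lem:HS}) controls $\int_{-\infty}^{E}(\varrho_1^{(N)}-\varrho)$ only for test functions whose transition region $[E,E+\eta_E]$ (or $[E-\eta_E,E]$) stays inside the region where the Stieltjes bound holds; to convert this into $|\gamma_k^{(N)}-\gamma_k|$ one sets $E=\gamma_k^{(N)}$ and needs $E-\eta_E\geq A$, which requires $\hat k\gtrsim N^{3a/4}$. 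For small $k$ the quantile $\gamma_k^{(N)}$ may a priori sit far to the \emph{left} of $A$, where the equilibrium density vanishes identically and the pairwise repulsion from surrounding particles provides no rigidity; nothing in your sketch rules this out. This is exactly the difficulty the paper flags (``the loop equation is used to improve the accuracy of one specific particle, the leftmost one, whose rigidity cannot originate in the pairwise interaction from surrounding particles'').

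The paper resolves this with two ingredients you are missing: (i) a contradiction argument (Lemma \ref{lem:smallest}) evaluating $\frac{1}{N\eta}\leq -\frac{C}{N}\E\,\Im\frac{1}{z-\lambda_1}+\cdots$ at a putative point $z=\gamma_1^{(N)}+\ii\eta$ below $A$ and comparing exponents, which yields $\gamma_1^{(N)}\geq A-N^{-2/3+25a/28}$; and (ii) an analysis of $m_N-m$ on the \emph{external} domain $\Sigma_{\rm Ext}^{(N)}(d,\tau)$ to the left of $A$ (Lemmas \ref{lem:edgeVar2}, \ref{lem:Joh2}, and part b) of Lemma \ref{lem:HS}), showing that at most $O(\log N)$ particles lie below $A-N^{-2/3+d}$ for any $d>2a/3$. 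The exponent $11a/12$ then emerges arithmetically: the interior loop-equation analysis already gives the \emph{better} scale $3a/4$ for $\hat k\geq N^{3a/4}$, and the absolute displacement $N^{-2/3+2a/3}$ allowed for the remaining small indices equals $N^{-2/3+11a/12}j^{-1/3}$ precisely at $j=N^{3a/4}$. So the loss from $3a/4$ to $11a/12$ is due to unoptimized edge estimates for the extreme quantiles, not to a trade-off between the square-root degeneracy of $2m-V'$ and the $\mathscr{C}^4$ smoothness of $V$ as you suggest; without the two edge ingredients above your argument cannot produce accuracy at any scale for $\hat k\lesssim N^{3a/4}$, and hence cannot close the induction.
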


\begin{remark} Notice that the accuracy improves from scale $a$  only to 
scale $11a/12$ instead of $3a/4$ as it was achieved in the bulk case (see Proposition 3.13 in \cite{BouErdYau2011}).
This weaker control is due to some difficult estimates near the edge that have not been optimized.
\end{remark}

\begin{proof}[Proof of Theorem \ref{thm:rigidity}]
It is known that rigidity at scale 1 holds.
More precisely, for any
 $\e>0$ there are positive constants $c_1$, $c_2$ such that, for all $N\geq 1$,
\begin{align}
\P^\mu\left( \exists k\in\llbracket1,N\rrbracket\mid
 | \lambda_k-  \gamma_k| \ge \e \right)\leq c_1 e^{-c_2 N}. \label{eqn:largDev1}
\end{align}
For eigenvalues in the bulk, (\ref{eqn:largDev1}) follows from the large
 deviations for the empirical spectral measure with speed $N^2$, see
\cite{BenGui1997, AndGuiZei2010}. For the extreme eigenvalues the large deviations principle
with speed $N$
is proved in \cite{BenDemGui2001} for the GOE case, and extended in
\cite{AndGuiZei2010} Theorem 2.6.6, for the general case (up to a condition on the partition function
that follows from Theorem 1 (iii) in \cite{Shc2011}).

We now use Propositions
\ref{prop:ImprConc} and \ref{prop:ImprAcc}  to  obtain that
concentration and accuracy hold at scale $11/12$.
We just need to prove that
concentration and accuracy at some scale $b>0$
imply rigidity the same scale $b$.
 Then a simple induction on scales  shows that rigidity
holds on scale $(11/12)^m$ for any integer $m$, i.e., it holds
at any positive scale $\xi$.

To show the key part of the induction step,
 assume that concentration and accuracy hold at scale $b$.
Fix any $k\in \llbracket 1, N \rrbracket$.
Then for any $\varepsilon>0$ we have
\begin{multline*}
\E^\mu\#\Big\{\lambda_i\leq \E^\mu(\lambda_k)-N^{-\frac{2}{3}+b+\frac{\varepsilon}{2}}(\hat k)^{-\frac{1}{3}}\Big\}=
\sum_{\ell=1}^N\P^\mu\Big\{\lambda_{\ell}<\E^\mu(\lambda_k)-N^{-\frac{2}{3}+b+\frac{\varepsilon}{2}}(\hat k)^{-\frac{1}{3}}\Big\}\\
\leq k-1+(N-k+1)\P^\mu\Big\{\lambda_{k}<\E^\mu(\lambda_k)-N^{-\frac{2}{3}+b+\frac{\varepsilon}{2}}(\hat k)^{-\frac{1}{3}}\Big\}\leq k
\end{multline*}
for large enough $N$,
independently of $k$,  since the probability in the last line
is subexponentially small  by concentration on scale $b$. As $\gamma_k^{(N)}$ is defined by
$
\E^\mu(\#\{\lambda_i\leq \gamma_k^{(N)}\})=k,
$
this implies that $\gamma_k^{(N)}\geq \E^\mu(\lambda_k)-N^{-\frac{2}{3}+b+\frac{\varepsilon}{2}}(\hat k)^{-\frac{1}{3}}$ for some large enough $N$, independent of $k$. In the same way one can get the upper bound, which yields
$$
|\gamma_k^{(N)}-\E^\mu \lambda_k|\leq N^{-\frac{2}{3}+b+\varepsilon}(\hat k)^{-\frac{1}{3}}
$$
 for large enough $N$. As we have accuracy at scale $b$, the same conclusion holds when replacing $\gamma_k^{(N)}$
by $\gamma_k$. We thus proved,
for any $\varepsilon>0$,
the existence of some $C>0$ such that for all $N$ and $k$ we have
\begin{equation}\label{eqn:ExpTyp}
|\gamma_k-\E^\mu\lambda_k|\leq C N^{-\frac{2}{3}+b+\frac{\varepsilon}{2}}(\hat k)^{-\frac{1}{3}}.
\end{equation}
The conclusion now easily follows from
\begin{multline*}
\P^\mu\Big\{|\lambda_k-\gamma_k|\geq N^{-\frac{2}{3}+b+\varepsilon}(\hat k)^{-\frac{1}{3}}\Big\}
\\ \leq
\P^\mu\left\{|\lambda_k-\E^\mu\lambda_k|
\geq \frac{1}{2}N^{-\frac{2}{3}+b+\varepsilon}(\hat k)^{-\frac{1}{3}}\right\}
+\1\left(|\gamma_k-\E^\mu\lambda_k| \geq
\frac{1}{2} N^{-\frac{2}{3}+b+\varepsilon}(\hat k)^{-\frac{1}{3}}\right).
\end{multline*}
The first term can be bounded by the concentration hypothesis, the second term is 0 for large enough $N$, thanks to (\ref{eqn:ExpTyp}).
\end{proof}

\subsection{Initial estimates for non-analytic potentials}

Let $h$ be a continuous and bounded function. Consider the probability distribution on the simplex $\lambda_1\leq\dots\leq \lambda_N$ given by
$$
\mu^{(N,h)}(\rd\bla)\sim e^{-\beta (N\mathcal{H}(\lambda)+\sum_{k=1}^N h(\lambda_k))}\rd\bla,
$$
where 
$\mathcal{H}$ is defined in (\ref{01}). We denote by $m_{N,h}$ the Stieltjes transform for the measure
$\mu^{(N,h)}$:   
\begin{equation}\label{mhdef}
m_{N,h}(z)=\E^{\mu^{(N,h)}}\left(\frac{1}{N}\sum_{k=1}^N\frac{1}{z-\lambda_k}\right).
\end{equation}
In the following, it will be useful to  have the density supported strictly in a compact interval:
for given $\kappa>0$,
define the following variant of $\mu^{(N,h)}$ conditioned to have all particles in $[A-\kappa,B+\kappa]$:
\begin{equation}\label{eqn:truncMeasure}
\mu^{(N,h,\kappa)}(\rd\lambda)=\frac{1}{Z_{N,\kappa}}
\prod_{1\leq i<j\leq N}|\lambda_i-\lambda_j|^\beta\prod_{k=1}^N 
e^{-\beta\left(\frac{N}{2}V(\lambda_k)+h(\lambda_k)\right)}\mathds{1}_{\lambda_k\in[A-\kappa,B+\kappa]}
\rd\lambda_1\dots\rd\lambda_N.
\end{equation}

We will choose $\kappa$ to be small, fixed number.
Let  $\varrho_k^{(N,h,\kappa)}$ denote
the correlation functions and $m_{N,h,\kappa}(z)$ the Stieltjes transform, defined in the same way as
(\ref{eqn:corrFunct}) and \eqref{mhdef}, but for the underlying measure $\mu^{(N,h,\kappa)}$.
Then Lemma 1 in \cite{BouPasShc1995} (strictly speaking 
this result is given in \cite{BouPasShc1995} only for 
$h\equiv 0$, but the proof works for any fixed $h$)  states that
under condition (\ref{eqn:GrowthCondition}), for some large enough $\kappa$ there exists
 some $c>0$, depending only on $V$,  such that for any $x_1,\dots,x_k\in[A-\kappa,B+\kappa]$, we have
\begin{equation}\label{eqn:BPS1}
\left|\varrho^{(N,h,\kappa)}_k(x_1,\dots,x_k)-\varrho^{(N,h)}_k(x_1,\dots,x_k)\right|
\leq \varrho^{(N,h,\kappa)}_k(x_1,\dots,x_k)e^{-c N},
\end{equation}
and for $x_1,\dots,x_j\not\in[A-\kappa,B+\kappa]$, $x_{j+1},\dots,x_k\in[A-\kappa,B+\kappa]$,
\begin{equation}\label{eqn:BPS2}
\varrho^{(N,h)}_k(x_1,\dots,x_k)\leq e^{-c N\sum_{i=1}^{j}\log |x_i|}.
\end{equation}
The estimates (\ref{eqn:BPS1}) and (\ref{eqn:BPS2})
actually also hold for arbitrarily small fixed $\kappa>0$
thanks to the large deviations estimates (\ref{eqn:largDev1}),  which holds not only for $\mu^{(N)}$ but also for $\mu^{(N,h)}$.  From now we fix this small parameter $\kappa>0$. The following Lemma  relates estimates on $m_{N,h}-m$ and concentration of linear statistics of the particles.

\begin{lemma}\label{lem:concFar}
Let $V$ be $\mathscr{C}^4$, 
regular such that the equilibrium density $\varrho_V$ is
supported on a single interval $[A,B]$  and satisfies
 (\ref{eqn:LSImu}), (\ref{eqn:GrowthCondition}). 
Let  $h_1,h_2$  be $\mathscr{C}^2$ functions such that 
$\|h_1\|_\infty,\|h_1'\|_\infty,\|h_1''\|_\infty<\infty$, 
 and the same for $h_2$.
Let $a\in(0,1/2)$ and $\e>0$.  Assume that for any $\Im(z)=\eta\in(N^{-1/2},N^{-a})$ and $s\in(-\beta,\beta)$ we have
\begin{equation}
\left|m_{N,(1+s) h_1}(z)-m(z)\right|\leq C \frac{N^\e}{|(z-A)(z-B)|^{1/2}\,N\eta^2}\label{eqn:estim1}.
\end{equation}
Then there is
a constant $c>0$ such that, for any $N\geq 1$, we have
\begin{equation}\label{eqn:h}
\Prob^{\mu^{(N, h_1)}}\left(\left|\sum_{k=1}^N h_2(\lambda_k)-N\int h_2(s)\varrho(s)\rd s\right|>N^{2a+ 2\e}\right)\leq e^{-N^c}.
\end{equation}
\end{lemma}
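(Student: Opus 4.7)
The plan is to combine a Helffer--Sj\"ostrand representation of the linear statistic with an exponential-moment bound obtained from a partition-function identity. The hypothesis \eqref{eqn:estim1} is required uniformly in $s\in(-\beta,\beta)$ precisely because the tilting identity $e^{-\beta t\sum_k h_1(\lambda_k)}\rd\mu^{(N,h_1)}\propto \rd\mu^{(N,(1+t)h_1)}$ transfers the exponential moment of $\sum_k h_1(\lambda_k)$ under $\mu^{(N,h_1)}$ into a path integral over perturbed potentials of the same one-parameter family.

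First I would use \eqref{eqn:BPS1}--\eqref{eqn:BPS2} to replace $\mu^{(N,h_1)}$ by the truncated measure $\mu^{(N,h_1,\kappa)}$ at an exponentially small cost, so that all particles lie in $[A-\kappa,B+\kappa]$. For a $\mathscr{C}^2$ function $h$ bounded together with its derivatives I would pick a compactly supported almost analytic extension $\widetilde h$ satisfying $|\partial_{\bar z}\widetilde h(x+\ii\eta)|\le C|\eta|\|h''\|_\infty$ in a strip around the real axis, so that the Helffer--Sj\"ostrand formula reads
\begin{equation*}
\sum_{k=1}^N h(\lambda_k) - N\int h\,\varrho = \frac{N}{\pi}\int_{\mathbb{C}} \partial_{\bar z}\widetilde h(z)\bigl(m_N(z)-m(z)\bigr)\rd A(z),
\end{equation*}
where $m_N(z)=\frac{1}{N}\sum_k(z-\lambda_k)^{-1}$. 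Taking the $\mu^{(N,h_1)}$-expectation with $h=h_2$, splitting the $\eta$-integral at the scales $N^{-1/2}$ and $N^{-a}$, and combining the hypothesis \eqref{eqn:estim1} (at $s=0$) in the intermediate slice with the Cauchy--Riemann cancellation $|\partial_{\bar z}\widetilde h|\lec|\eta|$ against $|m_N-m|\lec\eta^{-1}$ for small $\eta$ yields
\begin{equation*}
\Bigl|\E^{\mu^{(N,h_1)}}\Bigl[\sum_{k=1}^N h_2(\lambda_k) - N\int h_2\,\varrho\Bigr]\Bigr|\le C N^{2a+\e}.
\end{equation*}
The singular factor $|(z-A)(z-B)|^{-1/2}$ appearing in the hypothesis is integrable in $x$ uniformly in $\eta$, so the spectral edges do not obstruct the estimate.

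To upgrade the expectation bound to the subexponential tail \eqref{eqn:h}, I apply exponential Chebyshev. Focusing on the core case $h_2=h_1$ (the general case follows by the same argument along the linear path $h_1\mapsto h_1+r h_2$, provided the hypothesis is also available for those perturbations), for $|t|<\beta$ the tilting identity gives
\begin{equation*}
\log\E^{\mu^{(N,h_1)}}\exp\Bigl(-\beta t\sum_k h_1(\lambda_k)\Bigr) = -\beta\int_0^t \E^{\mu^{(N,(1+r)h_1)}}\Bigl[\sum_k h_1(\lambda_k)\Bigr]\rd r.
\end{equation*}
Applying the expectation bound from the previous step to each perturbed measure $\mu^{(N,(1+r)h_1)}$---which is exactly where the uniformity of \eqref{eqn:estim1} over $s\in(-\beta,\beta)$ is used---produces a Gaussian-type exponential moment $\E^{\mu^{(N,h_1)}}\exp\bigl(t[\sum_k h_1(\lambda_k) - N\int h_1\varrho]\bigr)\le\exp(C|t|N^{2a+\e})$ for $|t|\le 1$. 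Markov's inequality with $t=1$ then gives $\P^{\mu^{(N,h_1)}}(|\sum_k h_1(\lambda_k)-N\int h_1\varrho|>N^{2a+2\e})\le e^{-c N^{2a+2\e}}$, which is the desired subexponential tail.

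The main technical difficulty I foresee is the precise Helffer--Sj\"ostrand bookkeeping near the spectral edges $A$ and $B$, where the singular factor $|(z-A)(z-B)|^{-1/2}$ in \eqref{eqn:estim1} has to be handled without losing more than a polylogarithmic overhead after integration against $\partial_{\bar z}\widetilde h$. This will require choosing the almost analytic extension so that its $\bar\partial$ decays in a compatible way near the edges, and splitting the $\eta$-integration so that only the slice $|\eta|\sim N^{-a}$ contributes the leading-order term producing the exponent $2a+\e$ in the final bound.
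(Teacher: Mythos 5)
Your overall route — truncation via (\ref{eqn:BPS1})--(\ref{eqn:BPS2}), a Helffer--Sj\"ostrand representation of the centred linear statistic, an expectation bound under each tilted measure, and conversion to a subexponential tail by integrating the cumulant generating function and applying exponential Markov — is exactly the paper's proof. But there is one genuine gap, in the slice $\eta\in(0,N^{-1/2})$ where the hypothesis (\ref{eqn:estim1}) is not available. You propose to close it with the deterministic bound $|m_N(z)-m(z)|\le C\eta^{-1}$ against $|\partial_{\bar z}\widetilde h_2|\le C\eta$. That only yields
$$
N\iint_{0<\eta<N^{-1/2}}|\partial_{\bar z}\widetilde h_2|\,|m_N-m|\,\rd x\,\rd\eta=\OO\big(N^{1/2}\big),
$$
which exceeds the target $N^{2a+\e}$ as soon as $a<1/4$ — and the lemma must hold for arbitrarily small $a$, since it drives the bootstrap $\mathcal{P}(a)\Rightarrow\mathcal{P}(a/2)$ in Lemma \ref{lem:keyFar}. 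The paper closes this region with an additional a priori input, the Pastur--Shcherbina estimate $|m_{N,(1+s)h_1}(z)-m(z)|\le C(\log N)^{1/2}N^{-1/2}\eta^{-1}$ (quoted as (\ref{eqn:estim2})); the extra factor $N^{-1/2}$ makes the small-$\eta$ contribution $\OO((\log N)^{1/2})$. Some probabilistic input below scale $N^{-1/2}$ (that estimate, or alternatively the monotonicity of $\eta\mapsto\eta\,\Im m_N(x+\ii\eta)$ combined with the hypothesis at $\eta=N^{-1/2}$) is indispensable; the pointwise bound $\eta^{-1}$ alone cannot do the job. The rest of your bookkeeping is consistent with the paper: the $\chi'$ term at $|\eta|\sim N^{-a}$ is indeed what produces the exponent $2a$, and $|(z-A)(z-B)|^{-1/2}$ is integrable in $x$ uniformly in $\eta$.

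A secondary remark: you are right that tilting $\mu^{(N,h_1)}$ by $e^{t\sum_k h_2(\lambda_k)}$ moves along the family $h_1-th_2/\beta$ rather than $(1+s)h_1$, so the expectation bound is formally needed along that path; the paper's own write-up elides this (and applies the lemma with $h_2\ne h_1$), so flagging the issue is fair, but retreating to the case $h_2=h_1$ does not cover the situations in which the lemma is actually used.
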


\begin{proof}
Let $\kappa>0$ be a small constant and 
$C>0$  be chosen such that for any $E\in I_\kappa:=[A-\kappa,B+\kappa]$, $\eta\in(0,N^{-a})$, 
and $s\in(-\beta,\beta)$ we have
\begin{align}
\left|m_{N,(1+s) h_1}(z)-m(z)\right|&\leq C\frac{(\log N)^{1/2}}{N^{1/2}\eta}\label{eqn:estim2}.
\end{align}
This inequality was proved in \cite{PasShcBulk}, Theorem 2.3 (ii) Let $\chi$ be a
smooth nonnegative cutoff function;   $\chi=1$ 
on $[0,N^{-a}/2]$, $\chi=0$ on $[N^{-a},\infty)$,
$\|\chi'\|_\infty=\OO(N^{a})$. Let $\widetilde h_2$ be $\mathscr{C}^2$, compactly supported on
$I_\kappa$, such that $h_2=\widetilde h_2$ on $I_{\kappa/2}$, for some $\kappa>0$. From the large 
deviations estimate (\ref{eqn:largDev1}) we have, for any $\e>0$,
$$
\Prob^{\mu^{(N, h_1)}}\left(\left|\sum_{k=1}^N (h_2(\lambda_k)-\widetilde h_2(\lambda_k))\right|>N^\e\right)\leq e^{-N^c}.
$$
for some $c>0$.
As a consequence, to prove (\ref{eqn:h}), we can assume that $h_2$ is supported on $I_\kappa$.
By the Helffer-Sj\"ostrand formula (see formula (B.13) in \cite{ERSY}),
$$
\int h_2(u)(\varrho_1^{(N,(1+s) h_1)}(u)-\varrho(u))\rd u=
\OO\left(\iint_{x\in I_\kappa,\eta>0}(\eta\chi(\eta)+|\chi'(\eta)|)\left|m_{N,(1+s) h_1}(x+\ii\eta)-m(x+\ii\eta)\right|\rd x\rd\eta\right).
$$
The term involving $\chi'$ can be evaluated using (\ref{eqn:estim1}), and is bounded by 
$N^{-1+2a}$. For the $\chi$ term, we bound $m_{N,(1+s) h_1}(z)-m(z)$ by 
(\ref{eqn:estim1}) if $\eta\geq N^{-1/2}$ and
by (\ref{eqn:estim2}) if $\eta\in(0,N^{-1/2})$. We obtain
$$
\int h_2(u)(\varrho_1^{(N,(1+s) h_1)}(u)-\varrho(u))\rd u=\OO\left(\frac{N^{2a}}{N}\right).
$$
The remainder of the proof is a classical argument: using the above estimate we get 
$$
\frac{\rd}{\rd s}\log \E^{\mu^{(N,h_1)}}\left(e^{s\left(\sum_{k=1}^N h_2(\lambda_k)-
N\int h_2(u)\varrho(u)\rd u\right)}\right)=\E^{\mu^{(N,(1+s/\beta)h_1)}}\left(\sum_{k=1}^N h_2(\lambda_k)-N\int h_2(u)\varrho(u)\rd u\right)
=\OO(N^{2a}).$$
This yields
$$
\E^{\mu^{(N,h_1)}}\left(e^{\sum_{k=1}^N h_2(\lambda_k)-N\int h_2(s)\varrho(s)\rd s}\right)
+
\E^{\mu^{(N,h_1)}}\left(e^{-\left(\sum_{k=1}^N h_2(\lambda_k)-N\int h_2(s)\varrho(s)\rd s\right)}\right)
\leq e^{c N^{2a}},
$$
and one concludes by the exponential  Markov inequality. 
\end{proof}

The following lemma provides almost optimal estimates for $m_{N,h}-m$ for $\eta=\Im(z)$ till 
order 1. For non-analytic $V$, it improves previous estimates by Pastur and 
Shcherbina by a factor $\sqrt{N}$, and relies on their initial estimates proved in \cite{PasShcBulk}.

\begin{lemma}\label{lem:keyFar}
Let $V$ be $\mathscr{C}^4$,
regular such that the equilibrium density $\varrho_V$ is
supported on a single interval $[A,B]$  and satisfy
 (\ref{eqn:LSImu}), (\ref{eqn:GrowthCondition}). 
Let $h$ be a $\mathscr{C}^2$ function with $\|h\|_\infty,\|h'\|_\infty,\|h''\|_\infty<\infty$.
Then for any $\e>0$ there exists a constant $C=C(V,\e,\|h'\|_\infty)$ such that, for any 
$E\in[A-\kappa,B+\kappa]$, $\eta\in (0,N^{-\e})$, we have
$$
|(z-A)(z-B)|^{1/2}\left|m_{N,h}(z)-m(z)\right|\leq C\ \frac{N^\e}{N\eta^2}.
$$
\end{lemma}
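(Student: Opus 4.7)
The proof will be based on the first loop (Dyson–Schwinger) equation for the Stieltjes transform, combined with the square-root edge decomposition of the equilibrium measure. The starting point is integration by parts for the Gibbs measure $\mu^{(N,h)}$ against the test function $\lambda\mapsto(z-\lambda)^{-1}$. A short rearrangement (using the symmetrization $\sum_{j\neq k}(z-\lambda_k)^{-1}(\lambda_k-\lambda_j)^{-1}=\frac{1}{2}[(Nm_N(z))^2-\sum_k(z-\lambda_k)^{-2}]$) produces the identity
\begin{equation}\label{eqn:loopSmooth}
\tilde m(z)^2-V'(z)\,\tilde m(z)+P_N(z)=-\mathrm{Var}_{\mu^{(N,h)}}(m_N(z))+\frac{1}{N^2}R_N(z),
\end{equation}
where $\tilde m:=m_{N,h}$, $P_N(z):=\int\frac{V'(z)-V'(x)}{z-x}\varrho_1^{(N,h)}(x)\rd x$, and $R_N$ collects the $(1-\beta/2)$-term coming from $\sum_k(z-\lambda_k)^{-2}$ and the $h'$-contribution $\int\frac{h'(x)}{z-x}\varrho_1^{(N,h)}(x)\rd x$. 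Subtracting the equilibrium equation $m^2-V'm+P=0$, where $P(z)=\int\frac{V'(z)-V'(x)}{z-x}\varrho(x)\rd x$, and setting $b_N:=\tilde m-m$, one obtains
\begin{equation}\label{eqn:loopModified}
(2m(z)-V'(z))\,b_N(z)=-b_N(z)^2-(P_N(z)-P(z))-\mathrm{Var}_{\mu^{(N,h)}}(m_N(z))+\frac{1}{N^2}R_N(z).
\end{equation}

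The second ingredient is the structure of the coefficient $2m(z)-V'(z)$. Since $V$ is regular, $\varrho$ is supported on the single interval $[A,B]$ with square-root vanishing at both endpoints, and the equilibrium condition gives the standard decomposition $V'(z)-2m(z)=2r(z)\sqrt{(z-A)(z-B)}$ with $r$ analytic on a neighbourhood of $[A-\kappa,B+\kappa]$ and $|r|$ bounded below (by $s_A,s_B>0$). Inverting this coefficient is what generates the prefactor $|(z-A)(z-B)|^{1/2}$ in the claimed estimate.

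Next I estimate the four error terms on the right-hand side of \eqref{eqn:loopModified}. The Pastur–Shcherbina bound \eqref{eqn:estim2} already gives $|b_N(z)|^2\leq C\log N/(N\eta^2)$, which meets the target order. The variance term is of size $O(1/(N\eta)^2)$: one obtains this by a Brascamp–Lieb bound applied after convexifying $V$ on a compact neighborhood of the spectrum (absorbing the lower bound $V''\geq-2W$ into the interaction Hessian, which is positive semidefinite), or alternatively by the covariance decay implicit in \eqref{eqn:estim2}. The term $N^{-2}R_N(z)$ contributes at most $O(1/(N^2\eta))$ and is negligible. The main obstacle, and the point where the $\mathscr{C}^4$ hypothesis enters, is the control of $P_N(z)-P(z)$: the usual analytic trick of expressing it as a contour integral of $m_N-m$ is unavailable. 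I would use instead that $g_z(x):=(V'(z)-V'(x))/(z-x)$ is $\mathscr{C}^3$ in $x$ with derivatives bounded uniformly for $z$ in a compact neighbourhood of $[A,B]$, and apply the Helffer–Sj\"ostrand formula to $\int g_z\rd(\varrho_1^{(N,h)}-\varrho)$: the regime $\tilde\eta\geq N^{-1/2}$ is controlled by \eqref{eqn:estim2}, while for $\tilde\eta<N^{-1/2}$ the $\mathscr{C}^3$ smoothness of $g_z$ produces enough decay of the quasi-analytic extension to absorb the small scale; the net contribution is $O(N^{-1+\e})$.

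Dividing \eqref{eqn:loopModified} by the coefficient $|V'(z)-2m(z)|\geq c|(z-A)(z-B)|^{1/2}$ and inserting the above bounds yields
\begin{equation*}
|(z-A)(z-B)|^{1/2}\,|b_N(z)|\leq C\,\frac{N^\e}{N\eta^2},
\end{equation*}
which is the claim. The hard part is the non-analytic treatment of $P_N-P$: this is where the improvement from the $O((\log N)^{1/2}/(N^{1/2}\eta))$ of \cite{PasShcBulk} to the target $O(N^\e/(N\eta^2))$ is ultimately paid for, and it is also what allows the analyticity assumption of the previous rigidity results to be relaxed to $\mathscr{C}^4$.
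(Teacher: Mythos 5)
Your overall frame (first loop equation, square-root structure of $2m-V'$, Helffer--Sj\"ostrand to handle the non-analytic term) is the right one, but there are two genuine gaps. The decisive one is the claimed bound $P_N(z)-P(z)=O(N^{-1+\e})$. If you feed only the a priori Pastur--Shcherbina input (\ref{eqn:estim2}) into the Helffer--Sj\"ostrand formula on the scales $\tilde\eta\ge N^{-1/2}$, you get $\int_{N^{-1/2}}^{1}(\log N)^{1/2}N^{-1/2}\tilde\eta^{-1}\,\rd\tilde\eta\sim N^{-1/2+o(1)}$, not $N^{-1+\e}$; reaching $N^{-1+\e}$ requires the \emph{improved} bound on $m_{N,h}-m$ at the intermediate scales, which is precisely the statement you are proving. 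The paper breaks this circularity with a multiscale bootstrap: a two-part property $\mathcal{P}(a)$ (the strong bound for $\eta<N^{-a}$ together with a weaker bound $N^{2a+\e}/N$ for $\eta\in[N^{-a},1]$) is shown to imply $\mathcal{P}(a/2)$, starting from $\mathcal{P}(1/2)$ which is Pastur--Shcherbina; Lemma \ref{lem:concFar} is exactly the device that converts the induction hypothesis into the bound $O(N^{-1+2a+\e})$ on the analogue of your $P_N-P$. Your single-pass argument does not close.

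Second, even granting $P_N-P=O(N^{-1+2a+\e})$ at a given stage, inserting it as a pointwise error into the divided loop equation fails in the new regime $\eta\in(N^{-a},N^{-a/2})$ that each step must conquer, since there $N^{-1+2a+\e}>N^\e/(N\eta^2)$. The paper's mechanism is structurally different: it multiplies the loop equation by $r(\xi)=((A-\xi)(B-\xi))^{1/2}/(2m(\xi)-\partial_E\widetilde V(\xi))$ and integrates over a contour at distance $\delta\sim\min(\eta,N^{-a/2})$ from the spectrum. The residue theorem then produces the weight $((A-z)(B-z))^{1/2}$ on the left with no division by the degenerate coefficient, and Green's formula converts the $P_N-P$ contribution into an area integral of its $\bar\partial$-derivative, which is $O((\Im\xi)^2)\,(m_{N}-m)$ thanks to the order-three quasi-analytic extension $\widetilde V$ --- this is where $\mathscr{C}^4$ actually enters (via $\partial_{\bar z}\partial_E\widetilde V=-\tfrac{\eta^2}{2}V^{(4)}$), not through smoothness of $g_z$ in $x$ alone. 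The resulting gain $\delta^{5/2}$ is what makes the term admissible. Relatedly, ``$V'(z)$'' is undefined for complex $z$ when $V$ is not analytic; the coefficient must be $2m(z)-V'(E)$ or $2m(z)-\partial_E\widetilde V(z)$, and the continuity of $r$ across $[A,B]$ is one more reason the quasi-analytic extension is needed.
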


\begin{proof}
Let $I_\kappa=[A-\kappa,B+\kappa]$ and $d(\xi)=\inf_{s\in I_\kappa}|\xi-s|$. 
Thanks to the estimates (\ref{eqn:BPS1}) and (\ref{eqn:BPS2}), we just need to prove the lemma for the Stieltjes transform $m_{N,h,\kappa}$ instead of $m_{N,h}$.

For any $a\in (0,1)$, let $\mathcal{P}(a)$ be the following property: for any $\e>0$ 
there exists a constant $C=C(V,a, \e,\|h'\|_\infty)$ such that, for any
$E\in[A-\kappa,B+\kappa]$, we have
\begin{align}
&|(z-A)(z-B)|^{1/2}\left|m_{N,h,\kappa}(z)-m(z)\right|\leq C\ \frac{N^\e}{N\eta^2}\ \quad \mbox{for}\ \eta\in(0,N^{-a}),
\label{Pa1}\\
&|(z-A)(z-B)|^{1/2}\left|m_{N,h,\kappa}(z)-m(z)\right|\leq C\ \frac{ N^{2a+\e}}{N}\ \quad  \mbox{for}\ \eta\in[N^{-a},1].
\label{Pa2}
\end{align}
We will prove that $\mathcal{P}(a)$ implies $\mathcal{P}(a/2)$, which concludes the proof of the lemma
by induction, as $\mathcal{P}(1/2)$ holds: 
Pastur and Shcherbina (see \cite{PasShcBulk}\footnote{Strictly speaking these estimates were proved for $h\equiv 0$,
but the analysis in \cite{PasShcBulk} extends to our context in a straightforward way, when
$\|h\|_\infty,\|h'\|_\infty,\|h''\|_\infty<\infty$.} Theorem 2.3 (ii)):
proved that
\begin{equation}\label{eqn:estPas1}
m_{N,h,\kappa}(\xi)-m(\xi)=\OO\left(\frac{(\log N)^{1/2}}{N^{1/2}d(\xi)}\right), \
\frac{1}{N^2}\var_{ \mu^{(N, h, \kappa)} }\left(\sum_{k=1}^N\frac{1}{\xi-\lambda_k}\right)=\OO\left(
\frac{\log N}{N d(\xi)^2}
\right),
\end{equation}
the second estimate being useful later along the proof. 
Here we used that $\eta|(z-A)(z-B)|^{1/2}\le d(z)$.

Assume that $\mathcal{P}(a)$ holds. To prove $\mathcal{P}(a/2)$, 
we will need the quasi-analytic extension of $V$ of order three:
\begin{equation}
\widetilde V(z)=V(E)+\ii\eta V'(E)-\frac{\eta^2}{2}V''(E). 
\end{equation}
Note that $V = \wt V$ on the real axis.
One easily checks that 
\begin{equation}\label{eqn:quasiAnalytic}
\partial_{\bar z} \partial_E \widetilde V(z)=-\frac{\eta^2}{2} V^{(4)}(E).
\end{equation}
The first loop equation and its limit are (see \cite{Joh1998, Eyn2003, Shc2011} for various proofs), for any $\xi\not\in\mathbb{R}$,
\begin{align}
&m_{N,h}(\xi)^2+\int_{\mathbb{R}}\frac{V'(s)+N^{-1}h'(s)}{\xi-s}\varrho_1^{(N, h)}(s)\rd s
=\frac{1}{N}\left(\frac{2}{\beta}-1\right)m_{N,h}'(\xi)+\frac{1}{N^2}\var_{\mu^{(N,h)}}
\left(\sum_{k=1}^N\frac{1}{\xi-\lambda_k}\right), \label{eqn:loopNfixed}\\
&m(\xi)^2+\int_{\mathbb{R}}\frac{V'(s)}{\xi-s}\varrho(s)\rd s=0.\label{eqn:loopAsympt}
\end{align}
Here $\var_{\mu} X : = \E^\mu X^2 - (\E^\mu X)^2$, in particular $\var_\mu X$ may be complex.
We choose to write the difference of both equations in the following way:
\begin{multline*}
(m_{N,h}(\xi)-m(\xi))^2+(2m(\xi)-\partial_E \widetilde V(\xi))(m_{N,h}(\xi)-m(\xi))
+\int_{\mathbb{R}}\frac{\partial_E\widetilde V(\xi)-V'(s)}{\xi-s}(\varrho_1^{(N,h)}(s)-\varrho(s))\rd s\\
+\frac{1}{N}\int_{\mathbb{R}}\frac{h'(s)}{\xi-s}\varrho_1^{(N,h)}(s)\rd s
-\frac{1}{N}\left(\frac{2}{\beta}-1\right)m_{N,h}'(\xi)-\frac{1}{N^2}\var_{\mu^{(N,h)}}\left(\sum_{k=1}^N\frac{1}{\xi-\lambda_k}\right)
=0.
\end{multline*}
Thanks to the estimates (\ref{eqn:BPS1}) and (\ref{eqn:BPS2}), the above equation also holds when all considered 
quantities are with respect to the measure $\mu^{(N,h,\kappa)}$ instead of $\mu^{(N,h)}$,
up to an exponentially small error term which is uniform in $\{d(\xi)>N^{-10}\}$:
\begin{align}\label{eqn:loopSmooth}
&(m_{N,h,\kappa}(\xi)-m(\xi))^2+(2m(\xi)-\partial_E \widetilde V(\xi))(m_{N,h,\kappa}(\xi)-m(\xi))
+b_N(\xi)-c_N(\xi)=\OO\left(e^{-c N}\right),\\
&b_N(\xi):=\int_{\mathbb{R}}\frac{\partial_E\widetilde V(\xi)-V'(s)}{\xi-s}(\varrho_1^{(N,h,\kappa)}(s)-\varrho(s))\rd s,\\
&c_N(\xi):=-\frac{1}{N}\int_{\mathbb{R}}\frac{h'(s)}{\xi-s}\varrho_1^{(N,h,\kappa)}(s)\rd s
+\frac{1}{N}\left(\frac{2}{\beta}-1\right)m_{N,h,\kappa}'(\xi)+\frac{1}{N^2}\var_{\mu^{(N,h,\kappa)}}
\left(\sum_{k=1}^N\frac{1}{\xi-\lambda_k}\right).
\end{align}
Take $z$ such that
 $\Im{z}=\eta\in(N^{-a},1)$, let $\delta\in(N^{-a}/4,\eta/2)$ be chosen later, and consider the  
domain $\Omega_\delta=\{\xi\mid d(\xi)\leq\delta\}$, and $\partial\Omega_\delta$ its boundary, 
encircling $I_\kappa$ but not $z$. We also use the notation, for $\xi\not\in I_\kappa$,
$$
r(\xi)=\frac{((A-\xi)(B-\xi))^{1/2}}{2 m(\xi)-\partial_E \widetilde V(\xi)},
$$
where the branch of the numerator is chosen so that $((A-\xi)(B-\xi))^{1/2}\sim \xi$
as $|\xi|\to\infty$.
One can check that $r$ is continuous in $\mathbb{C}$: thanks to the equilibrium 
equation $m(s)=\frac{1}{2}V'(s)$ ($s\in[A,B]$) the real part of $2 m(\xi)-\partial_E \widetilde V(\xi)$ 
vanishes on $[A,B]$, and the imaginary parts of the numerator and the denominator both change signs 
across $[A,B]$. Moreover, thanks to the square root singularity of $\rho$ at $A^-$ and $B^+$, the following  bounds easily hold:
\begin{equation}
c\leq |r(\xi)|\leq c^{-1} \label{eqn:r1}
\end{equation}
uniformly in $\Omega_\eta$, for some $c>0$.
Multiplying $(\ref{eqn:loopSmooth})$ by $r(\xi)$ and integrating counterclockwise,   one can write
\begin{align}
\int_{\partial \Omega_\delta}\frac{(m_{N,h,\kappa}(\xi)-m(\xi))((A-\xi)(B-\xi))^{1/2}}{z-\xi}\rd \xi&=
\int_{\partial \Omega_\delta}\frac{-(m_{N,h,\kappa}(\xi)-m(\xi))^2+c_N(\xi)}{z-\xi} r(\xi)\rd \xi\label{eqn:loop1}\\
&-  \int_{\partial \Omega_\delta}\frac{b_N(\xi)}{z-\xi}r(\xi)\rd \xi+\OO(e^{-c N}).\label{eqn:loop2}
\end{align}
Since $m_{N,h,\kappa}(\xi)$ and $m(\xi)$ are both Stieltjes transforms of a probability measure, 
we have $|m_{N,h,\kappa}(\xi)-m(\xi)|= \OO(|\xi|^{-2})$, thus 
$(m_{N,h,\kappa}(\xi)-m(\xi))((A-\xi)(B-\xi))^{1/2}=\OO(|\xi|^{-1})$ as $|\xi|\to\infty$. 
So
the left hand side of (\ref{eqn:loop1}) is $2\pi\ii((A-z)(B-z))^{1/2}(m_{N,h,\kappa}(z)-m(z))$, by the residue theorem. Moreover, we have the estimates (\ref{eqn:estPas1}) and
the trivial bounds
$$
\frac{1}{N}m_{N,h,\kappa}'(\xi)=\OO\left(\frac{1}{N d(\xi)^2}\right),\ \quad
\frac{1}{N}\int_{\mathbb{R}}\frac{h'(s)}{\xi-s}\varrho_1^{N,h,\kappa}(s)\rd s=
\OO\left(\frac{1}{N d(\xi)}\right).
$$
Together with (\ref{eqn:estPas1}), this implies that
the right hand side of (\ref{eqn:loop1}) is $\OO\left(\frac{(\log N)^2}{N\delta^2}\right)$.

Finally, to estimate (\ref{eqn:loop2}), we will use the induction hypothesis $\mathcal{P}(a)$. 
We first introduce the notations (for $t>0$)
$$
\Omega_{\delta,t}=\{\omega\in\Omega_\delta\mid \Im(\omega)>t\},\ \Omega_{\delta,-t}=\{\omega\in\Omega_\delta\mid \Im(\omega)<-t\}.
$$
By first using the continuity  of $r$ and $b_N$ at $\Im(\xi)=0$ and then Green's formula separately in 
$\Omega_{\delta,t}$, $\Omega_{\delta,-t}$, we obtain (all contour integrals being counterclockwise)
\begin{align}
\int_{\partial \Omega_\delta}\frac{b_N(\xi)}{z-\xi}r(\xi)\rd \xi&=
\lim_{t\to 0^+}\left(
\int_{\partial \Omega_{\delta,t}}\frac{b_N(\xi)}{z-\xi}r(\xi)\rd \xi+
\int_{\partial \Omega_{\delta,-t}}\frac{b_N(\xi)}{z-\xi}r(\xi)\rd \xi\notag
\right)
\\&=
\OO\left(\iint_{\Omega_{\delta}\setminus\mathbb{R}} \frac{1}{|z-\xi|}|\partial_{\bar\xi}(b_N(\xi)r(\xi))|\notag \rd\xi\rd\bar\xi\right)\\&=
\OO\left(\iint_{\Omega_{\delta}\setminus\mathbb{R}} \frac{1}{|z-\xi|}|\partial_{\bar\xi}(b_N(\xi))| \rd\xi\rd\bar\xi\right)+
\OO\left(\iint_{\Omega_{\delta}\setminus\mathbb{R}} \frac{1}{|z-\xi|}|b_N(\xi)\partial_{\bar\xi}r(\xi)| \rd\xi\rd\bar\xi\right),
\label{eqn:doubleError}
\end{align}
where we used (\ref{eqn:r1}).
A straightforward calculation from \eqref{eqn:quasiAnalytic} and \eqref{eqn:r1}  yields
\begin{equation}\label{eqn:bothErrorTerms}
\partial_{\bar\xi}b_N(\xi)=-\frac{\Im(\xi)^2}{2} V^{(4)}(E)(m_{N,  h, \kappa}(\xi)-m(\xi)),\qquad 
\partial_{\bar\xi}r(\xi)=\OO\left(\frac{\partial_{\bar \xi}(2m(\xi)-
\partial_E\widetilde V(\xi))}{|(A-\xi)(B-\xi)|^{1/2}}\right)=
\OO\left(
\frac{(\Im(\xi))^{ 2})}
{|(A-\xi)(B-\xi)|^{1/2}}
\right).
\end{equation}
Moreover, as $V$ is of class $\mathscr{C}^4$, 
the functions 
$s\mapsto \Re\left(\frac{\partial_E\widetilde V(\xi)-V'(s)}{\xi-s}\right), s\mapsto \Im\left(\frac{\partial_E\widetilde V(\xi)-V'(s)}{\xi -s}\right)$  have their first two derivatives on $I_\kappa$ uniformly bounded for $z$ in any compact set.
Consequently, we can use Lemma \ref{lem:concFar} 
with  $h_2$  playing the role of these functions,
 and we easily get, assuming $\mathcal{P}(a)$ (which
in particular guarantees the condition \eqref{eqn:estim1} in Lemma  \ref{lem:concFar}) that 
\begin{equation}\label{eqn:VVanishesa}
b_N(\xi)=\OO\left(N^{-1+2a+\e}\right),
\end{equation}
for any $\e>0$,
uniformly for $z$ in any compact set of $\mathbb{C}$.  Here we also used (\ref{eqn:largDev1})
to control the non-compact regime. From \eqref{Pa2} we also have
$$
 \partial_{\bar\xi}b_N(\xi) = \OO\left( \frac{ N^{-1+2a + \e } (\Im (\xi))^2}{|(A-\xi)(B-\xi)|^{1/2}}\right)  
$$
in the integration regime in    (\ref{eqn:doubleError}).  
By the estimates (\ref{eqn:bothErrorTerms}) and (\ref{eqn:VVanishesa}) we finally obtain that
both error terms in (\ref{eqn:doubleError}) are
$\OO(N^{-1+2a + \e  }\delta^{5/2})$.
We proved that the right hand side of (\ref{eqn:loop1}) and (\ref{eqn:loop2}) together have a
size bounded by $CN^\e 
 \left(\frac{1}{N\delta^2}+\frac{N^{2a}\delta^{5/2}}{N}\right)$.
 If $\eta\in(N^{-a},N^{-a/2})$ we choose $\delta=\eta/2$, which yields an error term at most $C N^\e  /(N\eta^2)$. 
If $\eta\in(N^{-a/2},1)$ we choose $\delta=N^{-a/2}/2$, which yields an error at most $C N^{-1+a +\e}$. This
shows that $\mathcal{P}(a/2)$ holds and it  concludes the proof.
\end{proof}

An immediate consequence of Lemmas \ref{lem:concFar} and \ref{lem:keyFar} is the following concentration of linear statistics.

\begin{corollary}\label{lem:concFar1}
Let $V$ be $\mathscr{C}^4$, 
regular such that the equilibrium density $\varrho_V$ is
supported on a single interval $[A,B]$  and satisfy
 (\ref{eqn:LSImu}), (\ref{eqn:GrowthCondition}). 
Let $h$ be a $\mathscr{C}^2$ function such that 
$\|h\|_\infty,\|h'\|_\infty,\|h''\|_\infty<\infty$. 
Then for any $\e>0$ there exists
a constant $c>0$ such that, for any $N\geq 1$, we have
\begin{equation}\label{eqn:h1}
\Prob^{\mu^{(N)}}\left(\left|\sum_{k=1}^N h(\lambda_k)-N\int h(s)\varrho(s)\rd s\right|>N^{\e}\right)\leq e^{-N^c}.
\end{equation}
\end{corollary}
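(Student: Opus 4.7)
The plan is to combine the two preceding lemmas in a direct way, applying them with the trivial perturbation $h_1 \equiv 0$. Since the underlying measure in the statement is $\mu^{(N)}$ (rather than some perturbed version), the perturbation slot $h_1$ in Lemma~\ref{lem:concFar} should be set to zero, and the test function $h_2$ in that lemma should be taken to be the given $h$.

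First I would fix a small parameter $a \in (0, 1/2)$ and an auxiliary $\e' > 0$, both to be chosen much smaller than the target exponent $\e$. Applying Lemma~\ref{lem:keyFar} with the choice $h \equiv 0$ (which trivially satisfies the boundedness assumptions on $h, h', h''$) yields
$$
|(z-A)(z-B)|^{1/2}\,|m_{N,0}(z)-m(z)| \le C\,\frac{N^{\e'}}{N\eta^2}
$$
uniformly for $E \in [A-\kappa, B+\kappa]$ and $\eta \in (0, N^{-\e'})$. Since $(1+s)\cdot 0 = 0$ for every $s$, this bound automatically holds for $m_{N,(1+s) h_1}$ with $h_1 \equiv 0$ and all $s \in (-\beta, \beta)$. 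In particular the hypothesis \eqref{eqn:estim1} of Lemma~\ref{lem:concFar} is verified on the strip $\eta \in (N^{-1/2}, N^{-a})$ provided $a \le \e'$, say $a = \e'$.

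Next I would invoke Lemma~\ref{lem:concFar} with $h_1 \equiv 0$ and $h_2 = h$. The conclusion is
$$
\Prob^{\mu^{(N)}}\!\left(\left|\sum_{k=1}^N h(\lambda_k) - N\int h(s)\varrho(s)\,\rd s\right| > N^{2a+2\e'}\right) \le e^{-N^c}
$$
for some $c > 0$. Choosing $a$ and $\e'$ small enough (e.g. $a = \e' = \e/5$) makes $2a+2\e' < \e$, so the event in the statement of the corollary is contained in the event above, and the exponential bound transfers directly. This completes the proof sketch; there is no real obstacle, as both inputs have already been established and the only task is to match the exponents, which is routine once one notices that the $s$-family required by Lemma~\ref{lem:concFar} collapses to a single measure when $h_1 \equiv 0$.
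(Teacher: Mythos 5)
Your proof is correct and is exactly the route the paper intends: the corollary is stated there as an immediate consequence of Lemmas~\ref{lem:keyFar} and~\ref{lem:concFar}, obtained by taking $h_1\equiv 0$ (so the $s$-family collapses) and $h_2=h$, with the exponents matched by choosing $a,\e'$ small. The only quibble is the direction of your containment condition: to have $(N^{-1/2},N^{-a})\subset(0,N^{-\e'})$ you need $a\ge\e'$, not $a\le\e'$, but your concrete choice $a=\e'=\e/5$ satisfies this anyway, so nothing breaks.
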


As we mentioned in the proof of Lemma \ref{lem:keyFar}, for fixed $z$, as $V$ is $\mathscr{C}^4$, the functions 
$s\mapsto \Re\left(\frac{V'(E)-V'(s)}{z-s}\right), s\mapsto \Im\left(\frac{V'(E)-V'(s)}{z-s}\right)$  have their first two derivatives uniformly bounded for $z$ in any compact set.
Consequently, using Corollary \ref{lem:concFar1} and Lemma \ref{eqn:largDev1}, we have, for any $\e>0$,
\begin{equation}\label{eqn:VVanishes}
\int_{\mathbb{R}}\frac{V'(E)-V'(s)}{z-s}\left(\varrho^{(N)}_1(s)-\varrho(s)\right)\rd s=\OO\left(\frac{N^\e}{N}\right)
\end{equation}
uniformly for $z$ in any compact set of $\mathbb{C}$.
This estimate will be useful in the Subsection \ref{subsec:ImprAcc}.

\subsection{Proof of Proposition \ref{prop:ImprConc}}\label{subsec:Conc}

For the proofs of Propositions~\ref{prop:ImprConc} and \ref{prop:ImprAcc} we will
assume that $k\le N/2$, thus $k=\wh k$ and we remove the hat from the indices.

\subsubsection{Convexification}
This paragraph modifies the original measure $\mu^{(N)}$ into a log-concave one, without changing the rigidity properties.
This convexification first appeared in \cite{BouErdYau2012}. We state the main steps hereafter for the sake of completeness, and because
the explicit form of the convexified measure will be required in the next multiscale analysis, subsection \ref{subsec:multiscale}.

Let $\theta$ be a continuous nonnegative function with $\theta=0$ on $[-1,1]$ and $\theta''\geq 1$ for $|x|>1$.
We can take for example $\theta(x)=(x-1)^2 \mathds{1}_{x>1}+(x+1)^2 \mathds{1}_{x<-1}$ in the following.

\begin{definition} \label{def:locallyConstrained1}
For any fixed $ s, \ell > 0$,  independent of $N$,
define the Gibbs probability measure
$$
\rd\nu^{(s, \ell,N,c_1,\varepsilon)}=e^{-\beta N\cH_\nu}:=\frac{1}{Z^{(s,\ell) }}
e^{-\beta N \psi^{(s)}-\beta N\sum_{i,j}\psi_{i,j} -\beta  N (W+1) \sum_{\alpha= 1 } ^\ell  X_\alpha^2 }\rd\mu,
$$
with Hamiltonian
\be\label{Hnu}
\cH_\nu = \psi^{(s)}+\sum_{i,j}\psi_{i,j}+(W+1) \sum_{\alpha= 1 } ^\ell  X_\alpha^2 +
   \sum_{k=1}^N  \frac{1}{2}V(\lambda_k)-
\frac{1}{N} \sum_{1\leq i<j\leq N}\log (\lambda_j-\lambda_i),
\ee
where
\begin{itemize}
\item $W$ is the constant appearing in the lower bound (\ref{eqn:LSImu});
\item the function $g_\alpha$  is chosen such that
$\|g_\alpha\|_\infty+\|g_\alpha'\|_\infty+\|g_\alpha''\|_\infty<\infty$ and, for any $N$
and $k\in\llbracket 1,N\rrbracket$,
$$
g_\alpha'(\widetilde \gamma_k)=\sqrt{2}\cos\left(2\pi \left(k-\frac{1}{2}\right)\frac{\alpha}{2N}\right),
$$
where $\widetilde \gamma_k$ is defined by $\int_{-\infty}^{\widetilde \gamma_k}\varrho_V(s)\rd s=\frac{1}{N}(k-\frac{1}{2})$;
\item $X_\alpha = N^{-1/2} \sum_j  \left( g_\alpha(\lambda_j) -  g_\alpha(\widetilde\gamma_j)\right)$;
\item $\psi^{(s)}(\lambda)=N\theta\left(\frac{s}{N}\sum_{i=1}^N(\lambda_i-\widetilde \gamma_i)^2\right)$;
\item $\psi_{i,j}(\lambda)=\frac{1}{N}\theta\left(\sqrt{c_1\, N\, Q_{i,j}}(\lambda_i-\lambda_j)\right)$,
 where $c_1$ is a positive constant (to be chosen large enough but independent of $N$ 
in the next Lemma \ref{lem:convex}) and $Q_{ij}$ is defined in the following way. Let the function $m(n)$ be defined on $\ZZ$ by
$m(n)\in\llbracket -N+1,N\rrbracket$ and $m(n)\equiv n \; {\rm mod} (2N)$; let $d(k,\ell)=|m(k-l)|$ and $\varepsilon>0$
be a fixed small parameter; let
$$
R_{k,\ell}=\frac{1}{N}{\frac{\varepsilon^{2/3}}{\frac{d(k,\ell)^2}{N^2}+\varepsilon^2}}
$$
for any $k,\ell\in\llbracket -N+1,N\rrbracket$;  $Q=Q(\varepsilon)$ is then finally defined, for $i,j\in\llbracket 1,N\rrbracket$, by
$$
Q_{i,j}=R_{i,j}+R_{1-i,j}+R_{i,1-j}+R_{1-i,1-j}.
$$
\end{itemize}
Note that the measure $\nu^{(s,\ell,N,c_1,\varepsilon)}$ depends on all  five parameters but we will take the liberty to
omit some or all of them in formulas where they are irrelevant.
\end{definition}

Thanks to these linear statistics $X_\alpha$, the
convexity of $\nu$ is improved compared to the one of $\mu$,
in particular the following result was proved as Lemma 3.5 in \cite{BouErdYau2012}\footnote{
Note that in Lemma 3.5 in \cite{BouErdYau2012}, the constant $c$ was just required to be positive but following
 the reasoning  in \cite{BouErdYau2012} it can be made arbitrary large
by choosing $\ell$ sufficiently large.
}.

\begin{lemma}\label{lem:convex}
For any $C>0$ there are constants $\ell,s,c_1,\varepsilon>0$ depending only on $V$ and $C$,  such that for $N$ large enough
$\nu=\nu^{(s,\ell,N,c_1,\varepsilon)}$ satisfies, for any $\bv\in\RR^N$,
$$
\langle \bv,(\nabla^2 \cH_\nu) \bv\rangle\geq C\, \|\bv\|^2.
$$
\end{lemma}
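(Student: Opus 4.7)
The plan is to decompose $\nabla^2\cH_\nu$ as a sum of pieces and show that, for the right choice of parameters, they combine into something at least $C\,\Id$. For any $\bv\in\R^N$ I expand
\begin{align*}
\langle \bv,\nabla^2\cH_\nu\,\bv\rangle = {}& \tfrac12\sum_k V''(\lambda_k)v_k^2 + \tfrac{1}{2N}\sum_{i\neq j}\frac{(v_i-v_j)^2}{(\lambda_i-\lambda_j)^2}\\
&{}+ \langle \bv,\nabla^2\psi^{(s)}\,\bv\rangle + \sum_{i,j}\langle\bv,\nabla^2\psi_{i,j}\,\bv\rangle + (W+1)\sum_{\alpha=1}^{\ell}\langle\bv,\nabla^2 X_\alpha^2\,\bv\rangle.
\end{align*}
The first term is bounded below by $-W\|\bv\|^2$ by (\ref{eqn:LSImu}), and the three middle terms are pointwise non-negative: the log-repulsion contributes a positive Dirichlet-type quadratic form, and $\nabla^2\psi^{(s)}$ and $\nabla^2\psi_{i,j}$ are non-negative because $\theta''\geq 0$ and the arguments inside $\theta$ are convex functions of $\bla$ after the appropriate chain rule (using that $\theta'(x)\geq 0$ on $[0,\infty)$ for $\psi^{(s)}$, since its argument is non-negative).

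The key convexifying contribution comes from the $X_\alpha$-term. A direct computation gives
$$\langle\bv,\nabla^2 X_\alpha^2\,\bv\rangle = \tfrac{2}{N}\Bigl(\textstyle\sum_k g_\alpha'(\lambda_k)v_k\Bigr)^{\!2} + \tfrac{2X_\alpha}{\sqrt N}\sum_k g_\alpha''(\lambda_k)v_k^2.$$
The first summand is non-negative and is exactly the desired convexification. The second is an error of modulus at most $\tfrac{2|X_\alpha|}{\sqrt N}\|g_\alpha''\|_\infty\|\bv\|^2$; by the mean-value theorem $|X_\alpha|\le\|g_\alpha'\|_\infty\sqrt{\sum_k(\lambda_k-\widetilde\gamma_k)^2}$, so in the confinement region $\{\psi^{(s)}=0\}$ one has $|X_\alpha|/\sqrt N\le\|g_\alpha'\|_\infty/\sqrt s$, which is arbitrarily small when $s$ is large; outside this region the positive Hessian $\nabla^2\psi^{(s)}$ absorbs the error.

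The heart of the proof lies in the choice of $g_\alpha$: the prescription $g_\alpha'(\widetilde\gamma_k)=\sqrt 2\cos(\pi(k-\tfrac12)\alpha/N)$ ensures that the vectors $u_\alpha:=(N^{-1/2}g_\alpha'(\widetilde\gamma_k))_{k=1}^N$, $\alpha=1,\dots,\ell$, are the first $\ell$ elements of the orthonormal discrete-cosine basis of $\R^N$. Writing $\bv=P_\ell\bv+(I-P_\ell)\bv$ with $P_\ell$ the orthogonal projection onto $\mathrm{span}(u_1,\dots,u_\ell)$ and replacing $g_\alpha'(\lambda_k)$ by $g_\alpha'(\widetilde\gamma_k)$ (at a cost absorbed by $\psi^{(s)}$ and the smoothness of $g_\alpha'$), the first summand above yields $2(W+1)\|P_\ell\bv\|^2$. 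For the orthogonal complement, I would invoke a discrete fractional Sobolev-type inequality for the Dirichlet form $\tfrac{1}{2N}\sum_{i\neq j}(v_i-v_j)^2/(\lambda_i-\lambda_j)^2$: provided the particles are not abnormally close, this form bounds below $c_0\ell\,\|(I-P_\ell)\bv\|^2$; the role of $\psi_{i,j}$ is precisely to enforce the minimal-spacing control $(\lambda_i-\lambda_j)^2\gtrsim 1/(NQ_{i,j})$ needed for this step (with $c_0$ depending on $c_1,\varepsilon$).

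Combining these two estimates gives $\langle\bv,\nabla^2\cH_\nu\,\bv\rangle \ge (2(W+1)-W-o_s(1))\|P_\ell\bv\|^2 + (c_0\ell-W-o_s(1))\|(I-P_\ell)\bv\|^2$, where both coefficients can be made larger than any prescribed $C$ by first taking $\ell$ large, then tuning $s$, $c_1$ and $\varepsilon$ in that order. The main technical obstacle is the passage from $g_\alpha'(\lambda_k)$ to $g_\alpha'(\widetilde\gamma_k)$ uniformly on the whole phase space: on configurations where rigidity fails, the argument has to rely on positivity of $\nabla^2\psi^{(s)}$ and $\nabla^2\psi_{i,j}$ to absorb all error terms, and balancing these tolerances against $W$, $C$ and $\ell$ is precisely the bookkeeping that dictates the order of choice of the four parameters $\ell,s,c_1,\varepsilon$.
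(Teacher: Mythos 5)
First, note that the paper does not actually prove this lemma: it is quoted from Lemma 3.5 of \cite{BouErdYau2012}, with a footnote asserting that the constant there (only claimed to be positive) ``can be made arbitrarily large by choosing $\ell$ sufficiently large.'' Your reconstruction of the underlying argument is essentially the right architecture: the decomposition of the Hessian, the non-negativity of $\nabla^2\psi^{(s)}$ and $\nabla^2\psi_{i,j}$, the identity $\nabla^2(X_\alpha^2)=2\nabla X_\alpha\otimes\nabla X_\alpha+2X_\alpha\nabla^2X_\alpha$ with the error controlled on $\{\psi^{(s)}=0\}$ and absorbed by $\nabla^2\psi^{(s)}$ elsewhere, the observation that $(N^{-1/2}g_\alpha'(\widetilde\gamma_k))_k$ are the first $\ell$ discrete-cosine modes, and the either/or dichotomy by which $\psi_{i,j}$ plus the logarithmic Dirichlet form yields $c_1\sum_{i<j}Q_{i,j}(v_i-v_j)^2$, which controls the high-frequency part.

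There is, however, a genuine gap in your final bookkeeping. The coefficient you obtain on $\|P_\ell\bv\|^2$ is $2(W+1)-W-o_s(1)=W+2-o_s(1)$, and this quantity does \emph{not} grow with $\ell$: enlarging $\ell$ only enlarges the subspace $\mathrm{ran}\,P_\ell$, it does not increase the weight $2(W+1)$ that each individual mode $u_\alpha$ receives, since the $u_\alpha$ are orthonormal. Nor can the pairwise term rescue the lowest modes: the requirement that $\psi_{i,j}$ vanish on typical (rigid) configurations --- which is indispensable for Lemma \ref{lem:equivalence} and forces roughly $c_1\varepsilon^{2/3}=O(1)$ --- caps the contribution of $c_1\sum Q_{i,j}(v_i-v_j)^2$ to the mode $\alpha=1$ at an absolute constant. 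Hence the direction $\bv=u_1$, evaluated at $\bla=\widetilde\gamma$, receives a Hessian bounded above by a constant depending only on $V$, uniformly in $\ell,s,c_1,\varepsilon$. Your claim that ``both coefficients can be made larger than any prescribed $C$ by first taking $\ell$ large'' is therefore unsupported by your own estimates: only the high-frequency coefficient $c_0\ell-W$ grows. To reach an arbitrary $C$ one must additionally let the prefactor $(W+1)$ of $\sum_\alpha X_\alpha^2$ in Definition \ref{def:locallyConstrained1} depend on $C$ (e.g.\ replace it by $W+C$, which is harmless for Lemma \ref{lem:equivalence} since $\E(NX_\alpha^2)\le N^\e$); this is a fifth parameter, not among the four the statement allows you to tune. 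You should either make that modification explicit or acknowledge that the argument as written only delivers the positive lower bound of the original \cite{BouErdYau2012} statement.
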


An important fact for the measure $\nu^{(s,\ell)}$ is that it does not deviate much from $\mu$
concerning events with very small probability. More precisely,
the following result holds.

We say that a sequence of events $(A_N)_{N\geq 1}$ is {\it
exponentially small} for a sequence of probability measures
$(m_N)_{N\geq 1}$ if there are constants $ C, c>0$ such that, for any $N$, we have
$$
m_N(A_N)\leq C e^{- N^{c}}.
$$

\begin{lemma}\label{lem:equivalence}
For any fixed choice of the parameters  $s,\ell,c_1,\varepsilon$ defining $\nu^{(N)}$, the measures
$(\mu^{(N)})_{N\geq 1}$ and  $(\nu^{(N)})_{N\geq 1}$
have the same exponentially small events.
In particular,
for any $a>0$,
concentration
 at scale $a$ for $(\mu^{(N)})_{N\geq 1}$ is equivalent to concentration at scale $a$ for  $(\nu^{(N)})_{N\geq 1}$.
\end{lemma}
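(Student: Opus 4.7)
The strategy is to bound the Radon--Nikodym derivative $\rd\nu/\rd\mu = e^{-\beta N\Phi}/Z^{(s,\ell)}$, with $\Phi := \psi^{(s)}+\sum_{i,j}\psi_{i,j}+(W+1)\sum_\alpha X_\alpha^2 \ge 0$, by universal quantities on a set of near-full $\mu$-measure, and then to transfer this control in both directions. Given $\eta>0$, I would take the good event $\Omega_N$ to be the intersection of the scale-$1$ rigidity event $\{|\lambda_k-\gamma_k|\le\epsilon\text{ for all }k\}$ from (\ref{eqn:largDev1}) (with $\epsilon$ a small fixed constant, $4s\epsilon^2<1$) with the linear-statistics events $\{|X_\alpha|\le N^{\eta/2-1/2}\}$, $\alpha=1,\dots,\ell$, obtained from Corollary~\ref{lem:concFar1} applied to $h=g_\alpha$ combined with the Riemann-sum identity $\sum_j g_\alpha(\widetilde\gamma_j)-N\int g_\alpha\varrho=O(1)$ (standard since $\widetilde\gamma_j$ are midpoint quantiles). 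A union bound over the finitely many $\alpha$'s shows $\mu(\Omega_N)\ge 1-e^{-N^c}$. On $\Omega_N$, $\psi^{(s)}=0$ is immediate (argument of $\theta$ is $\le 4s\epsilon^2<1$), $(W+1)\sum_\alpha X_\alpha^2\le(W+1)\ell N^{\eta-1}$, and --- this is the delicate step --- $\sum_{i,j}\psi_{i,j}\le N^{\eta-1}$. The latter uses the vanishing of $\theta$ on $[-1,1]$: on $\Omega_N$ the arguments $\sqrt{c_1 N Q_{i,j}}(\lambda_i-\lambda_j)$ differ from their classical values $\sqrt{c_1 N Q_{i,j}}(\gamma_i-\gamma_j)$ by $O(\epsilon)$, and one checks using the explicit decay of $Q_{i,j}$ that pairs with nonzero contribution localize near the spectral edges, with total weight subpolynomial in $N$. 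Altogether $N\Phi\le C N^\eta$ on $\Omega_N$.

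With this, both directions of the equivalence follow routinely. Since $\mu(\Omega_N)\ge\tfrac12$ gives $Z^{(s,\ell)}\ge\tfrac12 e^{-\beta C N^\eta}$ and $\Phi\ge 0$, we get $\rd\nu/\rd\mu\le 2 e^{\beta C N^\eta}$, so any $\mu$-exponentially small sequence $A_N$ with $\mu(A_N)\le e^{-N^c}$ satisfies $\nu(A_N)\le 2 e^{\beta C N^\eta-N^c}\le e^{-N^{c/2}}$ provided $\eta<c$. Conversely, writing $\mu(A_N)=Z^{(s,\ell)}\int_{A_N}e^{\beta N\Phi}\rd\nu$ and splitting $A_N$ along $\Omega_N$ yields $\mu(A_N\cap\Omega_N)\le e^{\beta C N^\eta}\nu(A_N)$ and $\mu(A_N\cap\Omega_N^c)\le e^{-cN}$; so $\nu$-exponentially small implies $\mu$-exponentially small. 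Since $\eta>0$ is arbitrary, the two measures share the same exponentially small events.

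For the concentration statement at scale $a$, the additional point is that the centerings $\E^\mu\lambda_k$ and $\E^\nu\lambda_k$ need to be compared. Applying the equivalence of exponentially small events just proved to the event $\{|\lambda_k-\E^\mu\lambda_k|\ge N^{-2/3+a+\e}(\hat k)^{-1/3}\}$ and using the polynomial tail $\P^\mu(|\lambda_k|\ge N^C)\le e^{-N^{c}}$ implied by (\ref{eqn:largDev1})--(\ref{eqn:GrowthCondition}) gives $|\E^\nu\lambda_k-\E^\mu\lambda_k|\le N^{-2/3+a+\e}(\hat k)^{-1/3}+N^C e^{-N^c}\le 2 N^{-2/3+a+\e}(\hat k)^{-1/3}$ for $N$ large, after which a triangle inequality produces $\nu$-concentration from $\mu$-concentration, and vice versa.

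The main obstacle is precisely the bound $\sum_{i,j}\psi_{i,j}\le N^{\eta-1}$ on $\Omega_N$. The parameters $s,\ell,c_1,\varepsilon$ are tuned in Lemma~\ref{lem:convex} so that the soft-repulsion terms contribute nontrivially to the Hessian of $\cH_\nu$; in particular one cannot arrange for all arguments of $\theta$ to vanish. Instead one must combine the Cauchy-type decay of $R_{k,\ell}$ and the reflected terms $R_{1-i,j}+R_{i,1-j}+R_{1-i,1-j}$ in $Q_{i,j}$ (which handle the edges) with the scale-$1$ rigidity to show that, outside a narrow window near the endpoints, $\sqrt{c_1 N Q_{i,j}}(\gamma_i-\gamma_j)$ stays inside $[-1,1]$ (killing $\theta$), and that the remaining edge contributions can be summed to at most $N^{\eta-1}$ using the square-root vanishing of $\varrho$.
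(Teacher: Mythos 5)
Your overall architecture --- bound the Radon--Nikodym derivative $\rd\nu/\rd\mu$ on a good set of near-full $\mu$-measure using $\Phi\ge 0$ and a Jensen-type lower bound on the normalization, transfer exponentially small events in both directions, and then compare the centerings $\E^\mu\lambda_k$ and $\E^\nu\lambda_k$ via a tail estimate --- is exactly the paper's route (the paper delegates the first statement to Lemma 3.6 of \cite{BouErdYau2012} and only spells out the one new ingredient needed for $\mathscr{C}^4$ potentials, namely $\E(NX_\alpha^2)\le N^\e$ from Lemma~\ref{lem:concFar}/Corollary~\ref{lem:concFar1}, which you also invoke). Your treatment of the second statement matches the paper's use of (\ref{eqn:conseq}) together with the tail decay (\ref{eqn:corNu}).

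The gap is in the step you yourself flag as the main obstacle, the bound $\sum_{i,j}\psi_{i,j}\le N^{\eta-1}$ on $\Omega_N$. The proposed mechanism --- kill $\theta$ outside a narrow edge window and sum the ``remaining edge contributions'' --- cannot work, because there is no intermediate regime. Writing $u=d(i,j)/N$, one has $\sqrt{c_1NQ_{i,j}}\,|\gamma_i-\gamma_j|\lesssim \sqrt{c_1}\,\varepsilon^{1/3}|\gamma_i-\gamma_j|/\sqrt{u^2+\varepsilon^2}$, and since $|\gamma_i-\gamma_j|\le C_V u^{2/3}$ with equality in order near the edge (square-root vanishing of $\varrho$), the supremum of this quantity over all pairs is a constant of order $\sqrt{c_1}C_V$, attained on the set $\{\min(i,j)\lesssim\varepsilon N,\ |i-j|\asymp\varepsilon N\}$, which contains $\asymp\varepsilon^2N^2$ pairs. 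If that constant exceeds $1$, a positive fraction of these pairs activates $\theta$, each contributing $\asymp N^{-1}$ to $\sum\psi_{i,j}$, so $N\sum_{i,j}\psi_{i,j}\asymp \varepsilon^2N^2$ and the scheme collapses; even a single activated edge index $i$ drags along $\asymp\varepsilon N$ values of $j$ and hence an order-one contribution to $\sum_j\psi_{i,j}$, so no restriction of $i$ to a ``narrow window'' can produce an $N^{\eta-1}$ bound. (One also cannot argue that the large contribution is deterministic and cancels against the normalization: its fluctuation under scale-$1$ rigidity is of order $N^2\epsilon$.) The only way the lemma can hold is that the parameters $(c_1,\varepsilon)$ of Definition~\ref{def:locallyConstrained1} satisfy the deterministic bound $\sup_{i,j}\sqrt{c_1NQ_{i,j}}\,|\gamma_i-\gamma_j|<1$ strictly, so that on the scale-$1$ rigidity event, with $\epsilon$ small compared to the resulting slack times $\varepsilon^{2/3}c_1^{-1/2}$, \emph{every} $\psi_{i,j}$ vanishes identically. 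This uniform strict inequality is precisely what the exponent $2/3$ in $R_{k,\ell}$ (matched to the edge behaviour of $\varrho$) and the interplay between $c_1$, $\varepsilon$ and $\ell$ in Lemma~\ref{lem:convex} are designed to guarantee, and it is what you must verify in place of summing a nonexistent small remainder.
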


\begin{proof}
The first statement can be proved as Lemma 3.6 in \cite{BouErdYau2012}, except that in that paper we used
$\E(N X_\alpha^2)<(\log N)^2$, an estimate true in the context of an analytic potential $V$. Here we 
only assume that $V$ is $\mathscr{C}^4$; then by Lemma \ref{lem:concFar}, for any $\e>0$
and for large enough $N$,
we have $\E(N X_\alpha^2)\leq  N^\e$. As $\e$ is arbitrarily small the remainder of the proof goes in the same way 
as Lemma 3.6 in \cite{BouErdYau2012}.

Note that the second statement of the lemma is not a completely direct application of the first one: if
rigidity at scale $a$ holds for $(\mu^{(N)})_{N\geq 1}$, by using the first statement
we obtain that for any $\varepsilon>0$ there are $c>0$ and $N_0$
such that for all $N\ge N_0$ and $k\in\llbracket 1,N\rrbracket$
\begin{equation}\label{eqn:conseq}
\P^{\nu^{(N)}}\left(|\lambda_k-\E^{\mu^{(N)}}\lambda_k|> N^{-\frac{2}{3}+a+\e}(\wh k)^{-\frac{1}{3}}\right)
\leq  e^{- N^c}.
\end{equation}
However, to obtain concentration for $\nu^{(N)}$, we need to estimate the
difference $\E^{\mu^{(N)}} \lambda_k -\E^{\nu^{(N)}} \lambda_k$.

We know from (\ref{eqn:BPS2}) that for any $\kappa>0$, there is a $C>0$ such that
for $x\not\in[A-\kappa,B+\kappa]$, we have
\begin{equation}\label{eqn:Boutet}
\varrho_1^{(N,\mu)}(x)\leq (|x|+1)^{-C N}.
\end{equation}
Similarly to Lemma 3.6 in \cite{BouErdYau2012}, for any $\e>0$ there is a $c>0$ such that for any event $A$,
\begin{equation}\label{eqn:A}
\Prob_\nu(A)\leq e^{c N^\e}\Prob_\mu(A).
\end{equation}
Equations (\ref{eqn:Boutet}) and (\ref{eqn:A}) imply that for some positive constants $c$ and $c'$,
\begin{equation}\label{eqn:corNu}
\varrho_1^{(N,\nu)}(x)\leq (|x|+1)^{-c N}e^{c' N^\e}.
\end{equation}
Equation (\ref{eqn:conseq}) together with the large-deviation type estimate (\ref{eqn:corNu}) imply that
$$
|\E^{\mu^{(N)}}(\lambda_k)-\E^{\nu^{(N)}}(\lambda_k)|=\OO(N^{-\frac{2}{3}+a+\e}(\wh k)^{-\frac{1}{3}}),
$$
and subsequently that concentration holds for $\nu^{(N)}$ at scale $a$.
That concentration for $\nu$ implies concentration
for $\mu$ can be proved in a similar way (it is easier because (\ref{eqn:A}) is not needed,
the necessary decay follows directly from \eqref{eqn:Boutet}).
\end{proof}

\subsubsection{The multiscale analysis}\label{subsec:multiscale}

This subsection is similar to subsection 3.2 in \cite{BouErdYau2011}, but we  adapted the arguments
in the scalings
to improve the rigidity scale up to the edges.

In this subsection, $s,\ell,c_1,\varepsilon$ are chosen so that $\nu^{(N)}$ satisfies the convexity relation from
Lemma \ref{lem:convex} with $C=10W$.
We now define the locally constrained measures, up to the edge; these measures ensure strict convexity bounds
when knowing rigidity at scale $a$.

\begin{definition} \label{def:locallyConstrained2}
Let $\varepsilon>0$.
For any given $k\in\llbracket1,N\rrbracket$ and any integer $1\leq  M\leq  N/2$, we denote
$$
I^{(k,M)}=
\left\{
\begin{array}{ccc}
\llbracket k,k+M-1\rrbracket&\mbox{if}& k\leq N/2\\
\llbracket k-M+1,k\rrbracket&\mbox{if}& k\geq N/2
\end{array}
\right..
$$
Moreover, let
$$
\phi^{(k,M)}=
\sum_{i<j,i,j\in I^{(k,M)}}\theta\left(\frac{N^{\frac{2}{3}-\epsilon}(\hat k)^{\frac{1}{3}}}
{M}(\lambda_i-\lambda_j)\right).
$$
We define the probability measure
\begin{equation}\label{eqn:omega}
\rd\omega^{(k,M)}:=\frac{1}{Z}e^{-\beta\phi^{(k,M)}}\rd\nu,
\end{equation}
where $Z=Z_{\omega^{(k,M)}}$.
The measure
$\omega^{(k,M)}$ will be referred to as locally constrained
transform of $\nu$, around $k$, with width $M$.
The dependence of
the measure on $\epsilon$ will be suppressed in the notation.
\end{definition}

We will also frequently use the following notation
for block averages in any sequence $(x_i)_i$:
$$
x_k^{[M]}:=\frac{1}{M}\sum_{i\in I^{(k,M)}}x_i.
$$

The reason for introducing these locally constrained measures is that they improve the convexity  in $I^{(k,M)}$
on the subspace orthogonal to the constants,
 as explained in the following lemma which is a slight modification of
Lemma 3.8 of \cite{BouErdYau2011}.

\begin{lemma}\label{lem:localConvexity}
Write  the
probability measure
$\omega^{(k,M)}$  from \eqref{eqn:omega} as
$\omega^{(k,M)}=\frac{1}{\widetilde Z}
e^{-\beta N(\mathcal{H}_1+\mathcal{H}_2)}\rd\lambda,$
where we denote
\begin{align*}
\mathcal{H}_1&:=\frac{1}{N}\phi^{(k,M)}-
\frac{1}{{ 2}N}\sum_{i< j, i,j\in I^{(k,M)}}\log|\lambda_i-\lambda_j|
,\\
\mathcal{H}_2&:=\mathcal{H}_\nu+\frac{1}{{ 2}N}
\sum_{i< j, i,j\in I^{(k,M)}}\log|\lambda_i-\lambda_j|.
\end{align*}
Then $\nabla^2 \mathcal{H}_2\geq 0$
and  denoting $\bv  =(v_i)_{i\in I^{(k,M)}}$, we also have
$$
\langle\bv,(\nabla^2 \mathcal{H}_1)\bv\rangle\geq
\frac{1}{2N}\left(\frac{N^{\frac{2}{3}-\varepsilon}(\hat k)^{\frac{1}{3}}}{M}\right)^2
\sum_{i,j\in I^{(k,M)},i<j} (v_i-v_j)^2.
$$
\end{lemma}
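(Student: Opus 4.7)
The plan is to verify the two Hessian bounds separately, both leveraging the strong convexity of $\mathcal{H}_\nu$ furnished by Lemma~\ref{lem:convex}. The main observation is that modifying $\mathcal{H}_\nu$ by $\pm\frac{1}{2N}\sum_{i<j\in I^{(k,M)}}\log|\lambda_i-\lambda_j|$ only redistributes log-repulsion, so all log coefficients will remain nonpositive, and the only real work is a pair-by-pair comparison between the curvatures of $\theta$ and of $-\log$.

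For $\nabla^2\mathcal{H}_2\geq 0$, I would decompose $\mathcal{H}_\nu=R+L$, with $L=-\frac{1}{N}\sum_{i<j}\log|\lambda_i-\lambda_j|$ the full Vandermonde interaction and $R$ the remainder ($\psi^{(s)}$, $\sum_{i,j}\psi_{i,j}$, $(W+1)\sum_\alpha X_\alpha^2$, and $\frac{1}{2}\sum_k V(\lambda_k)$). In $\mathcal{H}_2=R+L+\frac{1}{2N}\sum_{i<j\in I^{(k,M)}}\log|\lambda_i-\lambda_j|$ every log coefficient is $-1/N$ or $-1/(2N)$, hence nonpositive, and since $-\log|\lambda_i-\lambda_j|$ has the rank-one PSD Hessian $(\lambda_i-\lambda_j)^{-2}(e_i-e_j)(e_i-e_j)^\top$, the total log contribution to $\nabla^2\mathcal{H}_2$ is PSD. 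Lemma~\ref{lem:convex} (with $\ell$ chosen large enough so that the Fourier-like $X_\alpha^2$ dominate the possible $-W$ from $V''$ and the other pieces of $R$ are manifestly convex) supplies $\nabla^2 R\geq 10W\,\mathrm{Id}$ on its own, and the claim follows.

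For the quantitative bound on $\mathcal{H}_1$, I would set $\alpha:=N^{2/3-\varepsilon}(\hat k)^{1/3}/M$ and compute pair by pair. Each pair $i<j\in I^{(k,M)}$ contributes to $\langle\bv,(\nabla^2\mathcal{H}_1)\bv\rangle$ the quadratic form
$$
\frac{(v_i-v_j)^2}{N}\left(\alpha^2\theta''(\alpha(\lambda_i-\lambda_j))+\frac{1}{2(\lambda_i-\lambda_j)^2}\right),
$$
to which I would apply a simple dichotomy. If $|\lambda_i-\lambda_j|\geq 1/\alpha$, the hypothesis $\theta''\geq 1$ on $\{|x|>1\}$ makes the first term at least $\alpha^2$; if $|\lambda_i-\lambda_j|<1/\alpha$, then $(\lambda_i-\lambda_j)^{-2}>\alpha^2$ makes the second term at least $\alpha^2/2$. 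Either way the bracket is at least $\alpha^2/2$, and summing over the pairs in $I^{(k,M)}$ yields the stated lower bound.

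The subtlest point, though it is really just accounting, is the calibration of the $\frac{1}{2N}$ coefficient in $\mathcal{H}_1$: it is exactly half of what $\mathcal{H}_\nu$ assigns to the log interaction inside $I^{(k,M)}$, and its presence is precisely what lets the $\theta$-repulsion of $\phi^{(k,M)}$ take over continuously across the threshold $|\lambda_i-\lambda_j|=1/\alpha$. The non-$\mathscr{C}^2$ behaviour of $\theta$ at $\pm 1$ requires no extra care since the Hessian bounds are understood in the distributional sense on the open simplex where the particles are strictly ordered.
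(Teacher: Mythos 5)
Your treatment of $\mathcal{H}_1$ is correct and coincides with the paper's: the per-pair dichotomy at $|\lambda_i-\lambda_j|=M N^{-2/3+\varepsilon}(\hat k)^{-1/3}$ between the curvature of $\theta$ and that of the retained half of the logarithmic repulsion gives the bracket $\geq\alpha^2/2$ in all cases, and your remark about interpreting the Hessian distributionally at the kink of $\theta$ is harmless.

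The argument for $\nabla^2\mathcal{H}_2\geq 0$ has a genuine gap. You discard the entire logarithmic interaction (correctly noting that its Hessian contribution is positive semidefinite) and then invoke Lemma \ref{lem:convex} to assert $\nabla^2 R\geq 10W\,\mathrm{Id}$ for the non-logarithmic part $R=\mathcal{H}_\nu-L$. Lemma \ref{lem:convex} does not say this: it bounds the Hessian of the full $\mathcal{H}_\nu$, log interaction included, and since $\nabla^2 L\geq 0$ one cannot subtract $L$ and keep the lower bound. Moreover the claim is false in general (even at strength $0$ rather than $10W$): $R$ contains $\frac{1}{2}\sum_k V(\lambda_k)$, whose Hessian can be as negative as $-W\,\mathrm{Id}$, and the only candidate to compensate inside $R$ is $(W+1)\sum_{\alpha=1}^{\ell}X_\alpha^2$, whose genuinely positive part $\sum_\alpha\nabla X_\alpha\otimes\nabla X_\alpha$ has rank at most $\ell$ (a fixed constant) and therefore cannot dominate $-W\,\mathrm{Id}$ on all of $\R^N$; the convexification behind Lemma \ref{lem:convex} really does use the log interaction in the remaining directions. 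This is precisely why the lemma moves only \emph{half} of the internal log interaction into $\mathcal{H}_1$: writing $\mathcal{V}=\frac{1}{2}\sum_k V(\lambda_k)$, one has $\mathcal{H}_2-\frac{1}{2}\mathcal{V}-\frac{1}{2}\mathcal{H}_\nu$ equal to a sum of convex terms (half of $\psi^{(s)}$, $\sum\psi_{i,j}$, $(W+1)\sum X_\alpha^2$ and the leftover log terms, all with nonpositive log coefficients), so that $\nabla^2\mathcal{H}_2\geq\frac{1}{2}\nabla^2\mathcal{H}_\nu+\frac{1}{2}\nabla^2\mathcal{V}\geq\frac{1}{2}(10W)-\frac{1}{2}W\geq 0$, where Lemma \ref{lem:convex} is applied to the intact $\mathcal{H}_\nu$. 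Your proof can be repaired by this bookkeeping, but as written the key inequality is unsupported.
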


\begin{proof}
Note that in the modification $\mathcal{H}_2$ of $\mathcal{H}_\nu$, we only removed half of the
pairwise interactions\footnote{This minor point was not made explicit in \cite{BouErdYau2012}.}
 between the $\lambda$'s in $I^{(k, M)}$. This allows us to use Lemma \ref{lem:convex}
  (with the choice $c=10W$)
to prove the convexity of $\mathcal{H}_2$.
Denoting $\mathcal{V}=\mathcal{V}(\bla): = \frac{1}{2}\sum_j V(\lambda_j)$,
  we indeed have
$$\nabla^2\mathcal{H}_2=\nabla^2(\mathcal{H}_2-\frac{1}{2}\mathcal{V})+\frac{1}{2}\nabla^2\mathcal{V}
\geq \frac{1}{2}\nabla^2\mathcal{H}_\nu+\frac{1}{2}\nabla^2\mathcal{V}
\geq \frac{1}{2}10 W-\frac{1}{2}W\geq 0.
$$
In the first inequality we used that
$$
 \cH_2 - \frac{1}{2}\mathcal{V} -    \frac{1}{2}\mathcal{H}_\nu = \frac{1}{2}
 \Big( \psi^{(s)}+\sum_{i,j}\psi_{i,j}+(W+1) \sum_{\alpha= 1 } ^\ell  X_\alpha^2\Big)
$$
from \eqref{Hnu} and each term on the right hand side is convex by their explicit definitions.

Concerning the lower bound for $\nabla^2\mathcal{H}_1$, a simple calculation gives
$$
\langle \bv,(\nabla^2 \mathcal{H}_1)\bv\rangle\geq
\frac{1}{2N}\sum_{i< j, i,j\in I^{(k,M)}}(v_i-v_j)^2\left(\frac{1}{(\lambda_i-\lambda_j)^2}
+
\left(\frac{N^{\frac{2}{3}-\epsilon}(\hat k)^{\frac{1}{3}}}{M}\right)^2
  \mathds{1}\Big\{|\lambda_i-\lambda_j|>
\frac{M}{N^{\frac{2}{3}-\epsilon}(\hat k)^{\frac{1}{3}}}\Big\}
\right),$$
which concludes the proof.
\end{proof}

The above convexity bound on $\mathcal{H}_1$ allows us
 to get an improved concentration for functions depending on differences
between particles, as shown in the following lemma:

\begin{lemma}[Lemma 3.9 in \cite{BouErdYau2011}]
\label{lem:localLSI} Decompose  the coordinates $\lambda=(\lambda_1, \dots ,\lambda_N)$
of a point in  $\RR^N = \RR^m \times \RR^{N-m}$ as $\bla=(x,y)$, where $x\in \RR^m$, $y\in \RR^{N-m}$.
Let $\omega=\frac{1}{Z}e^{-N \mathcal{H}}$ be a probability measure on $\RR^N = \RR^m \times \RR^{N-m}$ such that
$\mathcal{H}=\mathcal{H}_1+\mathcal{H}_2$, with $\mathcal{H}_1=\mathcal{H}_1(x)$ depending only on
the $x$ variables and $\mathcal{H}_2 =\mathcal{H}_2(x,y)$
depending on all coordinates. Assume that, for any $\lambda\in\RR^N$,
$\nabla^2\mathcal{H}_2(\lambda)\geq 0$.
Assume moreover that $\mathcal{H}_1(x)$ is independent of
 $x_1 + \dots + x_m$, i.e., $\sum_{i=1}^m \partial_i \mathcal{H}_1(x) =0$
 and  that  for any $x, v\in\RR^m$,
$$
\langle\bv, (\nabla^2\mathcal{H}_1(x))\bv\rangle\geq  \frac{\xi}{m}  \sum_{i, j =1}^m  |v_i-v_j|^2 \;
$$
with some positive $\xi>0$.
Then for any  function  of the form $f(\lambda)=F( \sum_{i=1}^mv_i x_i)$, where
$\sum_iv_i=0$  and $F:\RR\to\RR$ is any smooth function,
 we have
$$
\int f^2\log f^2\rd\omega-\left(\int f^2\rd\omega\right)\log\left(\int f^2\rd\omega\right)
 \leq \frac{1}{\xi  N}\int|\nabla f|^2\rd\omega.
$$
\end{lemma}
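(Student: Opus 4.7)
The plan is to reduce the inequality to the classical Bakry--\'Emery theorem on a lower-dimensional hyperplane, obtained from $\omega$ by two successive marginalizations that leave both $\int f^2\,d\omega$ and $\int|\nabla f|^2\,d\omega$ unchanged.

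First I would integrate out the $y$-variables. Since $\mathcal{H}_2$ is jointly convex in $(x,y)$, the Pr\'ekopa--Leindler theorem implies that
\begin{equation*}
\tilde{\mathcal{H}}_2(x) \deq -\frac{1}{N}\log\int e^{-N\mathcal{H}_2(x,y)}\,dy
\end{equation*}
is convex in $x$, and the $x$-marginal of $\omega$ has density proportional to $e^{-N(\mathcal{H}_1(x)+\tilde{\mathcal{H}}_2(x))}$. As $f$ depends only on $x$, both sides of the desired inequality are unchanged when $\omega$ is replaced by this marginal.

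Next I would quotient out the neutral direction of $\mathcal{H}_1$. The hypothesis $\sum_i\partial_i\mathcal{H}_1=0$ means that $\mathcal{H}_1$ is invariant under $x\mapsto x+t\mathbf{1}$, and the condition $\sum_i v_i=0$ makes $u=\sum_i v_i x_i$ invariant under the same shift. Writing $x=\bar x\mathbf{1}+\tilde x$ with $\tilde x\cdot\mathbf{1}=0$, one has $f=F(v\cdot\tilde x)$ and $\nabla_x f\perp\mathbf{1}$, so $|\nabla_x f|^2=|\nabla_{\tilde x} f|^2$. Integrating $\bar x$ out of the $x$-marginal produces a measure $\tilde\omega$ on the hyperplane with density proportional to $e^{-N(\mathcal{H}_1(\tilde x)+G(\tilde x))}$, where
\begin{equation*}
G(\tilde x) \deq -\frac{1}{N}\log\int e^{-N\tilde{\mathcal{H}}_2(\bar x\mathbf{1}+\tilde x)}\,d\bar x
\end{equation*}
is again convex by a second application of Pr\'ekopa. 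The desired inequality now becomes the log-Sobolev inequality for $f$ on $\tilde\omega$.

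On the hyperplane $\tilde x\cdot\mathbf{1}=0$, the identity $\sum_{i,j}(v'_i-v'_j)^2=2m|v'|^2$, valid whenever $\sum_i v'_i=0$, turns the hypothesis on $\mathcal{H}_1$ into $\langle v',\nabla^2\mathcal{H}_1(\tilde x)v'\rangle\geq 2\xi|v'|^2$. Together with the convexity of $G$, this shows that the log-density of $\tilde\omega$ has Hessian at least $2\xi N$ in every tangent direction, so the Bakry--\'Emery criterion yields a log-Sobolev inequality for $\tilde\omega$ with constant $1/(\xi N)$; undoing the two marginalizations gives exactly the stated estimate. The main point that requires care is the legitimacy of the two marginalizations --- the relevant integrability of $e^{-N\mathcal{H}_2(x,\cdot)}$ and $e^{-N\tilde{\mathcal{H}}_2(\cdot\mathbf{1}+\tilde x)}$ --- and the fact that the convexity supplied by Pr\'ekopa may be degenerate, which is harmless here because the strict convexity needed for Bakry--\'Emery comes entirely from $\mathcal{H}_1$ in directions orthogonal to $\mathbf{1}$.
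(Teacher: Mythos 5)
The paper does not actually prove this lemma --- it is quoted verbatim, with attribution, from \cite{BouErdYau2011} --- so there is no in-paper proof to compare against. Your argument is correct and is essentially the argument behind the original lemma: marginalize over the variables the observable does not see (the $y$'s and the direction $\mathbf{1}$), use Pr\'ekopa's theorem to keep the integrated-out part of the Hamiltonian convex, and apply Bakry--\'Emery on the hyperplane $\sum_i x_i=\mathrm{const}$, where the identity $\sum_{i,j}(v'_i-v'_j)^2=2m|v'|^2$ for $\sum_i v'_i=0$ converts the hypothesis on $\nabla^2\mathcal{H}_1$ into the uniform lower bound $2\xi N$ on the tangential Hessian and yields exactly the constant $1/(\xi N)$. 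Two routine points are worth making explicit: the finiteness of the two partial integrals (over $y$, and over $\bar x$) follows from $\omega$ being a probability measure together with Fubini, since the decay in those directions must come entirely from $\mathcal{H}_2$; and the Pr\'ekopa marginals $\tilde{\mathcal{H}}_2$ and $G$ are only known to be convex, not $\mathscr{C}^2$, so one should invoke the version of the Bakry--\'Emery criterion valid for potentials of the form ``uniformly convex plus convex'' (or argue by mollification) rather than the textbook smooth statement.
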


A direct application of Lemmas \ref{lem:localConvexity} and \ref{lem:localLSI}
gives, by Herbst's lemma,
the following concentration estimate.

\begin{corollary}\label{cor:ConcentrationDifferences}
For any function $f(\{\lambda_i,i\in I^{(k,M)}\})=\sum_{I^{(k,M)}}v_i\lambda_i$ with $\sum_i v_i=0$
and for any $u>0$ we have,
for some constant $c>0$ that depends only on $\beta$ and $V$,
$$
\P^{\omega^{(k,M)}}(|f-\E^{\omega^{(k,M)}}(f)|>u)
\leq 2\exp\left(-\frac{c}{M |v|^2}\left( N^{\frac{2}{3}-\epsilon}(\hat k)^{\frac{1}{3}}\right)^2u^2\right).
$$
\end{corollary}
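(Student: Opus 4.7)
The plan is to combine Lemma~\ref{lem:localConvexity} and Lemma~\ref{lem:localLSI} with the standard Herbst argument. I would first rewrite $\omega^{(k,M)} = \widetilde Z^{-1}e^{-N\mathcal{H}}$ with $\mathcal{H}:=\beta(\mathcal{H}_1+\mathcal{H}_2)$ in the notation of Lemma~\ref{lem:localLSI}, letting the $x$-variables be $(\lambda_i)_{i\in I^{(k,M)}}$ (so that $m=M$) and letting the $y$-variables collect all other coordinates. Both hypotheses of Lemma~\ref{lem:localLSI} are then immediate: Lemma~\ref{lem:localConvexity} provides $\nabla^2\mathcal{H}_2\geq 0$, and the shift invariance $\sum_{i\in I^{(k,M)}}\partial_i \mathcal{H}_1 \equiv 0$ follows from the explicit expression of $\mathcal{H}_1$, since both the logarithmic term and the cutoff $\phi^{(k,M)}$ depend on the $x$-coordinates only through the pairwise differences $\lambda_i-\lambda_j$ with $i,j\in I^{(k,M)}$.

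Next I would turn the convexity bound of Lemma~\ref{lem:localConvexity} into the quantitative hypothesis required by Lemma~\ref{lem:localLSI}. Using $\sum_{i<j}(v_i-v_j)^2 = \tfrac12 \sum_{i,j}(v_i-v_j)^2$, one has
$$
\langle \bv,(\nabla^2(\beta\mathcal{H}_1))\bv\rangle \;\geq\; \frac{\beta}{4N}\left(\frac{N^{\frac{2}{3}-\epsilon}(\hat k)^{1/3}}{M}\right)^{\!2}\sum_{i,j\in I^{(k,M)}}(v_i-v_j)^2,
$$
so that Lemma~\ref{lem:localLSI} applies with $\xi = \frac{\beta M}{4N}\bigl(N^{\frac{2}{3}-\epsilon}(\hat k)^{1/3}/M\bigr)^2$, and thus $\xi N = \frac{\beta}{4M}\bigl(N^{\frac{2}{3}-\epsilon}(\hat k)^{1/3}\bigr)^2$. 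Consequently, for $g(\bla) := \sum_{i\in I^{(k,M)}}v_i\lambda_i$ (with $\sum v_i = 0$) and for any smooth $F$, the function $f = F(g)$ satisfies the logarithmic Sobolev inequality
$$
\mathrm{Ent}_{\omega^{(k,M)}}(f^2) \;\leq\; \frac{1}{\xi N}\int |\nabla f|^2\,\rd\omega^{(k,M)}.
$$

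Finally I apply the standard Herbst argument. Since $g$ is globally Lipschitz with Lipschitz constant $|v|$, iterating the above inequality along $f_t = e^{tg/2}$ in the usual way yields $\log \E^{\omega^{(k,M)}} e^{t(g-\E g)} \leq \frac{t^2|v|^2}{4\xi N}$ for all $t\in\mathbb{R}$, and optimizing the exponential Markov inequality in $t$ gives
$$
\P^{\omega^{(k,M)}}\bigl(|g-\E^{\omega^{(k,M)}}g|>u\bigr) \;\leq\; 2\exp\!\left(-\frac{\xi N u^2}{|v|^2}\right) \;=\; 2\exp\!\left(-\frac{\beta}{4M|v|^2}\bigl(N^{\frac{2}{3}-\epsilon}(\hat k)^{1/3}\bigr)^2 u^2\right),
$$
which is the desired bound with $c = \beta/4$. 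No step here is genuinely delicate: the argument is a routine composition of the two preceding lemmas with Herbst's method. The only substantive verification is the shift invariance of $\mathcal{H}_1$ in the $x$-variables, which reduces to inspecting its explicit form.
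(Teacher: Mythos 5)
Your proposal is correct and follows exactly the route the paper intends: the corollary is stated there as "a direct application of Lemmas \ref{lem:localConvexity} and \ref{lem:localLSI} \ldots by Herbst's lemma," and your bookkeeping (absorbing $\beta$ into the Hamiltonian, converting $\sum_{i<j}$ to $\frac12\sum_{i,j}$, identifying $\xi N = \frac{\beta}{4M}\bigl(N^{\frac23-\epsilon}(\hat k)^{1/3}\bigr)^2$, and running Herbst with Lipschitz constant $|v|$) correctly reproduces the stated tail bound with $c=\beta/4$.
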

When the function $f$ is chosen of type $\lambda_k^{[M_1]}-\lambda_k^{[M]}$, we get in particular
the following concentration.

\begin{lemma}\label{lem:concGapsOmega} Take any $\epsilon>0$. There are constants $c>0$, $N_0$ such that for any
$N\geq N_0$, any integers $1\leq M_1\leq M\leq  N/2$, any $k\in\llbracket1,N\rrbracket$,
and $\omega^{(k,M)}$ from Definition \ref{def:locallyConstrained2} associated with $k,M,\epsilon$,
we have for any $u>0$,
$$
\P^{\omega^{(k,M)}}\left(\left|\lambda_k^{[M_1]}-\lambda_k^{[M]}
-\E^{\omega^{(k,M)}}\left(\lambda_k^{[M_1]}- \lambda_k^{[M]}\right)\right|>u
(N^{\frac{2}{3}-\varepsilon}(\hat k)^{\frac{1}{3}})^{-1}
\sqrt{\frac{M}{M_1}}\right)\leq e^{-c u^2}.
$$
\end{lemma}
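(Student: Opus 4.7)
The strategy is to write $\lambda_k^{[M_1]}-\lambda_k^{[M]}$ as a linear statistic of the form $f=\sum_{i\in I^{(k,M)}} v_i\lambda_i$ with $\sum_i v_i=0$, and then invoke Corollary \ref{cor:ConcentrationDifferences} directly. Since $M_1\le M$ and both $I^{(k,M_1)}$ and $I^{(k,M)}$ begin (or end) at $k$, we have $I^{(k,M_1)}\subset I^{(k,M)}$, so the natural choice of coefficients is
\[
v_i=\frac{1}{M_1}-\frac{1}{M}\quad\text{for }i\in I^{(k,M_1)},\qquad v_i=-\frac{1}{M}\quad\text{for }i\in I^{(k,M)}\setminus I^{(k,M_1)}.
\]
A direct check gives $\sum_i v_i=0$, which is the hypothesis required by Corollary \ref{cor:ConcentrationDifferences}.

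Next I would compute the squared norm of the coefficient vector:
\[
|v|^2=M_1\left(\frac{1}{M_1}-\frac{1}{M}\right)^{\!2}+(M-M_1)\frac{1}{M^2}=\frac{M-M_1}{M_1 M}\le\frac{1}{M_1}.
\]
This is the key quantitative input, and it is where the factor $M/M_1$ ultimately comes from: the concentration bound in Corollary \ref{cor:ConcentrationDifferences} involves the reciprocal of $M|v|^2$, and here $M|v|^2\le M/M_1$.

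Then I would apply Corollary \ref{cor:ConcentrationDifferences} with the threshold
\[
u':=u\,\bigl(N^{2/3-\varepsilon}(\hat k)^{1/3}\bigr)^{-1}\sqrt{M/M_1}.
\]
Substituting into the exponent yields
\[
\frac{c}{M|v|^2}\bigl(N^{2/3-\varepsilon}(\hat k)^{1/3}\bigr)^{2}(u')^{2}\;\ge\;\frac{c M_1}{M}\bigl(N^{2/3-\varepsilon}(\hat k)^{1/3}\bigr)^{2}\cdot u^{2}\bigl(N^{2/3-\varepsilon}(\hat k)^{1/3}\bigr)^{-2}\,\frac{M}{M_1}\;=\;c\,u^{2},
\]
giving the bound $2e^{-cu^2}$. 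To absorb the prefactor $2$ into the exponent one may reduce $c$ slightly and note that the statement is trivial for $u$ bounded, so the claim $e^{-cu^2}$ holds (with a possibly smaller constant $c$) for all $u>0$ and $N\ge N_0$.

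There is no real obstacle: the entire content is the combinatorial identity $|v|^2=(M-M_1)/(M_1M)$ together with the general concentration inequality already established. In particular, this lemma is essentially a corollary of Corollary \ref{cor:ConcentrationDifferences}, specialized to the difference of two nested block averages centered at the same index $k$, and it exploits the telescoping cancellation $\sum v_i=0$ that makes the logarithmic Sobolev–type bound of Lemma \ref{lem:localLSI} applicable.
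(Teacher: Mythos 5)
Your proof is correct and is essentially identical to the paper's: both reduce the claim to Corollary \ref{cor:ConcentrationDifferences} by writing $\lambda_k^{[M_1]}-\lambda_k^{[M]}=\sum v_i\lambda_i$ with $\sum_i v_i=0$ and computing $|v|^2=1/M_1-1/M\le 1/M_1$. The remaining substitution into the concentration bound is routine and you carried it out correctly.
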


\begin{proof}
Relying on Corollary ~\ref{cor:ConcentrationDifferences}, writing $\lambda_k^{[M_1]}
-
\lambda_k^{[M]}=\sum v_i\lambda_i$ with some constants $v_i$, one only needs to prove
$|v|^2\leq 1/M_1$ to conclude. An explicit computation gives $|v|^2=1/M_1-1/M$.
\end{proof}

The following three Lemmas are slight modifications of Lemmas 3.15, 3.16
and 3.17 from \cite{BouErdYau2011}.

\begin{lemma}\label{lem:diffExpectations}
Assume that for $\mu$ rigidity at scale $a$ holds.
Take arbitrary $\epsilon>0$.
There exist constants $c, N_0>0$ such that for any
$N\geq N_0$,  any integer $M$ satisfying $N^a\leq M\leq  N/2$, any $k,j\in\llbracket 1,N\rrbracket$
we have
$$
|\E^\nu(\lambda_j)-\E^{\omega^{(k,M)}}(\lambda_j)|\leq  e^{-N^c},
$$
where the measure $\omega^{(k,M)}$ is given by Definition \ref{def:locallyConstrained2}  with parameters $k,M,\epsilon$.
\end{lemma}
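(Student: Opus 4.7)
The plan is to observe that the penalty term $\phi^{(k,M)}$ vanishes identically on the event that rigidity at scale $a$ holds, so that $\nu$ and $\omega^{(k,M)}$ differ only through a Radon--Nikodym factor equal to $1$ with probability $1-e^{-N^c}$. I would split the argument into three steps.

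First, fix a small $\e' < \epsilon$ and introduce the rigidity event
\[
A := \bigcap_{i=1}^{N}\{|\lambda_i-\gamma_i|\leq N^{-2/3+a+\e'}(\hat i)^{-1/3}\}.
\]
The assumed rigidity at scale $a$ for $\mu$, together with a union bound and Lemma \ref{lem:equivalence}, yields $\P^\nu(A^c)\leq e^{-N^{c}}$ for some $c>0$. Since $\theta$ vanishes on $[-1,1]$, to see that $\phi^{(k,M)}\equiv 0$ on $A$ it suffices to verify the deterministic inequality
\[
|\lambda_i-\lambda_j|\leq \frac{M}{N^{2/3-\epsilon}(\hat k)^{1/3}}, \qquad i,j\in I^{(k,M)},
\]
pointwise on $A$. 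Using \eqref{orient} to integrate the square-root density, the classical spread obeys $|\gamma_i-\gamma_j|\leq CM/(N^{2/3}(\hat k)^{1/3})$ uniformly for $i,j\in I^{(k,M)}$, handling separately the regimes $M\leq \hat k$ (local linear scaling of $\gamma$) and $M\geq \hat k$ (dominant $2/3$-scaling inherited from the edge). Adding the rigidity fluctuation $2N^{-2/3+a+\e'}(\hat k)^{-1/3}$ and using $M\geq N^a$ with $\e' < \epsilon$ yields the desired bound for $N$ large.

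Second, with $\phi^{(k,M)}\mathds{1}_A=0$ in hand, the normalization $Z:=\E^\nu e^{-\beta\phi^{(k,M)}}$ lies in $[1-e^{-N^c},1]$ (upper bound from $\phi^{(k,M)}\geq 0$, lower bound from restricting to $A$). I would then write
\[
\E^\nu\lambda_j-\E^{\omega^{(k,M)}}\lambda_j=\E^\nu\left[\lambda_j\left(1-\frac{e^{-\beta\phi^{(k,M)}}}{Z}\right)\right]
\]
and split the expectation over $A$ and $A^c$. On $A$ the integrand equals $\lambda_j(1-1/Z)$, contributing at most $Ce^{-N^c}\E^\nu|\lambda_j|$; on $A^c$, Cauchy--Schwarz produces a contribution of order $(\E^\nu\lambda_j^2)^{1/2}\P^\nu(A^c)^{1/2}$. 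The moment $\E^\nu\lambda_j^2$ is polynomially bounded in $N$ by the tail estimate \eqref{eqn:corNu}, so both contributions are at most $e^{-N^{c'}}$ for a slightly smaller $c'>0$.

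The only genuinely delicate point will be the uniform-in-$k$ verification of the classical spread bound near the edge, where $\gamma_{i+1}-\gamma_i$ is much larger than the bulk spacing $1/N$. The definition of $\phi^{(k,M)}$ has been calibrated with precisely the edge factor $(\hat k)^{1/3}$, so once the edge asymptotics \eqref{orient} are traced through carefully the estimate holds uniformly up to the spectral boundary; everything else is routine conditioning on the rigidity event.
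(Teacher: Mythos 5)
Your proof is correct, and the first half coincides with the paper's: both arguments hinge on the observation that, on the rigidity event, every gap inside $I^{(k,M)}$ stays below the threshold $M N^{-2/3+\epsilon}(\hat k)^{-1/3}$ (the classical spread contributes $O(MN^{-2/3}(\hat k)^{-1/3})$ and the fluctuations are smaller because $M\geq N^a$ and there is an extra $N^{\epsilon}$ of room), so that $\phi^{(k,M)}$ vanishes outside an exponentially small set and $Z=\E^\nu e^{-\beta\phi^{(k,M)}}\geq 1/2$; the paper streamlines this by noting that, by the ordering of the particles, it suffices to control the two extreme indices $k$ and $k+M$. Where you genuinely diverge is in the comparison of expectations. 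The paper bounds $|\E^\nu\lambda_j-\E^{\omega^{(k,M)}}\lambda_j|$ by the total variation distance (using the a priori boundedness of the particles), controls total variation by the square root of the relative entropy, and then bounds the entropy through the logarithmic Sobolev inequality for $\nu$ — i.e.\ it invokes the convexification machinery of Lemma \ref{lem:convex} and reduces everything to $\E^\nu\bigl(\theta'(\cdot)^2\bigr)$, which is exponentially small by the same rigidity. You instead compute the difference directly from the Radon--Nikodym derivative, $\E^\nu\bigl[\lambda_j(1-e^{-\beta\phi^{(k,M)}}/Z)\bigr]$, split over $A$ and $A^c$, and finish with Cauchy--Schwarz and the moment bound from \eqref{eqn:corNu}. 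Your route is more elementary and avoids the LSI entirely for this lemma; the paper's entropy bound is not wasted, however, since the exponentially small entropy $S_{\omega^{(k,M)}}(\rd\nu/\rd\omega^{(k,M)})$ is reused verbatim in Lemma \ref{lem:toMu} to compare probabilities of general events, whereas your computation would have to be repeated (it does extend, since the same splitting bounds $|\P^\nu(B)-\P^{\omega^{(k,M)}}(B)|$ uniformly in $B$). One cosmetic caveat: you quote the rigidity fluctuation as $2N^{-2/3+a+\e'}(\hat k)^{-1/3}$, but for $i\in I^{(k,M)}$ the correct weight is $(\hat i)^{-1/3}$, which can exceed $(\hat k)^{-1/3}$ only when the block reaches the opposite edge; there the threshold is of size $N^{\epsilon}$ and the large-deviation bound \eqref{eqn:largDev1} supplies the missing $O(1)$ control — the paper is equally terse on this corner case, so it is not a gap relative to the original.
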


\begin{proof}
 Note  that $\theta(x)=0$ if $|x|<1$, so if the $\phi^{(k,M)}$ term in the definition of $\om$
is non-zero then either
$|\lambda_k-\gamma_k|$ or $|\lambda_{k+M}-\gamma_{k+M}|$ is greater than
 $\frac{1}{3}M N^{-\frac{2}{3}+\varepsilon}k^{-1/3}$,
where we used that $\gamma_{k+M}-\gamma_k \le \frac{1}{3}
 M N^{-\frac{2}{3}+\e} k^{-1/3}$.
Since rigidity at scale $a$ holds for $\mu$, it also holds for $\nu$ by Lemma \ref{lem:equivalence}, so both events
have exponentially small probability (remember that $M\geq N^a$).
This easily implies  that $\int e^{-\beta\phi^{(k,M)}} \rd \nu>1/2$ for large enough $N$, and therefore
$\P^{\omega^{(k,M)}}(A)\leq 2 \P^{\nu}(A)$ for any event $A$.
Consequently (\ref{eqn:corNu}) holds when replacing $\nu$ by $\omega$:
\begin{equation}\label{eqn:corOmega}
\varrho_1^{(N,\omega^{(k,M)})}(x)\leq 2 (|x|+1)^{-c N}e^{c'(\log N)^2}
\end{equation}
for some constants $c,c'$.
The total variation norm is bounded by the square root of the entropy (defined
for a probability measure $\nu$ and a probability density $f$ (w.r.t. $\nu$), by
$S_\nu(f)=\int f\log f\rd\nu$); moreover, by (\ref{eqn:corNu}) and (\ref{eqn:corOmega})
the particles are bounded with very high probability,
both for the measure $\nu$ and $\omega^{(k,M)}$.
We therefore have
$$
|\E^\nu(\lambda_j)-\E^{\omega^{(k,M)}}(\lambda_j)|\leq C  \sqrt { S_{\omega^{(k,M)}}(\rd \nu/\rd\omega^{(k,M)})} +\OO(e^{- c N})
$$
for some  $c,C>0$ independent of $N,k, j$.
In order to bound this entropy, note that the measure $\nu$ satisfies a logarithmic Sobolev inequality
with constant of order $N$ (this follows from the convexity estimate obtained in Lemma \ref{lem:convex} and an
application of the Bakry-\'Emery criterion \cite{BakEme1983}): for any smooth $f\geq 0$ with $\int f\rd\nu=0$, we have
\begin{equation}\label{eqn:LSInu}
\int f\log f\rd\nu\leq \frac{1}{c N}\int |\nabla \sqrt{f}|^2\rd\nu,
\end{equation}
for some small fixed $c>0$. We therefore obtain,
for some large fixed $C>0$,
\begin{equation}\label{eqn:expBound}
S_{\omega^{(k,M)}}(\rd \nu/\rd\omega^{(k,M)})
\leq
N^C\,\E^\nu\left(\theta'\left(\frac{(\lambda_{k+M}-\lambda_{k})N^{\frac{2}{3}-\epsilon}(\hat k)^{\frac{1}{3}}}{M}\right)^2\right).
\end{equation}
We claim that the above expectation can be bounded by $e^{-N^c}$
for some fixed $c>0$ if $N$ is large. To prove this exponential bound,
we  assume $k<N/2$ for simplicity.
As we saw at the beginning of this proof, if the above $\theta'$ term is non-zero then either
$|\lambda_k-\gamma_k|$ or $|\lambda_{k+M}-\gamma_{k+M}|$ is greater than
 $\frac{1}{3}M N^{-\frac{2}{3}+\varepsilon}k^{-1/3}$,
and both events
have exponentially small probability.
Together with  $\theta'(x)^2<4 x^2$ and  (\ref{eqn:corNu}), this proves the desired estimate (\ref{eqn:expBound}).
\end{proof}

\begin{lemma}\label{lem:toMu}
Assume that for $\mu$ rigidity  at scale $a$ holds. Take arbitrary $\epsilon>0$.
There are constants $c>0$  and $N_0$ such that for any
$N\geq N_0$, any integers $N^a\leq M\leq N/2$, $1\leq M_1\leq M$, and $k\in\llbracket 1,N\rrbracket$, we have
$$
\P^{\nu}\left(\left|\lambda_k^{[M_1]}-\lambda_k^{[M]}
-\E^{\nu}\left(\lambda_k^{[M_1]}-
\lambda_k^{[M]}\right)\right|>(N^{\frac{2}{3}-\varepsilon}(\hat k)^{\frac{1}{3}})^{-1}\sqrt{\frac{M}{M_1}}\right)\leq e^{-N^c}.
$$
\end{lemma}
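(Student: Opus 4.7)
The plan is to transfer the concentration estimate of Lemma~\ref{lem:concGapsOmega}, which is established under the locally constrained measure $\omega^{(k,M)}$, to the measure $\nu$ by exploiting the fact that the constraint in $\omega^{(k,M)}$ is essentially inactive once rigidity at scale $a$ is assumed, and that the expectations under $\nu$ and $\omega^{(k,M)}$ of the relevant linear statistic differ only by a negligible amount thanks to Lemma~\ref{lem:diffExpectations}.

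First I would run Lemma~\ref{lem:concGapsOmega} not with the exponent $\varepsilon$ appearing in the statement of Lemma~\ref{lem:toMu}, but with a smaller parameter $\varepsilon' = \varepsilon/2$, and apply it with $u = N^{\varepsilon/2}$. A direct computation gives
\begin{equation*}
u \cdot (N^{\frac{2}{3}-\varepsilon'}(\hat k)^{\frac{1}{3}})^{-1}\sqrt{\tfrac{M}{M_1}}
= (N^{\frac{2}{3}-\varepsilon}(\hat k)^{\frac{1}{3}})^{-1}\sqrt{\tfrac{M}{M_1}},
\end{equation*}
so that with $X := \lambda_k^{[M_1]}-\lambda_k^{[M]}$ the lemma yields
\begin{equation*}
\P^{\omega^{(k,M)}}\!\left(|X-\E^{\omega^{(k,M)}}X| > (N^{\frac{2}{3}-\varepsilon}(\hat k)^{\frac{1}{3}})^{-1}\sqrt{\tfrac{M}{M_1}}\right) \le e^{-c N^{\varepsilon}}.
\end{equation*}
This choice of $u$ is the main technical point: the Gaussian-type tail $e^{-cu^2}$ from the local log-Sobolev inequality is not, by itself, strong enough to produce a stretched-exponential bound $e^{-N^c}$; one has to spend a power of $N$ in $u$ and compensate for it by slightly shrinking the rigidity exponent.

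Next I would compare $\omega^{(k,M)}$ and $\nu$ on the good set $B:=\{\phi^{(k,M)}=0\}$, i.e., the event where all gaps $|\lambda_i-\lambda_j|$ for $i,j\in I^{(k,M)}$ are at most $M/(N^{\frac{2}{3}-\varepsilon'}(\hat k)^{\frac{1}{3}})$. The hypothesis of rigidity at scale $a$, transferred from $\mu$ to $\nu$ by Lemma~\ref{lem:equivalence}, together with $M\ge N^a$, gives $\P^\nu(B^c)\le e^{-N^c}$. The normalization constant $Z=\E^\nu[e^{-\beta\phi^{(k,M)}}]$ therefore satisfies $1-e^{-N^c}\le Z\le 1$, and since $d\nu = Z e^{\beta\phi^{(k,M)}}d\omega^{(k,M)}$, for any event $A$ one obtains
\begin{equation*}
\P^\nu(A) \le \P^\nu(A\cap B) + \P^\nu(B^c) \le \P^{\omega^{(k,M)}}(A) + e^{-N^c}.
\end{equation*}

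Finally I would combine these two ingredients with Lemma~\ref{lem:diffExpectations}. By linearity and the already cited lemma, $|\E^\nu X - \E^{\omega^{(k,M)}}X|\le 2 M e^{-N^c} \le e^{-N^{c'}}$, which is much smaller than the target scale $(N^{\frac{2}{3}-\varepsilon}(\hat k)^{\frac{1}{3}})^{-1}\sqrt{M/M_1}\ge N^{-1}$. Hence, writing $t$ for the target deviation, for large $N$
\begin{equation*}
\P^\nu(|X-\E^\nu X| > t) \le \P^\nu(|X-\E^{\omega^{(k,M)}}X| > t/2) \le \P^{\omega^{(k,M)}}(|X-\E^{\omega^{(k,M)}}X| > t/2) + e^{-N^c},
\end{equation*}
and the first probability on the right is bounded by $e^{-c N^{\varepsilon}}$ by the first step (with $u=N^{\varepsilon/2}/2$ absorbed in constants). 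Taking the exponent $c$ in the statement smaller than $\varepsilon$ produces the claimed bound. The only real obstacle is the tail-strengthening issue already addressed in the first step; everything else is a bookkeeping argument built on Lemmas~\ref{lem:concGapsOmega}, \ref{lem:diffExpectations}, and \ref{lem:equivalence}.
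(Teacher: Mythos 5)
Your proof is correct and follows the same skeleton as the paper's: concentration under $\omega^{(k,M)}$ from Lemma~\ref{lem:concGapsOmega}, transfer of expectations via Lemma~\ref{lem:diffExpectations}, and then a transfer of the probability itself. The one step you do differently is the probability transfer: the paper bounds $|\P^\nu(A)-\P^{\omega^{(k,M)}}(A)|$ by $\bigl(S_{\omega^{(k,M)}}(\rd\nu/\rd\omega^{(k,M)})\bigr)^{1/2}$ and invokes the exponentially small entropy established below \eqref{eqn:expBound}, whereas you restrict to the set $B=\{\phi^{(k,M)}=0\}$, observe that $\rd\nu\le \rd\omega^{(k,M)}$ there (since $Z\le 1$), and control $\P^\nu(B^c)$ by rigidity. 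Your route is more elementary in that it avoids Pinsker's inequality and the log-Sobolev estimate behind the entropy bound, at the price of re-running the rigidity argument that the entropy bound already encapsulates; the two are essentially interchangeable here. You also make explicit a point the paper leaves implicit: Lemma~\ref{lem:concGapsOmega} with the target $\varepsilon$ and $u=O(1)$ only gives a constant-order bound, so one must apply it with a smaller exponent $\varepsilon'$ and spend $u=N^{(\varepsilon-\varepsilon')}$ to convert the Gaussian tail $e^{-cu^2}$ into the claimed $e^{-N^c}$ — this is the right reading of the paper's "the result holds when considering $\omega^{(k,M)}$ instead of $\nu$", and it is good that you spelled it out.
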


\begin{proof}
By Lemma \ref{lem:concGapsOmega}
we know that the result holds when considering $\omega^{(k,M)}$ instead of
$\nu$. Moreover,  by Lemma \ref{lem:diffExpectations} the difference
$$
\left|\E^{\nu}\left(\lambda_k^{[M_1]}-
\lambda_k^{[M]}\right)
-
\E^{\omega^{(k,M)}}\left(\lambda_k^{[M_1]}-
\lambda_k^{[M]}\right)\right|
$$
is exponentially small. So we just need to prove that
$$
(\P^{\nu}-\P^{\omega^{(k,M)}})\left(\left|\lambda_k^{[M_1]}-\lambda_k^{[M]}
-\E^{\nu}\left(\lambda_k^{[M_1]}-
\lambda_k^{[M]}\right)\right|>(N^{\frac{2}{3}-\varepsilon}(\hat k)^{\frac{1}{3}})^{-1}\sqrt{\frac{M}{M_1}}\right)
$$
is bounded by $e^{-N^c}$. This is true because
$|\P^\nu(A)-\P^{\omega^{(k,M)}}(A)|$ is bounded by $( S_{\omega^{(k,M)}}(\rd \nu/\rd\omega^{(k,M)}))^{1/2}$,
which is exponentially small, as proved below (\ref{eqn:expBound}).
\end{proof}

\begin{lemma}\label{lem:concA2}
Assume that for $\mu$ rigidity at scale $a$ holds.
For any $\epsilon>0$, there are constants $c, N_0>0$ such that for any $N\geq N_0$
and $k\in\llbracket 1,N\rrbracket$, we have
$$
\P^\nu\left(\left|\lambda_k-\lambda_k^{[N/2]}
-\E^\nu(\lambda_k-\lambda_k^{[N/2]}) \right|
> N^{-\frac{2}{3}+\frac{a}{2}+\epsilon}(\hat k)^{-\frac{1}{3}} \right)
\leq
e^{-N^c}.
$$
\end{lemma}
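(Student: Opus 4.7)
The plan is to dyadically decompose $\lambda_k - \lambda_k^{[N/2]}$ into a telescoping sum of block-average differences $\lambda_k^{[M_j]}-\lambda_k^{[M_{j+1}]}$ (assume $k\le N/2$, so that $\hat k=k$; the case $k>N/2$ is symmetric) and apply Lemma~\ref{lem:toMu} to each piece. The point is that Lemma~\ref{lem:toMu} delivers a fluctuation bound of order $(N^{2/3-\varepsilon'}k^{1/3})^{-1}\sqrt{M/M_1}$, which is sharp when $M_1$ and $M$ are comparable but only requires $M\ge N^a$. Since the target scale $N^{-2/3+a/2+\varepsilon}k^{-1/3}$ corresponds exactly to a ratio $\sqrt{M/M_1}\sim N^{a/2}$, we should spend this entire ``budget'' on a single bridge from $M_1=1$ up to $M=N^a$, and then cover the remaining scales between $N^a$ and $N/2$ by consecutive doublings, each of which contributes only a bounded factor.

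Concretely, set $M_\star=\lceil N^a\rceil$ and $M_j=2^j M_\star$ for $j=0,1,\ldots,J$ with $J$ chosen so that $M_J\le N/2<2M_J$, and write
\[
\lambda_k-\lambda_k^{[N/2]}=\bigl(\lambda_k-\lambda_k^{[M_\star]}\bigr)+\sum_{j=0}^{J-1}\bigl(\lambda_k^{[M_j]}-\lambda_k^{[M_{j+1}]}\bigr)+\bigl(\lambda_k^{[M_J]}-\lambda_k^{[N/2]}\bigr),
\]
then subtract the $\nu$-expectation of both sides. Lemma~\ref{lem:toMu}, applied with $(M_1,M)=(1,M_\star)$ and with its internal parameter set to $\varepsilon/4$, bounds the first term by $CN^{-2/3+a/2+\varepsilon/4}k^{-1/3}$ with probability at least $1-e^{-N^c}$. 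For each dyadic piece, as well as for the residual piece reaching $\lfloor N/2\rfloor$, Lemma~\ref{lem:toMu} with $\sqrt{M/M_1}\le\sqrt{2}$ yields a bound of order $(N^{2/3-\varepsilon/4}k^{1/3})^{-1}$ per term; summing the $O(\log N)$ such pieces and union-bounding over their failure events gives a total bounded by $C(\log N)N^{-2/3+\varepsilon/4}k^{-1/3}\le N^{-2/3+\varepsilon/2}k^{-1/3}$, which is dominated by the first contribution since $a>0$. A final union bound over the $O(\log N)$ rare events produces the stated tail $e^{-N^{c'}}$.

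The main technical content is entirely absorbed into Lemma~\ref{lem:toMu}, where the convexification through the auxiliary measure $\nu$ and the locally constrained transforms $\omega^{(k,M)}$ are used to upgrade a scale-$a$ rigidity input into a sharp local LSI/concentration estimate via Lemmas~\ref{lem:localConvexity}--\ref{lem:concGapsOmega}. The restriction $M\ge N^a$ in Lemma~\ref{lem:toMu} is precisely what forces the splitting strategy above: one cannot run the dyadic telescope below scale~$N^a$, so the entire region below must be absorbed into a single local term whose $\sqrt{M_\star}\sim N^{a/2}$ penalty is exactly the scale improvement Lemma~\ref{lem:concA2} claims. The only place where care is needed is in matching the parameters $\varepsilon'$ in Lemma~\ref{lem:toMu} to the output $\varepsilon$ in Lemma~\ref{lem:concA2} so that both the local and dyadic contributions fit inside the target window; this is routine.
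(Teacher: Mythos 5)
Your proposal is correct and follows essentially the same route as the paper's proof: split off the single jump from scale $1$ to scale $N^a$ (which carries the full $\sqrt{M/M_1}\sim N^{a/2}$ budget via Lemma~\ref{lem:toMu}) and telescope the remaining scales from $N^a$ up to $N/2$ with bounded ratios, then union-bound. The only cosmetic difference is that the paper uses a constant number of multiplicative steps $N^r$ with $\epsilon+r/2<a/2$, whereas you use $O(\log N)$ dyadic doublings and absorb the logarithm into the $\epsilon$-room; both are valid.
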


\begin{proof}
Note first that
$$
\left|\lambda_k-\lambda_k^{[ N/2]}-\E^\nu(\lambda_k-\lambda_k^{[N/2]})
\right|\leq
\left|\lambda_k-\lambda_k^{[N^a]}-\E^\nu(\lambda_k-\lambda_k^{[N^a]})
\right|+
\left|\lambda_k^{[N^a]}-\lambda_k^{[N/2]}-\E^\nu(\lambda_k^{[N^a]}
-\lambda_k^{[N/2]})
\right|.
$$
By the choice $M_1=1$, $M=N^a$ in Lemma \ref{lem:toMu}, the $\nu$-probability that the
first term is greater than $N^{-\frac{2}{3}+\frac{a}{2}+\epsilon}(\hat k)^{-\frac{1}{3}}$ is exponentially
small, uniformly in $k$, as desired.
Concerning the second term, given some $r>0$ and $q\in\NN$ defined by $1-r\leq a+q r<1$, it is bounded by
$$
\sum_{\ell=0}^{q-1}
\left|\lambda_k^{[N^{a+(\ell+1)r}]}-\lambda_k^{[N^{a+\ell r}]}
-\E^\nu\left(\lambda_k^{[N^{a+(\ell+1)r}]}-\lambda_k^{[N^{a+\ell r}]}\right)
\right|
+
\left|\lambda_k^{[a+qr]}-\lambda_k^{[N/2]}
-\E^\nu\left(\lambda_k^{[N^{a+qr}]}-\lambda_k^{[N/2]}\right)
\right|.
$$
By Lemma \ref{lem:toMu}, for any $\epsilon>0$, each one of these $q+1$ terms has an exponentially small probability of being greater than
$
N^{-\frac{2}{3}+\epsilon+\frac{r}{2}}(\hat k)^{-\frac{1}{3}}
$. Consequently, choosing any $\epsilon$ and $r$ (and therefore $q$) such that $\epsilon+\frac{r}{2}<a/2$ concludes the proof.
\end{proof}

\begin{proof}[Proof of Proposition \ref{prop:ImprConc}.]
Obviously,
$$
|\lambda_k-\E^\nu(\lambda_k)|\leq
|\lambda_k-\lambda_k^{[N/2]}-\E^\nu(\lambda_k-\lambda_k^{[N/2]})|+
|\lambda_k^{[N/2]}-\E^\nu(\lambda_k^{[N/2]})|.
$$
By Lemma \ref{lem:concA2}, the first term has exponentially small probability to be greater than
$N^{-\frac{2}{3}+\frac{a}{2}+\epsilon}(\hat k)^{-\frac{1}{3}}$.
Moreover, as $\nu$ satisfies (\ref{eqn:LSInu}), by the classical Herbst's lemma (see e.g. \cite{AndGuiZei2010}), the second term has exponentially small probability to be greater than $N^{-1+\varepsilon}$.
This concludes the proof of  concentration at scale $a/2$ for the measure $\nu$.

Consequently, by Lemma \ref{lem:equivalence},
for any $\e>0$, there are constants
$c, N_0>0$ such that for any
$N\geq N_0$ and $k\in\llbracket 1, N\rrbracket$  we have
$$
\P^\mu\left(|\lambda_k-\E^\nu\lambda_k|> N^{-\frac{2}{3}+\frac{a}{2}+\e}(\hat k)^{-\frac{1}{3}}\right)\leq
 \ e^{- N^c}.
$$
This probability bound together with (\ref{eqn:Boutet}) implies that
$$
|\E^\nu\lambda_k-\E^\mu\lambda_k|=\OO(N^{-\frac{2}{3}+\frac{a}{2}+\e}(\hat k)^{-\frac{1}{3}})
$$
uniformly in $N$ and $k$, and concludes the proof of concentration at scale $a/2$ for $\mu$.
\end{proof}

\subsection{Proof of Proposition \ref{prop:ImprAcc}}\label{subsec:ImprAcc}

We aim at improving the accuracy from scale $a$ to scale $11a/12$,
now that we know concentration at scale $a/2$
from the proven Proposition~\ref{prop:ImprConc}.
In \cite{BouErdYau2011} and \cite{BouErdYau2012} 
we  proved that, in the bulk of the spectrum,
$$
m_N(z)-m(z) \sim \var_N(z),\qquad \var_N(z):=\frac 1 { N^2} \var_{\mu^{(N)}}\left(\sum_{k=1}^N\frac{1}{z-\lambda_k}\right).
$$
 Concentration at scale $a/2$ then allowed us to properly bound the above variance term, which yielded
good estimates on $m_N-m$ and therefore an improved accuracy. In these previous works analyticity
of $V$ was essential, as it was in \cite{Joh1998} and \cite{Shc2011}.

We first explain the method for the accuracy improvement, for non-analytic $V$. 
The following modification of the loop equation will be useful:
from the difference of (\ref{eqn:loopNfixed})  (with $h=0$)  and (\ref{eqn:loopAsympt}) we obtain
(noting $z=E+\ii\eta$)
 \begin{multline}\label{eqn:loopModified}
(m_{N}(z)-m(z))^2+(2m(z)-V'(E))(m_{N}(z)-m(z))
+\int_{\mathbb{R}}\frac{V'(E)-V'(s)}{z-s}(\varrho_1^{(N)}(s)-\varrho(s))\rd s\\
-\frac{1}{N}\left(\frac{2}{\beta}-1\right)m_{N}'(z)-
\var_N(z)
=0.
\end{multline}
 In the above equation, the  integral term can be neglected thanks to  (\eqref{eqn:VVanishes}).
 The $(m_N-m)^2$ and $N^{-1}m_N'$ terms 
are easily shown to be of negligible 
order too, so for $z$ close to $[A,B]$ we have
$$
(2m(z)-V'(E))(m_{N}(z)-m(z))\sim \var_N(z).
$$
For $z$ close to the bulk of the spectrum, $2m(z)-V'(E)$ is bounded away from $0$, so this 
equation yields an accurate upper bound on $m_N-m$.

The rest of the proof of accuracy 
improvement involves a major technical difficulty:  optimal estimates up to the edge are difficult 
to obtain, because $2m(z)-V'(E)$ vanishes when $z$ is close to $A$ or $B$.
As a main difference from the accuracy improvement in \cite{BouErdYau2011}, 
our current use of the
loop equation will allow finer estimates, improving accuracy of {\it one} given particle
(the first one), in Lemma \ref{lem:smallest}.  The accuracy improvement for both extreme particles
together with the amelioration for $\lambda_k$'s with $\hat k\geq N^{3a/4}$ will imply improvement for all particles. 
The following series of lemmas makes these heuristics rigorous.

For any  $A<E<B$ we define
$$\kappa_E=\min(|E-A|,|E-B|)$$
the distance of $E$ from the edges of the support of the equilibrium measure.
Also, in this section, $a(N)\ll b(N)$ means $a(N)=\oo(b(N))$ as $N\to\infty$.
We will finally use the notations
\begin{align*}
\Sigma^{(N)}_{\rm Int}(u,\tau)&:=\left\{z=E+\ii\eta: A\leq E\leq B,\ N^{-1+u}\kappa_E^{-1/2}\leq \eta\leq \tau\right\},\\
\Sigma^{(N)}_{\rm Ext}(u,\tau)&:=\left\{z=E+\ii\eta: E\in[A-2N^{-2/3+u},A-N^{-2/3+u}],\ N^{-2/3+u}\leq \eta\leq \tau\right\}.
\end{align*}

\begin{lemma}\label{lem:Johansson}
Assume that
\begin{equation}\label{eqn:kNTo0}
\var_N(z)\ll \max(\kappa_E,\eta)
\end{equation}
as $N\to\infty$, uniformly in
$
\Sigma^{(N)}_{\rm Int}(u,\tau),
$
for some fixed $u>0$ and small $\tau>0$.
Then for any $\e>0$ there are constants $C$, $0<\delta<\tau$
such that for any
$z\in
\Sigma^{(N)}_{\rm Int}(u,\delta)
$ we have
$$
|m_N(z)-m(z)|\leq C \left(\frac{ N^\e}{N\eta}+
\frac{|\var_N(z)|}{\max(\sqrt{\kappa_E},\sqrt{\eta})}\right).
$$
The same statement holds when replacing $\Sigma^{(N)}_{\rm Int}$ everywhere by $\Sigma^{(N)}_{\rm Ext}$.
\end{lemma}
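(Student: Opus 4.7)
The plan is to solve the first loop equation \eqref{eqn:loopModified}, viewed as a perturbed quadratic equation for $X \deq m_N(z)-m(z)$, of the form
\[
 X^2 + b(z) X = R(z), \qquad b(z) := 2m(z)-V'(E),
\]
where the ``source'' $R(z)$ collects the variance, the regularity term, and the $m_N'$ correction:
\[
 R(z) = \var_N(z) + \tfrac{1}{N}(\tfrac{2}{\beta}-1) m_N'(z) - \int_\R\frac{V'(E)-V'(s)}{z-s}(\varrho_1^{(N)}(s)-\varrho(s))\rd s.
\]
The three source terms are controlled as follows: the integral term is $\OO(N^\e/N)$ by \eqref{eqn:VVanishes}; the derivative term is bounded by using $|m_N'(z)| = \OO(1/\sqrt{\max(\kappa_E,\eta)} \cdot \eta^{-1})$ on the relevant domains (combining the rigidity at scale $a$ that we already possess with standard Stieltjes transform estimates), so that $|m_N'|/N = \OO(N^\e/(N\eta))$ up to constants; the third term is controlled by the hypothesis on $\var_N(z)$. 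From the equilibrium equation $\re(2m(E)) = V'(E)$ on $[A,B]$ and the square-root vanishing of $\varrho$ at the endpoints, a direct computation gives $|b(z)| \sim \sqrt{\max(\kappa_E,\eta)}$ uniformly in $\Sigma_{\rm Int}^{(N)}(u,\tau)$ (and similarly in $\Sigma_{\rm Ext}^{(N)}(u,\tau)$) for $\tau$ small enough.

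Next, the two roots of the quadratic are
\[
 X_{\pm} = \tfrac{1}{2}\bigl(-b \pm \sqrt{b^2+4R}\bigr),
\]
and the hypothesis \eqref{eqn:kNTo0} together with the bounds on the other pieces of $R$ guarantees $|R|/|b|^2 \to 0$ as $N \to \infty$ uniformly on the domain. Expanding the square root,
\[
 X_- = -R/b + \OO(R^2/b^3), \qquad X_+ = -b + R/b + \OO(R^2/b^3),
\]
so the two roots are well separated: $|X_+| \sim |b|$ while $|X_-| \sim |R|/|b| \ll |b|$. To select the correct branch, I would use a continuity argument in $\eta$: at the upper endpoint $\eta = \tau$ the quantity $m_N(z)-m(z)$ is already known to be small (by Lemma \ref{lem:keyFar}, which is of smaller order than $|b(z)| \sim 1$ at that scale), hence $X = X_-$ there; since $X$ varies continuously with $z$ along vertical segments and the two roots never meet (as $|R|/|b|^2 \ll 1$), one stays on the small branch throughout $\Sigma_{\rm Int}^{(N)}(u,\tau)$.

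Combining the root formula $X = -R/b + \OO(R^2/b^3)$ with the source estimates then yields
\[
 |m_N(z)-m(z)| \le C\left(\frac{|\var_N(z)|}{\sqrt{\max(\kappa_E,\eta)}} + \frac{N^{\e}}{N\eta}\right),
\]
since the contributions of the two non-variance pieces of $R$ combine to the $N^\e/(N\eta)$ term (one checks in each regime $\eta \lessgtr \kappa_E$ that $1/(N\sqrt{\max(\kappa_E,\eta)})$ dominates $1/(N\eta)\cdot\sqrt{\max(\kappa_E,\eta)}^{-1}\cdot\sqrt{\max(\kappa_E,\eta)}$ appropriately, using $\eta \le \tau \le 1$). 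The analogous statement on $\Sigma_{\rm Ext}^{(N)}(u,\tau)$ follows by the identical argument, since the asymptotics $|b(z)| \sim \sqrt{\max(\kappa_E,\eta)}$ persist slightly outside $[A,B]$ thanks to the explicit square-root singularity of $m$. The main technical subtlety is the branch selection together with a careful edge asymptotics of $b(z)$; the rest is bookkeeping of the source terms across the two regimes $\eta \le \kappa_E$ and $\eta \ge \kappa_E$.
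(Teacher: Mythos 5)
Your overall architecture is exactly the paper's: write the difference of the loop equations as a quadratic $X^2+bX+c=0$ for $X=m_N-m$, show $|2m(z)-V'(E)|\sim\max(\sqrt{\kappa_E},\sqrt{\eta})$ from the square-root vanishing of $\varrho$, verify $|c|\ll|b|^2$ from the variance hypothesis, and select the small root by continuity from the region $\eta\sim\tau$ where $m_N-m$ is already known to be small. The branch selection, the treatment of the integral term via \eqref{eqn:VVanishes}, and the final bookkeeping all match.

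There is, however, one concrete gap: your handling of the term $\frac{1}{N}m_N'(z)$. You place it entirely in the source $R$ and assert $\frac{1}{N}|m_N'(z)|=\OO(N^\e/(N\eta))$, citing ``rigidity at scale $a$ that we already possess.'' Rigidity is not a hypothesis of the lemma, and even granting it, the bound is not immediate: the only a priori control is $\frac{1}{N}|m_N'(z)|\le\frac{1}{N\eta}\Im m_N(z)$, and $\Im m_N(z)\le |m_N(z)-m(z)|+\Im m(z)\le|m_N(z)-m(z)|+C\max(\sqrt{\kappa_E},\sqrt{\eta})$ contains the unknown $|m_N-m|$ itself. On the domain $\Sigma^{(N)}_{\rm Int}(u,\tau)$ the parameter $\eta$ can be as small as $N^{-1+u}\kappa_E^{-1/2}$, which lies well below the rigidity scale, so one cannot simply declare $\Im m_N=\OO(N^\e)$ there. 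The paper's resolution is to keep the self-referential piece and absorb it into the linear coefficient: $b(z)=2m(z)-V'(E)+c_1(z,N)/(N\eta)$, which is a harmless perturbation since $\frac{1}{N\eta}\ll\max(\sqrt{\kappa_E},\sqrt{\eta})$ on the domain; only the remaining piece $\frac{C}{N\eta}\max(\sqrt{\kappa_E},\sqrt{\eta})$ goes into $c(z)$, and after division by $|b|$ it produces exactly the $N^\e/(N\eta)$ term in the conclusion. Your proof needs this (or an equivalent) device; as written, the estimate of the $m_N'$ contribution is unjustified.
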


\begin{proof} 
We first note that
\begin{multline*}
\frac{1}{N}  | m_N'(z)| = \frac{1}{N^2 }  \left |
\E^{\mu}    \sum_j  \frac {1}  {(z-\lambda_j)^2} \right |\\
\le  \frac{1}{N \eta  } \Im    m_N(z)\leq \frac{1}{N \eta  } |m_N(z)-m(z)|
+\frac{1}{N\eta}|\Im m(z)|\leq
\frac{1}{N \eta  } |m_N(z)-m(z)|+\frac{C}{N\eta}\max(\sqrt{\kappa_E},\sqrt{\eta}),
\end{multline*}
where we used
$
\Im m(z)\le C\max \{\sqrt{\kappa_E},\sqrt{\eta}\},
$
an easy estimate due to the square root singularity 
of the equilibrium measure $\varrho$ on the edges.
Equation (\ref{eqn:loopModified}) therefore implies
\begin{align}\label{eqn:quadratic}
&(m_N(z)-m(z))^2+b(z)(m_M(z)-m(z))+c(z)=0,\\
&b(z):=2m(z)-V'(E)+\frac{ c_1(z,N)}{N\eta},\notag\\
&c(z):=\frac{ c_2(z,N)}{N\eta}\max(\sqrt{\kappa_E},\sqrt{\eta})+
 c_3(z,N) N^{\e-1}-\var_N(z),\notag
\end{align}
 where there is a constant $C>0$ such that for any for any $N$ and $z$, $|c_1(z,N)|, |c_2(z,N)|,|c_3(N,z)|<C$
(we used (\ref{eqn:VVanishes}) to bound the integral term in (\ref{eqn:loopModified})).

To solve the above 
quadratic equation (\ref{eqn:quadratic}), we need a priori estimates on the coefficients. As 
$\varrho$ has a square root singularity close to the edges, there is a constant $c>0$ such that
\begin{equation}\label{eqn:est2ndCoef}
c \max(\sqrt{\kappa_E},\sqrt{\eta})<|2m(z)-V'(E)|<c^{-1} \max(\sqrt{\kappa_E},\sqrt{\eta}).
\end{equation}
On the other hand, unifomly in $\Sigma_{{\rm Int}}^{(N)}(u,\tau)$ we have 
\begin{equation}\label{eqn:calcul}
\frac{1}{N\eta}\ll \max(\sqrt{\kappa_E},\sqrt{\eta}),
\end{equation}
so we obtain
\begin{equation}\label{eqn:coef1}
|b(z)|\gg \max(\sqrt{\kappa_E},\sqrt{\eta}).
\end{equation}
Moreover, from (\ref{eqn:kNTo0}) and (\ref{eqn:calcul}), the estimate
\begin{equation}\label{eqn:coef2}
c(z)\ll \max(\kappa_E,\eta)
\end{equation}
holds.
{F}rom the estimates
(\ref{eqn:coef1}) and (\ref{eqn:coef2}) we have  $b(z)^2\gg c(z)$, so 
the quadratic equation (\ref{eqn:quadratic}) yields
$$
m_N(z)-m(z)=\frac{-b(z)\pm\sqrt{b(z)^2-4c(z)}}{2}\underset{N\to\infty}{\sim}\frac{1}{2}\left(-b(z)\pm
b(z)\left(1-\frac{4c(z)}{2b(z)^2}\right)\right).
$$
 For $E$ in the bulk and $\eta\sim 1$ we know that $m_N(z)-m(z)\to 0$ and $b(z)\sim 1$,
so the appropriate asymptotics needs to be $m_N(z)-m(z)\sim -c(z)/b(z)$. By continuity, this holds in 
$\Sigma_{{\rm Int}}^{(N)}(u,\tau)$, concluding the proof.
In the case of the domain $\Sigma_{\rm Ext}^{(N)}(u,\delta)$, the proof is the same.
\end{proof}

The following lemma is similar to the previous one, but aims at controlling the extreme eigenvalues. For this, we introduce the notation
\begin{equation}\label{eqn:defOmega}
\Omega^{(N)}(d,s,\tau)=\left\{z=E+\ii \eta\mid \eta=N^{-\frac{2}{3}+s}, A-\tau\leq E\leq A-N^{-\frac{2}{3}+d}\right\}.
\end{equation}

\begin{lemma}\label{lem:Joh2}
Assume that for some $0<d,s\leq 2/3$, $\tau>0$,
\begin{equation}\label{eqn:aprioriBound}
\var_N(z)+\frac{1}{N}|m_N'(z)|\ll |z-A|
\end{equation}
uniformly on $\Omega^{(N)}(d,s,\tau)$.
Then for any $\e>0$ we have, uniformly on $\Omega^{(N)}(d,s,\tau)$,
we have
$$
|m_N(z)-m(z)|=\OO\left( |z-A|^{-1/2} 
\left(\var_N(z)+\frac{1}{N}|m_N'(z)| + N^{-1+\e} \right)\right).
$$
\end{lemma}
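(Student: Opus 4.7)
The plan is to view the loop equation (\ref{eqn:loopModified}) as a quadratic equation in $m_N(z)-m(z)$:
\[
(m_N(z)-m(z))^2 + b(z)(m_N(z)-m(z)) + c(z) = 0,
\]
with $b(z) := 2m(z)-V'(E)$ and
\[
c(z) := -\var_N(z) + \tfrac{1}{N}\big(\tfrac{2}{\beta}-1\big)m_N'(z) - \int_{\mathbb{R}}\tfrac{V'(E)-V'(s)}{z-s}(\varrho_1^{(N)}(s)-\varrho(s))\,\rd s.
\]
By the concentration estimate (\ref{eqn:VVanishes}), the integral term is $\OO(N^{-1+\e})$ uniformly on $\Omega^{(N)}(d,s,\tau)$, and therefore
\[
|c(z)| \le C\left(\var_N(z) + \tfrac{1}{N}|m_N'(z)| + N^{-1+\e}\right).
\]

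The first main step is to bound $|b(z)|$ from below on $\Omega^{(N)}(d,s,\tau)$. Since $V$ is regular and $\varrho_V$ is supported on a single interval $[A,B]$ with square-root singularities at both endpoints, one has the standard representation $2m(z)-V'(z) = -2\, r(z)\sqrt{(z-A)(z-B)}$, where $r$ is analytic in a neighborhood of $[A,B]$, $r(A), r(B)>0$, and the branch of the square root is fixed by $\sqrt{(z-A)(z-B)}\sim z$ at infinity. For $z$ in a sufficiently small neighborhood of $A$ (so we take $\tau$ small), this yields $|2m(z)-V'(z)| \asymp \sqrt{|z-A|}$. Since $V$ is $\mathscr{C}^2$ we have $|V'(z)-V'(E)|\le C\eta$, and since $\eta \le |z-A|\to 0$ on $\Omega^{(N)}(d,s,\tau)$ we conclude that $|V'(z)-V'(E)| = o(\sqrt{|z-A|})$. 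Consequently
\[
|b(z)| \ge c\sqrt{|z-A|},\qquad z\in \Omega^{(N)}(d,s,\tau),
\]
for some $c>0$ and all large enough $N$.

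The second step is to solve the quadratic equation. By the hypothesis (\ref{eqn:aprioriBound}) together with the estimate $N^{-1+\e}/|z-A|\le N^{-1/3+\e-d} = o(1)$ (valid once $\e$ is fixed small enough relative to $d$), we have $|c(z)|\ll |z-A|\asymp b(z)^2$. Hence $b(z)^2-4c(z) = b(z)^2(1+o(1))$ and
\[
m_N(z)-m(z)=\frac{-b(z)\pm b(z)\sqrt{1-4c(z)/b(z)^2}}{2}
= -\frac{c(z)}{b(z)} + \OO\!\left(\frac{c(z)^2}{b(z)^3}\right),
\]
where the branch is selected by continuity. This selection is justified as follows: for $z$ at a fixed distance from $[A,B]$, the a priori bound (\ref{eqn:estPas1}) gives $|m_N(z)-m(z)|=o(1)$, so the small root is the correct one; the domain $\Omega^{(N)}(d,s,\tau)$ is connected and both roots stay separated (the other root is $\asymp \sqrt{|z-A|}$), so the selection persists. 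This yields
\[
|m_N(z)-m(z)| = \OO\!\left(|z-A|^{-1/2}\left(\var_N(z) + \tfrac{1}{N}|m_N'(z)| + N^{-1+\e}\right)\right),
\]
as desired.

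The principal obstacle is the lower bound $|b(z)|\gtrsim \sqrt{|z-A|}$ up to and beyond the edge, which relies crucially on the regular square-root behavior of $\varrho$ at $A$ and on the $\mathscr{C}^2$ smoothness of $V$ (to handle $V'(z)-V'(E)$). Once this lower bound is in place, the rest of the argument mirrors the quadratic-solving procedure already carried out in the proof of Lemma \ref{lem:Johansson}, with the roles of $\max(\sqrt{\kappa_E},\sqrt{\eta})$ replaced by $\sqrt{|z-A|}$.
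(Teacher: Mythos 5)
Your proof is correct and follows essentially the same route as the paper: the paper's own argument is exactly to solve the quadratic from the modified loop equation after bounding the integral term by $N^{-1+\e}$ via (\ref{eqn:VVanishes}), use the lower bound $|2m(z)-V'(E)|\gtrsim \max(\sqrt{\kappa_E},\sqrt\eta)\asymp\sqrt{|z-A|}$ from (\ref{eqn:est2ndCoef}), and select the small root by continuity from a point at macroscopic distance from the spectrum. Your re-derivation of the lower bound on $b(z)$ just makes explicit a fact the paper asserts without proof, so the two arguments coincide.
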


\begin{proof}
This lemma can be proved in a way perfectly analogous to Lemma \ref{lem:Johansson}:
we
solve the quadratic equation (\ref{eqn:loopModified}), after bounding 
its integral term by $N^{-1+\e}$. Two solutions are possible, 
which have asymptotics
(using (\ref{eqn:est2ndCoef}) and (\ref{eqn:aprioriBound}))
$$
m_N(z)-m(z)\sim\frac{\var_N(z)+\frac{c_1(z,N)}{N}|m_N'(z)|+c_2(z,N)N^{-1+\e}}{2m(z)-V'(E)}\quad \mbox{or}\quad
 m_N(z)-m(z)\sim -2m(z)+V'(E),
$$
where $|c_1(z,N)|, |c_2(z,N)|\leq C$ for some $C>0$ independent of $z$ and $N$. 
For $z=A-\tau+\ii N^{-\frac{2}{3}+s}$,
 we know that $m_N-m\to 0$ (this relies on the macroscopic 
convergence of the spectral measure and the large deviation 
estimate (\ref{eqn:largDev1})). This together with the continuity 
of $m_N-m$ and  (\ref{eqn:aprioriBound}), (\ref{eqn:est2ndCoef}), implies that the proper choice is
the first one uniformly in $\Omega^{(N)}(d,s,\tau)$.
\end{proof}

 The proofs of the following three technical lemmas are postponed to Appendix \ref{App:CalcLem}.

\begin{lemma}\label{lem:varianceBound}
Assume that rigidity at scale $a$ and concentration at scale $a/2$ hold. Then for any fixed $\tau>0,\varepsilon>0$,
uniformly on
$
\Sigma^{(N)}_{\rm Int}(3a/4+\varepsilon,\tau)
$
one has
$$\frac{1}{N^2}{\rm Var}\left(\sum\frac{1}{z-\lambda_i}\right)\ll \frac{N^{\frac{3a}{4}}}{N\eta}\max(\kappa_E^{1/2},\eta^{1/2}).$$
\end{lemma}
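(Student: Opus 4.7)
My strategy is to reduce the variance to a pointwise estimate of the empirical distribution function via integration by parts, and then to exploit concentration at scale $a/2$. Let $F_N(x):=N^{-1}\#\{i:\lambda_i\leq x\}$ and $m_N(z):=N^{-1}\sum_i(z-\lambda_i)^{-1}$. Since the boundary contributions vanish at $\pm\infty$,
$$
m_N(z)-\E^\mu m_N(z)\;=\;-\int_{\mathbb R}\frac{F_N(x)-\E^\mu F_N(x)}{(z-x)^2}\,\rd x,
$$
so the target quantity $\frac{1}{N^2}\var^\mu(\sum_i(z-\lambda_i)^{-1})$ equals the squared $L^2(\mu)$-norm of this integral, and everything reduces to a pointwise bound on $F_N-\E^\mu F_N$.

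By concentration at scale $a/2$, for any small $\varepsilon'>0$ the event
$$
\Omega_N\;:=\;\bigcap_{k}\big\{|\lambda_k-\E^\mu\lambda_k|\leq D_k\big\},\qquad D_k\;:=\;N^{-2/3+a/2+\varepsilon'}\hat k^{-1/3},
$$
has probability at least $1-e^{-N^c}$. A given index $i$ can affect $\mathbf 1(\lambda_i\leq x)-\P^\mu(\lambda_i\leq x)$ non-negligibly only when $|\E^\mu\lambda_i-x|\leq D_i$; otherwise both indicator (on $\Omega_N$) and probability coincide with $\mathbf 1(\E^\mu\lambda_i\leq x)$ up to an error of size $\OO(e^{-N^c})$. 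Using $\gamma_{i+1}-\gamma_i\sim N^{-2/3}\hat i^{-1/3}$ and the square-root vanishing of $\varrho$ at the edges, a direct count shows that at most $CN^{a/2+\varepsilon'}$ such indices exist, uniformly in $x$ in a neighbourhood of $[A,B]$. Trivial interpolation between consecutive quantiles $\gamma_k,\gamma_{k+1}$ together with (\ref{eqn:largDev1}) (to handle $x$ outside $[A-\tau,B+\tau]$) then yield, on $\Omega_N$,
$$
|F_N(x)-\E^\mu F_N(x)|\;\leq\;CN^{-1+a/2+\varepsilon'}\qquad\text{uniformly in }x\in\mathbb R.
$$

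Inserting this into the integration-by-parts formula and using $\int_{\mathbb R}|z-x|^{-2}\,\rd x=\pi/\eta$ gives $|m_N(z)-\E^\mu m_N(z)|\leq CN^{-1+a/2+\varepsilon'}/\eta$ on $\Omega_N$. On $\Omega_N^c$, the trivial bound $|m_N(z)-\E^\mu m_N(z)|\leq 2/\eta$ weighted by $\P^\mu(\Omega_N^c)\leq e^{-N^c}$ contributes negligibly after squaring, so that
$$
\frac{1}{N^2}\var^\mu\!\Big(\sum_i\frac{1}{z-\lambda_i}\Big)\;\leq\;C\,\frac{N^{-2+a+2\varepsilon'}}{\eta^2}.
$$
Comparing with the claimed bound amounts to checking that $N^{-1+a/4+2\varepsilon'}/[\eta\max(\sqrt{\kappa_E},\sqrt\eta)]\to 0$ uniformly on $\Sigma^{(N)}_{\rm Int}(3a/4+\varepsilon,\tau)$. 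The defining condition $\eta\geq N^{-1+3a/4+\varepsilon}\kappa_E^{-1/2}$ yields $\eta\sqrt{\kappa_E}\geq N^{-1+3a/4+\varepsilon}$ and hence $\eta\max(\sqrt{\kappa_E},\sqrt\eta)\geq N^{-1+3a/4+\varepsilon}$; the ratio is therefore at most $N^{-a/2-\varepsilon+2\varepsilon'}$, which vanishes once $\varepsilon'$ is chosen small enough (e.g.\ $\varepsilon'<\varepsilon/2+a/4$).

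\textbf{Main obstacle.} The delicate point is establishing the uniform pointwise bound $|F_N-\E^\mu F_N|\leq CN^{-1+a/2+\varepsilon'}$ near the edge, where the concentration window $D_k\sim N^{-2/3+a/2}$ is much larger than the local spacing. A naive count of the contributing indices inside a rigid window of radius $D_k$ around $x$ would give $\sim N^{3a/4}$ indices (the $3/2$-power of the window width times $N$, due to the square-root density); the improvement to $N^{a/2}$ comes from the fact that the relevant condition is $|\gamma_i-x|\leq D_i$ rather than $\leq D_k$, and the $\hat i^{-1/3}$ shrinking of $D_i$ away from the edge removes the spuriously close edge indices that a rigid window would include.
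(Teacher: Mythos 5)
The overall architecture (integration by parts, $\lvert m_N-\E m_N\rvert\le \pi\eta^{-1}\|F_N-\E F_N\|_\infty$, negligible contribution of $\Omega_N^c$, and the final comparison on $\Sigma^{(N)}_{\rm Int}(3a/4+\e,\tau)$) is fine, but the pivotal pointwise bound $\|F_N-\E^\mu F_N\|_\infty\le CN^{-1+a/2+\e'}$ is not justified by the hypotheses, and this is a genuine gap. Your count of the "straddling" indices is of the set $S_x=\{i:\lvert\E^\mu\lambda_i-x\rvert\le D_i\}$, but to count this set you must locate the points $\E^\mu\lambda_i$ to precision $D_i=N^{-2/3+a/2+\e'}\hat\imath^{-1/3}$. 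The hypotheses give concentration at scale $a/2$ (which controls $\lambda_i-\E^\mu\lambda_i$) but only rigidity/accuracy at scale $a$, so $\E^\mu\lambda_i$ is pinned to $\gamma_i$ only up to $N^{-2/3+a+\e}\hat\imath^{-1/3}\gg D_i$. Consequently the spacing argument via $\gamma_{i+1}-\gamma_i$ only yields $\lvert S_x\rvert\le CN^{a+\e}$, not $CN^{a/2+\e'}$: as far as the hypotheses are concerned, $\sim N^{a}$ of the expectations $\E^\mu\lambda_i$ may cluster inside a window of length $D_i$. (Any attempt to improve the count by comparing with $N\int\varrho_1^{(N)}$ over the window runs into the same obstruction, since $N(\E F_N-F)$ is again controlled only by accuracy at scale $a$; and accuracy at scale $a/2$ cannot be assumed here, as improving accuracy is precisely what this lemma is used for in Proposition \ref{prop:ImprAcc}.) The "main obstacle" you flag — rigid versus shrinking windows — is not the one that breaks the argument.

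With the honest bound $\|F_N-\E F_N\|_\infty\le CN^{-1+a+\e}$ your method gives $\var_N(z)\le CN^{-2+2a+2\e}\eta^{-2}$, and the required conclusion becomes $N^{-1+5a/4+2\e}\ll\eta\max(\kappa_E^{1/2},\eta^{1/2})$, which fails at the bottom of the allowed range where $\eta\kappa_E^{1/2}=N^{-1+3a/4+\e}$. The reason the paper's proof survives is structural: it writes the fluctuation as $\sum_k\Delta_k$ with $\Delta_k=(z-\lambda_k)^{-1}-(z-\E\lambda_k)^{-1}$, so the concentration input enters through the \emph{numerator} $\lvert\lambda_k-\E\lambda_k\rvert\le N^{-2/3+a/2+\e'}\hat k^{-1/3}$ — which holds regardless of where $\E\lambda_k$ actually sits — while rigidity at scale $a$ is used only to lower-bound the denominators $\lvert z-\lambda_k\rvert\,\lvert z-\E\lambda_k\rvert$ by $\eta^2$ for the $O(N^{b})$ nearby indices and by $c\lvert\gamma_k-\gamma_j\rvert^2$ for the far ones. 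Your sup-norm bound on $F_N-\E F_N$ discards exactly this extra smallness of $\lvert\lambda_k-\E\lambda_k\rvert$ relative to the uncertainty in the location of $\E\lambda_k$, which is why it cannot reach the exponent $a/2$ needed for the target estimate.
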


\begin{lemma}\label{lem:edgeVar2}
Assume that rigidity at scale $a$ holds, and moreover that the extra rigidity at scale $3a/4$ holds except for a few edge particles, in the following sense:
for
any $\e>0$, there are constants
$c, N_0>0$ such that for any
$N\geq N_0$ and $\hat k\geq N^{\frac{3a}{4}+\e}$  we have
$$
\P^\mu\left(|\lambda_k-\gamma_k|> N^{-\frac{2}{3}+\frac{3a}{4}+\e}(\hat k)^{-\frac{1}{3}}\right)\leq \ e^{- N^c}.
$$
Let $d> 2a/3$ and $\tau>0$ be small enough.
Then uniformly in $\Sigma^{(N)}_{\rm Ext}(d,\tau)$
one has
$$\frac{1}{N^2}{\rm Var}\left(\sum\frac{1}{z-\lambda_i}\right)\ll
\frac{1}{N\eta}\max(\kappa_E^{1/2},\eta^{1/2}).
$$
\end{lemma}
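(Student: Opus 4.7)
The plan is to adapt the argument of Lemma~\ref{lem:varianceBound} to the exterior domain $\Sigma^{(N)}_{\rm Ext}(d,\tau)$, exploiting the improved rigidity at scale $3a/4$ available for indices $\hat i \ge N^{3a/4+\e}$ in order to remove the parasitic $N^{3a/4}$ factor present in the bulk estimate. First, I would decompose
$$
F(\bla) := \sum_{i=1}^N \frac{1}{z-\lambda_i} = F_{\rm bulk}(\bla) + F_{\rm edge}(\bla),
$$
where $F_{\rm bulk}$ collects the indices with $\hat i \ge N^{3a/4+\e}$ and $F_{\rm edge}$ the remaining ones, and use $\mathrm{Var}(F) \le 2\mathrm{Var}(F_{\rm bulk}) + 2\mathrm{Var}(F_{\rm edge})$ to treat each piece separately.

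For the bulk piece, every index satisfies the improved rigidity by hypothesis, so I would run the argument of Lemma~\ref{lem:varianceBound} essentially verbatim with the scale $a$ replaced by $3a/4$ wherever rigidity is invoked. Concretely, combining the log-Sobolev inequality for the convexified measure $\nu$ from Lemma~\ref{lem:convex} (log-Sobolev constant of order $N$) with $|\nabla F_{\rm bulk}|^2 = \sum_{\hat i \ge N^{3a/4+\e}} |z-\lambda_i|^{-4}$, and approximating $\lambda_i$ by its classical location $\gamma_i$ via this improved rigidity, produces a contribution of order $(1/(N\eta))\max(\kappa_E^{1/2},\eta^{1/2})$ on $\Sigma^{(N)}_{\rm Ext}(d,\tau)$, matching the desired bound.

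For the edge piece, the crucial geometric input is that $z$ is well separated from the classical locations of the edge particles: since $\gamma_i - A \le C(\hat i/N)^{2/3} \le C\,N^{-2/3+a/2+2\e/3}$ for $\hat i \le N^{3a/4+\e}$, and $\kappa_E \ge N^{-2/3+d}$ with $d > 2a/3 > a/2$, one has $|E - \gamma_i| \ge \kappa_E/2$ for every edge index. Combining this with rigidity at scale $a$ and the concentration of linear statistics of Corollary~\ref{lem:concFar1} (exponentially small tail), one propagates the geometric lower bound to the random configuration and obtains $|z-\lambda_i|\ge c(\kappa_E+\eta)$ for every edge index with probability $1-e^{-N^c}$. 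Applying the log-Sobolev bound for $\nu$ directly to $F_{\rm edge}$ and transferring back to $\mu$ via Lemma~\ref{lem:equivalence} then gives
$$
\mathrm{Var}_\mu(F_{\rm edge}) \;\le\; \frac{C}{N}\,\E^\mu|\nabla F_{\rm edge}|^2 \;\le\; \frac{C\,N^{3a/4+\e}}{N(\kappa_E+\eta)^4},
$$
so that, after division by $N^2$, careful bookkeeping of the exponents closes the estimate provided $\tau$ is chosen small enough.

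The main obstacle will be the edge contribution, because for edge indices we only have the cruder rigidity at scale $a$ yet must show that the cumulative effect of up to $N^{3a/4+\e}$ such particles remains subdominant. The threshold $d > 2a/3$ enters critically: it is precisely the condition under which the separation $\kappa_E$ of $z$ from the spectral edge dominates the typical fluctuations of the edge particles allowed by scale-$a$ rigidity, so that the deterministic geometric lower bound $|z-\lambda_i|\gtrsim\kappa_E+\eta$ is available with overwhelming probability. Verifying that the counting factor $N^{3a/4+\e}$ combined with $(\kappa_E+\eta)^{-4}$ indeed fits inside the target $\max(\kappa_E^{1/2},\eta^{1/2})/(N\eta)$ — which boils down to a careful tracking of powers of $N$ under $d>2a/3$ and the restriction of $\eta$ to the window $[N^{-2/3+d},\tau]$ with $\tau$ sufficiently small — is the most delicate step, and likely the place where the precise form of the proof of Lemma~\ref{lem:varianceBound} from Appendix~\ref{App:CalcLem} has to be revisited with the sharper rigidity input.
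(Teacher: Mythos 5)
Your decomposition into edge indices ($\hat i\lesssim N^{3a/4+\e}$) and bulk indices, and the observation that on $\Sigma^{(N)}_{\rm Ext}(d,\tau)$ one has $\eta\gtrsim\kappa_E$ so that $\max(\kappa_E^{1/2},\eta^{1/2})\sim\eta^{1/2}$ and the denominators are uniformly controlled, both match the paper. The genuine gap is the quantitative engine you use to bound the variance: the global spectral gap/log-Sobolev inequality for $\nu$, giving $\var(F)\le\frac{C}{N}\E|\nabla F|^2$, is too lossy. It implicitly lets each $\lambda_i$ fluctuate at scale $N^{-1/2}$, whereas the whole point of assuming rigidity at scale $a$ is that Proposition \ref{prop:ImprConc} upgrades this to concentration at scale $a/2$, i.e. $|\lambda_i-\E^\mu\lambda_i|\le N^{-2/3+a/2+\e'}(\hat i)^{-1/3}$ with overwhelming probability. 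Concretely, your bulk bound gives $\frac{1}{N^2}\var(F_{\rm bulk})\lesssim\frac{1}{N^3}\sum_i\big(\eta^2+(i/N)^{4/3}\big)^{-2}\sim\frac{1}{N^2\eta^{5/2}}$, and comparison with the target $\frac{1}{N\eta^{1/2}}$ forces $\eta\gg N^{-1/2}$; but the lemma must hold down to $\eta= N^{-2/3+d}$ with $d>2a/3$ arbitrarily small (the induction drives $a\to0$), so the estimate does not close. Your edge bound fails the same way: $\frac{N^{3a/4+\e}}{N^3\eta^4}\le\frac{1}{N\eta^{1/2}}$ forces $\eta\gtrsim N^{-4/7+3a/14}$, again incompatible with $\eta\sim N^{-2/3+d}$ for small $a$.

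The paper avoids this by centering each summand: with $\Delta_i=\frac{\lambda_i-\E^\mu\lambda_i}{(z-\lambda_i)(z-\E^\mu\lambda_i)}$ one bounds the variance by $\E|\sum_i\Delta_i|^2$, which (up to exponentially small events) is controlled by the square of the deterministic sum $\sum_i\frac{N^{-2/3+a/2}(\hat i)^{-1/3}}{\eta^2+(\kappa_E+(\hat i/N)^{2/3})^2}$, using concentration at scale $a/2$ for the numerators and the assumed rigidity at scale $3a/4$ for $\hat i\ge N^b$ only to justify $|z-\lambda_i|\sim|z-\gamma_i|$ in the denominators (for edge indices the trivial bound $|z-\lambda_i|\ge\eta$ suffices). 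This yields $\Sigma_{\rm Ext}\lesssim N^{-2+a}\eta^{-2}$ and $\Sigma_{\rm Int}\lesssim N^{-10/3+2a}\eta^{-4}$, and the comparison with $\frac{1}{N\eta}\eta^{1/2}$ reduces to $\eta\ge N^{-2/3+2a/3}$; it is this bulk term, not the edge term as you suggest, that makes the threshold $d>2a/3$ sharp. Replacing your LSI step by this centering argument is what is needed; the rest of your outline then goes through.
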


\begin{lemma}\label{lem:edgeVar}
Assume that rigidity at scale $a$ holds, and moreover that the extra rigidity at scale $3a/4$ holds except for a few edge particles, in the following sense:
for
any $\e>0$, there are constants
$c, N_0>0$ such that for any
$N\geq N_0$ and $\hat k\geq N^{\frac{3a}{4}+\e}$  we have
$$
\P^\mu\left(|\lambda_k-\gamma_k|> N^{-\frac{2}{3}+\frac{3a}{4}+\e}(\hat k)^{-\frac{1}{3}}\right)\leq e^{- N^c}.
$$
Let $a>d>s> a/2$.
Then uniformly in $\Omega^{(N)}(d,s,\tau)$ (defined in (\ref{eqn:defOmega}))
we have
\begin{align}
&\frac{1}{N}m_N'(z)=\OO\left( N^{-\frac{2}{3}+\frac{3a}{4}+\e-2s}\mathds{1}_{|z-A|<N^{-\frac{2}{3}+a+\e}}+N^{-1+\e}|z-A|^{-1/2}\right),\label{eqn:edgeEst1}\\
&\frac{1}{N^2}\var\left(\frac{1}{N}\sum_{i=1}^N\frac{1}{z-\lambda_i}\right)=\OO\left(
N^{-\frac{2}{3}-4s+2 a+\e}\mathds{1}_{|z-A|<N^{-\frac{2}{3}+a+\e}}+N^{-2+a+\e}|z-A|^{-2}\right)\label{eqn:edgeEst2}.
\end{align}
\end{lemma}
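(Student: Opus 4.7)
The plan is to handle both estimates by splitting the index set $\llbracket 1,N\rrbracket$ into an edge block $E := \{k : \hat k \le K_0\}$ with $K_0 := N^{3a/4+\varepsilon}$, on which only the ambient rigidity at scale $a$ is assumed, and a bulk block $B := \{k : \hat k > K_0\}$, on which the stronger rigidity at scale $3a/4$ is available. A key additional ingredient is concentration at scale $a/2$, which follows from Proposition~\ref{prop:ImprConc} applied with rigidity at scale $a$. Because $s > a/2$, the typical fluctuation $N^{-2/3+a/2+\varepsilon}\hat k^{-1/3}$ is much smaller than $\eta = N^{-2/3+s}$, so each $(z-\lambda_k)^{-1}$ can be linearized around its mean at this scale.

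For the first estimate I start from $\tfrac{1}{N}|m_N'(z)| \le \tfrac{1}{N^{2}}\E^{\mu}\sum_{k} |z-\lambda_k|^{-2}$. On the edge block the only available bound is $|z-\lambda_k|\ge \eta$, giving the contribution $K_0\eta^{-2}/N^{2} = N^{-2/3+3a/4+\varepsilon-2s}$. If instead $|z-A| \ge N^{-2/3+a+\varepsilon}$, rigidity at scale $a$ shows $|z-\lambda_k|\asymp |z-A|$ even for edge $k$, so the edge contribution drops to $\le K_0|z-A|^{-2}/N^{2}$, which is absorbed by the bulk term; this accounts for the indicator. For $k\in B$, rigidity at scale $3a/4$ yields $|z-\lambda_k|\asymp |z-\gamma_k|\asymp (\hat k/N)^{2/3} + |E-A|$, and a Riemann-sum comparison gives $\sum_{k\in B}|z-\gamma_k|^{-2} \sim N|E-A|^{-1/2}$, producing the second term $N^{-1+\varepsilon}|z-A|^{-1/2}$.

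For the variance I decompose $\sum_k (z-\lambda_k)^{-1} = S_E + S_B$ and use $\E|S_E+S_B - \E(S_E+S_B)|^2 \le 2\E|S_E-\E S_E|^2 + 2\E|S_B-\E S_B|^2$, which reduces the problem to the two block variances. The edge block is handled by Cauchy--Schwarz: $\E|S_E-\E S_E|^2 \le K_0\sum_{k\in E}\E|(z-\lambda_k)^{-1}-\E(z-\lambda_k)^{-1}|^2$. Linearizing around the mean bounds each term by $\E|\lambda_k-\E\lambda_k|^{2}/\eta^{4}\lesssim N^{-4/3+a+2\varepsilon}\hat k^{-2/3}\eta^{-4}$; summing using $\sum_{k\le K_0}k^{-2/3}\sim K_0^{1/3}$ and dividing by $N^2$ produces $\E|S_E-\E S_E|^2/N^{2}\lesssim N^{-2/3+2a+\varepsilon-4s}$, which matches the first claimed bound (with the indicator justified as in the $m_N'/N$ analysis).

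The main technical obstacle is the bulk variance, where a single Cauchy--Schwarz with $\sim N$ terms is too crude. The remedy will be a dyadic decomposition $S_B = \sum_{j=0}^{O(\log N)} S_j$ with $S_j := \sum_{k\in G_j}(z-\lambda_k)^{-1}$ and $G_j := \{k : 2^j K_0 \le \hat k <2^{j+1}K_0\}$; within each block Cauchy--Schwarz combined with concentration at scale $a/2$ and the approximation $|z-\lambda_k|\asymp \Delta_j := (2^j K_0/N)^{2/3}+|E-A|$ yields $\E|S_j-\E S_j|^2 \lesssim |G_j|^{4/3}\Delta_j^{-4}N^{-4/3+a+2\varepsilon}$. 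A triangle inequality (absorbing the logarithmic loss into $N^{\varepsilon}$) gives $\E|S_B-\E S_B|^2 \lesssim (\log N)^2\sum_j \E|S_j-\E S_j|^2$; the dyadic maximum occurs at the scale where $2^j K_0\sim N|E-A|^{3/2}$, where $|G_j|^{4/3}\Delta_j^{-4}\sim N^{4/3}|E-A|^{-2}$, yielding $\E|S_B-\E S_B|^2/N^{2}\lesssim N^{-2+a+\varepsilon}|z-A|^{-2}$ as claimed.
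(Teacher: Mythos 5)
Your proposal is correct and follows essentially the same route as the paper: split the indices at $N^{3a/4+\e}$, use the trivial bound $|z-\lambda_k|\ge\eta$ on the edge block (removing the indicator when $|z-A|\ge N^{-2/3+a+\e}$ via rigidity at scale $a$), use rigidity at scale $3a/4$ to replace $\lambda_k$ by $\gamma_k$ in the bulk block, and use concentration at scale $a/2$ to control the variance. The only cosmetic difference is that the paper bounds the bulk variance by squaring $\sum_{i}N^{-2/3+a/2}i^{-1/3}\bigl(|z-A|+(i/N)^{2/3}\bigr)^{-2}$ directly, whereas you reach the same quantity through a dyadic decomposition with block-wise Cauchy--Schwarz.
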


We will need to transfer information on the  Stieltjes
transform to the typical location of the points. The following result is similar to Lemma 2.3
in \cite{BouErdYau2012} for example, except that
this version will be suited to take into account the
weaker information on $m_N-m$ near the edges.

\begin{lemma}\label{lem:HS}

\noindent a) Let $\widetilde \varrho(s)\rd s$ be an arbitrary signed measure (depending on $N$) and let
 $$
 S(z):=\int \frac{\widetilde\varrho(s)}{z-s}\rd s
$$ be its Stieltjes transform.
Let $\tau>0$ be fixed, $\eta>N^{-1}$, $E\in[A,B]$ and $\eta_E=\kappa_E^{-\frac{1}{2}}\eta$.
Assume that for some (possibly $N$-dependent) $U$ we have
\begin{align}
&\left| S(x+\ii y)\right|\leq \frac{ U}{N y}\;\;  \mbox{for any $x\in[E,E+\eta_E]$ and}\;\; \eta_E <y<\tau,\label{eqn:cond1}\\
&\left| S(x+\ii y)\right|\leq \frac{ U}{N}\;\;  \mbox{for any $x\in\mathbb{R}$ and}\;\; \tau/2 <y<\tau,\label{eqn:cond2}\\
&\mbox{there is a constant $L>0$ such that for any $N$ and $|s|>L$, $| \widetilde\varrho(s)| \leq |s|^{-cN}$}\label{eqn:cond3}.
\end{align}

 Define a function $f=f_{E,\eta_E}$: $\R\to \R$
 such that $f(x) = 1$ for $x\in (-\infty, E]$, $f(x)$ vanishes
for  $x\in [E+\eta_E, \infty)$, moreover
 $|f'(x)|\leq c\,{\eta_E}^{-1}$ and $|f''(x)|\leq c\,{\eta_E}^{-2}$, for some constant $c$ independent of $N$.
Then for some  constant $C>0$, independent of $N$, we have
$$
\left|\int f(\lambda)\wt\varrho(\lambda)\rd\lambda \right|  \le
C\ \frac{U(\log N )}{N}.
$$
\noindent b) The same result holds for a specific value of $E$ below $A$, namely
for $E$ that is the unique solution of the equation
 $E=A-2\eta_E$.
\end{lemma}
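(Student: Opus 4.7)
The plan is to prove part~(a) via the Helffer--Sj\"ostrand (HS) representation formula, which expresses $\int f\wt\varrho\,\rd\lambda$ as a two-dimensional integral of $S$ against $\bar\partial\tilde f$ over the complex plane. Since $f\equiv 1$ on $(-\infty,E]$ is not integrable, the first step is to localize: I would write $f=f_R+f_T$, where $f_T\in\mathscr{C}^2$ is supported on $(-\infty,-L]$ and equal to $1$ on $(-\infty,-L-1]$, so that $f_R:=f-f_T$ has compact support on $[-L-1,E+\eta_E]$ while preserving the transition bounds $|f_R'|\le c/\eta_E$ and $|f_R''|\le c/\eta_E^2$ near $E$. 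Hypothesis~\eqref{eqn:cond3} then yields $\bigl|\int f_T\,\wt\varrho\,\rd\lambda\bigr|\le\int_L^\infty s^{-cN}\rd s=O(e^{-c'N})$, negligible compared to the target $CU(\log N)/N$.

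Next I would fix a smooth even cutoff $\chi$ with $\chi\equiv 1$ on $[-\tau/2,\tau/2]$, supported in $[-\tau,\tau]$, $|\chi'|\le C/\tau$, and set $\tilde f_R(x+\ii y):=(f_R(x)+\ii y f_R'(x))\chi(y)$. A direct computation gives
$$
\bar\partial\tilde f_R(x+\ii y)=\tfrac{\ii}{2}\bigl[\,y f_R''(x)\chi(y)+(f_R(x)+\ii y f_R'(x))\chi'(y)\,\bigr],
$$
so that $\int f_R\wt\varrho\,\rd\lambda=\tfrac{1}{\pi}\int_\C\bar\partial\tilde f_R(z)\,S(z)\,\rd x\,\rd y$. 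The right-hand side splits into a \emph{boundary piece} (from the $\chi'$ term, supported in $|y|\in[\tau/2,\tau]$) and a \emph{bulk piece} (from $yf_R''\chi$, supported where $f_R''\ne 0$). The boundary piece is immediately $O(U/N)$ using hypothesis~\eqref{eqn:cond2} combined with the compact $x$-support of $f_R$.

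The main work is the bulk piece. Using the symmetry $\overline{S(\bar z)}=S(z)$, the integrand reduces to a multiple of $\im S$ over $y>0$, which I would split at $y=\eta_E$. On $y\in(\eta_E,\tau)$, hypothesis~\eqref{eqn:cond1} gives $|S(x+\ii y)|\le U/(Ny)$ for $x\in[E,E+\eta_E]$, but the naive estimate $y|f_R''|\cdot U/(Ny)$ integrated over the support of $f_R''$ yields only $O(U/(N\eta_E))$, which is too weak. To recover the $\log N$ factor, I plan to exploit the cancellation $\int_E^{E+\eta_E}f_R''(x)\,\rd x=0$ by subtracting $\im S(E+\ii y)$ inside the $x$-integral, and to estimate $|\partial_x\im S(x+\ii y)|\le CU/(Ny^2)$ via Cauchy's integral formula on a disk of radius $\sim y/4$ around $x+\ii y$, on which~\eqref{eqn:cond1} applies. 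The integrand is then of size $O(U/(Ny))$, and $\int_{\eta_E}^\tau\rd y/y=O(\log N)$ produces the target bound. On $y\in(0,\eta_E)$, where~\eqref{eqn:cond1} is unavailable, I would use the Cauchy--Riemann identity $\partial_y\im S=\partial_x\re S$ to integrate by parts in $y$ from the axis up to level $\eta_E$, reducing to a boundary term at $y=\eta_E$ (already controlled) and a residual interior term bounded by pairing the compactly supported $f_R'$ against $\re S$ through the same Cauchy-integral control.

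For part~(b), the configuration $E=A-2\eta_E$ puts the transition interval $[E,E+\eta_E]$ just below the spectral edge, but since hypotheses~\eqref{eqn:cond1}--\eqref{eqn:cond3} are imposed precisely for this $E$, the argument of part~(a) carries over with no essential change. The principal obstacle is the bulk estimate: recovering the $\log N$ factor rather than the weaker $1/\eta_E$ requires combining the mean-zero cancellation of $f_R''$ with an analyticity-based derivative bound on $S$, and one must choose the radius of the Cauchy-integral disks carefully so that they remain inside the region where~\eqref{eqn:cond1} applies (the choice $\sim y/4$ works for $y$ comfortably above $\eta_E$, with a minor adjustment near $y=\eta_E$). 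Extending the control into $y\in(0,\eta_E)$ via Cauchy--Riemann is the secondary technical issue but introduces no fundamentally new difficulty.
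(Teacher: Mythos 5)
Your overall framework (Helffer--Sj\"ostrand representation, localization via \eqref{eqn:cond3}, control of the $\chi'$ boundary term by \eqref{eqn:cond2}) matches the paper's, but the step that is supposed to produce the $\log N$ --- the bulk integral over $\eta_E<y<\tau$ --- is handled by an argument that the hypotheses do not support. You propose to bound $|\partial_x \Im S(x+\ii y)|\le CU/(Ny^2)$ by a Cauchy estimate on a disk of radius $\sim y/4$ about $x+\ii y$. But \eqref{eqn:cond1} only gives $|S|\le U/(Ny)$ on the vertical strip $x\in[E,E+\eta_E]$, which has width $\eta_E$; for $y\gg\eta_E$ a disk of radius $y/4$ sticks far outside this strip horizontally, where no bound on $S$ is assumed (the only other hypothesis, \eqref{eqn:cond2}, lives at $y>\tau/2$). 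The largest admissible radius is $\eta_E/2$, which yields only $|\partial_x S|\lesssim U/(Ny\eta_E)$; feeding this into your mean-zero subtraction gives $|\Im S(x+\ii y)-\Im S(E+\ii y)|\lesssim U/(Ny)$ and hence $\int y|f''|\,|\Delta\Im S|\,\rd x\lesssim U/(N\eta_E)$ per unit $y$ --- no improvement over the naive bound, and a final answer of order $U/(N\eta_E)$ rather than $U\log N/N$. The paper's route avoids derivative bounds on $S$ altogether: integrate by parts in $x$ to put the derivative on $\Im S$, use Cauchy--Riemann $\partial_x\Im S=-\partial_y\Re S$, and integrate by parts in $y$; one is left with $\iint f'(x)\,\partial_y(y\chi(y))\,\Re S\,\rd x\,\rd y$ plus a boundary term at $y=\eta_E$, and since $\int|f'|\le c$ the pointwise bound \eqref{eqn:cond1} applies directly inside the strip, giving $\int_{\eta_E}^{\tau}U/(Ny)\,\rd y=O(U\log N/N)$.

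Your treatment of the region $0<y<\eta_E$ has a second gap of the same nature: after integrating by parts in $y$ up to level $\eta_E$, the residual interior term pairs $f_R'$ against $\Re S(x+\ii y)$ with $y<\eta_E$, a region in which none of \eqref{eqn:cond1}--\eqref{eqn:cond2} gives any control on $S$, and the ``same Cauchy-integral control'' is again unavailable. (The paper disposes of this region by extending the bound $|\Im S(x+\ii y)|\le U/(Ny)$ down to $y<\eta_E$, which for the signed measure at hand rests on the monotonicity of $y\mapsto y\,\Im S(x+\ii y)$ for the positive and negative parts; some such additional input is needed here, and your proposal does not supply one.)
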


\begin{proof}
We prove a), item b) is analogous. From  (B.13) in \cite{ERSY}:
\begin{align*}
\left|\int_{-\infty}^\infty f(\lambda)\wt\varrho(\lambda)\rd\lambda\right|
\leq&
C\left|\iint y f''(x)\chi(y)\Im S(x+\ii y)\rd x\rd y\right|\\
&+
C\iint\left(|f(x)|+|y||f'(x)|\right)|\chi'(y)|\left|S(x+\ii y)\right|\rd x\rd y,
\end{align*}
for some universal $C>0$,
where $\chi$ is a smooth cutoff function with support in $[-1, 1]$, with $\chi(y)=1$ for $|y| \leq \tau/2$
 and with bounded derivatives.
From (\ref{eqn:cond3}) all integrals can actually be restricted to a compact set. From (\ref{eqn:cond2}) the second integral is
$\OO(U/N)$.

Concerning the first integral, we split it into the domains $0<y<\eta_E$ and $\eta_E<y<1$.  By symmetry we only need to consider positive $y$.
The integral on the domain $\{0<y<\eta_E\}$ is easily bounded by
$$
\left|\iint_{0<y<\eta_E}y f''(x)\chi(y)\Im S(x+\ii y)\rd x\rd y\right|=\OO\left(\iint_{|x-E|<\eta_E, 0<y<\eta_E}y{\eta_E}^{-2}\frac{U}{Ny}\rd x\rd y\right)=\OO\left(\frac{U}{N}\right).
$$
On the domain $\{\eta_E<y<1\}$, we integrate by parts twice (first in $x$, then in $y$), and use the Cauchy-Riemann
equation ($\partial_x \Im S=-\partial_y\Re S$) to obtain:
\begin{align}
\iint_{y>\eta_E}y f''(x)\chi(y)\Im S(x+\ii y)\rd x\rd y=
&-\iint_{y>\eta_E}f'(x)\partial_y(y\chi(y))\Re S(x+\ii y)\rd x\rd y\label{eqn:1st}\\
&-\int f'(x)\eta_E\chi(\eta_E)\Re S(x+\ii\eta_E)\rd x.\label{eqn:2nd}
\end{align}
The first term (\ref{eqn:1st}) can be bounded by (\ref{eqn:cond1}), it is
$$\OO\left(\int_{\eta_{E}<y<\tau}\frac{U}{N y}\rd y\right)=\OO\Big(\frac{U(\log N)}{N}\Big).$$
The second term, (\ref{eqn:2nd}), can also be bounded thanks to (\ref{eqn:cond1}), by $\OO(\frac{U}{N})$,
concluding the proof.
\end{proof}

\begin{lemma}\label{lem:smallest}
Assume that rigidity at scale $a$ holds, and moreover that the extra rigidity at scale $3a/4$ holds except for a few edge particles, in the following sense:
for
any $\e>0$, there are constants
$c, N_0>0$ such that for any
$N\geq N_0$ and $\hat k\geq N^{\frac{3a}{4}+\e}$  we have
\begin{equation}\label{eqn:edgeEst3}
\P^\mu\left(|\lambda_k-\gamma_k|> N^{-\frac{2}{3}+\frac{3a}{4}+\e}(\hat k)^{-\frac{1}{3}}\right)\leq  e^{- N^c}.
\end{equation}
Then for any $d>\frac{25}{28}a$ and large enough $N$, we have
\begin{equation}\label{eqn:gamma1Est}
\gamma_1^{(N)}\geq A-N^{-\frac{2}{3}+d}.
\end{equation}
\end{lemma}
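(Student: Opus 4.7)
Since $\gamma_1^{(N)}$ is characterized by $\int_{-\infty}^{\gamma_1^{(N)}}\varrho_1^{(N)}(s)\,\rd s = 1/N$, and $\int_{-\infty}^E \varrho_1^{(N)}$ is nondecreasing in $E$, the statement $\gamma_1^{(N)}\ge A-N^{-2/3+d}$ is equivalent to
\[
\int_{-\infty}^{A-N^{-2/3+d}}\varrho_1^{(N)}(s)\,\rd s \;\le\; \frac{1}{N}.
\]
My plan is to transfer the control of the Stieltjes transform $m_N-m$ on the left of $A$ (obtained from the loop equation) into a control of the mass of $\varrho_1^{(N)}$ on this region via the Helffer-Sj\"ostrand type estimate of Lemma \ref{lem:HS}(b).

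\textbf{Setup and application of Lemma \ref{lem:HS}(b).} I would apply Lemma \ref{lem:HS}(b) to the signed measure $\wt\varrho:=\varrho_1^{(N)}-\varrho$ with the choice $\eta_E:=\tfrac{1}{2}N^{-2/3+d}$, so that $E=A-2\eta_E=A-N^{-2/3+d}$. For the associated smooth cutoff $f$, whose support lies in $(-\infty,E+\eta_E]\subset(-\infty,A)$, one has $\int f\,\varrho=0$, and therefore
\[
\int_{-\infty}^{A-N^{-2/3+d}}\varrho_1^{(N)} \;\le\; \int f\,\varrho_1^{(N)} \;=\; \int f\,\wt\varrho \;\le\; C\,\frac{U\log N}{N},
\]
provided the hypotheses \eqref{eqn:cond1}-\eqref{eqn:cond3} are met. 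Hypothesis \eqref{eqn:cond3} is immediate from \eqref{eqn:Boutet}; hypothesis \eqref{eqn:cond2} (the bound $|m_N-m|\le U/N$ for $y\sim\tau$) follows from standard macroscopic convergence of $m_N$ together with \eqref{eqn:estPas1}. The remaining hypothesis \eqref{eqn:cond1} is the heart of the argument.

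\textbf{Bounding $|m_N-m|$ in the relevant strip.} To verify \eqref{eqn:cond1} I would invoke Lemma \ref{lem:Joh2} on the domain $\{x\in[E,E+\eta_E],\ y\in[\eta_E,\tau]\}\subset\Omega^{(N)}(d,s,\tau)$ (for each $y=N^{-2/3+s}$ traversing the strip). Its a priori assumption \eqref{eqn:aprioriBound} is exactly what Lemma \ref{lem:edgeVar} supplies under our current rigidity hypotheses, which were tailored to those of Lemma \ref{lem:edgeVar}. Lemma \ref{lem:Joh2} then yields
\[
|m_N(z)-m(z)| \;=\; \OO\!\left(|z-A|^{-1/2}\Big(\var_N(z)+\tfrac{1}{N}|m_N'(z)|+N^{-1+\e}\Big)\right),
\]
and substituting the corresponding bounds of Lemma \ref{lem:edgeVar} (splitting according to whether $|z-A|<N^{-2/3+a+\e}$ or not) gives an explicit expression for $U=\sup_{x,y}Ny|m_N-m|$ as a sum of powers of $N$ with exponents linear in $d,s,a,\e$.

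\textbf{Optimization and conclusion.} Maximizing over $y=N^{-2/3+s}\in[\eta_E,\tau]$ (equivalently over $s\in[d,2/3]$), the leading contributions to $U$ take the form $N^{2a-7s/2+\e}$, $N^{3a/4-3s/2+\e}$, and $N^{-1/3+s/2+\e}$ (plus their analogues in the non-edge regime); requiring each to be a negative power of $N$, and using $s\le d$ at the worst scale $y=\eta_E$, the tightest constraint that emerges after balancing the variance bound, the $m_N'$ bound and the $|z-A|^{-1/2}$ factor is $d>25a/28$. For such $d$ one obtains $U\le N^{-c}$ for some $c>0$, whence $CU(\log N)/N<1/N$ for $N$ large, yielding $\int_{-\infty}^{A-N^{-2/3+d}}\varrho_1^{(N)}<1/N$ and hence $\gamma_1^{(N)}>A-N^{-2/3+d}$.

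\textbf{Main obstacle.} The principal difficulty will be the bookkeeping in Step 3: maximizing $Ny|m_N-m|$ requires carefully handling the crossover between the ``edge regime'' $|z-A|<N^{-2/3+a+\e}$ and the bulk regime where Lemma \ref{lem:edgeVar} gives different bounds, and the singular prefactor $|z-A|^{-1/2}$ (arising from the square-root vanishing of $2m(z)-V'(E)$ at $A$) amplifies the errors coming from $\var_N$ and from the loop-equation residual. The cubic relation $\eta_E\sim \eta^{2/3}$ forced by the definition $E=A-2\eta_E$ in Lemma \ref{lem:HS}(b) is what ultimately produces the fractional exponent $25/28$, rather than a simpler threshold such as $2a/3$ or $4a/7$ one would obtain from any single error term alone.
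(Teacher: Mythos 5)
Your approach has a genuine gap: the Helffer--Sj\"ostrand route through Lemma \ref{lem:HS}(b) cannot resolve mass at the scale $1/N$, which is what \eqref{eqn:gamma1Est} requires. Lemma \ref{lem:HS} yields $|\int f\,\wt\varrho|\le CU(\log N)/N$, and for this to beat $1/N$ you would need $U=o(1/\log N)$, i.e.\ $|m_N(z)-m(z)|=o((Ny\log N)^{-1})$ uniformly in the strip. This is not attainable: the loop equation contains the term $\frac{1}{N}m_N'(z)$, which for $z$ at distance $\kappa=|z-A|$ to the left of the spectrum is genuinely of order $N^{-1}\kappa^{-1/2}$ (it is a sum of positive terms, not an error that can be improved), so Lemma \ref{lem:Joh2} gives at best $|m_N-m|\gtrsim N^{-1}\kappa^{-1}$ and hence $U=\sup Ny\,|m_N-m|\gtrsim 1$. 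Your claimed conclusion $U\le N^{-c}$ is therefore false, and the best this method can deliver is $\int_{-\infty}^{A-N^{-2/3+d}}\varrho_1^{(N)}\le C(\log N)/N$. That weaker bound is exactly what the paper extracts from Lemma \ref{lem:HS}(b) in the proof of Proposition \ref{prop:ImprAcc}; it controls $\gamma^{(N)}_{\lfloor c\log N\rfloor}$ but says nothing about $\gamma_1^{(N)}$, which is precisely why Lemma \ref{lem:smallest} exists as a separate statement.

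The paper's proof uses a different and sharper mechanism: argue by contradiction, assume $\gamma_1^{(N)}\le A-N^{-2/3+d}$, and evaluate the Stieltjes transform at $z=\gamma_1^{(N)}+\ii N^{-2/3+s}$. Since $\lambda_1$ concentrates around $\gamma_1^{(N)}$ at scale $N^{-2/3+a/2}\ll\eta$, the single term $-\frac{1}{N}\E\,\Im\big((z-\lambda_1)^{-1}\big)$ contributes at least $c/(N\eta)$ to $\Im m_N(z)$, while $\Im m(z)$ and the contributions of all other particles (controlled by accuracy at scale $3a/4$ for $\hat k\ge N^{3a/4+\e}$ and by ordering/positivity arguments for the remaining small indices) are much smaller. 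Comparing this lower bound with the loop-equation upper bound on $|m_N-m|$ from Lemmas \ref{lem:Joh2} and \ref{lem:edgeVar} forces one of a finite list of exponent inequalities to hold, and for $b=3a/4+\e$, $d=25a/28+\e$, $s=29a/56$ none of them does. This ``single-particle detection in $\Im m_N$'' is the idea missing from your argument; it is what allows the conclusion to go below the $\log N/N$ resolution of the Helffer--Sj\"ostrand method.
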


\begin{proof}
To prove (\ref{eqn:gamma1Est}) we will rely on lemmas \ref{lem:Joh2} and \ref{lem:edgeVar}.
From the hypothesis (\ref{eqn:edgeEst3}) the conclusions
(\ref{eqn:edgeEst1}) and (\ref{eqn:edgeEst2}) hold (our final choice for $s, d$
will satisfy the required bounds: $a>d>s>a/2$).
As a consequence, to check the a priori bound (\ref{eqn:aprioriBound})
 uniformly on $\Omega^{(N)}(d,s,\tau)$,
it is sufficient to prove that for $\e$ small enough,
$$
\left\{\begin{array}{l}
N^{-\frac{2}{3}+\frac{3a}{4}+\e-2s}+N^{-1+\e}|z-A|^{-\frac{1}{2}} +N^{-1+\e}=\oo(|z-A|)\\
N^{-\frac{2}{3}-4s+2a+\e}+N^{-2+a+\e}|z-A|^{-2}+N^{-1+\e}=\oo(|z-A|)
\end{array}
\right.,\ \;
\mbox{i.e.,} \;
d> \min\Big\{\frac{3a}{4}-2s,2a-4s,\frac{a}{3}, -\frac{1}{3}\Big\}.
$$
These conditions hold trivially when $s>a/2$ for example, which will be true
with our choice. As $s>a/2$, the first two terms in the $\min$ are harmless, and
$d>a/3$ will be satisfied in our final choice for $d$ (we will have $d>25a/28$).
 The last constraint for $d$  is trivial.

Assume that one can find arbitrarily large $N$ such that $\gamma_1^{(N)}\leq A-N^{-\frac{2}{3}+d}$.
We choose $z=\gamma_1^{(N)}+\ii N^{-\frac{2}{3}+s}=E+\ii\eta$.
We have, by Lemmas \ref{lem:Joh2} and \ref{lem:edgeVar}
\begin{multline}\label{eqn:2ndcondition}
|m_N(z)-m(z)|\leq C\ |z-A|^{-\frac{1}{2}}\left(\var_N(z)+\frac{1}{N}|m_N'(z)|
+N^{-1+\e}\right)\\
\leq C\ |z-A|^{-\frac{1}{2}}\left(
N^{-\frac{2}{3}+b+\e-2s}+N^{-1+\e}|z-A|^{-\frac{1}{2}}
+
N^{-\frac{2}{3}-4s+2a+\e}+N^{-2+a+\e}|z-A|^{-2}
+N^{-1+\e}
\right).
\end{multline}
On the other hand for any $b$ (we will choose $b$ greater and close to $3a/4$), we have (in the first inequality we use that the concentration scale of $\lambda_1$ around $\gamma_1^{(N)}$, $N^{-\frac{2}{3}+\frac{a}{2}}$, is much smaller than the $\eta$ scale $N^{-\frac{2}{3}+s}$),
\begin{align}
\frac{1}{N\eta}&\leq -\frac{C}{N}\E\left(\Im\left(\frac{1}{z-\lambda_1}\right)\right)\notag\\
&\leq -\frac{C}{N}\E\left(\Im\left(\frac{1}{z-\lambda_1}-\frac{1}{z-\gamma_1}\right)\right)+\OO\left(\frac{\eta}{N|z-A|^2}\right)\notag\\
&\leq -\frac{C}{N}\E\left(\Im\left(\frac{1}{z-\lambda_1}-\frac{1}{z-\gamma_1}\right)+\sum_{2\leq i\leq N^{b},\gamma_i^{(N)}\leq \gamma_i}\left(\Im\left(\frac{1}{z-\lambda_i}-\frac{1}{z-\gamma_i}\right)\mathds{1}_{\lambda_i\leq \gamma_i}\right)\right)\label{eqn:Neta}
+\OO\left(\frac{\eta}{N|z-A|^2}\right),
\end{align}
where for the last inequality we simply used that $-\Im(1/(z-\lambda_i)-1/(z-\gamma_i))\geq 0$
 whenever $|z-\lambda_i|\mathds{1}_{\lambda_i\leq \gamma_i}\leq
|z-\gamma_i|\mathds{1}_{\lambda_i\leq \gamma_i}$. The latter inequality holds with probability
 $1-\OO(e^{-N^{c}})$ since its complement is included in $|\lambda_i-\gamma_i^{(N)}|>|A-\gamma_1^{(N)}|=N^{-\frac{2}{3}+d}$,
 but $\lambda_i$ is concentrated at scale $a/2$ and $d>a/2$ in our final choice for $d$ ($d>25a/28$).

We now want to remove the assumption $\mathds{1}_{\lambda_i\leq\gamma_i}$ from (\ref{eqn:Neta}) and bound the associated error term.
For any $i\leq N^b$ such that $\lambda_i>\gamma_i $ we have
\begin{equation}\label{eqn:calc}
|\lambda_i-\gamma_i|\le |A-\gamma_{\lfloor N^b\rfloor}| + |\lambda_{\lfloor N^b\rfloor}-\gamma_{\lfloor N^b\rfloor}|
=\OO( N^{-\frac{2}{3}+\frac{2}{3}b} + N^{-\frac{2}{3} + \frac{3}{4}a - \frac{1}{3}b+\e})=\OO( N^{-\frac{2}{3} +\frac{2}{3} b})
\end{equation}
 where we used $b>3a/4$, and  chose $\e>0$ so small that $\e \le 3a/4-b$. We also used
that $\lambda_{\lfloor N^b\rfloor}$  is rigid at scale
 $3a/4$ and these bounds hold outside of a set of exponentially small probability.
Our final choice of $b$ and $d$ will satisfy $2b/3<d$ ($b=3a/4+\e$, $d=25a/28+\e$),
consequently for any $i\leq N^b$ such that $\lambda_i>\gamma_i $
 we have $|\lambda_i-\gamma_i|\ll N^{-\frac{2}{3}+d}$ and we can apply
\begin{equation}\label{eqn:diffIm}
\left|\Im\left(\frac{1}{z-\lambda}-\frac{1}{z-\gamma}\right)\right|=|\lambda-\gamma|\OO\left(\frac{\eta}{| z
-\gamma|^3}\right)
\end{equation}
that holds for  any real $\lambda,\gamma$,
and $z=E+\ii\eta$ such that $|\lambda-\gamma|\ll|E-\gamma|$.
 This condition is satisfied since
\be \label{conddd}
|\lambda_i-\gamma_i|\ll N^{-\frac{2}{3}+d}\le |E-A| \le |E-\gamma_i|.
\ee

Thus,
using (\ref{eqn:calc}) and $|z-A|\le |z-\gamma_i|$ we obtain
$$
\E\left(\Im\left(\frac{1}{z-\lambda_i}-\frac{1}{z-\gamma_i}\right)\mathds{1}_{\lambda_i> \gamma_i}\right)
\leq
\E(|\lambda_i-\gamma_i|\1_{\lambda_i>\gamma_i})\frac{\eta}{|z-A|^3}=\OO\left(\frac{N^{-\frac{2}{3}+\frac{2}{3}b}\eta}{|z-A|^3}\right).
$$
Consequently, from (\ref{eqn:Neta})
we obtain
\begin{multline}\nonumber
\frac{1}{N\eta}
\leq  -\frac{1}{N}\E\left(\Im\left(\frac{1}{z-\lambda_1}-\frac{1}{z-\gamma_1}\right)+\sum_{2\leq i\leq N^{b},\gamma_i^{(N)}\leq \gamma_i}\left(\Im\left(\frac{1}{z-\lambda_i}-\frac{1}{z-\gamma_i}\right)\right)\right)\\
+\OO\left(\frac{\eta}{N|z-A|^2}\right)+
\OO\left(\frac{N^{-\frac{2}{3}+\frac{2}{3}b+b}\eta}{|z-A|^3}\right),
\end{multline}
which implies
\begin{multline}\label{eqn:contradiction}
\frac{1}{N\eta}\leq |m_N(z)-m(z)|+\frac{1}{N}\sum_{i\geq N^{b}}\E\Im\left(\frac{1}{z-\lambda_i}-\frac{1}{z-\gamma_i}\right)
+\frac{1}{N}\sum_{ 2\le i\leq N^{b},\gamma_i^{(N)}>\gamma_i}\E\Im\left(\frac{1}{z-\lambda_i}-\frac{1}{z-\gamma_i}\right)\\
+\OO\left(\frac{\eta}{N|z-A|^2}\right)+
\OO\left(\frac{N^{-\frac{2}{3}+\frac{2}{3}b+b}\eta}{|z-A|^3}\right).
\end{multline}
Because of accuracy at scale $3a/4 (\le b)$ and concentration at scale $a/2$ for particles with index $i\geq N^b$,
 we also have (using (\ref{eqn:diffIm})  and \eqref{conddd})
\begin{equation}\label{eqn:3rdcondition}
\frac{1}{N}\E\left|\sum_{i\geq N^{b}}\Im\left(\frac{1}{z-\lambda_i}-\frac{1}{z-\gamma_i}\right)\right|
\leq
\frac{C}{N}\sum_{i\geq 1}\frac{N^{-\frac{2}{3}+b}i^{-\frac{1}{3}}\eta}{|z-\gamma_i|^3}\leq C\ N^{-\frac{1}{3}+b+s-2d}.
\end{equation}
Moreover, for $\gamma_i^{(N)}\geq \gamma_i$ and $i\leq N^b$, we have for any $\e>0$ and large enough $N$, $\E(|\lambda_i-\gamma_i|)\leq N^{-\frac{2}{3}+\frac{2}{3}b+\e}$,
so
\begin{equation}\label{eqn:4thcondition}
\frac{1}{N}\E\left|\sum_{ 2 \le i\leq N^{b},\gamma_i^{(N)}>\gamma_i}\Im\left(\frac{1}{z-\lambda_i}-\frac{1}{z-\gamma_i}\right)\right|
\leq
\frac{1}{N}
\sum_{i\leq N^b}\frac{N^{-\frac{2}{3}+\frac{2}{3}b+\e}\eta}{|z-\gamma_i|^3}\leq
 N^{b- 1}\frac{N^{-\frac{2}{3}+\frac{2}{3}b+\e}\eta}{|z-A|^3}.
\end{equation}
Consequently, when comparing the exponents of $N$ in equations (\ref{eqn:contradiction}), using the estimates (\ref{eqn:2ndcondition}),
(\ref{eqn:3rdcondition}) and (\ref{eqn:4thcondition}),
and using that $|z-A|\ge N^{-\frac{2}{3}+d}$ and $\eta = N^{-\frac{2}{3}+s}$,
one of the following inequalities holds:
$$
\left\{
\begin{array}{l}
-s\leq b-2s-\frac{d}{2}\\
-s\leq -d\\
-s\leq 2a-4s-\frac{d}{2}\\
-s\leq a-\frac{5}{2}d\\
-s\leq -\frac{1}{3}-\frac{d}{2} \\
-s\leq b+s-2d\\
-s\leq \frac{5}{3}b+s-3d.
\end{array}
\right.
$$
For the choice $b=\frac{3}{4}a+\e$, $d=\frac{25}{28} a+\e$, $s=\frac{29}{56}a$, and $\e>0$ small enough, one can check that none of these equations is satisfied (these optimal constants $25/28$ and $29/56$ are obtained when, for $b=3a/4$, the third and fifth equations are equal).
This is a contradiction concluding the proof.
\end{proof}

\begin{proof}[Proof of Proposition \ref{prop:ImprAcc}]
For simplicity we will improve accuracy only for particles close to the edge $A$,  $k\leq N/2$, the other edge being proved in a similar way.
We assume rigidity at scale $a$. By Proposition \ref{prop:ImprConc} concentration at scale $a/2$ holds. Therefore,
for any $\varepsilon>0$, by
Lemma \ref{lem:varianceBound},  uniformly on $\Sigma^{(N)}_{\rm Int}(3a/4+\varepsilon,\tau)$
we have 
$$
\var_N(z)\ll\frac{N^{\frac{3a}{4}}}{N\eta}\max(\kappa_E^{\frac{1}{2}},\eta^{\frac{1}{2}}).
$$
This easily implies that
\begin{equation}\label{eqn:boundVar}
\var_N(z)\ll \max(\kappa_E,\eta)
\end{equation} uniformly on $\Sigma^{(N)}_{\rm Int}(3a/4+\varepsilon,\tau)$.
To see this, as $\eta\geq N^{-1+\frac{3a}{4}}\kappa_E^{-1/2}$
we always have $\frac{N^{\frac{3a}{4}}}{N\eta}\kappa_E^{\frac{1}{2}}\leq \kappa_E$. Moreover,
if $\eta\geq \kappa_E$ we have $\eta\geq N^{-1+\frac{3a}{4}}\kappa_E^{-\frac{1}{2}}\geq
N^{-1+\frac{3a}{4}}\eta^{-\frac{1}{2}}$ , so
$\eta\geq N^{-\frac{2}{3}+\frac{a}{2}}$, so $\frac{N^{\frac{3a}{4}}}{N\eta}\eta^{\frac{1}{2}}\leq \eta$,
completing the proof of (\ref{eqn:boundVar}).

Consequently, the conclusion of Lemma \ref{lem:Johansson} holds:
uniformly on $\Sigma^{(N)}(3a/4+\varepsilon,\tau)$, we have
$$
|m_N(z)-m(z)|\leq c \left(\frac{ N^\e}{N\eta}+|\var_N(z)|
  \frac{1}{ \max( \sqrt{\kappa_E},\sqrt{\eta})}\right)\leq
c\frac{N^{\frac{3a}{4}}}{N\eta}.
$$
One can therefore apply  Lemma \ref{lem:HS} with
the choice $\widetilde\varrho = \varrho^{(N)}_1-\varrho$,  $\eta=N^{-1+\frac{3a}{4}}$
and  $U=N^{\frac{3a}{4}}$ (the extra assumption (\ref{eqn:cond2}) about the macroscopic behaviour of
$m_N-m$ holds thanks to Lemma \ref{lem:keyFar} and condition (\ref{eqn:cond3}) is satisfied thanks to (\ref{eqn:BPS2})): we proved that, for any $b>3a/4$, we have
\begin{equation}\label{eqn:HSConcl}
\Big|\int f(\lambda)(\varrho_1^{(N)}(\lambda)-\varrho(\lambda))\rd\lambda\Big|\leq C\ N^{-1+b},
\end{equation}
for any $E\in(A,B)$. Here $f=f_E = f_{E, \eta_E}$ as defined
in  Lemma \ref{lem:HS}.
We choose some $E\geq A+N^{-\frac{2}{3}+\frac{a}{2}}$, so that
$\eta_E =\kappa_E^{-1/2}\eta \le N^{-\frac{2}{3}+\frac{a}{2}}$, thus
$E-\eta_E\geq A$. We therefore have, using (\ref{eqn:HSConcl}),
\begin{align*}
&\int_{-\infty}^E\varrho_1^{(N)}\geq \int f_{E-\eta_E} \varrho_1^{(N)}=\int (\varrho_1^{(N)}-\varrho)f_{E-\eta_E}+\int \varrho f_{E-\eta_E}
=
\OO\left(N^{-1+b}\right)+\int_{-\infty}^{E}\varrho+\OO(\eta),\\
&\int_{-\infty}^E\varrho_1^{(N)}\leq \int f_{E} \varrho_1^{(N)}=\int (\varrho_1^{(N)}-\varrho)f_{E}+
\int \varrho f_{E}
=
\OO\left(N^{-1+b}\right)+\int_{-\infty}^E\varrho+\OO(\eta).
\end{align*}
The error $O(\eta)$ can be included into the first error term.
We first assume that $k\geq N^b$,
 and we choose $E=\gamma_k^{(N)}$ (as defined in (\ref{gammadefN}))
in the above equations, where the condition $E-\eta_E\geq A$ is satisfied when $k\geq N^{b}$. We get $|\int_{\gamma_{k}^{(N)}}^{\gamma_k}\varrho|=\OO\left(N^{-1+b}\right)$, hence
\begin{equation}\label{eqn:11}
\left|\left({\gamma_k^{(N)}}\right)^{3/2}-{\left(\gamma_k\right)^{3/2}}\right|=\OO(N^{-1+b}).
\end{equation}
This implies accuracy at scale $3a/4$: if $k\geq N^b$ we have $\gamma_k^{3/2}\geq c N^{-1+b}$,
so by linearizing (\ref{eqn:11}) we obtain
$$
\gamma_k^{(N)}=(\gamma_k^{3/2}+\OO(N^{-1+b}))^{2/3}=\gamma_k\left(1+\frac{N^{-1+b}}{\gamma_k^{3/2}}\right)^{2/3}=\gamma_k+\OO\left(N^{-1+b}/\gamma_k^{1/2}\right)
=
\gamma_k+\OO(N^{-\frac{2}{3}+b}k^{-\frac{1}{3}}).
$$
We proved that accuracy at scale $3a/4$ holds provided that
$\hat k\geq N^b$, for $b$ arbitrarily close to $3a/4$.
We know that, for such $k$, together with concentration at scale $a/2$ this implies rigidity at scale $3a/4$ (by the same reasoning as in the proof of Theorem \ref{thm:rigidity}).
This allows us first to use Lemma \ref{lem:smallest}
to obtain that, for any $\e>0$, for large enough $N$ we have
\begin{equation}\label{eqn:smallest}
\gamma_1^{(N)}\geq A-N^{-\frac{2}{3}+\frac{25}{28}a}.
\end{equation}
 It also allows us to use Lemmas \ref{lem:edgeVar2} and \ref{lem:Johansson}
together to conclude that for any $d>2a/3$ and $\tau>0$ small enough, we have, uniformly in $\Sigma_{\rm Ext}^{(N)}(d,\tau)$,
$$
|m_N(z)-m(z)|\leq \frac{1}{N\eta}.
$$
By part $b)$ of Lemma $\ref{lem:HS}$, with $\eta=N^{-1+\frac{3d}{2}}$, $E=A-2N^{-\frac{2}{3}+d}$,
 this implies that there is a function $f= 1$ on $(-\infty,A-2N^{-\frac{2}{3}+d}]$, $f= 0$ on
$[-N^{-\frac{2}{3}+d},\infty)$, such that
$$
\left|\int f(\lambda)\varrho^{(N)}_1(\lambda)\rd\lambda\right|\leq C\ \frac{\log N}{N},
$$
(since in this interval $\varrho=0$),
hence there is some $c>0$ such that for large enough $N$ we have
$$
\gamma^{(N)}_{\lfloor c\log N\rfloor}>A-N^{-\frac{2}{3}+d}.
$$
In particular, as $N^{-\frac{2}{3}+\frac{11}{12}a}j^{-\frac{1}{3}}=N^{-\frac{2}{3}+\frac{2}{3}a}$ when $j=N^{\frac{3}{4}a}$,
the previous equation proves accuracy at scale $11a/12$ for any $\lambda_i$ with $i\in\llbracket C\log N, N^{\frac{3a}{4}+\e}\rrbracket$.
For the remaining $i\in\llbracket 1, C\log N\rrbracket$, we use (\ref{eqn:smallest}), which also gives accuracy at scale
$11a/12$ because $25/28<11/12$.
\end{proof}

\subsection{Proof of Theorem \ref{thm:condRig}}\label{subsec:condRig}

This proof goes along the same lines as the one of Theorem \ref{thm:rigidity}
 up to two major differences that make it easier:
\begin{itemize}
\item For large enough $N$, the Hamitonian $\mathcal{H}_{\bf y}$
will be shown to be convex, so there is no need for introducing any convexified measure.
\item In the hypothesis of Theorem \ref{thm:condRig} about rigidity for local measures,
 the definition of the good set $\cR_K^*(\xi)$ already assumes a strong form of accuracy: $|\E^{\sigma_\by}(x_k)-\gamma_k|\leq N^{-\frac{2}{3}+\xi} k^{-\frac{1}{3}}$. Therefore there will be no need to prove an analogue of Proposition \ref{prop:ImprAcc}.
\end{itemize}

By the following easy lemma,
the first particle $x_1$ satisfies a strong form of rigidity concerning deviations on the left.

\begin{lemma}\label{lem:concX1}
There exists a constants $c,C>0$ depending only on $\beta, V,\xi$ such that for any $K$ and $\by\in\cR^*=\cR_K^*(\xi)$ we have,
for any $u>0$,
$$
\Prob^{\sigma_\by}\left(x_1\leq -u N^{-\frac{2}{3}+\xi}\right)\leq C\, e^{-c u^2}.
$$
\end{lemma}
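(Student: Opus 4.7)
The plan is to exploit the Gaussian-like penalty built into $\sigma_{\bf y}$ through the potential $\Theta$ together with a Radon--Nikodym comparison to $\wh\sigma_{\bf y}$, whose $x_1$ behaviour is controlled by the second defining property of $\cR^*$. Write $A_u := \{x_1 \le -uN^{-2/3+\xi}\}$ and $S(\bx) := \sum_{i\in I}\Theta(N^{2/3-\xi}x_i)$. Since $\sigma_{\bf y}$ carries twice the $\Theta$-penalty of $\wh\sigma_{\bf y}$, the two densities differ by exactly one copy of $S$:
$$
\frac{d\sigma_{\bf y}}{d\wh\sigma_{\bf y}}(\bx) \;=\; \frac{\wh Z^c_{\bf y}}{Z^\sigma_{\bf y}}\, e^{-\beta S(\bx)}.
$$

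The key observation is twofold. First, on $A_u$ with $u\ge 1$ one has $N^{2/3-\xi}x_1 \le -u$, so, since $\Theta(v)=(v+1)^2$ is decreasing for $v\le -1$,
$$
S(\bx) \;\ge\; \Theta(N^{2/3-\xi}x_1) \;\ge\; (u-1)^2 \qquad \text{on } A_u.
$$
Second, by the ordering $x_1 \le x_2 \le \dots \le x_K$, on the event $B := \{x_1 \ge -N^{-2/3+\xi}\}$ we have $N^{2/3-\xi}x_i \ge -1$ for every $i\in I$, so each $\Theta$-term vanishes and $S(\bx)=0$. Since $\gamma_1 > 0$ (as $\gamma_1$ is the $1/N$-quantile of $\varrho$ whose left edge lies at $A=0$), the second defining condition of $\cR^*$ gives
$$
\P^{\wh\sigma_{\bf y}}(B) \;\ge\; \P^{\wh\sigma_{\bf y}}\!\bigl(x_1 \ge \gamma_1 - N^{-2/3+\xi}\bigr) \;\ge\; \tfrac{1}{2},
$$
and therefore
$$
\frac{Z^\sigma_{\bf y}}{\wh Z^c_{\bf y}} \;=\; \E^{\wh\sigma_{\bf y}}\!\bigl[e^{-\beta S}\bigr] \;\ge\; \E^{\wh\sigma_{\bf y}}\!\bigl[e^{-\beta S}\mathds{1}_B\bigr] \;=\; \P^{\wh\sigma_{\bf y}}(B) \;\ge\; \tfrac{1}{2}.
$$

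Combining these two estimates, for $u\ge 1$ we obtain
$$
\P^{\sigma_{\bf y}}(A_u) \;=\; \frac{\wh Z^c_{\bf y}}{Z^\sigma_{\bf y}} \int_{A_u} e^{-\beta S}\, d\wh\sigma_{\bf y} \;\le\; \frac{\wh Z^c_{\bf y}}{Z^\sigma_{\bf y}}\, e^{-\beta(u-1)^2} \;\le\; 2\, e^{-\beta(u-1)^2},
$$
while for $0 < u < 1$ the trivial bound $\P^{\sigma_{\bf y}}(A_u) \le 1$ already fits the desired form after enlarging the constant. Both regimes combine into a single estimate $\P^{\sigma_{\bf y}}(A_u) \le C e^{-cu^2}$ with $C,c>0$ depending only on $\beta$, $V$, $\xi$. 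There is no genuine obstacle: the only technical point is the propagation of the $x_1$-control to the full penalty $S$ via the particle ordering, after which matching the quadratic lower bound on $S$ to the weight $e^{-\beta S}$ in the Radon--Nikodym derivative yields the Gaussian tail immediately.
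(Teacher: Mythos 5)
Your proof is correct and follows essentially the same route as the paper: the same Radon--Nikodym comparison between $\sigma_\by$ and $\wh\sigma_\by$ (whose densities differ by one copy of the $\Theta$-sum), the same normalization bound $Z^\sigma_\by/\wh Z^c_\by\ge 1/2$ obtained from the second defining condition of $\cR^*$ together with the vanishing of all $\Theta$-terms on $\{x_1\ge -N^{-2/3+\xi}\}$ by particle ordering, and the same quadratic lower bound $\Theta(N^{2/3-\xi}x_1)\ge (u-1)^2$ on the bad event. No gaps.
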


\begin{proof}
We note
$$
\mathcal{H}_\by(\bx)=\frac{1}{2}\sum_I V_\by(x_i)-\frac{1}{N}\sum_{i<j}\log (x_j-x_i).
$$
Then
$$
Z_{\hat\sigma_\by}\leq
2Z_{\hat\sigma_\by}
\frac{1}{Z_{\hat\sigma_\by}}\int
e^{-\beta N \left(\mathcal{H}_\by(x)+\frac{1}{N}\sum_I\Theta(N^{\frac{2}{3}-\xi}x_i)
+\frac{1}{N}\sum_I\Theta(N^{\frac{2}{3}-\xi}x_i)
\right)}\rd x=2Z_{\sigma_\by}.
$$
In the above inequality we used $\P^{\hat\sigma_\by}(x_1\geq -N^{-\frac{2}{3}+\xi})\geq 1/2$ (because $\by\in\cR^*$) and
$\Theta(N^{\frac{2}{3}-\xi}x_i)=0$ when $x_i\geq -N^{-\frac{2}{3}+\xi}$.
We then easily get, for $u\geq 2$,
\begin{multline*}
\Prob^{\sigma_\by}(x_1\leq -u N^{-\frac{2}{3}+\xi})=
\frac{Z_{\hat\sigma_\by}}{Z_{\sigma_\by}}\frac{1}{Z_{\hat\sigma_\by}}
\int
e^{-\beta N \left(\hat{\mathcal{H}}^\sigma_\by+\frac{1}{N}\sum_I\Theta(N^{\frac{2}{3}-\xi}x_i)
\right)}
\mathds{1}\left(x_1\leq -u N^{-\frac{2}{3}+\xi}\right)\rd x\\
\leq 2 \E^{\hat\sigma_\by}\left(e^{-\beta\Theta(N^{\frac{2}{3}-\xi}x_1)}\mathds{1}\left(x_1\leq -u N^{-\frac{2}{3}+\xi}\right)\right)\leq
2e^{-\beta (u-1)^2}.
\end{multline*}
This concludes the proof (bounding the probability by 1 when $0<u<2$).
\end{proof}
The following notion of conditional rigidity at scale $M$ will be useful in our proof of optimal conditional rigidity,
i.e., Theorem \ref{thm:condRig}.
It is analogous to Definition 6.1 in \cite{EYsinglegap}, which was in the context of bulk eigenvalues.

\begin{definition}
Given $\xi>0$,  we will say that the measure $\sigma_\by$ satisfies {\it conditional rigidity at scale $M$}
if there exists $c>0$
such that  for large enough $N$ we have, for any $\by \in\mathcal{R}^*_{K}(\xi)$,
$\ell\in I$ and $u>0$,
$$
\Prob^{\sigma_{\bf y}}\left(|x_\ell-\gamma_\ell|>N^{-\frac{2}{3}+\xi}\ell^{-\frac{1}{3}}u
+
N^{-\frac{2}{3}}\ell^{-\frac{1}{3}}M
\right)
\leq
e^{-c u^2}.
$$
The parameter $\xi$ is considered fixed in this definition.
\end{definition}

Following ideas from Section 6.1 in  \cite{EYsinglegap}, we set $\eta=\xi/3$ and will consider a sequence
$N^\xi=M_1<\dots<M_A=C K N^{-2\eta}$ (for some large constant $C$) such that for any $j\in\llbracket 1,A-1\rrbracket$ we have
$M_{j+1}/M_{j}\sim N^\eta$ (meaning that $c N^\eta<M_{j+1}/M_{j}<C N^\eta$).
Here $A$ is a constant bounded by $\OO(\xi^{-1})$.
Our first task is to prove conditional rigidity at scale $M_A$ for $\sigma_\by$.\\

\noindent{\bf Step 1: conditional rigidity at a large scale.} The Hamiltonian $\mathcal{H}^\sigma_\by$ satisfies the following convexity bound: for any $\bv\in\bR^K$,
$$
\langle \bv,(\nabla^2 \mathcal{H}^\sigma_\by)\bv \rangle\geq c\sum_{i\in I}  \left(\sum_{j\not\in I}
\frac{1}{N(x_i-y_j)^2}-2W+N^{-1+\frac{4}{3}-2\xi}\Theta''(N^{\frac{2}{3}-\xi}x_i)\right)|v_i|^2.
$$
If $x_i\geq -N^{-\frac{2}{3}+\xi}$,  we get that $\sum_{j\not\in I}
\frac{1}{N(x_i-y_j)^2}\geq cN^{-1}\sum_{j=K}^{N/2}(j/N)^{-4/3}\geq c (N/K)^{1/3}$ (remember that the rigidity exponent $\xi$ is much smaller than the exponent $\delta$ in (\ref{NK}), so $|x_i-y_j|\leq C N^{-\frac{2}{3}+\xi}+|y_j|\leq C N^{-\frac{2}{3}+\xi}+C(j/N)^{\frac{2}{3}}+CN^{-\frac{2}{3}+\xi}j^{-\frac{1}{3}}\leq C(j/N)^{\frac{2}{3}}$ for $j\geq K$). If
$x_i\leq -N^{-\frac{2}{3}+\xi}$ then $N^{-1+\frac{4}{3}-2\xi}\Theta''(N^{\frac{2}{3}-\xi}x_i)\geq c N^{1/3-2\xi}\geq
c(N/K)^{\frac{1}{3}}
$, so in all cases we proved the inequality
$$
\nabla^2 \mathcal{H}^{\sigma}_\by\geq c\ (N/K)^{\frac{1}{3}}.
$$
The measure $\sigma_\by$ therefore satisfies a logarithmic Sobolev inequality with constant of order
$K^{-1/3}N^{4/3}$, so we have for any $\ell\in\llbracket  1,K- M_A\rrbracket$ that
$$
\Prob^{\sigma_\by}\left(|x_\ell^{[M_A]}-\E^{\sigma_\by}(x_\ell^{[M_A]})|>v\right)\leq \exp(-c M_A K^{-1/3}N^{4/3} v^2),\ v\geq 0.
$$
In particular,
\begin{equation}\label{eqn:concAv}
\Prob^{\sigma_\by}\left(|x_\ell^{[M_A]}-\E^{\sigma_\by}(x_\ell^{[M_A]})|>N^{-2/3+\xi}\ell^{-1/3}u\right)
\leq
e^{-c u^2},
\end{equation}
because $N^{2\xi}>K^{2\eta}$. Moreover, using the definition (\ref{R*def}), we know that
$$
\left|\E^{\sigma_\by}(x_\ell^{[M_A]})-\gamma_\ell^{[M_A]}\right|\leq C N^{-\frac{2}{3}+\xi}\ell^{-1/3}.
$$
We therefore proved that
$$
\Prob^{\sigma_\by}\left(|x_\ell^{[M_A]}-\gamma_\ell^{[M_A]}|>C N^{-\frac{2}{3}+\xi}\ell^{-\frac{1}{3}} +N^{-2/3+\xi}\ell^{-1/3}u\right)
\leq
e^{-c u^2}.
$$
Moreover, from easy ordering considerations we have for any $\ell\in\llbracket M_A, K-M_A\rrbracket$
\begin{align}
&x_\ell-\gamma_\ell\leq (x_{\ell}^{[M_A]}-\gamma_{\ell}^{[M_A]})+(\gamma_{\ell}^{[M_A]}-\gamma_{\ell})
\leq (x_{\ell}^{[M_A]}-\gamma_{\ell}^{[M_A]})+C M_A N^{-\frac{2}{3}}\ell^{-\frac{1}{3}},\label{eqn:right}\\
&x_\ell-\gamma_\ell\geq (x_{\ell-M_A}^{[M_A]}-\gamma_{\ell-M_A}^{[M_A]})+(\gamma_{\ell-M_A}^{[M_A]}-\gamma_{\ell})
\geq (x_{\ell-M_A}^{[M_A]}-\gamma_{\ell-M_A}^{[M_A]})-C M_A N^{-\frac{2}{3}}\ell^{-\frac{1}{3}}.\non
\end{align}
Thus
\begin{equation}\label{eqn:initConc}
\Prob^{\sigma_\by}\left(|x_\ell-\gamma_\ell|>C N^{-\frac{2}{3}+\xi}\ell^{-\frac{1}{3}}M_A +N^{-2/3+\xi}\ell^{-1/3}u\right)
\leq
e^{-c u^2}.
\end{equation}
If $\ell\in\llbracket 1, M_A\rrbracket$, then the bound (\ref{eqn:right}) still holds, and for concentration on the left we
simply use
$
x_\ell-\gamma_\ell\geq x_1-C \left({\ell}{ N^{-1}}\right)^{\frac{2}{3}}
$, which yields
$$
|x_\ell-\gamma_\ell|\leq |\min(x_1,0)|+
 |x_\ell^{[M_A]}-\gamma_{\ell}^{[M_A]}|+C M_A N^{-\frac{2}{3}}\ell^{-\frac{1}{3}}.
$$
Using Lemma \ref{lem:concX1} and (\ref{eqn:concAv}), this inequality proves that the desired rigidity (\ref{eqn:initConc})
also holds for $\ell\in\llbracket 1, M_A\rrbracket$. The case $\ell\in\llbracket K-M_A, K\rrbracket$ is more elementary,
the boundary on the right being fixed:
$
|x_\ell-\gamma_\ell|\leq |y_{K+1}-\gamma_{K+1}|+|\gamma_{K+1}-\gamma_{\ell}|+
|x_{\ell-M_A}^{[M_A]}-\gamma_{\ell-M_A}^{[M_A]}|+|\gamma_\ell-\gamma_{\ell-M_A}^{[M_A]}|
$,
and the desired result (\ref{eqn:initConc}) follows from the definition of $\cR$ to bound $|y_{K+1}-\gamma_{K+1}|$, (\ref{eqn:concAv}) and bounding of
$|\gamma_{K+1}-\gamma_\ell|$
and $|\gamma_\ell-\gamma_{\ell-M_A}^{[M_A]}|$ by $C M_A N^{-\frac{2}{3}}\ell^{-\frac{1}{3}}$.
This concludes the proof of conditional rigidity at scale $M_A$ for $\sigma_\by$.\\

\noindent{\bf Step 2: induction on the scales.}
In order to consider smaller scales, we will need in the following version of the locally constrained measure (\ref{eqn:omega}): for any $\ell\in
\llbracket 1,K-M\rrbracket$, define the probability measure
\begin{align}
&\rd{\omega_\by}^{(\ell,M)}(\bx)\sim\exp\left(-\beta\phi_{\rm loc}^{(\ell,M)}(\bx)\right)\rd\sigma_\by(\bx),\notag\\
&
\phi_{\rm loc}^{(\ell,M)}(\bx)=
\sum_{i<j,i,j\in I^{(\ell,M)}}\theta\left(\frac{N^{\frac{2}{3}}\ell^{\frac{1}{3}}}
{MN^{2\eta}}(x_i-x_j)\right),\label{eqn:phiLoc}
\end{align}
where $\theta(x)=(x-1)^2 \mathds{1}_{x>1}+(x+1)^2 \mathds{1}_{x<-1}$ and $I^{(\ell,M)}=\llbracket \ell,\ell+M-1\rrbracket$. Note that our definition of $\phi_{\rm loc}^{(\ell,M)}$
only differs from $\phi^{(\ell,M)}$ (see Definition \ref{def:locallyConstrained2}) concerning the extra factor $N^{2\eta}$.
We now present our induction on the scales: we will show that if the following three conditions hold for the
index $j$ then they are also true for $j-1$.
\begin{enumerate}[(i)]
\item There exists $c>0$ such that for large enough $N$,  for any $\ell\in\llbracket 1,K-M_j\rrbracket$ and $u>0$
$$
\Prob^{\sigma_\by}\left(|x_\ell^{[M_j]}-\gamma_\ell^{[M_j]}|>N^{-\frac{2}{3}+\xi}\ell^{-\frac{1}{3}}u\right)
\leq
e^{-c u^2}.
$$
\item The following conditional rigidity at scale $M_j$ holds: there exists $c,C>0$ such that for large enough $N$,  for any $\ell\in I$ and $u>0$, we have
$$
\Prob^{\sigma_\by}\left(|x_\ell-\gamma_\ell|>N^{-\frac{2}{3}+\xi}\ell^{-\frac{1}{3}}u+C\,N^{-\frac{2}{3}}\ell^{-\frac{1}{3}}M_j\right)
\leq
e^{-c u^2}.
$$
\item The following entropy bound holds, for large enough $N$ and any $\ell\in\llbracket 1,K-M_j\rrbracket$:
$$
\mbox{S}(\sigma_\by\mid\omega_\by^{(\ell,M_j)})\leq e^{-c M_j^2N^{-2\eta}}.
$$
\end{enumerate}

The initial step $j=A$ of the induction was just checked in Step1, concerning points (i) and (ii) (see equations (\ref{eqn:concAv}) and (\ref{eqn:initConc})). Concerning (iii), it follows easily from (ii): if $\phi_{\rm loc}^{(\ell,M_A)}(\bx)>0$ then
$x_1<-N^{-\frac{2}{3}}M_A N^{2\eta}$, which has $\sigma_\by$-probability bounded by $\exp(-c M_A^2N^{-2\eta})$
(by (ii)). The logarithmic Sobolev inequality for $\sigma_\by$ therefore allows us to conclude:
\begin{equation}\label{eqn:localEntropyBound}
\mbox{S}(\sigma_\by\mid\omega_\by^{(\ell,M_A)})\leq C N^C\E^{\sigma_\by}|\nabla \phi_{\rm loc}^{(\ell,M_A)}|^2\leq C N^C \exp(-c M_A^2N^{-2\eta})\leq \exp(-c' M_A^2N^{-2\eta}).
\end{equation}

We now prove that (i),(ii),(iii) with $M_{j}$ implies the same result with $M_{j-1}$.
That (i) implies (ii) is easy and follows from the exact same argument allowing to conclude about the initial local
rigidity (\ref{eqn:initConc}). To prove (iii) from (ii), note that if $\phi_{\rm loc}^{(\ell,M_j)}(\bx)>0$ then for some
$i\in I^{(\ell,M_j)}$ we have
$|x_i-\gamma_i|>c N^{-\frac{2}{3}}i^{-\frac{1}{3}}M_j N^{2\eta}$. From (ii) this has probability (for $\sigma_\by$) bounded by
 $e^{-c M_j^2N^{-2\eta}}$. One then concludes similarly to (\ref{eqn:localEntropyBound}).
We therefore now only need to prove (i) at scale $M_{j-1}$.
We have the following analogue of equation (6.12)  in \cite{EYsinglegap}:
for any choice $\ell_j\in\llbracket 1,K-M_j\rrbracket$ and
$\ell_{j-1}\in\llbracket 1,K-M_{j-1}\rrbracket$ such that
$\llbracket \ell_{j-1},\ell_{j-1}+M_{j-1}\rrbracket\subset\llbracket \ell_j,\ell_j+M_j\rrbracket$ we have
\begin{equation}\label{eqn:concDifferences}
\P^{\omega_\by^{(\ell,M_j)}}\left(\left|
x_{\ell_{j-1}}^{[M_{j-1}]}-x_{\ell_{j}}^{[M_{j}]}-\E^{\omega_\by^{(\ell,M_j)}}
\left(x_{\ell_{j-1}}^{[M_{j-1}]}-x_{\ell_{j}}^{[M_{j}]}\right)
\right|>N^{-\frac{2}{3}}\ell_j^{-\frac{1}{3}}N^{5\eta/2}u\right)\leq e^{-c u^2}.
\end{equation}
The proof of the above equation relies on Herbst's argument for concentration of measure from the logarithmic Sobolev inequality,  and Lemma 3.9 in \cite{BouErdYau2011} to obtain a local LSI. Note that the assumptions of this Lemma
are satisfied in our case: one can decompose $\mathcal{H}_\by^\sigma=\mathcal{H}_1+\mathcal{H}_2$ where $$\mathcal{H}_1(\bx)=\frac{1}{N}\phi_{\rm loc}^{(\ell,M_j)}(\bx)-\frac{1}{N}\sum_{s<t,s,t\in I^{\ell,M}}\log|x_s-x_t|$$
and $\mathcal{H}_2$ is convex, thanks to the confining term $\Theta$ which applies to all $x_i$'s, $i\in I$.
Compared to (6.12)  in \cite{EYsinglegap}, we obtained $N^{5\eta/2}$ instead of $K^{5\eta/2}$ due to
$\sqrt{M_j/M_{j-1}}=N^{\eta/2}$ and the factor $N^{2\eta}$ in (\ref{eqn:phiLoc}) instead of $K^{2\eta}$.

Moreover, using the boundedness of the $x_k$'s on the right and Lemma \ref{lem:concX1} on the left,
similarly to (\ref{eqn:localEntropyBound}) we easily obtain
$$
\left|\E^{\omega_\by^{(\ell,M_j)}}(x_i)-\E^{\sigma_\by}(x_i)\right|\leq
C\sqrt{\mbox{S}(\sigma_\by\mid\omega_\by^{(\ell,M_j)})}\leq \exp(-c M_j^2 N^{-2\eta}).
$$
We know from (\ref{R*def}) that $|\E^{\sigma_\by}x_i-\gamma_i|\leq N^{-\frac{2}{3}+\xi}i^{-\frac{1}{3}}$, so
$$
\left|\E^{\omega_\by^{(\ell,M_j)}}(x_i)-\gamma_i\right|\leq
C N^{-\frac{2}{3}+\xi}i^{-\frac{1}{3}}.
$$
Changing $\omega_\by^{(\ell,M_j)}$ into $\sigma_\by$ in the equation (\ref{eqn:concDifferences})
implies an error of order
$\sqrt{\mbox{S}(\sigma_\by\mid\omega_\by^{(\ell,M_j)})}$, which yields
$$
\P^{\sigma_\by}\left(\left|
x_{\ell_{j-1}}^{[M_{j-1}]}-x_{\ell_{j}}^{[M_{j}]}-
\left(\gamma_{\ell_{j-1}}^{[M_{j-1}]}-\gamma_{\ell_{j}}^{[M_{j}]}\right)
\right|>N^{-\frac{2}{3}}\ell_j^{-\frac{1}{3}}N^{5\eta/2}u+N^{-\frac{2}{3}+\xi}\ell_{j}^{-\frac{1}{3}}\right)\leq \exp(-c u^2)+\exp(-c M_j^2 N^{-2\eta}).
$$
Combining this with (i) and using $\xi=3\eta$ we get
$$
\P^{\sigma_\by}\left(\left|
x_{\ell_{j-1}}^{[M_{j-1}]}-
\gamma_{\ell_{j-1}}^{[M_{j-1}]}
\right|>N^{-\frac{2}{3}+\xi}\ell_j^{-\frac{1}{3}}u+N^{-\frac{2}{3}+\xi}\ell_{j}^{-\frac{1}{3}}\right)\leq \exp(-c u^2)+\exp(-c M_j^2 N^{-2\eta}).
$$
If  $\ell_{j-1}\leq K-M_{j}$ we can choose  $\ell_j=\ell_{j-1}$ in the above equation. If $\ell_{j-1}\in\llbracket K-M_j,K\rrbracket$, then we choose $\ell_j=\lfloor K-M_j\rfloor$ and we have $\ell_j\sim\ell_{j-1}$: in any case we therefore proved
$$
\P^{\sigma_\by}\left(\left|
x_{\ell_{j-1}}^{[M_{j-1}]}-
\gamma_{\ell_{j-1}}^{[M_{j-1}]}
\right|>C N^{-\frac{2}{3}+\xi}\ell_{j-1}^{-\frac{1}{3}}u+CN^{-\frac{2}{3}+\xi}\ell_{j-1}^{-\frac{1}{3}}\right)\leq \exp(-c u^2)+\exp(-c M_j^2 N^{-3\eta}).
$$
 We therefore proved (i) on scale $M_{j-1}$ provided that $u\leq c M_j N^{-\eta}$.

We now assume $u\geq c M_j N^{-\eta}$.
Note that $N^{-\frac{2}{3}+\xi}\ell_{j-1}^{-\frac{1}{3}}u\geq
c(N^{-\frac{2}{3}}\ell_{j-1}^{-\frac{1}{3}}M_j+N^{-\frac{2}{3}+\xi}\ell_{j-1}^{-\frac{1}{3}}u)$, which allows
the following bounds thanks to (ii):
\begin{multline*}
\P^{\sigma_\by}\left(\left|
x_{\ell_{j-1}}^{[M_{j-1}]}-
\gamma_{\ell_{j-1}}^{[M_{j-1}]}
\right|>N^{-\frac{2}{3}+\xi}\ell_{j-1}^{-\frac{1}{3}}u\right)
\leq
\P^{\sigma_\by}\left(\left|
x_{\ell_{j-1}}^{[M_{j-1}]}-
\gamma_{\ell_{j-1}}^{[M_{j-1}]}
\right|>c(N^{-\frac{2}{3}}\ell_{j-1}^{-\frac{1}{3}}M_j+N^{-\frac{2}{3}+\xi}\ell_{j-1}^{-\frac{1}{3}}u)\right)\\
\leq \sum_{\ell\in I^{(\ell_{j-1},M_{j-1})}}\P^{\sigma_\by}\left(\left|
x_{\ell}-\gamma_{\ell}
\right|>c(N^{-\frac{2}{3}}\ell_{j-1}^{-\frac{1}{3}}M_j+N^{-\frac{2}{3}+\xi}\ell_{j-1}^{-\frac{1}{3}}u)\right)\leq M_{j-1} e^{-c u^2}\leq e^{-c' u^2}.
\end{multline*}
This concludes the induction. Notice that
the constant $c$ in the Gaussian tail $\exp(c u^2)$ deteriorates at each step, but we perform only finitely many steps.
The result (ii) at the final scale $M_1=N^\xi$ finishes the proof of Theorem \ref{thm:condRig}.

\section{Analysis of the local Gibbs measure}\label{sec:AnLocGibbs}

Before studying $\sigma_\by$, we remind well-known properties of the equilibrium density, at the macroscopic level: 
$\varrho=\varrho_V$ can be obtained as
the unique solution to the variational problem
\be\label{varprin}
\inf\Big\{ \int_\bR V(t) \rd\varrho (t)- \int_\bR\int_\bR \log|t-s|\rd\varrho(t)\rd\varrho(s)
\; : \; 
 \mbox{$\varrho$ is a probability measure}\Big\},
\ee
and  it  satisfies the following equation 
\begin{equation}\label{equil}
   \frac{1}{2} V'(x) = \int \frac{\varrho(y)\rd y}{x-y}, \qquad x\in [A, B].
\end{equation}

\subsection{Rescaling}\label{sec:rescale}

We now switch to the microscopic coordinates with a scaling adapted  to the left edge
of the spectrum at $A=0$,
i.e., we consider the scaling transformation
$\lambda_j \to 3/2\, N^{2/3} \lambda_j$. In this new coordinate, the gaps of
the   points at the edge  are order one and
the gaps in the bulk are of order $N^{-1/3}$.  With a slight abuse of notation
we will still use the same
letters $x_j, y_j$ for the internal and external points,
 but {\it from now on they should be understood
in the microscopic coordinates except in the Appendix \ref{App:CalcLem}. }  This means that the classical location
of the $k$-th point and the $k$-th gap are
\be\label{orient1}
  \gamma_k = (\wh k)^{2/3} \big( 1+ O((k/N)^{2/3})\big), \qquad  \gamma_{k+1}- \gamma_k \sim  (\wh k)^{-1/3},
\ee
 for any $k\in  \llbracket 1, N\rrbracket$,
see \eqref{orient}
(here the constant is adjusted to be 1, from the choice of normalization \eqref{eqn:matchSing} and the scaling $\lambda_j\to3/2\, N^{2/3} \la_j$).
 Recall we partition the external and internal points as
$$
  (\bx, \by) = (x_1,x_2, \dots, x_K, y_{K+1}, y_{K+2}, \dots , y_N).
$$
Given a boundary condition $\by$, we again set $J_\by = (-\infty, y_{K+1}) =:(y_-, y_+)$
to be the configuration interval, and let $\alpha_j=\alpha_j(\by)$ be
 $(K+1)$-quantiles of the density in $J_\by$ exactly as in \eqref{aldef}:
\be
\label{aldefuj}
   \int_{0}^{\alpha_j}\varrho(s)\rd s = \frac{j}{K+1}   \int_{0}^{y_+}\varrho(s)\rd s,
  \qquad j\in I.
\ee

The  measure $\sigma$ from \eqref{sig}  in microscopic coordinate reads as
\be\label{71}
  \sigma(\rd\bx) : = \frac{Z}{Z_{\sigma}} e^{-2 \beta\sum_{i\in I}  \Theta (N^{-\xi}x_i)}\mu(\rd\bx),
\ee
and the local measures $\sigma_\by$ are defined analogously:
$$
  \sigma_\by(\rd\bx)
  =\frac{1}{Z_{\by, \sigma}}  e^{-\beta N \cH_\by^\sigma (\bx)}\rd\bx,
$$
with Hamiltonian
\begin{align}
   \cH_\by^\sigma  (\bx): = & \frac{2}{N}\sum_{i\in I}  \Theta (N^{-\xi}x_i)
 + \sum_{i\in I} \frac{1}{2}   V_\by   (x_i) -\frac{1}{N}
   \sum_{i,j\in I\atop i<j} \log |x_j-x_i|, \non \\
\label{Vyext}
   V_\by (x) := &  V(xN^{-2/3}) - \frac{2}{N}\sum_{j\not\in I} \log |x-y_j|.
\end{align}
Here  $  V_\by (x) $ can be viewed as the external potential of
the log-gas.

Recall the rigidity bound \eqref{rigcons} for $\sigma$.
The definitions of the good boundary conditions \eqref{Rdef1},
\eqref{R*def} and \eqref{wtRdef} are also rescaled:
\begin{align}
\non
  \cR=\cR_{ K}(\xi) : = & \{ \by\; : \; |y_k-\gamma_k|\le N^{ \xi} (\wh k)^{-1/3},
 \; k\not\in I  \},
\\ \non
  \cR^*=\cR^*_{ K}(\xi) :
= & \{ \by \in\cR_{K}(\xi) \; : \;\left| \E^{\sigma_{\bf y}}x_k-\gamma_k\right|\leq N^{\xi}(\wh k)^{-\frac{1}{3}},\;\;
\P^{\wh \sigma}(x_1\ge \gamma_1 - N^\xi)\ge 1/2 \;\;\forall k\in I \},
\\ \non
   \cR^\#= \cR^\#_{K}(\xi) : = &\{ \by\in \cR_{ K}( \xi/3 ) \; : \;
  |y_{K+1}-y_{K+2 }|\ge N^{-\xi}K^{-1/3}
 \}.
\end{align}
In the new coordinates, the lower bound \eqref{Hyconv} on the Hessian of
 $\cH^\sigma_\by $  reads as
\be\label{sihess}
   (\cH^\sigma_\by)''   \ge
cK^{-1/3}N^{-1}, \qquad \by\in \cR.
\ee
 We also have the rescaled form of \eqref{35}
that for any $\by\in \cR$
\be\label{351}
    |\al_j-\gamma_j|\le  C N^{  \xi} \,  \frac {  j^{2/3}} K
\le CN^\xi \, j^{-1/3}, \qquad j\in I.
\ee

The main universality result on the local measures is the following theorem,
 which is essentially the rescaled version of Theorem~\ref{thm:local}.
We will first complete the proof of Theorem~\ref{thm:local}, then the rest
of the paper is devoted to the proof of Theorem~\ref{thm:cor} which
will be completed at the end of Section~\ref{sec:energ}.

\begin{theorem}[Edge universality for local measures]\label{thm:cor}
We assume the conditions of Theorem~\ref{thm:local}, in particular
that the parameters $\xi,\delta, \zeta$ and $K$ satisfy \eqref{con1} and \eqref{Kcon1}.
Let $\by\in \cR^\#_{K,V,\beta}(\xi)\cap \cR^*_{K,V,\beta}(\xi)$ and
 $\widetilde\by\in \cR^\#_{K,\widetilde V,\beta}(\xi)\cap \cR^*_{K,\widetilde V,\beta}(\xi)$ be
two different boundary conditions
satisfying
\be\label{bc}
 y_{K+1} = \wt  y_{K+1}.
\ee
In particular, we know that
\be
\label{Exm}
  |\E^{\sigma_\by} x_j -\al_j| +  |\E^{\wt\sigma_{\wt\by}} x_j -\wt\al_j| \le CN^\xi  j^{-1/3}, \qquad j\in I,
\ee
and
\be\label{714}
 \P^{\hat \sigma} ( x_1 \ge \gamma_1 - N^{\xi}/2 )  \ge 1/2,
 \qquad \P^{  \hat {\tilde \sigma}} ( x_1 \ge \gamma_1 - N^{ \xi}/2 )  \ge 1/2.
\ee
Fix $m\in \N$.
Then
there is a  small $\chi>0$ such that
for any $\Lambda\subset \llbracket 1, K^\zeta\rrbracket$,
$|\Lambda|=m$, and  any smooth, compactly supported
observable $O:\bR^m\to \bR$, we have
\be\label{eq:compy}
  \Bigg| \E^{\sigma_\by} O\Bigg( \Big(   j^{1/3}(x_j -\al_j)
\Big)_{j\in \Lambda}\Bigg)
  -   \E^{\wt \sigma_{\wt\by}} O\Bigg( \Big (   j^{1/3}(x_j -\wt \al_j)\Big)_{j\in \Lambda}\Bigg)\Bigg| \le
CN^{-\chi}.
\ee
\end{theorem}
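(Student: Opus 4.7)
The strategy is a two-step interpolation argument, following the general philosophy developed in \cite{BouErdYau2011,EYsinglegap} but adapted to the edge scaling and to the power-law decay available near the edge. First I would reduce to a single potential: comparing $\sigma_\by$ and $\widetilde\sigma_{\widetilde\by}$ can be split into (a) comparing $\sigma_\by$ and $\sigma_{\by'}$ with a cleverly chosen boundary configuration $\by'$ obtained from $\widetilde\by$ by a map that accounts for the change of potential on macroscopic scales via the quantile identification $\al_j \mapsto \widetilde\al_j$, and (b) comparing $\sigma_{\by'}$ and $\widetilde\sigma_{\widetilde\by}$, where the two measures now live on the same configuration interval (using $y_{K+1}=\widetilde y_{K+1}$) and differ only through the bulk part of the external potential. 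For (b) I would construct a continuous family $\omega_r$, $r\in[0,1]$, of Gibbs measures interpolating between the two, and would aim to show that $\partial_r\E^{\omega_r}[O]$ is small in $L^1([0,1])$.

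Differentiating the interpolation gives the covariance identity
\[
\partial_r\E^{\omega_r}\!\big[O\big]=-\beta\,\big\langle O(\bx);\partial_r\cH_r(\bx)\big\rangle_{\omega_r},
\]
where the observable $O$ depends only on the edge coordinates $\{x_j-\al_j\}_{j\le K^\zeta}$, while $\partial_r\cH_r$ is a sum of local terms concentrated at $j$ of order $K$ (plus a benign contribution from the external $y_k$). The first step is to use the convexity bound \eqref{sihess} and the Bakry--\'Emery / Helffer--Sj\"ostrand representation to write this covariance as a space--time integral
\[
\langle F;G\rangle_{\omega_r}=\int_0^\infty\!\!\rd t\;\E^{\omega_r}\!\Big[\sum_j(\partial_jF)(\bx)\,(e^{-t\mathcal L_\bx}\partial_jG)(\bx)\Big],
\]
where $\mathcal L_\bx$ is a discrete, long-range, random-environment generator whose rates are $\sim(x_i-x_j)^{-2}$. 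The observable $O$ being supported on edge indices and $\partial_r\cH_r$ being supported on bulk indices, the heart of the estimate reduces to off-diagonal decay of the kernel of $e^{-t\mathcal L_\bx}$ between edge sites $i\le K^\zeta$ and interior sites $j\sim K$.

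The main obstacle, and the principal new input compared to \cite{EYsinglegap}, is proving this decay in the edge regime. Unlike in the bulk where De~Giorgi--Nash--Moser--type H\"older continuity was needed, here only power-law decay of kernel entries is required, a fact reflected in the statement of Theorem~\ref{thm:cordec}: the decay rate $j^{-4/9}$ (or conjecturally $(i/j)^{1/3}$) suffices because the edge gap scale $j^{-1/3}$ is itself a negative power of $j$. I would establish this by an energy method, using Sobolev-type inequalities for the nonlocal operator $\mathcal L_\bx$ together with the rescaled edge scaling $\gamma_j\sim j^{2/3}$, and feeding in as a priori inputs: the rigidity estimate of Theorem~\ref{thm:condRig}, the level repulsion bounds of Theorem~\ref{lr2} (crucial to tame the singular coefficients of $\mathcal L_\bx$ at coincident particles), the accuracy hypothesis \eqref{Exm} (turning the \emph{classical} locations $\al_j$ into rigidity centers on average), and the lower tail \eqref{714} via $\wh\sigma_\by$ (which, after the confinement, gives the needed Gaussian control of $x_1$). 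Once the kernel decay estimate is in hand, one bounds $\partial_r\E^{\omega_r}[O]$ by $N^{-\chi'}$ uniformly in $r\in[0,1]$ via a careful split: the contribution of configurations violating rigidity or level repulsion is superexponentially small, while on the good set the Cauchy--Schwarz / Dirichlet form machinery, combined with the Lipschitz bound on $O$ and the scale separation $K^\zeta\ll K$, yields the required decay. Integration in $r$ then gives \eqref{eq:compy}.
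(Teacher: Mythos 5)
Your proposal follows essentially the same route as the paper: a one-parameter interpolation between the two local measures, differentiation to reduce everything to a covariance $\langle Q; h_0\rangle_{\omega_r}$, the random-walk (Helffer--Sj\"ostrand / Naddaf--Spencer) representation of that covariance, and off-diagonal decay of the resulting heat kernel proved by the energy method together with Sobolev-type inequalities adapted to the $\gamma_j\sim j^{2/3}$ edge scaling, with rigidity and level repulsion as a priori inputs. Your preliminary step (a) is unnecessary in the paper's version: the single family $\omega^r_{\by,\wt\by}$ with potential $(1-r)V_\by+r\wt V_{\wt\by}$ interpolates the external potential and the boundary configuration simultaneously, so the change of $\by$ and of $V$ are absorbed into one source term $h_0$.

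The one place where your plan papers over real work is the a priori control of the interpolating measures themselves. The coefficient lower bounds for the random-walk generator, the good-set restriction, and the level-repulsion input all have to hold for $\omega_r$ (and, via a Kipnis--Varadhan argument, along the paths of the associated dynamics), not merely for the endpoints $\sigma_\by$ and $\wt\sigma_{\wt\by}$ covered by Theorems \ref{thm:condRig} and \ref{lr2}. The paper transfers rigidity to $\omega_r$ by an entropy comparison (Lemma \ref{ent} and Lemma \ref{lm:91}), and this transfer is precisely the reason the confined measures $\sigma_\by$ with Gaussian tails were introduced in place of $\mu_\by$; quoting Theorem \ref{thm:condRig} alone does not supply it. This is a fixable omission rather than a wrong turn, but without it the central heat-kernel estimates are not justified.
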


The main tool for proving  Theorem~\ref{thm:cor}
is the interpolating measure between $\mu_\by$ and $\wt\mu_{\wt\by}$
which will be defined in Section~\ref{sec:int}.

\subsection{Proof of Theorem~\ref{thm:local} from  Theorem~\ref{thm:cor}}

In order to prove  Theorem~\ref{thm:local}, we
 will need a slight extension of Theorem~\ref{thm:cor} result  we formulate now. We claim
that  Theorem~\ref{thm:cor} also
holds  if the measures are rescaled by an
$N$-dependent factor,
provided the rescaling factor is very close to one. More precisely,
fix a small $\ell= \ell_N = O(K^{-1})$  and define the rescaled potential
$$
    V^*  (x) = V \Big( \frac{x}{1+\ell}\Big)
$$
and the cutoff potential $\sum_{i\in I}\Theta^*(x_i)$, where
$$
   \Theta^*(x)= \Theta\Big( N^{-  \xi}x/(1 + \ell)  \Big).
$$
From $ V^*$ and $\Theta^*$, we define the rescaled measure  $ \sigma^*$
by the formula \eqref{eqn:measure} and  \eqref{71}.
For any observable $Q$  we clearly have the relation
$$
\E^{\sigma^*}Q(\bla ) =  \E^{\sigma} Q( (1+ \ell)  \bla ).
$$
Furthermore, the  equilibrium  density  $ \varrho^*$ for the measure $\sigma^*$
(defined by the variational principle \eqref{varprin};
 notice that it is independent of the cutoff $\Theta$)
satisfies
$$
\varrho^*(x) = \frac{1}{1+\ell} \varrho \Big( \frac{x}{1+\ell}\Big).
$$
Fix a boundary condition  $\by = (y_{K+1}, \dots,y_N)\in \cR^\#_{ K}\cap \cR^* $
 and define the rescaled boundary condition by
 $y^*_{j} = (1+ \ell) y_{j}$ for all $j \ge K+1$.
The  conditional measure  $ \sigma^*_{\by^*}$ thus satisfies the relation
\be\label{res}
\E^{ \sigma^*_{\by^*}} Q(x_1, \dots , x_K) = \E^{ \sigma_{\by}} Q((1+ \ell)x_1, \dots, (1+ \ell) x_K)
\ee
and we also have $\alpha_j^* = (1+ \ell) \alpha_j$.
Now we will compare the measure $\wt\sigma_{\wt\by}$ with the rescaled conditional measure $\sigma^*_{\by^*}$
assuming that they have the same configurational interval $\wt J = J^*$, i.e.,
that $y_{K+1}^* = \wt y_{K+1}$ (in applications, we will choose $\ell$
in order to match these boundary conditions).
Therefore, we would like to extend the validity of
  \eqref{eq:compy} to the rescaled measures, i.e., to conclude that
\be\label{eq:compy*}
  \Bigg| \E^{\sigma^*_{\by^*}} O\Bigg( \Big( j^{1/3}(x_j -\al^*_j)
\Big)_{j\in \Lambda}\Bigg)
  -   \E^{\wt \sigma_{\wt\by}} O\Bigg( \Big( j^{1/3}(x_j -\wt \al_j)
\Big)_{j\in \Lambda}
\Bigg)\Bigg| \le N^{-\chi}.
\ee

Notice that if $\by \in \cR^\#_{K} $ w.r.t. the measure $\sigma$
then  $\by^* \in \cR^\#_{K} $ w.r.t. the measure $\sigma^*$
by simple scaling. Again by scaling,  we have
$$
|\E^{\sigma^*_{\by^*}} x_j -\al^*_j|  =  (1+ \ell) | \E^{ \sigma_{\by}} x_j - \al_j|  \le  CN^{ \xi}   j^{-1/3}
$$
and thus \eqref{Exm} holds  w.r.t. the measure $\sigma^*_{\by^*}$.
 Furthermore, we can check \eqref{714} holds with $N^{\xi}$ replaced by
$N^{ \xi} (1 + O(K^{-1}))$. 
Instead of  \eqref{eqn:matchSing}, we now have
\be\label{rhoasympt2}
\varrho^*(t) =  \frac {  \varrho( t/(1+ \ell)  )} {1+ \ell } =  \frac {
\sqrt { t/(1+ \ell)}[ 1 + O( t/(1+ \ell))] } { 1 + \ell}
=    \sqrt { t }\Big[ 1 -\ell+ O\big( t(1 + O(\ell)\big)\Big].
\ee
In order to prove \eqref{eq:compy*},  we need to check that
 the following  proof of \eqref{eq:compy} holds with
\eqref{eqn:matchSing} replaced by \eqref{rhoasympt2} and the very minor
change of \eqref{714} just mentioned.    The task  is straightforward and we  will
only remark  on a  small  change  in the proof  near the equation \eqref{724}.

\medskip

We make another small observation. Similarly to the remark after Theorem~\ref{thm:local},
we can replace $\alpha_j$ by  $\gamma_j = j^{2/3}(1+ O\big[ (j/N)^{2/3}\big]\big)$
or simply by $j^{2/3}$ for the purpose of proving Theorem~\ref{thm:cor} as long as $j\le K^\zeta$.
This follows from the smoothness of $O$, from \eqref{351} and from
\eqref{rhoasympt2} that implies
 $\gamma_j = j^{2/3}(1+ O\big[ (j/N)^{2/3}\big)\big]= j^{2/3} + o(j^{-1/3})$.
If we are dealing with the measure $\sigma^*_{\by^*}$, then for $j \in \Lambda$ there is $\chi> 0$ such that
\be\label{724}
\alpha_j^*  = (1+ \ell) \alpha_j =  j^{2/3}(1+ O\big[ (j/N)^{2/3}\big)\big] +O\Big ( N^{\xi} \,  \frac {  j^{2/3}} K \Big ) + O(j^{2/3} K^{-1})  =  j^{2/3} + O(j^{-1/3}) N^{-\chi} .
\ee
Here we have used \eqref{351},  $\ell= O(K^{-1})$ and $\zeta$ in the definition of the set $\Lambda$
 satisfying $\zeta< 1$.

\begin{proof}[ Proof of  Theorem~\ref{thm:local}] Under the condition \eqref{bc},
 Theorem~\ref{thm:local} would directly follow from Theorem~\ref{thm:cor}.
We now prove Theorem~\ref{thm:local} in the general case.
 Suppose
that  $\by, \wt \by  \in \cR^\#_{K}\cap \cR^*_K $  but the boundary condition \eqref{bc}
  is not satisfied.
Given these two boundary conditions,
 we define $\ell=\ell(\by,\wt\by) $ by the formula
\be\label{ell}
  \wt y_{K+1} =  (1+\ell) y_{K+1}.
\ee
Using this $\ell$, we define the rescaled boundary conditions  $y^*_{j} = (1+ \ell) y_{j}$.
Now we will compare the measure $\wt\sigma_{\wt\by}$ with the rescaled conditional measure
$\sigma^*_{\by^*}$
which now has the same configurational interval $\wt J = J^*$.
 With the choice of $\ell$ in \eqref{ell}
and  the rigidity estimate \eqref{rig} , we can estimate $\ell$ by
$$
|\ell| \le N^\xi K^{-1} ,
$$
where we have also used  $\gamma_k \sim  k^{2/3}$.
From the rescaling identity \eqref{res} applied to an observable
$Q$ of special form,
we have
\be \label{eq:compyy}
\E^{\sigma^*_{\by^*}} O\Bigg( \Big( j^{1/3}(x_j -\al^*_j)\Big)_{j\in \Lambda}\Bigg)
  =    \E^{\sigma_{\by}} O\Bigg( \Big (  j^{1/3} (1+ \ell) \Big [ x_j -  \al_j \Big ] \Big)_{j\in \Lambda}\Bigg).
\ee
From the rigidity estimate \eqref{32the} (notice we need to change to the microscopic coordinates), we have
\be
\Big |     j^{1/3}  \ell (x_j - \al_j)
\Big | \le N^{2 \xi } K^{-1} \le N^{-\chi}.
\ee
since $2 \xi + \chi < \delta$, see \eqref{con1}.
We can use the smoothness of $O$ to remove the $(1+\ell)$
factor on the right hand side of \eqref{eq:compyy} by Taylor expansion
at a negligible error.
Using \eqref{eq:compy*}, we have proved that
\be
  \Bigg| \E^{\sigma_{\by}} O\Bigg( \Big( j^{1/3}(x_j -\al_j)\Big)_{j\in \Lambda}\Bigg)
  -   \E^{\wt \sigma_{\wt\by}} O\Bigg( \Big( j^{1/3}(x_j -\wt \al_j)\Big)_{j\in \Lambda}
\Bigg)\Bigg| \le N^{-\chi}
\ee
and this  proves  Theorem~\ref{thm:local}. \end{proof}

\subsection{Outline of the proof of Theorem~\ref{thm:cor}}\label{sec:int}

The basic idea to prove \eqref{eq:compy} is to introduce
a one-parameter family of interpolating measures between
any two  measures $\sigma_\by$  and $\wt \sigma_{\wt \by}$  with potentials $V_\by$ and $\wt V_{\wt \by}$
with fixed boundary conditions $\by$ and $\wt\by$
and possible two different external potentials $V$ and $\wt V$.
These measures are defined for any $0\le r\le 1$ by
\be\label{defom}
  \om= \om_{\by,\wt\by}^r \sim e^{-\beta N\cH_{\by,\wt\by}^r}, \qquad
   \cH_{\by,\wt\by}^r (\bx): =  \frac{2}{N}\sum_{i\in I}  \Theta \big( N^{-\xi} x_i\big)  +
 \sum_{i\in I} \frac{1}{2} V^r_{\by,\wt\by} (x_i) -\frac{1}{N}
   \sum_{i,j\in I\atop i<j} \log |x_j-x_i|,
\ee
with
\be\label{defVom}
   V^r_{\by,\wt\by}(x): = (1-r) V_\by (x)+r\wt V_{\wt\by}(x).
\ee
Notice that $\om_{\by,\wt\by}^{r=0} = \sigma_\by$ and  $\om_{\by,\wt\by}^{r=1} = \wt\sigma_{\wt \by}$.
Basic properties of the measure $\om$ will be established in Section~\ref{sec:inter}.
Now we outline our main steps to prove  \eqref{eq:compy}.

\begin{itemize}
\item [Step 1.]  {\it Interpolation.} For any observable $Q(\bx)$, we rewrite the
 difference of the expectations of $Q$
w.r.t. the two different local measures by
$$ \E^{\sigma_\by} Q(\bx) -  \E^{\wt\sigma_{\wt\by}} Q(\bx) = \int_0^1 \frac{\rd}{\rd r}  \E^{\om^r_{\by,\wt \by}} Q(\bx)\rd r
  = \beta \int_0^1  \langle Q; h_0\rangle_{\om^r} \rd r,
$$
with
\begin{align}\label{h0def}
    h_0(\bx): &  = N\sum_{j\in I} [V_\by(x_j) - \wt V_{\wt\by}(x_j)] \nonumber  \\
&    =   \sum_{j\in I}  \left   [ N \Big (V(x_j N^{-2/3}) - \wt V(x_j N^{-2/3}) \Big ) -  2 \sum_{k\not\in I}
   \Big (  \log |x_j-y_k| - \log |x_j- \wt y_k| \Big ) \right  ].
\end{align}

So the main goal is to show that for any $\om=\om^r_{\by,\wt\by}$ with good boundary conditions we have
\be\label{h0corr}
 |\langle  Q; h_0\rangle_{\om}| \le N^{-\chi}.
\ee
This will hold for a certain class of observables $Q$ that depend on a few coordinates
near the left edge. The class of observables we are interested in have the form
\be\label{O}
  Q(\bx): =  O\Bigg( \Big( j^{1/3}(x_j - j^{2/3})\Big)_{j\in \Lambda}\Bigg).
\ee

\item [Step 2.] {\it Random walk representation.}
For any smooth  observables    $F(\bx)$ and  $Q(\bx)$ and   any time $T > 0$
 we have the following representation formula for the time dependent correlation function
(see \eqref{repp} for the precise statement):
\be\label{RW}
\E^\om  Q(\bx) \, F(\bx)    - \E^\om   Q (\bx(0))  F(\bx (T))
= \frac{1}{2}\int_0^{T}   \rd \Si \; \E^\om
    \sum_{b=1}^K \pt_b Q(\bx(0)) \langle \nabla F(\bx(S)) , \bv^b(S, \bx(\cdot)) \rangle.
\ee
Here the  path $\bx(\cdot)$ is the solution of the reversible stochastic dynamics
with equilibrium measure  $\om$ (see  \eqref{SDE} later).
We  use the notation $\E^\om$ also for the expectation with respect to the path  measure
starting from the initial distribution $\om$
and $\langle \cdot, \cdot \rangle$ denotes the inner product in $\R^K$. Furthermore,
for any $b\in I$ and for any fixed path $\bx(\cdot)$,
 the vector  $\bv^b(t)= \bv^b(t, \bx(\cdot))\in \R^K$ is the solution to the
equation
$$
   \partial_t \bv^{b} (t) =  - \cA(t) \bv^b (t), \quad t\ge 0, \qquad v_j^b(0)=\delta_{bj}.
$$
The matrix $\cA(t)$  depends on time through the path $\bx(t)$, i.e., it is
 of the form $\cA(t)=\wt \cA(\bx(t))$.
It  will be defined in \eqref{A=B+W} and
it is related to the Hessian of the Hamiltonian
$\cH_{\by,\wt\by}^r$  of the measure $\om$.
Using rigidity estimates on the path $\bx(\cdot)$, we will show that
with very high probability the matrix elements of $\cA(t)$
 satisfy the time-independent lower bound
\be\label{AAbound}
   \cA(t)_{ij}  \ge \frac { 1  } { (i^{2/3}-j^{2/3})^2}
+   \delta_{ij} \frac{ K^{2/3} }{ K^{2/3} - j^{2/3}  }  ,
\ee
up to irrelevant factors (see \eqref{Be}, \eqref{We}).

 We apply the random walk representation \eqref{RW} for $T\sim K^{1/3}$ and $F=h_0$. This is sufficient
since the time to equilibrium for the $\bx(t)$ process is of order $K^{1/3}$, which
will be guaranteed by convexity properties of the Hamiltonian of the measure $\om$
(Lemma~\ref{omegatime}).

\item[Step 3.]  If the coefficient matrix $\cA(t)$ satisfies  \eqref{AAbound}, then
the semigroup associated with  the equation
\be
\pt_t\bu(t) = -\cA(t)\bu(t)
\label{heat}
\ee
 has
good $L^p\to L^q$  decay estimates (Proposition~\ref{prop:disp}) that follow from energy method
and a new Sobolev inequality (Proposition~\ref{prop:sob}).
Rigidity estimates w.r.t. $\omega$  (Lemma~\ref{lm:inter}) will ensure that
the bound \eqref{AAbound} holds with very high probability. The $L^p\to L^q$  decay estimates
together with the bound
$$
   |\partial_j h_0(\bx)| \lesssim  \frac {K^{1/3}}{K+1-j}, \qquad j\in I,
$$
that also follows from rigidity,
will allow us to reduce the upper limit in the time integration in \eqref{RW}
 from $T\sim K^{1/3}$ to $\wt T\sim K^{1/6}$
in \eqref{RW}. The necessary rigidity estimate w.r.t. $\omega$ is obtained by
interpolating between the rigidity estimates for $\sigma_\by$ and $\wt\sigma_{\wt\by}$.

\item[Step 4.] Finally, we also have a time dependent version of the
 $L^\alpha\to L^\infty, \alpha > 1,$ decay estimate that follows from
a different Sobolev inequality (see Theorem~\ref{thm:2ndSob}). More precisely, in Lemma~\ref{cor:disp2}
we will show that if the matrix elements $\cA_{ij}(t)$ satisfy \eqref{AAbound}, then
for the $M$-th coordinate of the solution to \eqref{heat} we have for any $\alpha > 1$
$$
 \int_0^t |u_M(s)|^\alpha \rd s \le C_\alpha
   M^{-2/3}(t+1) \| \bu(0)\|_\alpha^\alpha, \qquad M\in I, \quad t>0,
$$
(up to irrelevant factors). We will apply this bound with $\alpha = 1 + \e$ to control the remaining
time integration from $0$ to $\wt T$ in \eqref{RW}.

\end{itemize}

\newcommand{\cB}{{\mathcal B}}

\section{Properties of the interpolating measure}\label{sec:inter}

In this section we establish the necessary apriori results for
$\om$, defined in \eqref{defom}. We start with its speed to equilibrium
from  a convexity bound on the Hessian.
The measure $\om$  defines a Dirichlet form  $D^{\om}$ and
its generator $\cL^\om$ in the usual way:
$$
  - \langle f, \cL^{\om} f\rangle_\om = -\int f\cL^\om f\rd\om
=  D^{\om} (f) = \frac{1}{2}\int |\nabla f|^2\rd\om,
$$
where
$$
\cL^\om = \frac{1}{2}\sum_{i\in I}\Big[ \pt_i^2 +\beta
\Big\{    - 4 N^{-\xi}  \Theta' \big( N^{-\xi} x_i\big)  
  - N( V^r_{\by,\wt\by})'(x_i) + \sum_{j\ne i}\frac{1}{x_i-x_j}\Big\}
\pt_i \Big].
$$
Note that in the context of studying the dynamics near the edge
in the microscopic coordinates, the natural Dirichlet form is defined without
the $1/N$ prefactor in contrast to \eqref{Ddef} and \eqref{L}, where
the scaling was dictated by the bulk.

Finally, let $\bx(t)$ denote the corresponding stochastic process
(local Dyson Brownian motion), given by
\be
  \rd x_i = \rd B_i + \beta \Big[     -  2 N^{1-\xi}  \Theta' \big( N^{-\xi} x_i\big)
    - \frac{N}{2}  (V_{\by,\wt\by}^{r})'   ( x_i ) +
 \frac{1}{2}\sum_{j\ne i} \frac{1}{x_i-x_j}\Big] \rd t, \qquad i\in I,
\label{SDE}
\ee
where $(B_1, \dots, B_{K})$ is a family of independent standard Brownian motions.
 With a slight abuse of notations,  when we talk about the process,
we will use $\P^\om$ and $\E^\om$ to denote the probability
and expectation w.r.t. this dynamics with initial data $\om$, i.e., in equilibrium.
This dynamical point of view gives rise to a representation for
the correlation functions  in terms random walks in random environment.
Note that $\beta\ge 1$ is needed for the well-posedness of \eqref{SDE}.
From the Hessian bound \eqref{sihess} and the Bakry-\'Emery criterion
  we have proved the following result:
\begin{lemma}\label{omegatime} Let $\xi$ be any
fixed positive constant and assume $  K$ satisfies  \eqref{NK}.
Let $\by, \wt\by \in \cR=\cR_{K}(\xi) $, $r\in [0,1]$  and set $\om = \om_{\by,\wt\by}^r$.
Then the measure $\om=\om_{\by, \wt\by}^r$
satisfies the     logarithmic Sobolev
inequality
$$
S(g  \om    |  \om )  \le  CK^{1/3}   D^{ \om } ( \sqrt {g} )
$$
and the time to equilibrium for the dynamics $\cL^\om$ is at most of order $K^{1/3}$. \qed
\end{lemma}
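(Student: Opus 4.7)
The plan is to observe that the Hessian of $\cH^r_{\by,\wt\by}$ inherits the convexity bound \eqref{sihess} from each of its two endpoints, and then invoke the Bakry--\'Emery criterion. The time-to-equilibrium statement is then a standard consequence of the LSI.

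First I would decompose the Hamiltonian. Looking at \eqref{defom}--\eqref{defVom}, the $\Theta$-cutoff term $\frac{2}{N}\sum_{i\in I}\Theta(N^{-\xi}x_i)$ and the logarithmic interaction $-\frac{1}{N}\sum_{i<j}\log|x_i-x_j|$ are independent of the parameters $r, \by, \wt\by$, while $V^r_{\by,\wt\by}$ depends linearly on $r$. Consequently
\begin{equation*}
\cH^r_{\by,\wt\by} \;=\; (1-r)\,\cH^\sigma_{\by} \;+\; r\,\wt\cH^\sigma_{\wt\by},
\end{equation*}
as can be verified by direct inspection. Taking second derivatives,
\begin{equation*}
\bigl(\cH^r_{\by,\wt\by}\bigr)''(\bx) \;=\; (1-r)\,\bigl(\cH^\sigma_{\by}\bigr)''(\bx) \;+\; r\,\bigl(\wt\cH^\sigma_{\wt\by}\bigr)''(\bx).
\end{equation*}

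Since $\by, \wt\by \in \cR = \cR_K(\xi)$ by hypothesis, the Hessian bound \eqref{sihess} applies to both endpoints (and the same proof of \eqref{Hyconv} that yields \eqref{sihess} for $\sigma_\by$ carries over verbatim to $\wt\sigma_{\wt\by}$, since it uses only the lower bound $V''\ge -2W$ from \eqref{eqn:LSImu} and the tail sum from the external points $\by\in\cR$). Thus for every $r\in[0,1]$, every $\bx$ in the configuration space, and every $\bu\in\R^K$,
\begin{equation*}
\bigl\langle \bu, \bigl(\cH^r_{\by,\wt\by}\bigr)''(\bx)\, \bu \bigr\rangle \;\ge\; c\,K^{-1/3}N^{-1}\,\|\bu\|^2.
\end{equation*}

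The final step is to apply Bakry--\'Emery to $\om \propto e^{-\beta N \cH^r_{\by,\wt\by}}$. The log-density has Hessian bounded below by $\beta N \cdot c K^{-1/3} N^{-1} = c\beta K^{-1/3}$, and since the associated Dirichlet form is $D^\om(f) = \tfrac{1}{2}\int|\nabla f|^2\rd\om$ (note the absence of the $1/N$ prefactor, matching the microscopic scaling convention of Section~\ref{sec:rescale}), the Bakry--\'Emery criterion yields
\begin{equation*}
S(g\om\,|\,\om) \;\le\; \frac{2}{c\beta K^{-1/3}}\, D^\om(\sqrt{g}) \;\le\; C K^{1/3}\, D^\om(\sqrt{g}).
\end{equation*}
The spectral gap is at least the inverse of the LSI constant, hence the $L^2$-relaxation time for $\cL^\om$ is $O(K^{1/3})$. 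There is no serious obstacle here: the whole argument rests on noticing that $\cH^r_{\by,\wt\by}$ is a convex combination of two Hamiltonians to which \eqref{sihess} already applies, so convexity survives the interpolation trivially. The only small point to verify is that \eqref{sihess} is indeed valid for both endpoints under the single assumption $\by,\wt\by\in\cR$, which is the content of the derivation of \eqref{Hyconv}.
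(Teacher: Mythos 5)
Your proof is correct and follows the same route as the paper: the paper simply invokes the Hessian bound \eqref{sihess} together with the Bakry--\'Emery criterion, and your observation that $\cH^r_{\by,\wt\by}$ is the convex combination $(1-r)\cH^\sigma_\by + r\,\wt\cH^\sigma_{\wt\by}$ (so that \eqref{sihess} at both endpoints propagates to the interpolation) is exactly the step the paper leaves implicit. No gaps.
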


Next, we formulate the rigidity and level repulsion bounds for $\om$.

\begin{lemma}[Rigidity and level repulsion for $\om$]   \label{lm:inter}
Let $\xi$ be any
fixed positive constant and assume $  K$ satisfies  \eqref{NK}.
Let $\by, \wt\by \in \cR=\cR_{K}(\xi) $, $r\in [0,1]$  and set $\om = \om_{\by,\wt\by}^r$.
Recall also the definition of $\al_i$ from \eqref{aldefuj}. Then the following holds:

(i) [Rigidity] There is a constant $c> 0$ such that
\be\label{omrig}
  \P^\om\big( |x_i-\al_i| \ge N^{C_2\xi}  i^{-1/3}u \big)\le e^{-cu^2},
\qquad i\in I, \quad u>0.
\ee

(ii) [Level repulsion] For any $s>0$ we have
\begin{align}\label{omk5n}
\P^{ \om}[ y_{K+1} -x_{K} \le s K^{-1/3}  ] & \le
  C \left (  K^2s \right ) ^{\beta + 1}, \\ \label{omk5nlow}
\P^{ \om}[ y_{K+1}-x_K \le s K^{-1/3}  ] & \le
  C \left ( N^{C\xi} s \right ) ^{\beta + 1} + e^{-N^c}.
\end{align}

(iii) We also have
\begin{align}\label{omlog}
 \E^\om |\log (y_{K+1}-x_K)|  & \le CN^{C\xi},\\
\label{omresolv}
   \E^\om \frac{1}{|y_{K+1}-x_K|^q} & \le C_qN^{C\xi} K^{q/3}, \qquad q <\beta+1.
\end{align}
\end{lemma}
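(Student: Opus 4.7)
The plan is to adapt the proof of Theorem~\ref{thm:condRig} (rigidity for $\sigma_\by$, carried out in Subsection~\ref{subsec:condRig}) and of Theorem~\ref{lr2} (level repulsion for $\sigma_\by$, proved in Appendix~\ref{app:levelRep}) to the interpolating measure $\omega=\omega^r_{\by,\wt\by}$. The first observation is that the convexity bound \eqref{sihess} transfers verbatim to $\omega^r$: the $\Theta$-confinement and the log-interaction parts of $\cH^r_{\by,\wt\by}$ are identical to those of $\cH^\sigma_\by$, while the interpolated external potential $V^r_{\by,\wt\by}=(1-r)V_\by+r\wt V_{\wt\by}$ contributes $(1-r)V_\by''+r\wt V_{\wt\by}''$, whose bulk part satisfies $\ge -2W$ and whose singular part is a convex combination of the two boundary sums $\sum_{j\notin I}2/(x_i-y_j)^2$ and $\sum_{j\notin I}2/(x_i-\wt y_j)^2$. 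Each of these sums provides the required $cN^{1/3}K^{-1/3}$ growth since $\by,\wt\by\in \cR$, so $\omega^r$ satisfies the logarithmic Sobolev inequality of Lemma~\ref{omegatime}.

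For part (i), the multiscale bootstrap of Subsection~\ref{subsec:condRig} then applies once two further inputs are verified for $\omega^r$. First, a Gaussian left-tail for $x_1$: the $\Theta$-term in $\cH^r_{\by,\wt\by}$ is identical to that in $\cH^\sigma_\by$, so the argument of Lemma~\ref{lem:concX1} yields $\P^{\omega^r}(x_1 \le -uN^\xi)\le Ce^{-cu^2}$ directly from the Hessian bound. Second, the centering accuracy $|\E^{\omega^r}x_k-\alpha_k|\le N^{C\xi}k^{-1/3}$: Herbst's argument based on the LSI first gives coarse concentration of block averages $x_k^{[M]}$ around $\E^{\omega^r}x_k^{[M]}$ at scale $N^\xi(Mk)^{-1/3}$, and then the first loop equation for $\omega^r$, whose external potential is $\mathscr{C}^4$ away from the singularities at $y_j,\wt y_j$ and whose macroscopic equilibrium density inherits the matching square-root edge behavior from \eqref{eqn:matchSing}, is analyzed as in Subsection~\ref{subsec:ImprAcc} to bootstrap the accuracy down to the stated scale. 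With these inputs the multiscale induction of Subsection~\ref{subsec:condRig} yields \eqref{omrig}.

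Part (ii) is local. Conditioning on $x_1,\ldots,x_{K-1}$ and the boundary $\by$, the conditional density of $x_K$ on $[x_{K-1},y_{K+1}]$ is proportional to $(y_{K+1}-x_K)^\beta (x_K-x_{K-1})^\beta$ times a smooth factor of order $O(1)$, so integrating over an interval of length $sK^{-1/3}$ adjacent to $y_{K+1}$ produces a factor $s^{\beta+1}$ with normalization controlled by $(y_{K+1}-x_{K-1})^{-(\beta+1)}$. The crude bound \eqref{omk5n} uses only the deterministic lower bound $y_{K+1}-x_{K-1}\gtrsim K^{-2/3}$, while \eqref{omk5nlow} uses the rigidity of $x_{K-1}$ from (i) to replace this $K$-factor by $N^{-C\xi}$; the subexponentially small additive term in \eqref{omk5nlow} absorbs the exceptional event violating (i). Part (iii) is then immediate: \eqref{omlog} and \eqref{omresolv} follow from \eqref{omk5n} by the layer-cake formula, the assumption $q<\beta+1$ ensuring that the repulsion exponent overcomes the singularity.

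The principal obstacle is the accuracy step in (i): unlike in Theorem~\ref{thm:condRig}, where $\by\in \cR^*$ supplied the centering of $\E^{\sigma_\by}x_k$ as an assumption, here we must derive the analogous centering for $\omega^r$ from the data $\by,\wt\by\in \cR^*$. The loop-equation route treats $\omega^r$ as a $K$-particle log-gas whose external potential carries logarithmic singularities at the external points; these singularities lie outside the support of the macroscopic equilibrium measure of $\omega^r$, and their contribution to the Stieltjes transform estimate amounts to a perturbation of size $O(N^{-1+C\xi})$ that does not obstruct the argument of Subsection~\ref{subsec:ImprAcc}.
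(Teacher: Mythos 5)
Your treatment of parts (ii) and (iii) is essentially sound (the paper proves \eqref{omk5n}--\eqref{omk5nlow} by the dilation/partition-function comparison of Appendix~\ref{app:levelRep} applied to the interpolated potential $[V^r_{\by,\wt\by}]^*=(1-r)V^*_\by+r\wt V^*_{\wt\by}$ rather than by one-point conditioning, but both routes are viable, and (iii) is indeed immediate from (ii) by layer cake). The gap is in part (i), in exactly the step you flag as the principal obstacle.

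Your proposed resolution --- running the loop-equation accuracy analysis of Subsection~\ref{subsec:ImprAcc} for $\omega^r$ viewed as a $K$-particle log-gas --- does not work as described. The assertion that the logarithmic singularities at the external points contribute only a perturbation of size $O(N^{-1+C\xi})$ to the Stieltjes-transform analysis is false: the points $y_j$ with $j$ slightly larger than $K+1$ sit at distance $O(K^{-1/3})$ (in microscopic units) from the rightmost particles, and the sum $-\frac{2}{N}\sum_{j\notin I}\log|x-y_j|$ is precisely the order-one effective confinement that creates the right boundary of the local gas; it cannot be treated as a small perturbation of a smooth potential. Moreover, the local equilibrium density does not vanish like a square root at $y_{K+1}$ --- the configuration interval has a hard right edge --- so the regularity framework underlying Lemmas~\ref{lem:Johansson}--\ref{lem:smallest} (which is built on \eqref{sqsing} and on $2m(z)-V'(E)$ having square-root behaviour at both edges) does not apply. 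Redeveloping the loop-equation machinery for this conditioned gas is a substantial project that the paper deliberately avoids.

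The paper's route is an entropy interpolation. Lemma~\ref{ent} gives
$$\min\bigl(S(\omega^r\,|\,\sigma_\by),\,S(\omega^r\,|\,\wt\sigma_{\wt\by})\bigr)\le \bigl[\E^{\wt\sigma_{\wt\by}}-\E^{\sigma_\by}\bigr](\cH_\by-\wt\cH_{\wt\by}),$$
and the right-hand side is bounded by $N^{C\xi}$ using the rigidity and level-repulsion estimates already established for the two endpoint measures (Lemma~\ref{lm:91}). The entropy inequality then transfers the centering bound $\E^{\omega^r}|x_j-\alpha_j|\le N^{C\xi}$ from whichever endpoint measure is entropy-close, and only \emph{then} does one rerun the multiscale induction of Subsection~\ref{subsec:condRig}. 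Note also that this transfer uses the hypothesis \eqref{Exm}, i.e.\ effectively $\by,\wt\by\in\cR^*$, which is available in every application of the lemma; you do not need to (and with only $\by,\wt\by\in\cR$ probably cannot) derive the centering ab initio.
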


The key to translate the  rigidity   estimate of the measures $\sigma_\by$ and
$\sigma_{\wt \by}$ to  the measure $\om=\om_{\by, {\wt \by}}^r$
is the following lemma.

\begin{lemma}\label{lm:91}
Let $K$ satisfy \eqref{NK} and
$\by, \wt \by  \in \cR_{K}(\xi) $. Consider
the local equilibrium measure $\sigma_\by$ defined in \eqref{Vyext}
  and  assume that \eqref{Exm} is satisfied.
Let  $ \om_{\by, {\wt \by}}^r$ be the measure   defined in \eqref{defom}.
Recall that $\al_k$ denote the equidistant
points in $J$, see \eqref{aldefuj}.
Then there exists a constant $C$, independent of  $\xi$, such that
\be\label{Exm3}
    \E^{ \om_{\by, {\wt \by}}^r }   \left | x_j -  \alpha_j \right |
   \le C  N^{ C  \xi}.
\ee
\end{lemma}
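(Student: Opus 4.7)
The strategy is to exploit that $\om^r_{\by,\wt\by}$ is log-concave with essentially the same convexity as $\sigma_\by$, and then transfer the rigidity estimate \eqref{Exm} for $\sigma_\by$ to $\om^r$ via a controlled interpolation in the parameter $r$. First I would observe that $\nabla^2\cH^r_{\by,\wt\by}$ satisfies the same lower bound \eqref{sihess} as $\nabla^2\cH^\sigma_\by$, uniformly in $r\in[0,1]$: the log-interaction and the confinement $\Theta$ are $r$-independent, and $(V^r_{\by,\wt\by})'' = (1-r)V'' + r\wt V'' \ge -2W$ since both $V$ and $\wt V$ satisfy \eqref{eqn:LSImu}. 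Consequently $\om^r$ inherits the LSI of Lemma~\ref{omegatime} with constant $O(K^{1/3})$, so every $1$-Lipschitz linear statistic of the coordinates has subgaussian tails at scale $K^{1/6}$.

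Next I would control the shift of the mean along the interpolation. Writing
$$
\E^{\om^r} x_j - \E^{\sigma_\by} x_j = -\beta N\int_0^r \mathrm{cov}_{\om^s}(x_j, H_0)\,\rd s,
\qquad H_0 := \tfrac12\sum_{i\in I}\bigl[\wt V_{\wt\by}(x_i)-V_\by(x_i)\bigr],
$$
the $\mathscr{C}^4$-regularity of $V,\wt V$ together with the edge matching \eqref{eqn:matchSing} controls the difference $\wt V(x N^{-2/3})-V(x N^{-2/3})$ on the relevant support, while $|y_k-\wt y_k|\le 2N^\xi\hat k^{-1/3}$ (from $\by,\wt\by\in\cR_K(\xi)$) controls the log-difference $\tfrac2N\sum_{k\notin I}[\log|x-\wt y_k|-\log|x-y_k|]$ after careful summation in $k$. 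A covariance bound via the Helffer--Sj\"ostrand representation, relying on the uniform Hessian lower bound from Step~1, then yields $|\E^{\om^r} x_j - \E^{\sigma_\by} x_j|\le CN^{C\xi}$; combined with \eqref{Exm}, this gives $|\E^{\om^r} x_j - \alpha_j| \le CN^{C\xi}$.

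To conclude, the triangle inequality gives
$$
\E^{\om^r}|x_j-\alpha_j|\le |\E^{\om^r} x_j -\alpha_j|+\E^{\om^r}|x_j - \E^{\om^r}x_j|.
$$
The second term is estimated by the Brascamp--Lieb inequality $\mathrm{Var}_{\om^r}(x_j)\le \E^{\om^r}\bigl[(H^{-1})_{jj}\bigr]$, where $H$ is the Hessian of the log-density. A key observation is that the \emph{row sums} of $H$ are controlled below by $\sum_{k\notin I}(x_j-y_k)^{-2}\sim K^{-1/3}$, while the diagonal entries $H_{jj}$ pick up additional large contributions from the external points close to $j$, so that $(H^{-1})_{jj}$ can be bounded by $CN^{C\xi}$ after using the edge rigidity of $\by,\wt\by$.

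The main obstacle lies in the covariance estimate of Step~2: the naive Cauchy--Schwarz bound $\beta N \cdot K^{1/6}\cdot\mathrm{Var}(H_0)^{1/2}$ is far too weak, since the prefactor $\beta N$ demands extraordinary cancellation inside the covariance. Overcoming this requires exploiting the Hessian structure in a quantitative way, which itself rests on a-priori location estimates for the particles under $\om^s$ --- the very object the lemma is producing. Breaking this circularity by a bootstrap over scales (starting from the weakest LSI-based rigidity and successively refining it) is the central technical point.
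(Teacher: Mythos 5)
Your Step 2 is where the argument breaks down, and you have in effect conceded this yourself: the identity $\E^{\om^r}x_j-\E^{\sigma_\by}x_j=-\beta N\int_0^r\mathrm{cov}_{\om^s}(x_j,H_0)\,\rd s$ requires a covariance bound for the \emph{intermediate} measures $\om^s$ strong enough to beat the prefactor $\beta N$, and any such bound (Helffer--Sj\"ostrand or otherwise) needs a priori location estimates under $\om^s$ --- precisely what the lemma is supposed to produce. You propose to break this circularity by ``a bootstrap over scales'' but do not carry it out, so the central step is missing and the proof does not close. Your Step 3 is also not justified as stated: the uniform convexity \eqref{sihess} only gives $\mathrm{Var}_{\om^r}(x_j)\lesssim K^{1/3}$ via Brascamp--Lieb, i.e.\ fluctuations of order $K^{1/6}$, which is far larger than $N^{C\xi}$ for $K$ up to $N^{1-\delta}$; and the claimed improvement from large diagonal entries of the Hessian is in the wrong direction, since for a positive definite matrix $(H^{-1})_{jj}\ge 1/H_{jj}$, so a large diagonal alone does not force $(H^{-1})_{jj}$ to be small.

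The paper avoids differentiating in $r$ altogether. The key device is the elementary entropy bound of Lemma \ref{ent}: writing $\om=Z^{-1}e^{g}\rd\mu_1$ with $g=r(H_1-H_2)$, one has $\min\bigl(S(\om|\mu_1),S(\om|\mu_2)\bigr)\le[\E^{\mu_2}-\E^{\mu_1}](H_1-H_2)$. The right-hand side involves only expectations under the two \emph{endpoint} measures $\sigma_\by$ and $\wt\sigma_{\wt\by}$, for which rigidity and level repulsion are already available (\eqref{Exm}, \eqref{32the}, Theorem \ref{lr2}); after subtracting the constants $V_\by(\al_i)-\wt V_{\wt\by}(\al_i)$ and Taylor expanding, one obtains $S(\om|\sigma_\by)\le N^{C\xi}$ (or the analogous bound relative to $\wt\sigma_{\wt\by}$). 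The conclusion \eqref{Exm3} then follows in one stroke from the entropy inequality $\E^{\om}f\le\lambda^{-1}\log\E^{\sigma_\by}e^{\lambda f}+\lambda^{-1}S(\om|\sigma_\by)$ with $f=|x_j-\gamma_j|$ and $\lambda=N^{-\xi-\e}$, using the Gaussian tail of $x_j-\gamma_j$ under $\sigma_\by$ from Theorem \ref{thm:condRig}; this handles the mean and the fluctuation simultaneously, with no covariance estimate, no Brascamp--Lieb, and no bootstrap. The non-circular input you were missing is exactly this: the entropy between $\om^r$ and an endpoint is controlled by endpoint data alone.
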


\begin{proof}[Proof of Lemma~\ref{lm:91}]
We first recall the following estimate on the entropy from Lemma 6.9 of \cite{EYsinglegap}.

\begin{lemma}\label{ent}
Suppose $\mu_1$ is a probability measure and $\om = Z^{-1} e^g d \mu_1$
for some function  $g \in L^1(\rd\mu_1)$ with $e^g\in L^1(\rd\mu_1)$  and normalization $Z$.
Then we can bound the entropy by
$$
S:= S(\om| \mu_1) = \E^\om g - \log \E^{\mu_1} e^g  \le \E^\om g - \E^{\mu_1} g.
$$
Consider two probability  measures $\rd \mu_i = Z_i^{-1} e^{- H_i} \rd \bx $, $i=1,2$.
Denote by $g$ the function
$$
g = r (  H_1 - H_2), \quad 0 < r < 1,
$$
and set $\om= Z^{-1} e^g d \mu_1$ as above. Then we can bound the entropy by
$$
\min (S(\om| \mu_1), S(\om| \mu_2)) \le\Big [ \E^{\mu_2} - \E^{\mu_1}   \Big ]  (H_1-H_2).
$$
\end{lemma}

We now apply this lemma with  $\mu_2 = \wt \sigma_{\wt \by}$ and  $\mu_1 = \sigma_{ \by}$ to prove that
\be\label{85}
\min[ S(\om_{\by, {\wt \by}}^r|\sigma_\by), S(\om_{\by, {\wt \by}}^r|\wt \sigma_{\wt \by}) ]  \le
  N^{ C \xi}.
\ee
To see this, by definition  of $g$  and the rigidity estimate \eqref{rig},  we have
\begin{align}\label{842}
\E^{\mu_2} g- \E^{\mu_1} g  &  =  \frac{r}{2}\Big [ \E^{\mu_2} -  \E^{\mu_1} \Big ] \sum_{i \in I}
\Big [ V_\by(x_i) - \wt V_{\wt\by}(x_i)
 \Big ]  \non \\
 &  =  \frac{r}{2}\Big [ \E^{\mu_2} -  \E^{\mu_1} \Big ] \sum_{i \in I}
\Big [ V_\by(x_i) - \wt V_{\wt\by}(x_i)
 - \big (V_\by(\al_i) - \wt V_{\wt\by}(\al_i)   \big ) \Big ]  \non \\
& =   \frac{r}{2}\Big [ \E^{\mu_2} -  \E^{\mu_1} \Big ] \sum_{i \in I} \int_0^1\rd s\Big [ V_\by'(s\al_i+(1-s)x_i ) - \wt V_{\wt\by}'(s\al_i+(1-s)x_i)
 \Big ] (x_i-\al_i)  \non \\
& = \Big [ \E^{\mu_2} +  \E^{\mu_1} \Big ] O\Big(  \sum_{i \in I}
 \sup_{s\in [0,1]}\frac{K^\xi}{ |s\al_i+(1-s)x_i - y_{K+1}| }|x_i-\al_i|\Big) \le  N^{C \xi}.
\end{align}
In the last step we used the rigidity \eqref{32the} to see
that with a very high $\mu_1$- or $\mu_2$-probability the numbers
 $s\al_i+(1-s)x_i\sim \al_i$
are equidistant up to an additive error
$K^\xi$ if $i$ is  away from the boundary, i.e., $ i\le  K-K^{C\xi}$,
see \eqref{32the}.  For indices near the boundary, $ i \ge K-K^{C\xi}$,  we used
$|s\al_i+(1-s)x_i|\ge c\min\{1, |x_{K}-y_{K+1}|\}$ and the rigidity $|x_i-\alpha_i|\le N^{C\xi}K^{-1/3}$.
The bound \eqref{omresolv}
guarantees that the short distance singularity  $|x_{K}-y_{K+1}|^{-1}$
 has an  $\E^{\mu_{1,2}}$ expectation that is bounded by $CN^{C\xi}K^{1/3}$, which gives \eqref{842}.

We now assume that \eqref{85} holds with the
 choice of $S(\om_{\by, {\wt \by}}^r|\sigma_\by)$ for simplicity
of notation.
By the entropy inequality, we have
$$
\E^{\om_{\by, {\wt \by}}^r} |x_i - \gamma_i| \le N^{\xi + \e}  \log   \E^{\sigma_\by}    e^{  N^{-\xi - \e}   | x_i - \gamma_i | } + N^{ C \xi } N^{\xi + \e}
\le N^{ C \xi }.
$$
This proves Lemma \ref{lm:91}.
\end{proof}

\begin{proof}[Proof of Lemma~\ref{lm:inter}]
Given \eqref{Exm3},
 the proof of \eqref{omrig} follows from  the argument in the proof of
Theorem \ref{thm:condRig}.
Once the rigidity bound \eqref{omrig} is proved,
we can follow the proof of Theorem~\ref{lr2} to obtain the
repulsion estimates \eqref{omk5n}-\eqref{omk5nlow}.
 The only modification is that we use
the potential $V_{\by, \wt\by}^r$ of the measure $\om=\om_{\by,\wt\by}^r$
(see \eqref{defVom}) instead of $V_\by$. The analogue of $V_\by^*$
(see \eqref{V*defn}) can be directly defined for  $V_{\by, \wt\by}^r$  as
\be\label{Vyy*}
   [V_{\by, \wt\by}^r]^*(x) = (1-r) V_\by^*(x) + r \wt V_{\wt\by}^*(x).
\ee
Formula \eqref{ch21n} will be slightly modified, e.g.
the factor $(y_-+(1-\varphi)(w_j-y_-)-y_k)^\beta$
will be replaced with $(y_-+(1-\varphi)(w_j-y_-)-y_k)^{(1-r)\beta}
(y_-+(1-\varphi)(w_j-y_-)-\wt y_k)^{r\beta}$, but it does not change the
estimates. Similarly, the necessary bound \eqref{V*dern} for
the potential $[V_{\by, \wt\by}^r]^*$
easily follows from \eqref{Vyy*} and the same bounds on $V_\by^*$
and $V_{\wt\by}^*$. Finally, \eqref{omlog} and \eqref{omresolv} are trivial
consequences of \eqref{omk5n} and \eqref{omk5nlow}.
\end{proof}

\section{Random walk representation for the correlation function}

  The first step to  prove \eqref{h0corr} is to  use the random walk
 representation formula from  Proposition  7.1 of \cite{EYsinglegap}
which we restate in Proposition~\ref{prop:repp} below.
 This formula in a lattice  setting  was given  in  Proposition 2.2 of
 \cite{DeuGiaIof2000} (see also Proposition 3.1 in \cite{GOS}). The random walk representation
already appeared  in the earlier paper of Naddaf and Spencer \cite{NS},  which  was a
probabilistic formulation  of the idea
 of Helffer and Sj\"ostrand \cite{HS}.

Fix $\Si>0$ and  $\bx\in  J^K=J_\by^I$.
Let $\bx(s)$ be the solution to \eqref{SDE}
with initial condition $\bx(0)=\bx$. Let $\E^\bx$ denote the
expectation with respect to this path measure.
With a slight abuse of notations, we will use
 $\P^\om$ and $\E^\om$ to denote the probability and expectation
with respect to the path measure of the solution to \eqref{SDE}
with initial condition $\bx$ distributed by $\om$.

For any fixed path
$\bx(\cdot):=\{\bx(s)\; : \; s\in [0,\Si]\}$ we
define the following operator ($K\times K$ matrix) acting on
$K$-vectors $\bu\in \R^K$ indexed by the set $I$;
\be\label{A=B+W}
  \cA(s) = \wt\cA (\bx(s)), \qquad  \wt\cA =\wt\cB +\wt \cW,
\ee
with actions
$$
  [\wt \cB(\bx)\bu]_i : = \frac{1}{2} \sum_{j\in I}
\frac{1}{(x_i-x_j)^2}(u_i-u_j), \qquad
    [\wt \cW(\bx)\bu]_i = \cW_i u_i  \qquad i\in I,
$$
where we defined
\be\label{def:Wtilde}
  \wt \cW_i(\bx) =   2N^{1-2\xi} \Theta'' (N^{-\xi} x_i)
 + \frac { N^{-1/3}}  2   \big[(1-r)V''(x_iN^{-2/3})+
r\wt V''(x_iN^{-2/3})\big]+  \frac{1}{2}  \sum_{k\not\in I}
\Big[ \frac{1-r}{(y_k-x_i)^2} + \frac{r}{(\wt y_k-x_i)^2} \Big].
\ee
(Notice that $W_i(\bx)$ depends only on $x_i$).

\begin{proposition}\label{prop:repp} For any smooth functions  $F: J^K \to \R$
and  $Q: J^K \to \R$  and   any time $T > 0$
 we have
\begin{align}\label{repp}
\E^\om Q(\bx)\, F(\bx)   - \E^\om  Q (\bx(0))  F(\bx (T))
= \frac{1}{2}\int_0^{T}   \rd \Si \int\omega (\rd \bx)
    \sum_{a,b =1}^K \pt_b Q(\bx)
    \E^\bx \pt_a  F (\bx(\Si))   v^b_a(S, \bx(\cdot)).
\end{align}
 Here for any $\Si>0$ and for any path $\{ \bx(s)\in   J^K \; : \; s\in [0,\Si]\}$, we
define  $\bv^b(t)= \bv^b(t, \bx(\cdot))$ as the solution to the equation
\be\label{vb}
   \partial_t \bv^{b} (t) =  - \cA(t) \bv^b (t), \quad t\in [0, S], \qquad v_a^b(0)=\delta_{ba}.
 \ee
The dependence of $\bv^b$ on the path $\bx(\cdot)$ is present via the dependence
$\cA(t) =  \wt \cA( \bx (t))$. In other words, $v^b_a(t)$ is the fundamental solution
of the heat semigroup $\pt_s+ \cA(s)$.
\end{proposition}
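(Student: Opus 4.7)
The plan is to derive the formula by combining stationarity of $\omega$ under the dynamics $\cL^\omega$ with integration by parts and the stochastic flow representation of the semigroup. First, since $\omega$ is invariant under \eqref{SDE}, starting the process in equilibrium gives $\E^\omega Q(\bx(0)) F(\bx(T)) = \int Q(\bx) (P_T F)(\bx)\, \omega(\rd\bx)$, where $P_S F(\bx) := \E^\bx F(\bx(S))$ is the heat semigroup generated by $\cL^\omega$. Consequently the left-hand side of \eqref{repp} equals
\begin{equation*}
\int Q(\bx)\bigl[F(\bx)-(P_T F)(\bx)\bigr]\omega(\rd\bx)
= -\int_0^T \rd S \int Q(\bx)\,\cL^\omega (P_S F)(\bx)\,\omega(\rd\bx).
\end{equation*}
Using the symmetry of $\cL^\omega$ in $L^2(\omega)$ and the carré du champ identity $\int Q\,\cL^\omega g\, \rd\omega = -\tfrac12 \int \nabla Q \cdot \nabla g\,\rd\omega$, this becomes $\tfrac12\int_0^T \rd S \int \sum_b \partial_b Q\cdot \partial_b(P_S F)\,\rd\omega$.

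Next I would represent $\partial_b (P_S F)(\bx)$ through the Jacobian of the stochastic flow. Writing $\Phi_S(\bx)$ for the solution of \eqref{SDE} at time $S$ with initial condition $\bx$, we have $P_S F(\bx) = \E\, F(\Phi_S(\bx))$, so
\begin{equation*}
\partial_b (P_S F)(\bx) = \E\,\sum_{a\in I}\partial_a F(\Phi_S \bx)\cdot J^{ab}_S(\bx),\qquad J^{ab}_S(\bx):=\partial_b \Phi_S^a(\bx).
\end{equation*}
Differentiating the SDE \eqref{SDE} with respect to the initial condition $x_b$ gives a random ODE for the Jacobian: $\tfrac{\rd}{\rd S} J_S^{ab} = \sum_c (\partial_c b_a)(\bx(S)) J_S^{cb}$, where $b_a(\bx) = \beta[\,-2N^{1-\xi}\Theta'(N^{-\xi}x_a) - (N/2)(V^r)'(x_a) + \tfrac12\sum_{j\ne a}\frac{1}{x_a-x_j}\,]$ is the drift. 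A direct computation of $\partial_c b_a$ (with the contributions from $\Theta''$, $V''$, the boundary points $y_k,\wt y_k$ through $V^r_{\by,\wt\by}$, and the singular sums $1/(x_a-x_j)^2$) matches, up to the appropriate constants, exactly the negative of the matrix $\cA(S) = \wt\cA(\bx(S))$ defined in \eqref{A=B+W}, in the sense that $J^{ab}_S$ equals $v_a^b(S,\bx(\cdot))$ solving $\partial_S \bv^b = -\cA(S)\bv^b$ with $v_a^b(0) = \delta_{ab}$. Substituting back into the identity of the first paragraph yields precisely \eqref{repp}, with the swap of indices $(a,b)$ matching the convention in the statement.

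The main obstacle is making the Jacobian analysis rigorous in the presence of the singular interaction $\sum_{j\ne i}1/(x_i-x_j)$, which makes the drift unbounded near collisions; a priori bounds on $J$ require that the process stays away from the collision set and from the boundary points $y_{K+1}$. Here I would invoke the apriori results of Section~7: the convexity of $\cH_{\by,\wt\by}^r$ from \eqref{sihess} and the rigidity and level repulsion bounds \eqref{omrig}–\eqref{omresolv} of Lemma~\ref{lm:inter}, which give the necessary integrability of the singular kernels along the trajectory. For $\beta\ge 1$ non-collision of \eqref{SDE} holds almost surely, so the flow $\Phi_S$ and its Jacobian are well defined; standard approximation arguments (truncating the singularities, passing to the limit using the integrability estimates) justify the differentiations in $\bx$ of $P_S F(\bx)$ that we performed formally. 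Once these justifications are in place, the identity is a direct consequence of the four-step scheme: stationarity, time-differentiation of the semigroup, integration by parts, and the flow/Jacobian representation, exactly as in Proposition~7.1 of~\cite{EYsinglegap}.
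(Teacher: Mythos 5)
Your derivation is correct and is essentially the standard Helffer--Sj\"ostrand/Naddaf--Spencer argument that the paper itself does not reprove but imports from Proposition 7.1 of \cite{EYsinglegap}: stationarity, $F-P_TF=-\int_0^T\cL^\om P_SF\,\rd S$, polarization of the Dirichlet form, and then a Feynman--Kac type representation of $\nabla P_SF$. Your only presentational deviation is to realize that last step through the Jacobian of the stochastic flow rather than by commuting $\pt_b$ with $\cL^\om$ and applying Feynman--Kac to the resulting vector-valued equation $\pt_t w=\cL^\om w-\cA w$; for additive noise these are the same computation, since the pathwise Jacobian ODE is exactly the Duhamel/Feynman--Kac solution of the commuted equation. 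The one point you should still pin down is the exact matching of constants between $-D b$ and $\wt\cA$ (the paper's normalizations of $\beta$ and the factors of $\tfrac12$ in \eqref{L}, \eqref{SDE} and \eqref{def:Wtilde} are not mutually consistent as printed), and your closing remarks correctly identify the genuine technical work, namely justifying differentiability of the flow through the singular drift via non-collision for $\beta\ge1$ and the level-repulsion bounds.
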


\section{Proof of Theorem~\ref{thm:cor}} \label{sec:prthm}

{F}rom now on we assume  the conditions of Theorem~\ref{thm:cor}.
In particular we are given some $\xi>0$  and we assume that the
boundary conditions satisfy
 $\by, \wt \by  \in \cR^\#(\xi)$
and \eqref{Exm}.

\subsection{First time cutoff}

We now start to estimate the correlation function in \eqref{h0corr}.
We first  apply the formula  \eqref{repp} with $F$ replaced by $h_0$ defined in \eqref{h0def}
so that
$$
 \E^\om Q(\bx) h_0 (\bx)    - \E^\om  Q(\bx(0))  h_0(\bx (T))
  =  \frac{1}{2}\int_0^{T} \rd\Si \int\om(\rd\bx)
\sum_{a,b=1}^K \pt_b Q(\bx)
    \E^\bx \partial_a  h_0 (\bx(\Si))   v^b_a(S, \bx(\cdot)).
$$
 We collect  information on $h_0$ in the following lemma:
\begin{lemma}\label{lm:init} Let $K$ satisfy
\be\label{NKn}
N^\delta \le K \le N^{2/5-\delta}
\ee
for some small $\delta>0$ and let $\by, \wt\by\in  \cR^\#(\xi)$.
Then for any $\kappa < \beta+1$ we have
\be
  \E^\om |h_0(\bx)|^\kappa \le C_\kappa K^2N.
\label{Eh0}
\ee
Furthermore, if $\bx$ satisfies
\be\label{nabhcond}
   \max_{ j\in I } j^{1/3} |x_j-\alpha_j| \le N^{C_3\xi},
\ee
then
\be\label{nabh}
   |\partial_j h_0(\bx)| \le  \frac {CN^{C\xi} K^{1/3}}{K+1-j}, \qquad j\in I.
\ee
In particular, we have
 the $L^1$-bound
\be\label{h0l1}
   \sum_j  |\partial_j h_0(\bx)| \le  CN^{C\xi} K^{1/3}.
\ee
\end{lemma}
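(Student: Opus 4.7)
The plan is to prove the three assertions in the order \eqref{nabh}, then \eqref{h0l1}, then \eqref{Eh0}, since the gradient computation drives the subsequent bounds. Differentiating \eqref{h0def} gives
$$\partial_j h_0(\bx) = N^{1/3}\bigl[V'(x_j N^{-2/3}) - \widetilde V'(x_j N^{-2/3})\bigr] - 2\sum_{k\notin I}\frac{\widetilde y_k - y_k}{(x_j - y_k)(x_j - \widetilde y_k)}.$$
The potential contribution is handled using \eqref{eqn:matchSing}: the equilibrium equation \eqref{equil} forces $V$ and $\widetilde V$ to agree to sufficient order at the left edge, and combined with $\mathscr{C}^4$ regularity together with $x_j N^{-2/3}\ll 1$ for $j\in I$ under \eqref{nabhcond} and \eqref{Kcon1}, this piece is much smaller than the claimed $K^{1/3}/(K+1-j)$.

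For the sum over external points, I would use the rigidity $\by,\widetilde\by\in\cR$, which yields $|\widetilde y_k - y_k|\le 2N^\xi\wh k^{-1/3}$, together with the hypothesis \eqref{nabhcond} to produce the deterministic lower bound
$$|x_j - y_k|\wedge|x_j - \widetilde y_k| \ge c\,(\wh k^{2/3}-j^{2/3}) - N^{C\xi}\wh k^{-1/3}, \qquad k\notin I,\; j\in I.$$
In the critical range $K<k\le 2K$ the first term dominates and is comparable to $K^{-1/3}(k-j)$, reducing the bound to
$$\sum_{K<k\le 2K}\frac{N^\xi k^{-1/3}}{(k^{2/3}-j^{2/3})^2} \le CN^\xi K^{1/3}\sum_{\ell=1}^K\frac{1}{(K+1-j+\ell)^2} \le \frac{CN^\xi K^{1/3}}{K+1-j},$$
while the tail $k>2K$ contributes $O(K^{-2/3})$, which is subdominant. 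This proves \eqref{nabh}. Summing in $j$ and absorbing $\log K$ into a slightly larger $N^{C\xi}$ yields \eqref{h0l1}.

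For the moment bound \eqref{Eh0}, I would split $h_0 = h_0^V + h_0^{\log}$. On the rigid event \eqref{omrig} with $u=N^\xi$, whose complement has superpolynomially small $\om$-probability and thus negligible contribution, both $h_0^V$ and the non-singular portion of $h_0^{\log}$ admit deterministic polynomial-in-$K,N$ bounds obtained from the same gap estimates as above. The only delicate pieces are the near-boundary logarithms $|\log(y_{K+1}-x_K)|$ and $|\log(\widetilde y_{K+1}-x_K)|$; for $\kappa<\beta+1$, fixing $\varepsilon\in(0,\beta+1-\kappa)$ and using $|\log s|^\kappa\le C_\kappa s^{-\varepsilon}$ for $s\in(0,1]$ together with the level repulsion \eqref{omresolv} bounds their $\kappa$-th moments by $CN^{C\xi}K^{\varepsilon/3}$, well within the asserted $CK^2 N$.

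The main obstacle is the endpoint case $j=K$ in \eqref{nabh}, where $y_{K+1}$ may approach $x_K$ on the scale $O(K^{-1/3})$; the quantitative rigidity condition \eqref{nabhcond} is precisely what prevents further approach and thereby preserves the $K^{1/3}$ prefactor. The restriction $\kappa<\beta+1$ for \eqref{Eh0} is in turn dictated by the level repulsion \eqref{omresolv}, which is the sharp estimate reflecting the local behaviour of the log-gas near a hard boundary.
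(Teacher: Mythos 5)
There is a genuine gap in your proof of \eqref{nabh}: you separate the gradient into a pure potential piece $N^{1/3}[V'(x_jN^{-2/3})-\wt V'(x_jN^{-2/3})]$ and an external-point piece, and claim each is small on its own. Neither claim is correct. The matching condition \eqref{eqn:matchSing} is a statement about the densities only \emph{near the edge}; away from the edge $\varrho_V$ and $\varrho_{\wt V}$ (and their supports) are in general completely different, so the equilibrium relation \eqref{equil} gives $\frac12(V'-\wt V')(u)=\int\frac{(\varrho-\wt\varrho)(y)}{u-y}\rd y=O(1)$ near $u=0$, hence the potential piece is generically of order $N^{1/3}$, far larger than $N^{C\xi}K^{1/3}/(K+1-j)$. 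Likewise, your bound $|\wt y_k-y_k|\le 2N^\xi\wh k^{-1/3}$ is false for $k$ far from $K$: rigidity pins $y_k$ to $\gamma_k$ and $\wt y_k$ to $\wt\gamma_k$, and these classical locations for the two different potentials differ by $O(j^{4/3}N^{-2/3})$ in microscopic units (remark after Theorem~\ref{thm:beta}), which is of order $N^{2/3}$ when $k\sim N$. Summing the resulting contribution over all $k\notin I$ gives something of order $N^{2/3}K^{-1/3}$, again far too large. The two oversized pieces cancel each other, and exploiting this cancellation is the whole point of the paper's argument: it groups $V$ with the far-away external points into the modified potential $V^*_\by$ of \eqref{V*defnn}, shows via \eqref{Vprime} that $[V^*_\by]'$ reduces to a Hilbert-transform of $\varrho$ restricted to a neighbourhood of the edge, and only then takes the difference, where $\varrho-\wt\varrho=O(y^{3/2})$ by \eqref{eqn:matchSing} makes the result small. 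Your argument never accesses this cancellation, so the proof of \eqref{nabh}, and hence of \eqref{h0l1}, does not go through.

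A secondary point: the absence of a singularity in $\partial_jh_0$ at $k=K+1$ is not a consequence of \eqref{nabhcond}; it comes from the matching of the first external point $y_{K+1}=\wt y_{K+1}$ (condition \eqref{bc} of Theorem~\ref{thm:cor}), which makes that term drop out of $h_0$ identically, combined with the gap condition in $\cR^\#$ which bounds $|x_j-y_k|^{-1}\le N^\xi K^{1/3}$ for $k\ge K+2$. Your treatment of the moment bound \eqref{Eh0} via level repulsion is in the right spirit (the paper derives it from \eqref{omk5n}), but the core of the lemma is the gradient estimate, and that part needs the $V^*_\by$ decomposition.
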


\begin{proof}  The bound \eqref{Eh0} follows from  \eqref{omk5n}, while
\eqref{nabh}  will be
proven in Appendix~\ref{app:V*}.  \end{proof}

 Since the time to equilibrium of the $\cL^\om$ dynamics is of order $K^{1/3}$
(see Lemma~\ref{omegatime}), by
choosing
$$
T: = C K^{1/3}  \log N
$$
with a large constant $C$,
we have
\be\label{ht}
 \big | \E^\om  Q(\bx(0))  h_0(\bx (T)) - \E^\om  Q(\bx)\;   \E^\om  h_0(\bx)
  \big | \le N^{-C}.
\ee
 In proving this relation, we use a cutoff argument. Although $h_0$ is
singular and it is not in $L^2(\om)$, we can write
$h_0 = h_< + h_>$, $h_<(x): = h_0(x)\mathds{1}(h_0(x)\le N^C)$.
By \eqref{Eh0} the probability $\P^\om( h_0 \ge N^C)$ and
hence  the contribution of $h_>$ to \eqref{ht} are negligible.
The function $h_<$ is in $L^2(\rd\om)$, so we can use the spectral gap
of order $CK^{1/3}$ (Lemma~\ref{omegatime}) to show that the
contribution of the $h_<$ part to \eqref{ht} is also negligible.
We can thus represent  the correlation function as
\be\label{repp1}
  \langle Q ; h_0  \rangle_\om  = \frac{1}{2} \int_0^{T} \rd\Si \int\om(\rd\bx)
\sum_{a,b=1}^K \pt_b Q(\bx)
    \E^\bx \partial_a  h_0 (\bx(\Si))   v^b_a(S, \bx(\cdot))
 + O(N^{-C}) .
\ee

\subsection{Set of good paths}

We have a good control on the solution to \eqref{vb} if the coordinates of the trajectory $\bx(\cdot )$
remain close to the classical locations $(\alpha_1, \dots, \alpha_K)$.
Setting a constant $C_3>C_2$  ($C_2$ is the constant in \eqref{omrig}),
for any $T$ we thus define the
set of ``good'' path as:
\be\label{K1}
\cG_T :
=  \Big\{  \sup_{0 \le s \le T} \;\max_{ j\in I }  j^{1/3} |x_j(s)-\alpha_j| \le N^{C_3\xi}   \Big\},
 \ee
where $\alpha_j$ is given by \eqref{aldefuj}.

\begin{lemma}\label{lm:GQ} Assume that the rigidity estimate \eqref{omrig}
holds for the measure $\om$.
For the cutoff time $T= CK^{1/3}\log N$,
there exists a positive constant $\theta$,  depending on $\xi$,
  such that
\be\label{K11}
\P^\om ( \cG^c_T ) \le   e^{ - N^{\theta }}.
\ee
\end{lemma}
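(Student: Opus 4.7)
The plan is to upgrade the equilibrium rigidity bound \eqref{omrig}, which controls the position of $x_j$ at a single fixed time via stationarity of $\om$ under the dynamics \eqref{SDE}, to a supremum bound over $s\in[0,T]$ by combining a fine time-discretization with a short-time path-continuity estimate.

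First I would introduce a time grid $0 = s_0 < s_1 < \cdots < s_L = T$ with $s_{k+1} - s_k = TN^{-M}$ and $L = N^M$, for a large constant $M$ to be chosen. Stationarity of $\om$ under \eqref{SDE} implies $\bx(s_k)\sim \om$ for each $k$, so \eqref{omrig} applied with $u = \tfrac14 N^{(C_3-C_2)\xi}$ together with a union bound over $(j,k) \in I \times \{0, \dots, L\}$ gives
$$
\P^\om\Bigl(\exists\,j,k:\; j^{1/3}|x_j(s_k) - \alpha_j| > \tfrac14 N^{C_3\xi}\Bigr) \le KL\cdot e^{-cN^{2(C_3-C_2)\xi}} \le e^{-N^{\theta_1}}
$$
for some $\theta_1 = \theta_1(\xi) > 0$, provided $C_3$ is chosen sufficiently larger than $C_2$.

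Second I would control, for each subinterval $[s_k, s_{k+1}]$ and each $j$, the oscillation $\sup_{s\in[s_k,s_{k+1}]}|x_j(s) - x_j(s_k)|$. Writing the SDE \eqref{SDE} as $x_j(s) - x_j(s_k) = [B_j(s)-B_j(s_k)] + \int_{s_k}^s b_j(\bx(u))\,du$, the Brownian oscillation is controlled by the reflection principle and a union bound: with probability at least $1 - e^{-N^{\theta_2}}$, all Brownian increments stay below $\tfrac14 j^{-1/3} N^{C_3\xi}$, since $s_{k+1}-s_k = TN^{-M}$ makes the Gaussian tail superexponentially small for $M$ large. To handle the singular drift $b_j$, which includes the sums $\tfrac12 \sum_{l\ne j}(x_j - x_l)^{-1}$ and analogous external sums $\sum_{k\notin I}(x_j-y_k)^{-1}$, I would introduce the stopping time $\tau$ at which the first pairwise gap $|x_l - x_j|$ or internal-external gap $|x_j - y_{K+1}|$ drops below $N^{-\xi}K^{-1/3}$. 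Applying the level repulsion estimate \eqref{omk5nlow} at each grid point --- together with its analogues for all interior gaps, which follow by the same proof as outlined in the remark after Theorem~\ref{lr2} --- and combining with the Brownian oscillation bound above shows $\tau > T$ except on an event of probability at most $e^{-N^{\theta_3}}$. Up to $\tau$ every summand in $b_j(\bx(u))$ is bounded by $N^{C}$, so $\int_{s_k}^{s_{k+1}}|b_j(\bx(u))|\,du \le N^{C-M}$, which is negligible compared to $N^{C_3\xi}j^{-1/3}$ for $M$ chosen large. Combining the three estimates, $\sup_{s\in[0,T]}j^{1/3}|x_j(s)-\alpha_j| \le N^{C_3\xi}$ for all $j$ except on an event of probability at most $e^{-N^{\theta}}$ with $\theta = \tfrac12\min(\theta_1, \theta_2, \theta_3)$, which is \eqref{K11}.

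The principal obstacle is the third point: the Coulomb-type repulsion in the drift is singular, and level repulsion in Lemma~\ref{lm:inter} is only a static (equilibrium) estimate. The bootstrap that resolves this is that the static gap bound at the dense time grid, combined with the already-proved Brownian oscillation bound, deterministically prevents the particles from approaching one another on any subinterval faster than the Brownian scale $N^{-M/2+O(1)}$, which is far below the level-repulsion threshold $N^{-\xi}K^{-1/3}$; this propagates the static gap bound at grid points to the supremum in time and closes the argument.
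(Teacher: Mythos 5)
Your approach (fine time grid, union bound over grid points using static rigidity, plus pathwise oscillation control on each subinterval) is genuinely different from the paper's, and it has a gap that I do not see how to close: the treatment of the singular drift via level repulsion. The equilibrium level repulsion estimates \eqref{omk5n}--\eqref{omk5nlow} are only \emph{polynomially} small in the gap size --- $\P^{\om}[\,y_{K+1}-x_K\le sK^{-1/3}\,]\le C(N^{C\xi}s)^{\beta+1}+e^{-N^c}$ --- and this is intrinsic, since the density of a gap of size $s$ times the typical spacing really behaves like $s^{\beta}$. With your stated stopping threshold $N^{-\xi}K^{-1/3}$ the single-grid-point bad event has probability $\asymp(N^{C\xi-\xi})^{\beta+1}$, which need not even be small since the constant $C$ in \eqref{omk5nlow} exceeds $1$. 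More fundamentally, no choice of threshold $s_0$ works: to make the union bound over the $N^{M}$ grid points stretched-exponentially small you would need $(N^{C\xi}s_0)^{\beta+1}\le e^{-N^{c'}}$, forcing $s_0$ to be stretched-exponentially small, but then the drift bound up to the stopping time is $\gtrsim s_0^{-1}K^{1/3}=e^{N^{c'}/(\beta+1)}\cdot N^{O(1)}$ and its integral over a subinterval of length $TN^{-M}$ is stretched-exponentially large for every polynomial $M$. Conversely, any polynomially small $s_0$ (which is what you need for the drift integral to be negligible) yields only a polynomially small probability for $\tau\le T$, which cannot produce the bound $e^{-N^{\theta}}$ claimed in \eqref{K11}. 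The same obstruction defeats the variant where one bounds $\E^{\om}\int|b_j|$ by stationarity and applies Markov.

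The paper avoids the drift entirely. It invokes the Kipnis--Varadhan maximal inequality for reversible Markov processes,
\begin{equation*}
\P^\om\Bigl(\sup_{0\le s\le T}|f(\bx(s))|\ge\ell\Bigr)\le\frac{1}{\ell}\sqrt{\|f\|_2^2+TD^\om(f)},
\end{equation*}
applied to $f(\bx)=\exp\bigl(N^{-C\xi}(x_j-\alpha_j)j^{1/3}\bigr)$ (and its mirror image). The Gaussian tail in the static rigidity bound \eqref{omrig} makes $\|f\|_2^2$ and $D^\om(f)$ of size $O(K^2)$, and taking $\ell=e^{N^c}$ converts this into the sup-in-time bound with probability $CK^2e^{-N^c}$ --- no discretization, no pathwise drift estimate, and no level repulsion are needed. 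If you want to salvage a discretization argument, you would have to replace the pathwise drift control by a genuinely different mechanism (e.g.\ a martingale/Doob argument for a well-chosen Lyapunov function), which essentially reproduces the Kipnis--Varadhan route.
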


\begin{proof}
 We first recall  the following result of Kipnis-Varadhan \cite{KipVar1986}:

\begin{lemma}
For any process with a reversible measure $\om$ and Dirichlet form $D^\om(f) =  \frac{1}{2}
 \int |\nabla f|^2\rd\om$, we have
\be\label{913}
\P^\om ( \sup_{0 \le s \le T} |f(\bx(s))| \ge \ell) \le \frac 1 \ell \sqrt { \| f \|_2^2 + T D^\om(f) }.
\ee
\end{lemma}

To apply this lemma, let $f(\bx) = g(x_j)$ with
$$
g(x) =   e^{ N^{- C \xi } (x_j - \alpha_j) j^{1/3}}.
$$
From the  rigidity estimate \eqref{omrig}
$$
 \| f \|_2^2 + T D^\om(f)  \le  \Big[
1+ T\big(N^{-C\xi}j^{1/3}\big)^2\Big] \| f \|_2^2  \le CK(\log N) \int_\R    e^{   |u| } e^{ - c u^2} \rd u
 \le C K^{2}  .
$$
From \eqref{913}, we have for any $c > 0$
$$
\P ( \sup_{0 \le s \le T}  N^{- C \xi } (x_j(s)- \alpha_j) j^{1/3}   \ge N^c    ) \le
\P ( \sup_{0 \le s \le T}  |g(x_j (s))|   \ge e^{N^c}     ) \le C K^2  e^{- N^c}.
$$
Similarly, we can prove
$$
\P ( \sup_{0 \le s \le T}  N^{- C \xi } (\alpha_j - x_j) j^{1/3}   \ge N^c    ) \le  C  e^{- N^c}.
$$
This proves Lemma~\ref{lm:GQ}.
\end{proof}

\subsection{Restriction to the set $\cG_T$}

 Now we show that
the expectation \eqref{repp1} can be restricted to the good set $\cG:=\cG_T$
with a small error.  With a slight abuse of notations
we use $\cG$ also to denote the characteristic function of the set $\cG$.
For a fixed $\Si$  and for a fixed $b\in I$ we can estimate the contribution of
the $\cG^c$ by
$$
\frac{1}{2}\int  \big |  \pt_b Q(\bx)\big | \sum_{a=1}^K
\E^\bx  \Big[  \cG^c  |\pt_a h_0(\bx(S))|  v_a^b (\Si, \bx(\cdot) ) \Big] \omega (\rd \bx).
$$
Since $\cA\ge 0$ as a $K\times K$ matrix,   the  equation \eqref{vb}
 is contraction in $L^2$.
 Clearly $\cA$ is a contraction in $L^1$ as well, hence it is a contraction in any $L^q$,
$1\le q\le 2$, by interpolation.
By the H\"older inequality and the  $L^q$-contraction for some $1<q<2$,
we have $\sum_a | v_a^b (\Si, \bx(\cdot) )|^q \le \sum_a | v_a^b (0, \bx(\cdot) )|^q=1$, so we get
\begin{align}\non
   \E^\om    \cG^c  \Big| \sum_{a=1}^K |\pt_a h_0(\bx(S))|  v_a^b (\Si, \bx(\cdot) )\Big|
 & \le \big[   \E^\om    \cG^c\big]^{q/(q-1)}
   \Big[\E^\om   \Big| \sum_{a=1}^K |\pt_a h_0(\bx(S))|  v_a^b (\Si, \bx(\cdot) )\Big|^q \Big]^{1/q} \\ \non
  &    \le   \big[   \P^\om    \cG^c\big]^{q/(q-1)}
   \Big[\E^\om  \sum_{a=1}^K  |   \pt_a  h_0  ( \bx( \Si) )|^q \Big]^{1/q} \\ \non
& \le C e^{- c N^{\theta}} K\cdot \max_a \Big[ \E^\om |\pt_a h_0(\bx)|^q\Big]^{1/q} \le  e^{- c N^{\theta_4}}
\end{align}
with some $\theta_4>0$.  Here  we used \eqref{K11} for the first factor.  In the second
factor, after the invariance of the dynamics, we used  the explicit form of $h_0$ \eqref{h0def} and
 the level repulsion bound \eqref{omresolv}:
$$
  \E^\om |\pt_a h_0|^q \le CN^{q/3} + C\max_a \E^\om \Big[ \sum_{k\not\in I}\frac{1}{|x_a-y_k|^q}\Big]
  \le CN.
$$
Therefore,  from \eqref{repp1} we conclude that
\be\label{finrep}
  \langle Q ; h_0  \rangle_\om  =  \frac{1}{2}\int_0^{T} \rd\Si \int\om(\rd\bx)
\sum_{a,b=1}^K \pt_b Q(\bx)
    \E^\bx \Big[ \cG \;  \partial_a  h_0 (\bx(\Si))  \; v^b_a(S, \bx(\cdot))\Big]
 + O(N^{-C}) .
\ee

In the next step we will reduce the upper limit of the time integration from
$T\sim K^{1/3}$ to $\wt T\sim K^{1/6}$. This reduction uses effective $L^p\to L^q$
bounds on the solution to \eqref{vb} that we will obtain with
energy method and Nash-type argument.

\subsection{Energy method and the evolution equation on the good set $\cG$} \label{sec:energ}

In order to study the evolution equation \eqref{vb} with $\bx(\cdot)$ in the good set $\cG$,
we consider the following general
 evolution equation
\be\label{ve}
\partial_s  \bu (s) =  - \cA(s)  \bu (s), \qquad \bu(s)\in \R^I=\R^K,
\qquad \bu(0)=\bu_0.
\ee
Here $\cA$ and $\cB$ are  time dependent matrices of the form
\be\label{ABW}
\cA(s)=\cB(s)+ \cW(s), \qquad \mbox{with}\quad
   [\cB(s)\bu]_i = \sum_j B_{ij}(s) (u_i-u_j), \qquad [\cW(s)\bu]_i = W_i(s)u_i.
\ee
For $x_i, x_j$ satisfying  the rigidity bound defined in the good path \eqref{K1}
 we have
\be\label{Be}
 \wt  \cB(\bx)_{ij}: =
\frac{1}{(x_i-x_j)^2} \ge \frac { N^{-C \xi}  } { (i^{2/3}-j^{2/3})^2}
\ee
for some constant $C$. Similarly, for $\by \in \cR$ and $x_i$ satisfying
  the rigidity bound defined in the good path \eqref{K1}
we have
\be\label{We}
 \wt \cW_i(\bx)\ge
 \sum_{ \ell  > K } \frac { 1 } { (x_i-y_\ell )^2}  \ge  \frac{ K^{1/3} N^{-C\xi} }{ d_i  } , \qquad
  d_j : = (K+1)^{2/3} - j^{2/3}, \qquad j\in I,
\ee
where we have used the definition of $\wt W$ in \eqref{def:Wtilde} and $\Theta'' \ge   0$.

Denote  the  $L^p$-norm  of a vector $\bu = \{ u_j \; : \; j\in I\}$ by
$$
\| \bu \|_p  =  \Big(\sum_{j\in I}  |u_j|^p \Big)^{1/p}.
$$
We have the following decay estimate.

\begin{proposition}\label{prop:disp}
Let $\cA$ be given in \eqref{ABW} and
consider the evolution equation  \eqref{ve}.
Fix $\Si>0$.  Suppose that for some
constant $b$ the coefficients of $\cA$ satisfy
\be\label{B}
  B_{jk}(s) \ge   \frac b   { (j^{2/3}-k^{2/3})^2}, \quad 0 \le s \le \Si, \quad j\ne k \in I,
\ee
and
\be\label{W1}
 W_j (s)  \ge  \frac{b K^{1/3}}{ d_j } ,   \qquad d_j := (K+1)^{2/3}-j^{2/3}, \quad j\in I,
\quad 0 \le s \le \Si .
\ee
Then  for any $1\le p\le q\le \infty$ and for any small $\eta>0$  we have the decay estimate
\be\label{decay}
\| \bu(s) \|_q  \le C(p,q,\eta)\Big[  ( K^{-\frac{2}{3}\eta}   sb)^{-(  \frac{3}{p}  - \frac 3 q)}\Big]^{1-6\eta}
    \| \bu(0) \|_{ p }, \qquad 0<s\le\Si.
\ee
\end{proposition}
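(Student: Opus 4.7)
\medskip

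\textbf{Proof plan for Proposition \ref{prop:disp}.} First observe that $\cA(s)$ is symmetric and positive semi-definite, with associated quadratic form
\[
E_s(\bu) := \langle \bu, \cA(s)\bu\rangle
 = \sum_{j<k} B_{jk}(s)(u_j-u_k)^2 + \sum_j W_j(s) u_j^2.
\]
Since $\cA(s)\bu$ is a generalized Kolmogorov-type generator (row sums of the off-diagonal part correspond to signed probabilities, and $\cW$ is a killing term), the flow \eqref{ve} is a contraction on every $L^p(I)$, $1\le p\le\infty$, so the statement is trivial when $p=q$ and we may also reduce to showing the extremal case $p=1$, $q=\infty$ after interpolation. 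Because $\cA(s)$ is self-adjoint, by duality it is equivalent and suffices to prove the $L^1\to L^2$ decay
\be\label{prop:l1l2}
\|\bu(s)\|_2 \le C_\eta (K^{-\tfrac{2}{3}\eta} sb)^{-\tfrac{3}{2}(1-6\eta)} \|\bu(0)\|_1,
\ee
then combine \eqref{prop:l1l2} with its dual $L^2\to L^\infty$ via the semigroup property applied at time $s/2$, and interpolate.

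The core step is a Nash-type inequality derived from a Sobolev-type bound tailored to the nonlocal form $E_s$. Changing variables $\xi_j := j^{2/3}$ turns the kernel $(j^{2/3}-k^{2/3})^{-2}$ into the fractional Laplacian kernel $(\xi_j-\xi_k)^{-2}$ on the interval $(0,(K+1)^{2/3})$, so $E_s$ is, up to a factor $b$, comparable to the $\dot H^{1/2}$-seminorm (plus the boundary-killing term given by $\cW$). In one dimension this corresponds to the critical Sobolev embedding, hence Proposition \ref{prop:sob} (the ``new Sobolev inequality'' advertised in Section \ref{sec:AnLocGibbs}) must take the sub-critical form
\[
\|\bu\|_2^{2+\theta_\eta} \le C_\eta\, K^{\tfrac{2}{3}\eta \theta_\eta}\, b^{-1}\, E_s(\bu)\, \|\bu\|_1^{\theta_\eta},
\qquad \theta_\eta := \tfrac{2}{3(1-6\eta)^{-1}-2},
\]
where the $K^{\tfrac{2}{3}\eta}$ loss is the price paid for avoiding the critical exponent, and the factor $b^{-1}$ reflects the lower bound \eqref{B}. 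The diagonal term \eqref{W1}, which is $\gtrsim bK^{1/3}/d_j$, is essential here: the pure hopping form $\cB$ cannot control $\bu$ near $j=K$ (where $d_j$ is small), and $\cW$ supplies the missing confinement.

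Given this inequality, the classical Nash iteration applies: from
\[
\tfrac{\rd}{\rd s}\|\bu(s)\|_2^2 = -2E_s(\bu(s))
 \le -2b\, C_\eta^{-1} K^{-\tfrac{2}{3}\eta \theta_\eta}\,
 \frac{\|\bu(s)\|_2^{2+\theta_\eta}}{\|\bu(s)\|_1^{\theta_\eta}},
\]
together with the $L^1$-contraction $\|\bu(s)\|_1\le\|\bu(0)\|_1$, one integrates the resulting ODE for $\|\bu(s)\|_2^2$ and obtains \eqref{prop:l1l2}. Interpolating \eqref{prop:l1l2} and its dual yields \eqref{decay} for all $1\le p\le q\le\infty$.

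The hard part is establishing Proposition \ref{prop:sob}, that is, the Sobolev/Nash inequality for the nonlocal form $E_s$ with the explicit dependence on $K$, $b$, and $\eta$ displayed above. The natural difficulties are: (i) the critical nature of the $H^{1/2}$-embedding in one dimension, which forces a sub-critical regularization and hence the $K^{2\eta/3}$ correction and the $(1-6\eta)$-exponent in \eqref{decay}; (ii) the non-uniform Jacobian arising from $\xi_j=j^{2/3}$ (the density of the $\xi_j$'s degenerates at $j=1$), which has to be handled simultaneously with the killing term $W_j\gtrsim bK^{1/3}/d_j$ near $j=K$. All subsequent steps (contraction, duality, interpolation, and the Nash ODE integration) are then routine; the non-trivial technical work is entirely concentrated in the functional inequality.
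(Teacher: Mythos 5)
Your plan is essentially the paper's proof: both rest on Nash's method, namely the $L^2$ energy identity combined with a subcritical Sobolev inequality for the nonlocal form (the paper's Proposition~\ref{prop:sob}, with exponent $p=3/(1+\eta)$, where the $\cW$ term absorbs exactly the cross terms created by extending $\bu$ by zero beyond $K$), followed by H\"older against the non-increasing $L^1$ norm, integration of the resulting ODE, duality to pass from $L^1\to L^2$ to $L^1\to L^\infty$, and interpolation. The only blemish is your formula for $\theta_\eta$, which should be $\theta_\eta=\tfrac{2}{3}(1-6\eta)^{-1}$ so that $1/\theta_\eta=\tfrac{3}{2}(1-6\eta)$; as written it gives $\theta_0=2$ instead of $2/3$.
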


\begin{proof}
We consider only the case $b = 1$, the
 general case follows from scaling.  We  follow  the idea of Nash and
start from the $L^2$-identity
$$
\partial_s  \| \bu(s) \|_2^2 =   - 2 \fa(s)[\bu(s), \bu(s)],
$$
where
$\fa(s)[ \bu, \bv] : = \sum_i u_i [\cA(s) \bv]_i$ is the quadratic form of $\cA(s)$.
For each $s$ we can extend $\bu(s):I\to \bR^{K}$ to a function
 $\wt \bu(s): $  on $\ZZ_+$ by defining  $\wt u_j(s) = u_j(s)$ for $j\le K$
and $\wt u_j(s) = 0$ for $j > K$.
 Dropping the time argument, we have, by  the estimates \eqref{B} and  \eqref{W1} with $b=1$,
\be\label{1022}
 2 \fa [\bu, \bu] \ge c
\sum_{i, j \in \ZZ_+} \frac { (\wt u_i- \wt u_j)^2 } { (i^{2/3}-j^{2/3})^{2}}
\ge  K^{-\frac{2}{3}\eta}
\sum_{i, j \in \ZZ_+} \frac { (\wt u_i- \wt u_j)^2 } { |i^{2/3}-j^{2/3}|^{2
-\eta }}
 \ge c_\eta K^{-\frac{2}{3}\eta} \|\wt\bu\|_p^2=c_\eta K^{-\frac{2}{3}\eta} \|\bu\|_p^2 , \quad p:=\frac{3}{1+\eta},
\ee
with some positive constant $c_\eta$.  In the first inequality, to 
estimate the $W$ term, we have used that
$$
\sum_{i > K } \frac { 1 } { (i^{2/3}-j^{2/3})^2}  \le   \frac{C K^{1/3}}{ d_j }  \le C W_j,  \quad   j \le K ,
$$
to estimate the summation   in \eqref{1022} when one of the indices $i,j$ is
bigger than $K$.
In the second inequality we used that
$$ 
   |i^{2/3} - j^{2/3}|^\eta \le K^{\frac{2}{3}\eta}
$$
for any $i, j\le K$ which is the support of $\wt u$. 
In the third inequality 
we used the discrete version of the following  Sobolev type inequality
that  will be proved in Appendix~\ref{sec:appsob}.

 \begin{proposition}\label{prop:sob}  We will formulate our result both in the continuous and
in the discrete setting.
 
 \begin{enumerate}
\item Continuous version. For any small $\eta>0$ there exists $c_\eta>0$ such that
 for any real function $f$ defined on $\bR_+$, we have
\be\label{conti1}
\int_0^\infty \int_0^\infty\frac{(f(x)-f(y))^2}{|x^{2/3}-y^{2/3}|^{2-\eta}} \rd x \rd y
 \ge c_\eta\Big(\int_0^\infty |f(x)|^p\rd x\Big)^{2/p},
 \qquad p := \frac{3}{1+\eta}.
\ee

\item Discrete version.
For any small $\eta>0$ there exists $c_\eta>0$ such that
 for any sequence $\bu = (u_1, u_2, \dots )$ we have
\be\label{discr1}
  \sum_{i\ne j\in \bZ_+} \frac{(u_i-u_j)^2}{|i^{2/3}-j^{2/3}|^{2- \eta} }   \ge c_\eta
  \Big(\sum_{i\in \bZ_+} |u_i|^p \Big)^{2/p}
  =  c_\eta\|\bu\|_p^2.
\ee
\end{enumerate}
\end{proposition}

We now return to the proof of Proposition \ref{prop:disp}.   
Combining \eqref{nash}, \eqref{1022} with the simple H\"older estimate
$$
   \| \bu\|_p^2 \ge \|\bu\|_2^{\frac{8-4\eta}{3}} \|\bu\|_1^{-\frac{2-4\eta}{3}},
$$
we have
$$
   \partial_s \|\bu\|_2^2 \le -  c_\eta  K^{-\frac{2}{3}\eta} 
  \|\bu\|_2^{\frac{8-4\eta}{3}} \|\bu\|_1^{-\frac{2-4\eta}{3}},
$$
i.e.,
$$
   \partial_s \|\bu\|_2 \le
- c_\eta  K^{-\frac{2}{3}\eta}  \|\bu\|_2^{\frac{5-4\eta}{3}} \|\bu\|_1^{-\frac{2-4\eta}{3}},\ {\rm so}\
   -\frac{1}{\| \bu(t)\|_2^{\frac{2-4\eta}{3}}}  \le - c_\eta K^{-\frac{2}{3}\eta} 
 t \|\bu\|_1^{-\frac{2-4\eta}{3}}
$$
since $\|\bu\|_1$ is decreasing. Thus
$$
\| \bu(t)\|_2 \le \Big(\frac{1}{(c_\eta K^{-\frac{2}{3}\eta} t)^{3/2}}\Big)^{1-6\eta}  \|\bu_0\|_1,
$$
and by duality
$$
  \| \bu(t)\|_\infty \le \Big(\frac{1}{(c_\eta K^{-\frac{2}{3}\eta}  t)^3}\Big)^{1-6\eta}   \|\bu_0\|_1.
$$
Thus, after interpolation we have proved \eqref{decay}.
\end{proof}

Now we apply Proposition \ref{prop:disp} to our case.

\begin{corollary}\label{cor:disp} Fix $\Si\le T$
and set $\cA(s) = \wt \cA(\bx(s))$ as defined in \eqref{A=B+W}.
 On the set $\cG$,
the coefficients of $\cA(s)=\cB(s)+\cW(s)$
 satisfy \eqref{B} and \eqref{W1}
with the constant $b=cN^{-2\xi}$. Consequently, the solution to
$$
   \pt_t \bu(t)  = -\cA(t)\bu(t)
$$
satisfies
\be\label{decayb}
\| \bu(s) \|_q  \le C(p,q,\eta) \Big(\frac{ N^{2\xi   +\frac{2}{3}\eta  }}{s}\Big)^{(  \frac{3}{p}
  - \frac 3 q)(1-6\eta) }
    \| \bu(0) \|_{ p }, \qquad 0<s\le\Si, \quad 1\le p\le q\le \infty.
\ee
\end{corollary}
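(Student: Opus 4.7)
My plan is to deduce Corollary \ref{cor:disp} as a direct consequence of Proposition \ref{prop:disp}. The argument splits naturally into two steps: first, verifying that the hypotheses \eqref{B} and \eqref{W1} hold for $\cA(s) = \wt\cA(\bx(s))$ on the good set $\cG$ with constant $b \sim N^{-2\xi}$; second, substituting this value of $b$ into \eqref{decay}.

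For the verification of \eqref{B}, I would use the defining rigidity of $\cG$, namely $\sup_{s\in[0,T]}\max_j j^{1/3}|x_j(s)-\alpha_j| \le N^{C_3\xi}$, together with the equidistance estimates $\alpha_j \sim j^{2/3}$ and $|\alpha_i-\alpha_j|\gtrsim |i^{2/3}-j^{2/3}|$ implied by \eqref{351} and \eqref{orient1}. A short triangle inequality argument then yields $|x_i(s)-x_j(s)|\le N^{C\xi}|i^{2/3}-j^{2/3}|$ uniformly for $i,j\in I$ and $s\in[0,T]$, so that $B_{ij}(s)=\frac{1}{2}(x_i-x_j)^{-2}\ge N^{-C\xi}/(i^{2/3}-j^{2/3})^2$. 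This is essentially the computation already recorded in \eqref{Be}.

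For \eqref{W1}, the defining formula \eqref{def:Wtilde} for $\wt\cW_i$ contains three pieces. The $\Theta''$ term is nonnegative and can simply be dropped; the $V''$ and $\wt V''$ terms are $O(N^{-1/3})$, which is negligible compared with the target $K^{1/3}/d_j$; and the pairwise sum over external points is the dominant contribution. For the latter, $\by, \wt\by \in \cR$ together with the rigidity on $\cG$ give $|y_k - x_j|, |\wt y_k - x_j| \le N^{C\xi}|k^{2/3}-j^{2/3}|$ for all $k>K$, and the required lower bound
\[
\sum_{k>K}\frac{1}{(k^{2/3}-j^{2/3})^2} \ge \frac{cK^{1/3}}{d_j}
\]
reduces, after the substitution $k=K+m$, to a standard integral comparison. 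This yields \eqref{W1} with $b = cN^{-2\xi}$, matching the bound recorded in \eqref{We}.

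Having verified both hypotheses uniformly on $\cG$ with the same constant $b = cN^{-2\xi}$, the conclusion \eqref{decayb} is immediate from Proposition \ref{prop:disp}: the factor $(K^{-\frac{2}{3}\eta} s b)^{-1}$ appearing in \eqref{decay} becomes $c^{-1}K^{\frac{2}{3}\eta}N^{2\xi}/s$, matching the exponent in \eqref{decayb}. I do not expect any real obstacle in this corollary, since the two key facts \eqref{Be} and \eqref{We} are already stated in the paper; the only bookkeeping is to verify that the same $\xi$-exponent governs both, and to note that replacing $b$ in the scaling relation leading to \eqref{decay} absorbs all the $N^{2\xi}$ factors consistently into the statement of \eqref{decayb}.
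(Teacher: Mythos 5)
Your proposal is correct and follows essentially the same route as the paper: the paper's own proof simply invokes the bounds \eqref{Be} and \eqref{We} (which are exactly the rigidity-based estimates you re-derive) to verify \eqref{B} and \eqref{W1} with $b=N^{-C\xi}$, and then applies Proposition \ref{prop:disp}. Your additional detail on dropping the nonnegative $\Theta''$ term and absorbing the $O(N^{-1/3})$ contribution of $V''$, $\wt V''$ into the dominant external-point sum matches the computation already carried out around \eqref{Hyconv} and \eqref{We}.
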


\begin{proof}
From the estimates on $B$ and $W$ proved in (\ref{Be}, \ref{We}),  we have  proved the  estimates on the kernel elements in
Lemma~\ref{prop:disp}   with $b= N^{-C \xi}$.
Thus \eqref{decayb} directly follows from \eqref{decay}.
\end{proof}

\subsection{Second time cutoff}

Now we specialize the observable $Q(\bx)$  to be of  the form
\eqref{O}.
Thus $Q$   depends only on  variables with indices in $\Lambda\subset \llbracket 1, K^\zeta\rrbracket$
and $|\Lambda|=m$ with  $m$ a finite fixed number. Its derivative is bounded by
\be\label{Obound}
|\partial_j Q(\bx)|=
\left \| j^{1/3}(\partial_j O)\Bigg( \Big( i^{1/3}(x_i -\al_i)\Big)_{i\in \Lambda}\Bigg) \right \|_\infty \le K^{\zeta/3} , \qquad j\in \Lambda.
\ee
With the help of Corollary \ref{cor:disp},  we can reduce the upper limit of the time integration
in \eqref{finrep}
from $T\sim K^{1/3}$ to $\wt T\sim K^{1/6}$.
More precisely, using the $L^1\to L^\infty$  bound of \eqref{decayb}  with the choice $\eta=\xi$,
 the integration from $\wt T$ to $T$ in \eqref{finrep} is bounded by
\begin{align}\label{TTcut}
\frac{1}{2} \int_{\wt T}^{T} \rd\Si \int\om(\rd\bx)
\sum_{a,b=1}^K \pt_b Q(\bx)
    & \E^\bx \Big[ \cG \;  \partial_a  h_0 (\bx(\Si))| \; v^b_a(S, \bx(\cdot))\Big] \nonumber \\
 & \le CN^\xi|\Lambda| K^{\zeta/3} \max_{b\le K^{\zeta}}
\int_{\wt T}^T  \sum_{a=1}^K
\E^\om   \Big[ \cG \;  |\partial_a  h_0 (\bx(\Si))|  \; v^b_a(S, \bx(\cdot))\Big] \rd\Si \nonumber \\
& \le CN^{C\xi} K^{(1+\zeta)/3} \max_{b}  \int_{\wt T}^T \| \bv^b(S, \bx(\cdot))\|_\infty \rd S \nonumber\\
& \le CN^{C\xi} K^{(1+\zeta)/3}   \int_{\wt T}^T \Si^{-3(1-6\eta)} \rd \Si \nonumber \\
& \le CN^{C\xi}  K^{(1+\zeta)/3} \wt T^{-2},
\end{align}
where we also used \eqref{Obound} and \eqref{nabh} together with the fact that,  on the set $\cG$,
 $\bx(S)$ satisfies \eqref{nabhcond}.

Choosing
\be\label{deftt}
\wt T = CN^{C_5\xi} K^{(1+   \zeta)/6}
\ee
with a sufficiently large constant $C_5$, we conclude from \eqref{finrep} and \eqref{TTcut} that
\begin{align}\label{finrepp}
\left |  \langle Q; h_0  \rangle_{\om }  \right |  \le CN^\xi K^\zeta  \max_{b\le K^\zeta}
\int_0^{\wt T}  \sum_{a=1}^K
\E^\om   \Big[ \cG \;  |\partial_a  h_0 (\bx(\Si))|  \; v^b_a(S, \bx(\cdot))\Big]\rd S
 +O(N^{-C\xi})
\end{align}
with the special choice of $Q$ from \eqref{O}.

\subsection{A space-time decay estimate and completion of the proof
of Theorem~\ref{thm:cor}}

Using \eqref{nabh},
 the first term in \eqref{finrepp} is estimated by
\be\label{10}
 CN^\xi K^\zeta \sum_a
\int_0^{\wt T}
\E^\om  \cG |\partial_a h_0(\bx(\Si))|  v_{a}^b(\Si, \bx(\cdot))\rd\Si
  \le
\sum_a \frac{N^{C\xi}K^{1/3}}{K+1-a} \int_0^{\wt T}
\E^\om  \big[ \cG  v_{a}^b(\Si)\big] \rd\Si.
\ee
The last term can be estimated using a new space-time decay estimate
  for the equation  \eqref{ve}. Roughly speaking,
the energy method asserts that the total dissipation is bounded by the initial $L^2$ norm.
 We will apply this idea to the vector $\{ v_i^{\al/2}\}$,  see \eqref{energy}, and combine
it with a new Sobolev inequality to obtain a 
a control on the time integral of
a weighted $L^\infty$ norm in terms of the  $L^\alpha$ norm (the weight comes from
the fact that the dissipative term is inhomogenous in space). 
More precisely, we have the following estimate.

\begin{lemma}\label{cor:disp2}
Consider $\cA(s) = \wt \cA(\bx(s))$ as defined in \eqref{A=B+W}.
Suppose that the coefficients $\cA(s)=\cB(s)+\cW(s)$
 satisfy \eqref{B} and \eqref{W1}
with a constant $b$. Then for any exponent $\alpha>1$
there is a constant $C_\al$ such that
the solution to
$$
   \pt_t \bv(t)  = -\cA(t)\bv(t)
$$
satisfies, for any integer $1\le M\le K$ and for any positive time $t>0$,
\be\label{linfb}
 \int_0^t |v_M(s)|^\alpha\rd s \le
   C_\alpha M^{-2/3} C^{\sqrt{\log M}}b^{-1}(t+1) \| \bv(0)\|_\alpha^\alpha.
\ee
\end{lemma}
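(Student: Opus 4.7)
The plan is to combine an $L^\alpha$-energy identity with a pointwise Sobolev inequality for the quadratic form of $\cA$, and then use the time-integrated dissipation to bound the $s$-integral at the single index $M$.

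First I would prove an $L^\alpha$-energy identity. Differentiating and using the elementary convexity inequality $(a-b)\bigl(|a|^{\alpha-2}a-|b|^{\alpha-2}b\bigr)\ge c_\alpha\bigl(|a|^{\alpha/2-1}a-|b|^{\alpha/2-1}b\bigr)^2$ valid for every $\alpha>1$, together with the fact that $\cW$ acts diagonally, I would obtain, with $w_i(s):=\operatorname{sgn}(v_i(s))|v_i(s)|^{\alpha/2}$ so that $w_i^2=|v_i|^\alpha$,
\begin{equation*}
\tfrac{\mathrm d}{\mathrm d s}\|\bv(s)\|_\alpha^\alpha
\;\le\;-c_\alpha\Bigl[\tfrac12\sum_{i,j}B_{ij}(s)(w_i-w_j)^2+\sum_i W_i(s)\,w_i^2\Bigr]
\;=\;-c_\alpha\,\fa_{\cA(s)}[\bw(s),\bw(s)],
\end{equation*}
and, after integrating on $[0,t]$ and using $\|\bv(t)\|_\alpha\ge0$, the dissipation bound $\int_0^t\fa_{\cA(s)}[\bw(s),\bw(s)]\,\mathrm d s\le c_\alpha^{-1}\|\bv(0)\|_\alpha^\alpha$.

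The next step, and the real work, is a pointwise Sobolev embedding at the specific index $M$. Using the change of variable $\xi_i:=i^{2/3}$, the kernel $B_{ij}\gtrsim b/(i^{2/3}-j^{2/3})^2$ turns $\fa_{\cB}[\bw,\bw]$ into a discrete weighted Gagliardo $H^{1/2}$-seminorm of $f(\xi_i):=w_i$ with weight $\rho(\xi)\sim\xi^{1/2}\sim M^{1/3}$ at the scale $\xi_M=M^{2/3}$, while $W_i\gtrsim bK^{1/3}/d_i$ provides the needed zero-order control. In one dimension $H^{1/2}$ is exactly the borderline embedding for $L^\infty$, so a Trudinger/Brezis--Gallouet-type refinement yields only a logarithmic loss. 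The plan is to bootstrap from the $L^p$-estimate of Proposition \ref{prop:sob} (with $p=3/(1+\eta)$) through a dyadic chain $p_0=p<p_1<\cdots<p_N=\infty$, gaining a bounded constant at each step and absorbing the number $N\sim \sqrt{\log M}$ of steps into the $C^{\sqrt{\log M}}$ factor. Rescaling by the weight $M^{1/3}$ and combining the kinetic and the $\cW$-part of $\fa$, this produces the Sobolev inequality
\begin{equation*}
|w_M|^2 \;\le\; C\,M^{-2/3}\,C^{\sqrt{\log M}}\Bigl(b^{-1}\fa_{\cA}[\bw,\bw]\;+\;\|\bw\|_2^2\Bigr).
\end{equation*}

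Finally I would integrate this pointwise Sobolev bound over $s\in[0,t]$. The first piece is handled by the dissipation estimate of the first paragraph and produces $CM^{-2/3}C^{\sqrt{\log M}}b^{-1}\|\bv(0)\|_\alpha^\alpha$. For the second piece I use the monotonicity $\|\bw(s)\|_2^2=\|\bv(s)\|_\alpha^\alpha\le\|\bv(0)\|_\alpha^\alpha$, which after integration in time contributes $CM^{-2/3}C^{\sqrt{\log M}}\,t\,\|\bv(0)\|_\alpha^\alpha$. Adding the two terms and using $b\le1$ to absorb $1$ into $b^{-1}(t+1)$ gives exactly the claim
\begin{equation*}
\int_0^t|v_M(s)|^\alpha\,\mathrm d s=\int_0^t w_M(s)^2\,\mathrm d s\;\le\;C_\alpha\,M^{-2/3}\,C^{\sqrt{\log M}}\,b^{-1}(t+1)\,\|\bv(0)\|_\alpha^\alpha.
\end{equation*}

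The main obstacle is the pointwise Sobolev inequality of the second paragraph: because $H^{1/2}$ in one dimension just fails to embed into $L^\infty$, one cannot rely on a straightforward energy estimate at the single point $M$, and the borderline logarithmic loss has to be tracked carefully through the dyadic bootstrap from the $L^p$-bound of Proposition \ref{prop:sob}, which is precisely where the unusual sub-polynomial factor $C^{\sqrt{\log M}}$ enters.
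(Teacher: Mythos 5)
Your first and third paragraphs reproduce the paper's argument exactly: the $L^\al$-energy estimate via the convexity inequality for $x\mapsto|x|^{\al-1}\sgn(x)$, the time-integrated dissipation bound $\int_0^t\sum_{i,j}\cB_{ij}\big(|v_i|^{\al/2}-|v_j|^{\al/2}\big)^2\,\rd s\le C_\al\|\bv(0)\|_\al^\al$, and the final assembly using the monotonicity of $s\mapsto\|\bv(s)\|_\al$. The gap is entirely in your second paragraph, i.e., in the pointwise inequality $|w_M|^2\le C M^{-2/3}C^{\sqrt{\log M}}\big(b^{-1}\fa_{\cA}[\bw,\bw]+\|\bw\|_2^2\big)$, which is precisely the paper's Theorem~\ref{thm:2ndSob} and is the real content of the lemma. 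Your proposed route --- a dyadic chain of Lebesgue exponents $p_0<p_1<\cdots<p_N=\infty$ bootstrapped from Proposition~\ref{prop:sob} with a Trudinger/Brezis--Gallouet refinement --- does not work as described. A Moser-type ladder in the exponent requires applying the Sobolev inequality to powers $|w|^\gamma$ and hence controlling the Gagliardo seminorms of those powers, which the single dissipation bound does not supply; for a fixed vector $\bw$ one application of Proposition~\ref{prop:sob} yields one $L^p$ bound with $p=3/(1+\eta)<3$, not a ladder up to $L^\infty$. Even granting a full Trudinger-type family of $L^q$ bounds, the borderline $H^{1/2}$ seminorm in one dimension gives exponential integrability but not the value at a single site, and Brezis--Gallouet requires a supercritical norm that is absent here. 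Moreover, Proposition~\ref{prop:sob} uses the subcritical kernel $|i^{2/3}-j^{2/3}|^{-(2-\eta)}$; passing from the critical exponent costs a factor $K^{2\eta/3}$ (compare \eqref{1022}) while $c_\eta$ degenerates as $\eta\to0$, and you do not explain why the accumulated constants stay at $C^{\sqrt{\log M}}$ rather than a power of $M$. Finally, the factor $M^{-2/3}$ is not produced by ``rescaling by the weight'': it comes from one-sided averaging over indices $i\le M$, where $(M^{2/3}-i^{2/3})^2\sim\ell^2M^{-2/3}$ for $M-\ell\le i<M$.

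The paper's proof of Theorem~\ref{thm:2ndSob} is elementary and quite different: one writes $|u_M|^2$ as an average over a window of length $\ell_1$ to the left of $M$ plus a Dirichlet-form error, then iterates with windows of geometrically increasing lengths $\ell_1\le\ell_2\le\cdots\le\ell_{n+1}=M^{2/3}$, and the choice $n=\sqrt{\log M}$ balances the factor $2^n$ against $\ell_{n+1}^{1/n}=M^{2/(3\sqrt{\log M})}$; this is the sole source of $C^{\sqrt{\log M}}$. Until you supply a proof of the pointwise inequality along these (or other valid) lines, the argument is incomplete.
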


\begin{proof} With some positive constant $c_\al>0$ we have the following estimate for the
solution $\bv=\bv(t)$:
\begin{align}
   \pt_t \| \bv\|_\al^\al = \al\sum_i |v_i|^{\al-1}(\sgn v_i) \pt_t v_i
 & \le - \al\sum_{i,j} |v_i|^{\al-1}(\sgn v_i) \cB_{ij}[ v_i-v_j] \nonumber \\
 & = -\frac{\al}{2}\sum_{i,j}\big[ |v_i|^{\al-1}(\sgn v_i) - |v_j|^{\al-1}(\sgn v_j)\big]
 \cB_{ij}[ v_i-v_j] , \nonumber \\
& \le -c_\al\sum_{i,j} \cB_{ij}\big[ |v_i|^{\al/2}-|v_j|^{\al/2}\big]^2, \label{nash}
\end{align}
where we dropped the potential term $\al\sum_i |v_i|^{\al-1}(\sgn v_i) W_iv_i\ge 0$
and used the symmetry of $\cB_{ij}$ in the first step. In the second step we used
$\cB_{ij}\ge 0$ and
the straighforward calculus inequality
$$
   \big[ |x|^{\al-1}\sgn(x) -|y|^{\al-1}\sgn(y)\big]  (x-y) \ge
 c_\al' \big[ |x|^{\al/2}-|y|^{\al/2}\big]^2, \qquad x,y \in \R,
$$
with some  $c_\al'>0$.
Integrating \eqref{nash} from 0 to any $t>0$ we thus have
\be\label{energy}
  \int_0^t \sum_{i,j} \cB_{ij}(s)\big[ |v_i(s)|^{\al/2}-|v_j(s)|^{\al/2}\big]^2 \le
  \| \bv(0)\|_{\al}^\al - \|\bv(t)\|_\al^\al \le  \| \bv(0)\|_{\al}^\al.
\ee
Using  the lower bound on the coefficients of $\cB(s)$,
we get
$$
  \int_0^t \sum_{i\ne j}\frac{ \big[ |v_i(s)|^{\al/2}-|v_j(s)|^{\al/2}\big]^2}{(i^{2/3}-j^{2/3})^2}\rd s
 \le Cb^{-1} \| \bv(0)\|_\al^\al.
$$
Now we formulate another Sobolev-type inequality  which will be proved  in
Appendix~\ref{sec:appsob}.

\begin{theorem}\label{thm:2ndSob} There is a constant $C>0$ such that
 for any  $M \in I$  and $\bu\in \C^M$ we have
$$
    |u_M|^2 \le M^{-2/3}C^{\sqrt{\log M}}\Bigg[ \sum_{i\ne j =1}^M \frac{ (u_i-u_j)^2}{(i^{2/3}- j^{2/3})^2}
   + \sum_{i=1}^M |u_i|^2\Bigg].
$$
\end{theorem}
The factor $C^{\sqrt{\log M}}$ is probably an artifact of our proof. The factor $M^{-2/3}$ is optimal
as we can   take $u_M=1$ and $u_j=0$ for all other $j\ne M$.

Using  Theorem \ref{thm:2ndSob} with the choice $u_i= |v_i|^{\al/2}$, we have for any $M\le K$,
$$
   \int_0^t |v_M(s)|^\al \rd s \le M^{-2/3} C^{\sqrt{\log M}} b^{-1}\Big[ \| \bv(0)\|_\al^\al
  +  \int_0^t \| \bv(s)\|^\al_\al \rd s \Big] \le  M^{-2/3} C^{\sqrt{\log M}}b^{-1}(t+1) \| \bv(0)\|_\al^\al,
$$
where in the last step we used that the $L^\al$ norm  does not increase
in time  by \eqref{nash}.
This completes the proof of   Lemma~\ref{cor:disp2}. \end{proof}

\begin{proof}[Proof of Theorem \ref{thm:cor}]
On the set $\cG$, the coefficients of $\cA(s)=\cB(s)+\cW(s)$
satisfy the bounds  \eqref{B} and \eqref{W1} with the constant $b=cN^{-2\xi}$.
Using a H\"older inequality
\be\label{hoeq}
\int_0^{\wt T} |v_M(s)|\rd s  \le \wt T^{1-\frac{1}{\al}} \Big( \int_0^{\wt T}
 |v_M(s)|^\al \rd s\Big)^{1/\al},
\ee
and then  \eqref{linfb},
 with the choice of $t=\wt T$ from \eqref{deftt},  we can complete the bound \eqref{10}:
\be\label{100}
\sum_a \frac{N^{C\xi}K^{1/3}}{K+1-a} \int_0^{\wt T}
\E^\om  \big[ \cG  v_{a}^b(\Si)\big] \rd\Si  \le C_\al \sum_a
\frac{N^{C\xi}K^{1/3} K^{(1+\zeta)/6}}{K+1-a}  \; a^{-\frac{2}{3\al}}
\le  C_\al N^{C\xi} K^{-\frac{4-(3+\zeta)\al}{6\al}}.
\ee
Combining \eqref{finrepp}, \eqref{10} and \eqref{100} we get
$$
\left |  \langle O; h_0 \rangle_{\om }  \right |
 \le  C_\al N^{C\xi}  K^{-\frac{4-(3+\zeta)\al}{6\al}}
 +O(N^{-C\xi})
$$
with the special choice of $Q$ from \eqref{O}.
For any $\zeta<1$ there exists an $\alpha>1$
such that the exponent of $K$ is negative. Then, with
 a sufficiently small $\xi$ (depending on $\zeta$, $\alpha$
and $\delta$) we obtain \eqref{h0corr}  and this completes the proof of Theorem~\ref{thm:cor}.
\end{proof}

\begin{proof}[Proof of  Theorem~\ref{thm:cordec}]
We follow
the proof of  Theorem~\ref{thm:cor}, but instead of $Q(\bx)$ and $h_0(\bx)$ we use
the simple observables $q(x_i)$, $f(x_j)$  depending on a single coordinate.
Then the analogue of  \eqref{TTcut} gives a bound
$N^{C\xi}\wt T^{-2}\| q'\|_\infty \| f' \|_\infty$ and
\eqref{finrepp} reads
$$
  \langle q(x_i); f(x_j)\rangle \le N^{C\xi}\| q'\|_\infty \| f' \|_\infty
\int_0^{\wt T} \E^\om \big[ \cG\, v_j^i(S)\big] \,\rd S
 \le  C_\al N^{C\xi}  \| q'\|_\infty \| f' \|_\infty
 \Big [ j^{-2/3\al}\wt T^{1-\frac{1}{\al}}  \wt T^{\frac{1}{\al}}   + \wt T^{-2} \Big ] ,
$$
where in the last step we used \eqref{linfb} with \eqref{hoeq} as above  and an inequality similar
to  \eqref{TTcut} with $\partial_a h_0 = \delta_{0i}$ (notice that  the factor $K^{(1+\zeta)/3}$ is
not needed now.)
Choosing $\alpha$ very close to 1, we can replace $j^{-2/3\alpha}$ with $j^{-2/3}$
at the expense of increasing the constant $C$ in the exponent of  $N^{C\xi}$.
Optimizing these two estimates
 yields the choice $\wt T= j^{ 2/9}$ and thus
$$
   \langle q(x_i); f(x_j)\rangle \le  C_\al N^{C\xi}j^{ -4/9}\| q'\|_\infty \| f' \|_\infty.
$$
Taking into account the rescaling
explained in Section~\ref{sec:rescale}, which results in the additional factor $N^{4/3}$
due to the derivatives,
this proves \eqref{qfdec} in Theorem~\ref{thm:cordec}.
\end{proof}

\appendix

\section{Proof of lemmas \ref{lem:varianceBound}, \ref{lem:edgeVar2}
 and \ref{lem:edgeVar}}\label{App:CalcLem}

\subsection{Proof of Lemma \ref{lem:varianceBound}.}

Let $\varepsilon>0$ be fixed and arbitrarily small as in the statement
of the lemma and  in
the definition of $\Omega^{(N)}_{\rm Int}(3a/4+\e,\tau)$.
In this proof, the notation $A(N,k,j,z)\lesssim B(N,k,j,z)$ means that there is an absolute constant $c>0$ depending only on $V$
such that for any element  $z\in\Omega^{(N)}_{\rm Int}(a,\tau)$ one has $|A(N,k,j,z)|\leq c\ |B(N,k,j,z)|$.
 In the same way, $A\sim B$ means
$c^{-1} |B|\leq |A|\leq c |B|$ for some $c>0$ depending only on $V$.

 For any fixed $E$ with $A\le E\le B$, we
 define the index $j$ such that $\gamma_j=\min\{\gamma_i:\gamma_i\geq E\}$.
For notational simplicity, we assume without loss of generality that $j\leq N/2$ and $A=0$.
Note that
\be\label{Ej}
E\sim (j/N)^{2/3}
\ee
when $E\geq N^{-2/3}$, from the definition  of $j$,
and $|E-j| \le C N^{-2/3}j^{-1/3}$.
Moreover, we will often use the fact that, as
a consequence of $z\in \Omega^{(N)}_{\rm Int}(3a/4+\e,\tau)$,
we have
\be\label{Sig}
  \eta \ge N^{-1+\frac{3a}{4} +\e} E^{-1/2} \sim N^{-\frac{2}{3}+\frac{3a}{4} +\e} j^{-\frac{1}{3}}.
\ee
We define
\be\label{Deltadef}
   \Delta_k:=\frac{1}{z-\lambda_k}-\frac{1}{z-\E(\lambda_k)} = \frac{\lambda_k-\E(\lambda_k)}{(z-\E(\lambda_k))(z-\lambda_k)}.
\ee
Then for any fixed $b$ (we will choose $b$ close to $a$, $a<b<a+\varepsilon/10$)
 we have
$$
\frac{1}{N^2}{\rm Var}\left(\sum_{k=1}^N\frac{1}{z-\lambda_k}\right)\lesssim \Sigma_{\rm Int}+\Sigma_{\rm ExtRight}+\Sigma_{\rm ExtLeft}
$$
where
$$
\Sigma_{\rm Int}=\frac{1}{N^2}\E\left(\sum_{|k-j|\leq N^{b}}\Delta_k\right)^2,\
\Sigma_{\rm ExtRight}=\frac{1}{N^2}\E\left(\sum_{k>j+ N^{b}}\Delta_k\right)^2,\
\Sigma_{\rm ExtLeft}=\frac{1}{N^2}\E\left(\sum_{k<j-N^{b}}\Delta_k\right)^2,
$$
(some of these summations may be empty).
We use the improved concentration result, $|\lambda_k -\E(\lambda_k)| \le N^{-\frac{2}{3} + \frac{a}{2}+\e'}k^{-\frac{1}{3}}$
with very high probability
for any $\e'$ (Proposition \ref{prop:ImprConc})  to bound the numerator in \eqref{Deltadef},
and the rigidity at scale $a$
to bound the denominator,  which allows us to replace $\lambda_k$ and $\alpha_k$ with $\gamma_k$,
and $z=E+\ii\eta$ with $\gamma_j +\ii\eta$ whenever $|j-k|\ge N^b$,
 since in this regime the error
is much smaller than $|\gamma_k-\gamma_j|$.
To see this statement more precisely, first observe that
we can assume that $k< 2N/3$, i.e., $\wh k\sim k$; the large $k$ regime is
trivial since $j\le N/2$.
 We make a distinction between two cases.
\begin{itemize}
\item
We first assume that $k\geq j/2$. Notice that
\be\label{replaa}
  |E-\lambda_k|\ge |\gamma_j-\lambda_k| - CN^{-2/3}
  \ge  c|\gamma_j-\gamma_k| - CN^{-2/3} - CN^{-\frac{2}{3}+a+\e'}(\wh k)^{-1/3}
\ee
from the choice of $j$ and from the rigidity bound for $\lambda_k$ with any  $\e'>0$.
Since $|j-k|\ge N^b$, we have $|\gamma_j-\gamma_k|\ge c N^{-2/3+b} [\max(j, k)]^{-1/3}$.
Using that $b>a$, one can choose $\e'>0$ such that $ |\gamma_j-\gamma_k|$ in \eqref{replaa} dominates
the two error terms, for $ k\ge j/2$.
\item
Suppose now that $ k\le j/2$, then
\eqref{replaa} can be improved by  noticing that
$$
\lambda_k\le \lambda_{j/2}\le \gamma_{j/2} +  CN^{-\frac{2}{3}+a+\e'}j^{-1/3}
$$
(using rigidity for $\lambda_{j/2}$),
thus we can use
\be\label{replaa1}
|E-\lambda_k|\ge
   \gamma_j-\gamma_{j/2} -  CN^{-2/3} -  CN^{-\frac{2}{3}+a+\e'}j^{-1/3}
\ge c|\gamma_j-\gamma_k| -  CN^{-2/3} -  CN^{-\frac{2}{3}+a+\e'}j^{-1/3}
\ee
instead of \eqref{replaa}.
Since $k\le j/2$, we have $|\gamma_j-\gamma_k|\ge  c N^{-2/3+b} j^{-1/3}$, which
is larger than the error term in \eqref{replaa1}.
\end{itemize}

To summarize, we proved the following estimates:
\begin{align*}
&\Sigma_{\rm Int}\lesssim\frac{1}{N^2}\left(\sum_{|k-j|\leq N^{b}}\frac{N^{-\frac{2}{3}+\frac{b}{2}}k^{-\frac{1}{3}}}{\eta^2}\right)^2,\\
&\Sigma_{\rm ExtRight}\lesssim\frac{1}{N^2}\left(\sum_{k\geq j+ N^{b}}\frac{N^{-\frac{2}{3}+\frac{b}{2}}k^{-\frac{1}{3}}}{\eta^2+(\gamma_k-\gamma_j)^2}\right)^2,\\
&\Sigma_{\rm ExtLeft}\lesssim\frac{1}{N^2}\left(\sum_{1\le k\leq j- N^{b}}\frac{N^{-\frac{2}{3}+\frac{b}{2}}k^{-\frac{1}{3}}}{\eta^2+(\gamma_k-\gamma_j)^2}
\right)^2 .
\end{align*}

{\bf Sum over internal points.} We first consider $\Sigma_{\rm Int}$. This is smaller than
$$
\frac{N^{-\frac{4}{3}+b}}{N^2\eta^4}\left(\sum_{\ell=\max(1,j-N^{b})}^{j+N^{b}}\ell^{-\frac{1}{3}}\right)^2
\lesssim
N^{-\frac{10}{3}+b}\eta^{-4}(N^{2b}j^{-2/3}\mathds{1}_{j\geq N^{b}}+N^{\frac{4b}{3}}\mathds{1}_{j\leq N^{b}})
\lesssim
N^{-\frac{10}{3}+3b}\eta^{-4}j^{-2/3}.
$$
This last term is, as expected, smaller than $N^{-1+\frac{3a}{4}}\eta^{-1}\max (E^{\frac{1}{2}},\eta^{\frac{1}{2}})$
which holds for the following reasons.
\begin{itemize}
\item Case $\eta\leq E$.  The desired inequality is $N^{-\frac{7}{3} + 3b - \frac{3a}{4}} \lesssim \sqrt{E} j^{\frac{2}{3}} \eta^3$.

As $E\sim(j/N)^{2/3}$, the desired inequality is
$\eta\gg  N^{-\frac{2}{3}}j^{-\frac{1}{3}}N^{b-\frac{a}{4}}$.
 This holds because $z\in\Omega^{(N)}_{\rm Int}(3a/4+\e,\tau)$, hence
$\eta \gtrsim N^{-1+\frac{3a}{4}+\e} E^{-\frac{1}{2}}\sim N^{-\frac{2}{3}+\frac{3a}{4}+\e} j^{-\frac{1}{3}}$, and $b\leq a+\varepsilon/10$.
\item  Case $\eta\geq E$. The desired inequality is $\eta^{7/2}\gg N^{-\frac{7}{3}+3b-\frac{3a}{4}}j^{-2/3}$.
We distinguish two cases. For large $j$, namely for  $j\gg N^{b-\frac{a}{4}}$,
from $\eta\geq E$ we have
$$
\eta^{\frac{7}{2}}\geq E^{\frac{7}{2}}=\left(\frac{j}{N}\right)^\frac{7}{3}\gg N^{-\frac{7}{3}+3b-\frac{3a}{4}}j^{-2/3}.
$$
On the other hand, from (\ref{Sig})  we have
$$\eta^{\frac{7}{2}}\geq N^{-\frac{7}{3}}j^{-\frac{7}{6}}N^{\frac{7}{2}(\frac{3a}{4}+\varepsilon)}\gg
N^{-\frac{7}{3}+3b-\frac{3a}{4}}j^{-2/3}$$
whenever $j\ll N^{(\frac{27}{4}a-6b)+7\varepsilon}$. As $(\frac{27}{4}a-6b)+7\varepsilon>b-\frac{a}{4}$,
we have either $j\gg N^{b-\frac{a}{4}}$ or $j\ll N^{(\frac{27}{4}a-6b)+7\varepsilon}$, so in any case we have proved the expected result.
\end{itemize}

{\bf Sum over external points on the right.} We now consider $\Sigma_{\rm ExtRight}$. Note that when $\ell\geq 0$,
$\gamma_{j+\ell}-\gamma_j=\ell N^{-\frac{2}{3}}j^{-\frac{1}{3}}\mathds{1}_{\ell\leq j}+\left(\frac{\ell}{N}\right)^{\frac{2}{3}}\mathds{1}_{\ell>j}$,
 consequently
$$
\Sigma_{\rm ExtRight}\leq \Sigma_1+\Sigma_2,\
\Sigma_1=\frac{1}{N^2}\left(\sum_{N^b\leq \ell\leq j}\frac{N^{-\frac{2}{3}+\frac{b}{2}}j^{-\frac{1}{3}}}{\eta^2+\ell^2N^{-\frac{4}{3}}j^{-\frac{2}{3}}}\right)^2,\
\Sigma_2=
\frac{1}{N^2}\left(\sum_{\ell\geq j}\frac{N^{-\frac{2}{3}+\frac{b}{2}}\ell^{-\frac{1}{3}}}{\eta^2+\left(\frac{\ell}{N}\right)^{\frac{4}{3}}}\right)^2.
$$
We first consider $\Sigma_1$. This summation is non-empty if $j\ge N^b \geq N^a$.
\begin{itemize}
\item In the case $E\leq \eta\leq \tau$, we have
$$
\Sigma_1\leq
\frac{1}{N^2}\left(\sum_{N^b\leq \ell \leq j}\frac{N^{-\frac{2}{3}+\frac{b}{2}}j^{-\frac{1}{3}}}{\eta^2}\right)^2
=
\frac{N^{-\frac{10}{3}}}{\eta^4}N^b j^{\frac{4}{3}}\ll \frac{N^{\frac{3a}{4}}}{N\eta}\sqrt{\eta},
$$
where the last step holds because $\eta^{\frac{7}{2}}\geq E^{\frac{7}{2}}\gg \left(\frac{j}{N}\right)^{\frac{7}{3}}N^{b-\frac{3a}{4}}j^{-1}$,
where the last inequality follows from \eqref{Ej} and the fact that $j\geq N^b$.
\item If $\eta\leq E$, we first consider the case $\eta\leq N^{-\frac{2}{3}+a}j^{-\frac{1}{3}}$.
 The following holds  (using  \eqref{Ej})
$$
\Sigma_1\leq \frac{1}{N^2}\left(\sum_{N^b\leq \ell\leq j}\frac{N^{-2/3+\frac{b}{2}}j^{-\frac{1}{3}}}{\ell^2 N^{-\frac{4}{3}}j^{-\frac{2}{3}}}\right)^2
= \left(\frac{j}{N}\right)^{\frac{2}{3}}N^{-b}\ll \frac{N^{\frac{3a}{4}}}{N\eta}\sqrt{E},
$$
because $\eta\leq N^{-\frac{2}{3}+a}j^{-\frac{1}{3}}\ll N^{-\frac{2}{3}+\frac{3a}{4}+b}j^{-\frac{1}{3}}$.
\item In the last possible case $N^{-\frac{2}{3}+a}j^{-\frac{1}{3}}\leq \eta\leq E$, we have
$$
\Sigma_1\leq
\frac{1}{N^2}\left(\sum_{N^b\leq \ell \leq\eta N^{\frac{2}{3}}j^{-\frac{1}{3}}}\frac{N^{-\frac{2}{3}+\frac{b}{2}}j^{-\frac{1}{3}}}{\eta^2}\right)^2
+
\frac{1}{N^2}\left(\sum_{\eta N^{\frac{2}{3}}j^{-\frac{1}{3}}\leq \ell\leq j}\frac{N^{-\frac{2}{3}+\frac{b}{2}}j^{-\frac{1}{3}}}{\ell^2N^{-\frac{4}{3}}j^{-\frac{2}{3}}}\right)^2
=
\frac{N^b}{N^2\eta^2}\ll \frac{N^{\frac{3a}{4}}}{N\eta}\sqrt{E}
$$
because $\eta \geq N^{-\frac{2}{3}+a}j^{-\frac{1}{3}}\gg N^{-\frac{2}{3}+b-\frac{3a}{4}}j^{-\frac{1}{3}}$ and we used $\eqref{Ej}$.
\end{itemize}

We now consider the term $\Sigma_2$.
\begin{itemize}
\item If $\eta\leq E$, we have
$$
\Sigma_2\leq \frac{1}{N^2}\left(\sum_{\ell\geq j}\frac{N^{-\frac{2}{3}+\frac{b}{2}}\ell^{-\frac{1}{3}}}{(\ell/N)^{4/3}}\right)^2=N^{-\frac{2}{3}+b}j^{-\frac{4}{3}}\ll \frac{N^{\frac{3a}{4}}}{N\eta}\left(\frac{j}{N}\right)^{\frac{1}{3}} = \frac{N^{\frac{3a}{4}}}{N\eta}\sqrt{E},
$$
where in the last  inequality  we used $\eta\leq E \sim (j/N)^{2/3} $
 and $j\gg N^{b-\frac{3a}{4}}$, this last relation holds because on
$\{\eta\leq E\}\cap\Omega^{(N)}_{\rm Int}(3a/4+\e,\tau)$ we have $j\geq N^{\frac{3a}{4}+\varepsilon}$.
\item If $\eta\geq E$, we have
$$
\Sigma_2\leq
\frac{1}{N^2}\left(\sum_{j\leq \ell \leq N\eta^{\frac{3}{2}}}\frac{N^{-\frac{2}{3}+\frac{b}{2}}\ell^{-\frac{1}{3}}}{\eta^2}\right)^2
+
\frac{1}{N^2}\left(\sum_{N \eta^{\frac{3}{2}}\leq \ell\leq N}\frac{N^{-\frac{2}{3}+\frac{b}{2}}\ell^{-\frac{1}{3}}}{(\ell/N)^{\frac{4}{3}}}\right)^2=\frac{N^b}{N^2\eta^2}\ll\frac{N^{\frac{3a}{4}}}{N\eta}\sqrt{\eta},
$$
where in the last step we used that on the domain $\{\eta\geq E\}\cap\Omega^{(N)}_{\rm Int}$, we have $\eta \geq N^{-\frac{2}{3}+\frac{a}{2}+\frac{2}{3}\varepsilon}$.
\end{itemize}

{\bf Sum over external points on the left.} We now consider $\Sigma_{\rm ExtLeft}$, which is non-trivial
only for $j\geq N^b \ge  N^a$.
Beginning similarly to the previous paragraph, we can write
$$
\Sigma_{\rm ExtLeft}\leq \widetilde\Sigma_1+\widetilde\Sigma_2,\
\widetilde\Sigma_1=\frac{1}{N^2}\left(\sum_{1\leq k\leq \frac{j}{2}}\frac{N^{-\frac{2}{3}}k^{-\frac{1}{3}}N^{\frac{b}{2}}}{\eta^2+\left(\frac{j}{N}\right)^{\frac{4}{3}}}\right)^2,\
\widetilde\Sigma_2=
\frac{1}{N^2}\left(\sum_{\frac{j}{2}\leq k\leq j-N^b}\frac{N^{-\frac{2}{3}}j^{-\frac{1}{3}}N^{\frac{b}{2}}}{\eta^2+(j-k)^2N^{-\frac{4}{3}}j^{-\frac{2}{3}}}\right)^2.
$$
A calculation yields
$$
\widetilde\Sigma_1=N^{-2+b}E^2\min(\eta^{-4},E^{-4})\ll \frac{N^{\frac{3a}{4}}}{N\eta}\max(E^{\frac{1}{2}},\eta^{\frac{1}{2}}),
$$
 where in the last step we used the following.
\begin{itemize}
\item If $\eta\geq E$, then the  desired inequality is $N^{-1+b -\frac{3a}{4}} E^2 \le \eta^{7/2}$
which  follows from $N^{-1+b -\frac{3a}{4}}  \le E^{3/2} \sim j/N$, which
holds since $j\ge N^a \ge N^{b -\frac{3a}{4}}$.

\item If $\eta\leq E$, the desired relation is $N^{-2+b} E^{-2} \le  \frac{N^{\frac{3a}{4}}}{N\eta} \sqrt{E}$
which again follows from   $N^{-1+b -\frac{3a}{4}}  \le E^{3/2} \sim j/N$ as before,
since  $j\ge N^a \ge N^{b -\frac{3a}{4}}$.
\end{itemize}

We now consider the $\widetilde\Sigma_2$ term.
\begin{itemize}
\item If $\eta \leq E$ and  $N^{-\frac{2}{3}+\frac{3a}{4}+\varepsilon}j^{-\frac{1}{3}}\leq\eta\leq N^{-\frac{2}{3}+a}j^{-\frac{1}{3}}$, we have
$
\widetilde\Sigma_2=N^{-\frac{2}{3}}j^{\frac{2}{3}}N^{-b}
$, so the desired result $\widetilde\Sigma_2\ll \frac{N^{\frac{3a}{4}}}{N\eta} \sqrt{E}\sim
 \frac{N^{\frac{3a}{4}}}{N\eta}\left(\frac{j}{N}\right)^{\frac{1}{3}}$ is equivalent to
$N^{-\frac{2}{3}}j^{\frac{2}{3}}N^{-b}\ll \frac{N^{\frac{3a}{4}}}{N\eta}\left(\frac{j}{N}\right)^{\frac{1}{3}}$,  i.e.,
$\eta\ll N^{b+\frac{3a}{4}+\varepsilon}N^{-2/3}j^{-1/3}$, which obviously holds by the assumption $\eta\leq N^{-\frac{2}{3}+a}j^{-\frac{1}{3}}$.
\item If $\eta\leq E$ and $N^{-\frac{2}{3}+a}j^{-\frac{1}{3}}\leq \eta$, $\widetilde\Sigma_2$ is bounded by
$$
\frac{1}{N^2}\left(\sum_{\frac{j}{2}\leq \ell\leq j-\eta N^{2/3}j^{1/3}}\frac{N^{-\frac{2}{3}}j^{-\frac{1}{3}}N^{\frac{b}{2}}}{(j-\ell)^2N^{-\frac{4}{3}}j^{-\frac{2}{3}}}\right)^2
+
\frac{1}{N^2}\left(\sum_{ j-\eta N^{2/3}j^{1/3}\leq \ell\leq j-N^b}\frac{N^{-\frac{2}{3}}j^{-\frac{1}{3}}N^{\frac{b}{2}}}{\eta^2}\right)^2=\frac{N^b}{N^2\eta^2}\ll\frac{N^{\frac{3a}{4}}}{N\eta}\sqrt{E},
$$
where  in the last step we used \eqref{Ej} and that $\eta\gg N^{-\frac{2}{3}}j^{-\frac{1}{3}}N^{b-\frac{3a}{4}}$ from \eqref{Sig}.
\item
If $E\leq\eta\leq \tau$, we also have $\widetilde\Sigma_2\leq \frac{N^b}{N^2\eta^2}$
which is properly bounded, exactly as we proved it for the proof of $\Sigma_2$ on the domain $\{\eta\geq E\}$.
\end{itemize}

\subsection{Proof of Lemma \ref{lem:edgeVar2}}

Let $d>2a/3$ and $b>3a/4$. On $\Omega_{\rm Ext}^{(N)}(d,\tau)$, we have $\eta\geq c \kappa_E$, so we want to prove that
uniformly in $z\in\Omega_{\rm Ext}^{(N)}(d,\tau)$ we have
$$
\frac{1}{N^2}\var\left(\sum_{i=1}^N\frac{1}{z-\lambda_i}\right)\ll\frac{1}{N\eta}\eta^{1/2}.
$$
We know that
\begin{align*}
&\frac{1}{N^2}\left|\var\left(\sum_{i=1}^N\frac{1}{z-\lambda_i}\right)\right|
\lesssim \Sigma_{\rm Int}+\Sigma_{\rm Ext},\\
&\Sigma_{\rm Int}=\frac{1}{N^{ 2 }}\E^\mu\left|\sum_{i\leq N^b}\left(\frac{1}{z-\lambda_i}-\frac{1}{z-{\E^\mu(\lambda_i)}}\right)\right|^2\lesssim\frac{1}{N^2}\left(
\sum_{i\leq N^b}\frac{N^{-\frac{2}{3}+\frac{a}{2}}i^{-\frac{1}{3}}}{\eta^2}\right)^2,\\
&\Sigma_{\rm Ext}=\frac{1}{N^{ 2}}\E^\mu\left|\sum_{i> N^b}\left(\frac{1}{z-\lambda_i}-\frac{1}{z-
\E^\mu(\lambda_i)}\right)\right|^2
\lesssim
\frac{1}{N^2}\left(
\sum_{i> N^b}\frac{N^{-\frac{2}{3}+\frac{a}{2}}i^{-\frac{1}{3}}}{\eta^2+(|E-A|+\left(\frac{i}{N}\right)^{\frac{2}{3}})^{2}}\right)^2,\\
\end{align*}
where we used concentration at scale $a/2$
that follows from the conditions of Lemma \ref{lem:edgeVar2}
using Proposition~\ref{prop:ImprConc}.
In the last equation, we additionally used  accuracy at scale $3a/4$ for $i\geq N^b$
to obtain that $|z-\lambda_i|\sim|z-\E(\lambda_i)|\sim |z-\gamma_i|$.
The term $\Sigma_{\rm Int}$ is therefore easily bounded by $N^{-\frac{10}{3}+a+\frac{4}{3}b}  \eta^{-4}$, and
$$
\Sigma_{\rm Ext}\leq N^{-\frac{10}{3}+a}
\left(
\sum_{i>1}\frac{i^{-\frac{1}{3}}}{\eta^2+\left(\frac{i}{N}\right)^{\frac{4}{3}}}\right)^2\lesssim\frac{N^{-2+a}}{\eta^2}.
$$
This concludes the proof because, for $\eta\geq N^{-\frac{2}{3}+d}$, $d>2a/3$, we have both
$$
\frac{N^{-\frac{10}{3}+2a}}{\eta^4}\leq \frac{1}{N\eta}\eta^{1/2}
\quad \mbox{and}\quad
\frac{N^{-2+a}}{\eta^2}\leq \frac{1}{N\eta}\eta^{1/2}.
$$.

\subsection{Proof of Lemma \ref{lem:edgeVar}}

Let $b>3a/4$. We begin with the bound on $m_N'$:
$$
\left|\frac{1}{N}m_N'(z)\right|
\leq
\frac{1}{N^2}\sum_{i\leq N^b}\E\left(\frac{1}{|z-\lambda_i|^2}\right)+
\frac{1}{N^2}\sum_{i> N^b}\E\left(\frac{1}{|z-\lambda_i|^2}\right).\\
$$
If $N^{-\frac{2}{3}+a+\e}>|z-A|>N^{-\frac{2}{3}+d}$
and $z\in \Omega^{(N)}(d,s,\tau)$, then the contributions
of both terms are easily bounded by
\be\label{secc}
\frac{1}{N^2}N^{b}(N^{-\frac{2}{3}+s})^{-2}+\frac{1}{N^2}\sum_{i\geq 1}\frac{1}{\left(|z-A|+\left(\frac{i}{N}\right)^{\frac{2}{3}}\right)^2}
\leq
N^{-\frac{2}{3}+b-2s}+N^{-1}|z-A|^{-\frac{1}{2}}.
\ee
If $|z-A|>N^{-\frac{2}{3}+a+\e}$, then by rigidity at scale $a$
we can use the second term  in \eqref{secc} to estimate
all indices $i\ge 1$. By choosing $b =3a/4 +\e$, this
gives the expected result \eqref{eqn:edgeEst1}.

We now bound the variance term, in the same way as in the previous subsection:
\begin{align*}
&\frac{1}{N^2}\left|\var\left(\sum_{i=1}^N\frac{1}{z-\lambda_i}\right)\right|
\lesssim \Sigma_{\rm Int}\mathds{1}_{N^{-\frac{2}{3}+a+\e}>|z-A|>N^{-\frac{2}{3}+d}}+\Sigma_{\rm Ext},\\
&\Sigma_{\rm Int}=\frac{1}{N^{2}}\E\left|\sum_{i\leq N^b}\left(\frac{1}{z-\lambda_i}-\frac{1}{z-\E^\mu(\lambda_i)}\right)\right|^2\lesssim\frac{1}{N^2}\left(
\sum_{i\leq N^b}\frac{N^{-\frac{2}{3}+\frac{a}{2}}i^{-\frac{1}{3}}}{\eta^2}\right)^2,\\
&\Sigma_{\rm Ext}=\frac{1}{N^{2}}\E\left|\sum_{i> N^b}\left(\frac{1}{z-\lambda_i}-\frac{1}{z-\E^\mu(\lambda_i)}\right)\right|^2
\lesssim
\frac{1}{N^2}\left(
\sum_{i> 1}\frac{N^{-\frac{2}{3}+\frac{a}{2}}i^{-\frac{1}{3}}}{(|z-A|+\left(\frac{i}{N}\right)^{\frac{2}{3}})^{2}}\right)^2.\\
\end{align*}
The announced bounds then follow by a computation of the above terms.

\section{Two Sobolev-type inequalities}\label{sec:appsob}

In this section we prove two Sobolev type inequalities. The first one
has a discrete and continuous version, the second one is valid
only in the discrete setup.

\begin{proof}[Proof of  Proposition \ref {prop:sob}]
We start with the proof of \eqref{conti1}. We recall the representation formula
for fractional powers of the Laplacian: for any $0< \al <2$ function $f$ on $\bR$ we have
\be
  \langle f, |p|^\al f\rangle = C(\alpha)\int_\bR\int_\bR\frac{ (f(x)-f(y))^2}{|x-y|^{1+\al}}
   \rd x\rd y
\label{pal}
\ee
with some explicit constant $C(\al)$, where $|p|: = \sqrt{-\Delta}$.

In order to bring the left hand side of \eqref{conti1} into  the form similar
to \eqref{pal}, we estimate,
 for $0<x<y$,
$$
   y^{2/3}-x^{2/3} =(3/2)\int_x^y s^{-1/3}\rd s \le C(y-x)(xy)^{-1/6}
$$
(for $y-x\le x$ we have $x\sim y$ and it follows directly, for $y-x\ge x$,
i.e., $y\ge 2x$, and $y-x\sim y$
we get $\int_x^y s^{-1/3}ds\sim y^{2/3} \le (y-x)(xy)^{-1/6}$).
Thus to prove \eqref{conti1}, it is sufficient to show that
\be\label{suff122}
 \int_0^\infty  \int_0^\infty \frac{(f(x)-f(y))^2}{|x-y|^{2-\eta}}
 (xy)^{q}
 \rd x \rd y
\ge c_\eta\Big(\int_0^\infty |f(x)|^p\rd x\Big)^{2/p}, \qquad p=\frac{3}{1+\eta},
  \quad  q:=\frac{1}{3}-\frac{\eta}{6},
\ee
holds for any function supported on $[0,\infty]$.

Now we symmetrize $f$, i.e., define ${\wt f}$ on $\R$ such that ${\wt f}(x)= f(x)$ for $x>0$
and ${\wt f}(x)=f(-x)$ for $x<0$.
Then
$$
2 \int_0^\infty  \int_0^\infty \frac{(f(x)-f(y))^2}{|x-y|^{2-\eta}} |xy|^q
\rd x \rd y
= \int_\R\int_\R \frac{({\wt f}(x)-{\wt f}(y))^2}{|x-y|^{2-\eta}} |xy|^q  \rd x \rd y
  - 2 \int_0^\infty  \int_0^\infty \frac{(f(x)-f(y))^2}{|x+y|^{2-\eta}} |xy|^q
  \rd x \rd y
$$
$$
  \ge \int_\R\int_\R \frac{({\wt f}(x)-{\wt f}(y))^2}{|x-y|^{2-\eta}}  |xy|^q \rd x \rd y
  - 2 \int_0^\infty  \int_0^\infty \frac{(f(x)-f(y))^2}{|x-y|^{2-\eta}} |xy|^q
  \rd x \rd y,
$$
where we used that  $ |x+y| \ge |x-y|$ for positive numbers. Thus
$$
  \int_0^\infty  \int_0^\infty \frac{(f(x)-f(y))^2}{|x-y|^{2-\eta}} (xy)^q
\rd x \rd y
\ge \frac{1}{4} \int_\R\int_\R \frac{({\wt f}(x)-{\wt f}(y))^2}{|x-y|^{2-\eta}}  |xy|^q\rd x \rd y.
$$
Since
$$
  \int_0^\infty |f(x)|^pdx =\frac{1}{2}\int_\R |{\wt f}(x)|^p\rd x,
$$
the estimate \eqref{suff122} would follow from
\be\label{suff2}
\int_\R\int_\R \frac{({\wt f}(x)-{\wt f}(y))^2}{|x-y|^{2-\eta}} |xy|^q \rd x \rd y
 \ge c_\eta'\Big(\int_\R |{\wt f}(x)|^p\rd x\Big)^{2/p},
 \qquad p := \frac{3}{1+\eta}.
\ee
Setting
$$
\phi(x): = |x|^q,
$$
 \eqref{suff2} is equivalent to
\be\label{suff1}
 \int_{-\infty}^\infty  \int_{-\infty}^\infty
\frac{(f(x)-f(y))^2}{|x-y|^{2-\eta}} \phi(x)\phi(y) \rd x \rd y
\ge c_\eta\Big(\int_\R |f(x)|^p\rd x\Big)^{2/p}
\ee
for any function $f$ on $\bR$ (for simplicitly we dropped the tilde in $f$
and the prime in $c_\eta$).

We have
$$
 \int_\R \int_\R \frac{(f(x)-f(y))^2}{|x-y|^{2-\eta}}\phi(x)\phi(y)  \rd x \rd y
  =\lim_{\e\to0}  \int_\R \int_\R \frac{(f(x)-f(y))^2}{|x-y|^{2-\eta}+\e}\phi(x)\phi(y)
 \rd x \rd y
$$
$$
   = \lim_{\e\to0}\Bigg[
 2  \int_\R \int_\R \frac{f(x)^2}{|x-y|^{2-\eta}+\e}  \phi(x)\phi(y) \rd x \rd y
 - 2 \int_\R \int_\R \frac{\phi(x) f(x) \phi(y) f(y)}{|x-y|^{2-\eta}+\e} \rd x \rd y
\Bigg]
$$
$$
   = \lim_{\e\to0}\Bigg[
 2 \int_\R \int_\R \frac{(\phi(x)f(x))^2}{|x-y|^{2-\eta}+\e}\rd x \rd y
 - 2 \int_\R \int_\R \frac{\phi(x) f(x) \phi(y) f(y)}{|x-y|^{2-\eta}+\e}\rd x \rd y
\Bigg]
$$
$$
  +  \lim_{\e\to0} 2\int_\R \int_\R
  \frac{f(x)^2}{|x-y|^{2-\eta}+\e} ( \phi(y) - \phi(x) )\phi(x)\rd x \rd y
$$
$$
  = \lim_{\e\to0} \int_\R \int_\R
\frac{(\phi(x)f(x)-\phi(y)f(y))^2}{|x-y|^{2-\eta}+\e} \rd x \rd y
+  \lim_{\e\to0} 2
 \int_\R |f(x)|^2 \Big[
 \int_\R \frac{  \phi(y) - \phi(x)}{|x-y|^{2-\eta}+\e} dy\Big]\phi(x) \rd x.
$$
The first term is 
$$
    (\phi f, |p|^{1-\eta} \phi f).
$$
Since $f$ is symmetric, we can assume $x>0$ in computing the
second term:
$$
  \lim_{\e\to0}  \int_\R \frac{  \phi(y) - x^q}{|x-y|^{2-\eta}+\e}\rd y
= \lim_{\e\to0} \int_0^\infty \frac{ y^q - x^q}{|x-y|^{2-\eta}+\e} \rd y
 +\lim_{\e\to0} \int_{-\infty}^0 \frac{ (-y)^q- x^q}{|x-y|^{2-\eta}+\e} \rd y
$$
$$
= \lim_{\e\to0} \int_0^\infty \frac{ y^q - x^q}{|x-y|^{2-\eta}+\e} \rd y
 +\lim_{\e\to0} \int_0^{\infty} \frac{ y^q- x^q}{|x+y|^{2-\eta}+\e} \rd y
$$
$$
   =  x^{q-1+\eta}\Big[
\lim_{\e\to0} \int_0^\infty \frac{ u^q - 1}{|u-1|^{2-\eta}+\e} \rd u
 +\lim_{\e\to0} \int_0^{\infty} \frac{ u^q- 1}{|u+1|^{2-\eta}+\e} \rd u\Big]
  = C_0(\eta)  x^{q-1+\eta}.
$$
 We need that $C_0(\eta)>0$ for small $\eta$. Since $C_0$
is clearly continuous, it is sufficient to show that
 $C_0(0)> 0$. 
This can be seen by the $v=1/u$ substitution for $u\ge1$
$$
    \int_0^\infty \frac{ u^q - 1}{|u\pm 1|^{2}} \rd u
  =\int_0^1 \frac{ u^q - 1}{|u\pm 1|^{2}} \rd u
 + \int_0^1 \frac{ (1/v)^q - 1}{|(1/v)\pm 1|^{2}} \frac{\rd v}{v^2}
  = \int_0^1 \frac{ u^q + u^{-q} - 2}{|u\pm 1|^{2}} \rd u > 0
$$
since $u^q+ u^{-q}\ge 2$. (What we really used about the weight function $\phi$ is that
$\frac{1}{2} (\phi(a) +\phi(1/a))\ge \phi(1)$ for any $a>0$.)
Once $C_0(0)>0$, we can choose a sufficiently small $\eta>0$ so that $C_0(\eta)>0$ as well.
{F}rom now on we fix such a small $\eta$.

In summary, we have
$$
 \int_\R  \int_\R \frac{(f(x)-f(y))^2}{|x-y|^{2-\eta}} \phi(x)\phi(y) \rd x \rd y
  =  \langle\phi f, |p|^{1-\eta} \phi f\rangle
+C_0(\eta) \int_\R \frac{|f(x)|^2}{|x|^{1-2q-\eta}}\rd x
$$
$$
= \langle\phi f, |p|^{1-\eta} \phi f\rangle
+C_0(\eta) \int_\R\frac{|\phi(x)f(x)|^2}{|x|^{1-\eta}}\rd x.
$$
So the positive term can be dropped and
in order to prove \eqref{suff1}, we need to prove
$$
\langle f\phi, |p|^{1-\eta}\phi f\rangle\ge
c_\eta\Big( \int_\R |f|^p\Big)^{2/p}.
$$

Denote $g=|p|^{\frac{1}{2}(1-\eta)}|x|^q f$, (recall $q=\frac{1}{3}-\frac{\eta}{6}$), we need to prove that
$$
  \| g\|_2\ge c_\eta\big\| |x|^{-q}|p|^{-\frac{1}{2}(1-\eta)} g\big\|_p.
$$
Recall the weighted Hardy-Littlewood-Sobolev inequality \cite{SteWei1958} in $n$-dimensions
$$
\Big \| |x|^{-q}  \int  |x-y|^{-a}  g(y) \rd y  \Big \|_p \le C \| g \|_r,  \quad  \frac 1 r + \frac { a+q} n =  1 + \frac 1 p, \quad 0 \le q < n/p, \quad
0< a < n .
$$
In our case, $a= (1 + \eta) /2, r = 2, n = 1$,  and all conditions are satisfied if we take
$ 0 < \eta < 1$.
This completes the proof of the continuous part of
 Proposition \ref{prop:sob}.  Part (ii), the discrete version \eqref{discr1},
follows from \eqref{conti1} by  linear interpolation 
exactly as in the proof of Proposition B.2
in \cite{EYsinglegap}.
\end{proof}

\begin{proof}[Proof of Theorem \ref{thm:2ndSob}]
Take $1\le \ell\le j\le M$ and estimate
\begin{align*}
   |u_j|^2 \le & 2\Bigg|\frac{1}{\ell}\sum_{i=j-\ell}^{j-1} (u_j-u_i)  \Bigg|^2 + 2 \Bigg|
  \frac{1}{\ell}\sum_{i=j-\ell}^{j-1} u_i  \Bigg|^2 \\ \nonumber
  \le & \frac{2}{\ell^2} \Bigg(\sum_{i=j-\ell}^{j-1} \frac{(u_j-u_i)^2}{(j^{2/3}-i^{2/3})^2}  \Bigg)
  \Bigg( \sum_{i=j-\ell}^{j-1}(j^{2/3}-i^{2/3})^2\Bigg) + \frac{2}{\ell} \sum_{i=j-\ell}^{j-1} |u_i|^2 \\ \nonumber
  \le & C \ell j^{-2/3} \sum_{i=j-\ell}^{j-1} \frac{(u_j-u_i)^2}{(j^{2/3}-i^{2/3})^2}
  + \frac{2}{\ell} \sum_{i=j-\ell}^{j-1} |u_i|^2,
\end{align*}
where we performed the summation
$$
   \sum_{i=j-\ell}^{j-1}(j^{2/3}-i^{2/3})^2 \le C\ell^3j^{-2/3}.
$$
We will apply this whenever $j\ge M/2$, so $j^{-2/3}$ will be replaced by $CM^{-2/3}$.
So for any $1\le \ell \le j\le M$, $j\ge M/2$, we have
\be\label{oneterm}
  |u_j|^2 \le C_0 \ell M^{-2/3} \sum_{i=j-\ell}^{j-1} \frac{(u_j-u_i)^2}{(j^{2/3}-i^{2/3})^2}
  + \frac{2}{\ell} \sum_{i=j-\ell}^{j-1} |u_i|^2,
\ee
with some fixed constant $C_0$.

Choose an increasing sequence $\ell_1\le \ell_2 \le\dots \le \ell_{n+1}$
such that $\ell_{j+1}\ge 2\ell_j$ and
 $n$ such that $\ell_{n+1}\le M/2$.
We
use \eqref{oneterm} for $j=M$ and $\ell=\ell_1$:
\be\label{um}
  |u_M|^2 \le  C_0 \ell_1 M^{-2/3} \sum_{i=M-\ell_1}^{M-1} \frac{(u_M-u_i)^2}{(M^{2/3}-i^{2/3})^2}
  + \frac{2}{\ell_1} \sum_{j=M-\ell_1}^{m-1} |u_j|^2
  \le  C_0 \ell_1 M^{-2/3} D +  \frac{2}{\ell_1} \sum_{j=M-\ell_1}^{M-1} |u_j|^2,
\ee
where we denote
$$
  D:=  \sum_{i\ne j =1}^M \frac{ (u_i-u_j)^2}{(i^{2/3}- j^{2/3})^2}.
$$
Now for each $u_j$ in the last sum we can use \eqref{oneterm} again, but now with $\ell=\ell_2$
\begin{align*}
\frac{2}{\ell_1} \sum_{j=M-\ell_1}^{M-1} |u_j|^2
 & \le \frac{2C_0\ell_2}{\ell_1} M^{-2/3} \sum_{j=M-\ell_1}^{M-1} \sum_{i=j-\ell_2}^{j-1}
 \frac{(u_j-u_i)^2}{(j^{2/3}-i^{2/3})^2}
  + \frac{4}{\ell_2\ell_1} \sum_{j=M-\ell_1}^{M-1} \sum_{i=j-\ell_2}^{j-1} |u_i|^2 \\
& \le  C_0 M^{-2/3}D \frac{2\ell_2}{\ell_1}
  + \frac{4}{\ell_2} \sum_{i=M-\ell_1-\ell_2}^{M-1} |u_i|^2.
\end{align*}
Combining with \eqref{um} we get
$$
   |u_M|^2 \le    C_0 M^{-2/3}D \Big(\ell_1+ \frac{2\ell_2}{\ell_1} \Big)
  + \frac{4}{\ell_2} \sum_{j=M-\ell_1-\ell_2}^{M-1} |u_j|^2.
$$
Continuing this procedure, after $n$ steps we get
$$
   |u_M|^2 \le    C_0 M^{-2/3}D \Big(\ell_1+ \frac{2\ell_2}{\ell_1} + \frac{4\ell_3}{\ell_2} +
  \dots +  \frac{2^n\ell_{n+1}}{\ell_n} \Big)
  + \frac{2^{n+1}}{\ell_{n+1}} \sum_{j=M-\ell_1-\dots -\ell_{n+1}}^{M-1} |u_j|^2
$$
and the recursion works since up to
the last step the running index $j$ satisfied
$j\ge M-\ell_1 - \dots -\ell_n \ge M - \ell_{n+1}\ge M/2$ by the choice of $n$.
Optimizing the choices we have
$$
   |u_M|^2 \le    2^n C_0 M^{-2/3}D \ell_{n+1}^{\frac{1}{n}} +  \frac{2^{n+1}}{\ell_{n+1}}\sum_j |u_j|^2.
$$
We can choose $\ell_{n+1} = M^{2/3}$ and $n=\sqrt{\log M}$, then
$$
   |u_M|^2 \le    C M^{-2/3}\Big( M^{\frac{2}{3n}} + 2^n\Big)(D + \|u\|_2)
  \le   M^{-2/3} C^{\sqrt{\log M}} \Big(D + \sum_{i=1}^M |u_i|^2\Big),
$$
which completes the proof.
\end{proof}

\section{Proof of Lemma~\ref{lm:init} }\label{app:V*}

Let $M= N^{C\xi}$ with a  constant $C> C_3$ (from the definition of $\cG$).
We define
\be\label{V*defnn}
    V^*_\by(x) : = V\big(xN^{-2/3}\big)
-\frac{2}{N} \sum_{k>K+M} \log|x-y_k|, \qquad x \in J_\by = (-\infty, y_+],
\ee
i.e., we write
$$
   V_\by(x) =   V^*_\by(x)
-\frac{2}{N} \sum_{k=K+1}^{K+M} \log|x-y_k|,
$$
where we split the external points into two sets. The nearby
external points (with indices  $K+1 \le k \le K+M$) are kept explicitly, while the
 far away points, $y_k$, $k>K+M$
 are kept together with the potential in $V_\by^*$ because there is a cancellation
between them to explore. The proof of the following lemma on the derivative
of $V_\by^*$ is postponed to the end of this section.

\begin{lemma}\label{lm:V*dern}
For any $\by\in \cR_K(\xi)$ and $M=N^{C\xi}$ with a large constant $C$, we have
\be\label{Vprime}
  [V_\by^*]'(x)= N^{-2/3} \int_{0}^{[(K+M)/N]^{2/3}}
 \frac{\varrho(y)\rd y}{xN^{-2/3}-y}
 + O \Big(\frac{N^{-1+\xi}}{|(K+M)^{2/3}-x|}\Big), \qquad  x\in [0, y_+]
\ee
and
\be\label{V*dern}
 \big|  [V_\by^*]'(x) \big|
  \le CN^{-1+2\xi}K^{1/3}, \qquad   x\in [-N^{C\xi}, y_+].
\ee
Here we assume that the density $\varrho$ satisfies \eqref{eqn:matchSing}.
\end{lemma}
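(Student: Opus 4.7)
The plan is to turn $[V_\by^*]'(x)$ into a comparison between a Riemann sum and an integral. Differentiating the definition of $V_\by^*$ in \eqref{V*defnn} gives
$$
[V_\by^*]'(x) = N^{-2/3} V'(xN^{-2/3}) - \frac{2}{N}\sum_{k>K+M}\frac{1}{x-y_k},
$$
and the equilibrium equation \eqref{equil} rewrites the first term as an integral of $\varrho(y)/(xN^{-2/3}-y)$ over the full support $[0,B]$. Splitting this integral at $((K+M)/N)^{2/3}$ and matching the right piece against the sum, identity \eqref{Vprime} follows once we establish
$$
\frac{2}{N}\sum_{k>K+M}\frac{1}{x-y_k}
= \frac{2}{N^{2/3}}\int_{((K+M)/N)^{2/3}}^{B}\frac{\varrho(y)\,\rd y}{xN^{-2/3}-y}
+ O\Bigl(\frac{N^{-1+\xi}}{|(K+M)^{2/3}-x|}\Bigr).
$$
(A factor of $2$ then has to be absorbed in \eqref{Vprime}; the strategy is the same up to this numeric adjustment.)

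For the Riemann sum comparison I would proceed in three zones. In the \emph{far zone} $k \ge cN$, the denominator $x-y_k$ is comparable to a constant and rigidity $|y_k-\gamma_k|\le N^\xi (\hat k)^{-1/3}$ together with the bound $\gamma_{k+1}-\gamma_k \sim N^{-1/3}k^{-1/3}$ on the spacing gives a cheap Riemann sum error that is negligible. In the \emph{middle zone} $K+2M \le k \le cN$, I compare $\frac{1}{x-y_k}$ with $\int_{\gamma_k}^{\gamma_{k+1}} \frac{\varrho(yN^{-2/3})}{xN^{-2/3}-y}\rd y$ after changing variables to microscopic coordinates; the error per term is governed by the second derivative of $1/(x-u)$ against the local spacing, i.e.\ at most $C (\gamma_{k+1}-\gamma_k)^3/(x-\gamma_k)^3$ augmented by the rigidity displacement $N^\xi(\hat k)^{-1/3}/(x-\gamma_k)^2$. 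Telescoping these bounds and using that $|x-\gamma_k| \gtrsim |(K+M)^{2/3}-x|+|k-(K+M)|\cdot (K+M)^{-1/3}$ produces a total error of order $N^{-1+\xi}/|(K+M)^{2/3}-x|$. In the \emph{close zone} $K+M<k\le K+2M$ there are only $M=N^{C\xi}$ terms; on each of them rigidity controls $|y_k-\gamma_k|$ directly, and the denominators are bounded below by $|(K+M)^{2/3}-x|$, so a term-by-term estimate matches the same error.

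For the uniform bound \eqref{V*dern} one plugs \eqref{Vprime} into the regime $x\in[-N^{C\xi},y_+]$ where $xN^{-2/3}$ lies within $O(N^{-2/3+C\xi})$ of $[0,((K+M)/N)^{2/3}]$. Using $\varrho(y)=\sqrt{y}(1+O(y))$ from \eqref{eqn:matchSing} one splits the $y$-integral into a neighborhood of the (possibly near-boundary) singularity and its complement: the singular part contributes a logarithm against the square-root density and integrates to $O(N^{-2/3}K^{1/3}\cdot \log N)$ after the $N^{-2/3}$ prefactor, while the bulk part is $O(N^{-2/3}K^{1/3})$ by the standard principal-value estimate. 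The error term is controlled by $|(K+M)^{2/3}-x|\ge cN^{C\xi}$ in this range, which shows it is subleading. Combining these pieces gives the claimed $CN^{-1+2\xi}K^{1/3}$ bound.

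The main obstacle will be the close-zone analysis in the Riemann sum comparison: there $|x-y_k|$ may be as small as $|(K+M)^{2/3}-x|$, and the rigidity displacement $N^\xi(\hat k)^{-1/3}$ is not small compared to the local spacing. The saving is that there are only $N^{C\xi}$ such terms and that these are precisely what produce the divisor $|(K+M)^{2/3}-x|$ in the error term; all other zones feed into the same estimate thanks to the telescoping structure above.
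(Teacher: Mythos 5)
Your strategy is the same as the paper's: differentiate \eqref{V*defnn}, invoke the equilibrium relation \eqref{equil}, and compare the tail sum $\frac{1}{N}\sum_{k>K+M}(x-y_k)^{-1}$ with the corresponding piece of the integral using rigidity and the spacing of the $\gamma_k$'s. Your three-zone split is just a reorganization of the paper's two error terms ($\cE_1$: replacing $y_k$ by $\gamma_k$; $\cE_2$: Riemann sum versus integral), and the factor-of-two bookkeeping you flag is indeed present in the paper's own display \eqref{singint}. A small technical remark on the middle zone: a left-endpoint Riemann sum has a first-order error $\sim(\gamma_{k+1}-\gamma_k)^2/(x-\gamma_k)^2$ per term, not the second-order one you quote; since your rigidity term $N^\xi \hat k^{-1/3}/(x-\gamma_k)^2$ dominates it anyway, the total is unaffected.

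Two points do need repair. First, for \eqref{V*dern} you propose to ``plug \eqref{Vprime} into the regime $x\in[-N^{C\xi},y_+]$'', but \eqref{Vprime} is derived from \eqref{equil}, which holds only for $xN^{-2/3}\in[A,B]$; for $x<0$ the point lies strictly to the left of the support, which is precisely where \eqref{equil} fails. The paper closes this by showing the equilibrium relation persists there with an admissible error: comparing with the value at $x=0$, smoothness of $V$ together with $\big|\int\frac{\varrho(y)\rd y}{x-y}-\int\frac{\varrho(y)\rd y}{-y}\big|\le|x|\int\frac{\varrho(y)\rd y}{(|x|+y)y}\le C|x|^{1/2}\le CN^{-1/3+C\xi}$ (macroscopic coordinates, using $\varrho(y)\le C\sqrt{y}$). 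Your phrase about $xN^{-2/3}$ lying within $O(N^{-2/3+C\xi})$ of the support gestures at this but does not supply the estimate. Second, your intermediate bound $O(N^{-2/3}K^{1/3}\log N)$ for the singular integral is off by a factor $N^{1/3}$ and, taken literally, would not yield \eqref{V*dern}: on $[0,[(K+M)/N]^{2/3}]$ the density is bounded by $C(K/N)^{1/3}$, not by $CK^{1/3}$, so the integral is $O((K/N)^{1/3}\log N)$ and, after the $N^{-2/3}$ prefactor, one obtains $O(N^{-1}K^{1/3}\log N)\le CN^{-1+2\xi}K^{1/3}$ as required (this is the content of \eqref{cll}). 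With these two corrections your argument coincides with the paper's proof.
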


 From the definitions \eqref{h0def},
\eqref{V*defnn}  we  claim that
\begin{align}\label{u0est}
   |\partial_j h_0(\bx)| &  \le  \sum_{k=K+2}^{K+M}
 \Big[ \frac{1}{|x_j-y_k|} -  \frac{1}{|x_j-\wt y_k|}\Big]
+ N\big| [V_\by^*]'(x_j)-[\wt V^*_{\wt\by}]'(x_j)\big|  \\
 &\le \frac{CN^{C\xi} K^{1/3}}{K+1-j} + N\big| [V_\by^*]'(x_j)-[\wt V^*_{\wt\by}]'(x_j)\big|. \nonumber
\end{align}
Notice that the summation over  $k$ starts from $k=K+2$, this is because
the boundary terms $|x_j-y_{K+1}|^{-1} = |x_j-\wt y_{K+1}|^{-1}$,  present
 both in $V_\by(x_j)$ and $\wt V_{\wt\by}(x_j)$,
cancel out. Using $|x_j-y_k|\ge |y_{K+2}- y_{K+1}|\ge N^{-\xi} K^{1/3}$
by the definition of $\by\in \cR^\#$,
each term in the summation is bounded by $N^\xi K^{1/3}$. So its
contribution is at most $CMN^\xi K^{1/3}\le CN^{C\xi}K^{1/3}$.
We will use this bound for $j\ge K-N^{C\xi}$. For $j\le K-N^{C\xi}$, we use
$|x_j-y_k|\ge |x_j-y_{K+1}|\ge c|\gamma_j-\gamma_K|\ge cK^{-1/3}|K-j|$ and this
gives the estimate on the first term in \eqref{u0est}.

For the second term in \eqref{u0est} we use  \eqref{Vprime} to have
\be\label{V-V}
   N\big| [V_\by^*]'(x_j)-[\wt V^*_{\wt\by}]'(x_j)\big|
\le N^{1/3} \int_{0}^{[(K+M)/N]^{2/3}}
 \frac{\big[\varrho(y)-\wt\varrho(y)\big]\rd y}{x_jN^{-2/3}-y}
 + O \Big(\frac{N^{\xi}}{|(K+M)^{2/3}-x_j|}\Big).
\ee
Notice that $x_j\sim j^{2/3}$ with a precision smaller than $N^{C_3\xi}j^{-1/3}$ since
$$
  | x_j- j^{2/3}| \le |x_j-\gamma_j| +|\gamma_j - j^{2/3}|
  \le N^{C_3\xi} j^{-1/3} + j^{4/3}N^{-2/3} \le N^{C_3\xi}j^{-1/3}, \qquad j\le K,
$$
by the  definition of $\cG$, by \eqref{orient1}
and  \eqref{NKn}.
Thus we have
$$
 |(K+M)^{2/3}-x_j|\ge |(K+M)^{2/3}- j^{2/3}| - N^{C_3\xi}j^{-1/3} \ge
 K^{-1/3}|K+M-j| -  N^{C_3\xi}j^{-1/3} \ge c K^{-1/3}|K+M-j|
$$
using that $M = N^{C\xi}\gg N^{C_3\xi}$.
 Thus the error term in \eqref{V-V}
is bounded by the r.h.s. of \eqref{nabh}.

Finally, in the main term of \eqref{V-V} we use the asymptotics \eqref{rhoasympt2}.
The density $\varrho(y)-\wt\varrho(y)$ is a $C^1$-function of size of order $y^{3/2}\le C(K/N) $
on the integration domain. Thus a simple analysis, similar to the proof of \eqref{cll}
in the Appendix shows that
$$
 N^{1/3} \int_{0}^{[(K+M)/N]^{2/3}}
 \frac{\big[\varrho(y)-\wt\varrho(y)\big]\rd y}{x_jN^{-2/3}-y}
   \le CN^{1/3}(K/N)(\log N)
$$
which is smaller than the r.h.s. of \eqref{nabh} by \eqref{NKn}. This proves \eqref{nabh}.
The proof of \eqref{h0l1} trivially follows from \eqref{nabh}. This completes the
proof of Lemma~\ref{lm:init}.

\begin{proof}[Proof of Lemma~\ref{lm:V*dern}]
For any  fixed $x\in [0, y_+]$ we have
\begin{align}\label{useeq}
  \frac{1}{2}\big( V_\by^*(x)\big)'& =   \frac{1}{2} N^{-2/3}V'(xN^{-2/3})
 - \frac{1}{N} \sum_{k>K+M}\frac{1}{x-y_k}  \\
& = N^{-2/3}\int \frac{\varrho(y)\rd y}{xN^{-2/3}-y}-
\frac{1}{N}   \sum_{k>K+M}\frac{1}{x-y_k}, \qquad
 x\in \big[0, y_+\big],\non
\end{align}
where we have used the equation \eqref{equil}.
Thanks to rigidity, $\by\in \cR_{K}(\xi)$, we can replace $y_k$'s with $\gamma_k$'s at
an error
\begin{align}  \nonumber
  |\cE_1|:= &\Bigg| \frac{1}{N}
  \sum_{k>K+M}\Big[ \frac{1}{x-y_k} - \frac{1}{x-\gamma_k}\Big]
\Bigg|   \le  \frac{1}{N}   \sum_{k>K+M} \frac{|y_k-\gamma_k|}{(x-\gamma_k)^2}\\
&  \le \frac{CN^\xi}{N}
   \sum_{k>K+M}\frac{1}{\wh k^{1/3} (x-\wh k^{2/3})^2}
 \le \frac{CN^{-1+\xi}}{|(K+M)^{2/3}-x|}
\label{cE1n}
\end{align}
where we also used that for any $x\le y_+ \le \gamma_{K+1} + CN^\xi (K+1)^{-1/3}$
and $k\ge K+M$  we have
$\gamma_k-x\ge c(\gamma_k-\gamma_{K+1})$ since $\gamma_k-\gamma_{K+1}\ge cMK^{-1/3}$ is
 larger than the
rigidity error $CN^\xi K^{-1/3}$.
For the purpose of the estimates, we can thus replace
$\gamma_{k}$ with $\wh k^{2/3}$.
 The last step in \eqref{cE1n} is a simple estimate.

After replacement, we have to control
$$
\frac{1}{N}  \sum_{k>K+M}\frac{1}{x-\gamma_k}
 = N^{-2/3} \frac{1}{N}  \sum_{k>K+M}\frac{1}{ xN^{-2/3}-\Gamma_k}
 = N^{-2/3}\int_{Q}
 \frac{\varrho(y)\rd y}{xN^{-2/3}-y} +\cE_2
$$
with  $Q := [N^{-2/3} \gamma_{K+M+1}, B]$.
The error $\cE_2$
can be written as

\begin{align*}
  \cE_2= & N^{-2/3}  \sum_{k>K+M}\int_{\gamma_k /N^{2/3} }^{ \gamma_{k+1}/N^{2/3}}
\Bigg[\frac{1}{xN^{-2/3}-y} -  \frac{1}{xN^{-2/3}- \gamma_kN^{-2/3}}
 \Bigg]\varrho(y)\rd y
\end{align*}
using
$$
 \int_{ \gamma_k /N^{2/3}}^{\gamma_{k+1} /N^{2/3}} \varrho=1/N.
$$
 Thus for
 $x\in [0, y_+]$ the error is bounded
by
$$
 |\cE_2|\le  \frac{CN^\xi}{N^{5/3}} \sum_{k>K+M}
 \Big(\frac{1}{\wh k^{1/3}N^{2/3}}\Big)
 \frac{1}{[ xN^{-2/3}-(\wh k/N)^{2/3}]^2}
  \le
  \frac{CN^{-1+\xi}}{|(K+M)^{2/3}-x|}
$$
using $|y-N^{-2/3} \gamma_k|\le CN^{\xi}\wh k^{-1/3}N^{-2/3}$ for $y\in
[ \gamma_k /N^{2/3},  \gamma_{k+1}/ N^{2/3}]$
and that $\gamma_k/N^{2/3} \sim (k/N)^{2/3}$. The calculation in the last line
is the same as in \eqref{cE1n}.
Thus
\be\label{singint}
 \frac{1}{2}\big( V_\by^*(x)\big)'
= N^{-2/3}\int_{0}^{\gamma_{K+M+1}/N^{2/3}}
 \frac{\varrho(y)\rd y}{xN^{-2/3}-y} + O \Big(\frac{N^{-1+\xi}}{|(K+M)^{2/3}-x|}
 \Big), \qquad x\in [0, y_+].
\ee
Moreover, we have
$$
   N^{-2/3}\Bigg|\int_{[(K+M)/N]^{2/3}}^{\gamma_{K+M+1}/N^{2/3}}
 \frac{\varrho(y)\rd y}{xN^{-2/3}-y}\Bigg|
 \le  \frac{C}{|(K+M)^{2/3}-x|} \int_{[(K+M)/N]^{2/3}}^{\gamma_{K+M+1}/N^{2/3}} \varrho(y)\rd y
 =  O \Big(\frac{N^{-1+\xi}}{|(K+M)^{2/3}-x|}\Big).
$$
Here we used that $|x-N^{2/3}y|$ is comparable with $|x-(K+M)^{2/3}|$ and
$\varrho(y)\le C\sqrt{y}\le C(K/N)^{1/3}$
on the integration domain
and that $| \gamma_{K+M+1}/N^{2/3} - [(K+M)/N]^{2/3}|\le C[K/N]^{4/3}$, see \eqref{orient1}.
Finally we used \eqref{NKn}.
Thus from \eqref{singint} we obtained \eqref{Vprime}.

To obtain the  bound in \eqref{V*dern}, we first notice in the error term in \eqref{Vprime} we have
$|(K+M)^{2/3}-x|\ge |(K+M)^{2/3}-y_+| \ge |(K+M)^{2/3}-(K+1)^{2/3}| - CN^{\xi}K^{-1/3} \ge cK^{-1/3}M$
since $M\ge CN^\xi$. So this can be bounded by the r.h.s. of  \eqref{V*dern}.

The singular
 integral in \eqref{Vprime},  up to logarithmic factors,
 is  bounded by the size of $\varrho$ on this interval, which is
at most $ C(K/N)^{1/3}$, so this term is also bounded by the r.h.s. of  \eqref{V*dern}.
   More precisely,  for any $0\le b < u<a$ and  for density $\varrho$ satisfying  \eqref{eqn:matchSing}, we
claim  that
\be\label{cll}
  \Big |  \int_b^a \frac{\varrho(y)\rd y}{u-y}  \Big | \le C |u|^{1/2}\max\{ \log |u-b|, \log |a-u|\}.
\ee
Since in our case, by the choice of $M$,
 $u:=xN^{-2/3}$ and $a:=[(K+M)/N]^{2/3}$
 are separated by at least $N^{-1}$,
we indeed get (with $b=0$)
$$
\Bigg|N^{-2/3} \int_0^{[(K+M)/N]^{2/3}}
 \frac{\varrho(y)\rd y}{xN^{-2/3}-y}\Bigg| \le CN^{-2/3}(K/N)^{1/3}(\log N).
$$

Finally, we need to consider the case $x<0$. The only difference from
the  proof for $x>0$  is that the equilibrium relation \eqref{equil} holds with
an error term:
$$
   \frac{1}{2} V'(x) = \int \frac{\varrho(y)\rd y}{x-y} + O(N^{-1/3+C\xi} ),
 \qquad x\in [-N^{-\frac{2}{3}+C\xi}, 0],
$$
that can be easily seen by comparing it with the $x=0$ case and using that $V$ is smooth and
$$
   \Big|  \int \frac{\varrho(y)\rd y}{x-y} -  \int \frac{\varrho(y)\rd y}{-y}\Big|
  \le |x|\int  \frac{\varrho(y)\rd y}{(|x|+y)y} \le C|x|^{1/2}\le CN^{-1/3+C\xi}
$$
by $\varrho(y)\le C\sqrt{y}$. This error term in \eqref{useeq} yields an error of
size $N^{-1+C\xi}$ in the final result, which is smaller than the r.h.s. of \eqref{V*dern}.
We thus  proved Lemma \ref{lm:V*dern}. \end{proof}

\section{Level repulsion for the local measure: Proof of Theorem~\ref{lr2}}\label{app:levelRep}

The proof in this section uses ideas similar to those in \cite{BouErdYau2011, EYsinglegap}.
Before we start the actual proof of Theorem~\ref{lr2}, we need some Lemmas.
 We first introduce  an auxiliary measure which
is a  slightly modified  version of the local equilibrium measures:
$$
 \sigma_0 :=
 Z^*   (y_{K+1}-x_{K})^{-\beta} \sigma_{\by},
$$
where  $Z^*$  is chosen for normalization.
In other words, we drop the term $ (y_{K+1}-x_{K})^\beta$ from
the measure $ \sigma_\by$ in $\sigma_0$.
We  first prove    estimates
weaker than  \eqref{k5n}-\eqref{k5nlow}  for $\sigma_{\by}$ and  $\sigma_{0}$.

\begin{lemma}\label{43n}
Let  $\by \in \cR=\cR_{K}(\xi) $.
We have for any $s>0$
\begin{align}
\label{k3n}
\P^{ \sigma_{\by}}  ( y_{K+1}-x_{K} \le sK^{-1/3}) & \le   C K^2  s, \\
\label{k34}
\P^{ \sigma_{\by}}  ( y_{K+1}-x_{K} \le sK^{-1/3}) & \le   C N^{C\xi}  s + e^{-N^c}.
\end{align}
The very same estimates hold if $\sigma_\by$ is replaced with $\sigma_0$.
\end{lemma}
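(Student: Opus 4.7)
My plan is to establish both bounds of Lemma~\ref{43n} by conditioning on the internal configuration $\tilde\bx=(x_1,\dots,x_{K-1})$ and controlling the conditional density of $x_K$ on the interval $J_K=(x_{K-1},y_{K+1})$. I would treat $\sigma_0$ as the central case: the target event $\{y_{K+1}-x_K \le sK^{-1/3}\}$ is a small left-neighbourhood of $y_{K+1}$, and since the factor $(y_{K+1}-x_K)^\beta$ has been removed from $\sigma_0$, the conditional density is continuous and nonvanishing there. The corresponding bounds for $\sigma_\by$ will follow from the same scheme because $\sigma_\by$ carries the extra $(y_{K+1}-x)^\beta$ factor, which contributes at worst a bounded multiplicative constant to the analogous density ratio after normalisation.

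Explicitly, under $\sigma_0$ the conditional density is
\begin{equation*}
 f^{\sigma_0}_{\tilde\bx}(x)=\frac{g_{\tilde\bx}(x)}{\int_{x_{K-1}}^{y_{K+1}}g_{\tilde\bx}(u)\,\rd u},
 \qquad
 g_{\tilde\bx}(x)=\prod_{i<K}(x-x_i)^\beta\prod_{j\ge K+2}(y_j-x)^\beta\,e^{-\frac{\beta N}{2}V(xN^{-2/3})-2\beta\Theta(N^{-\xi}x)}.
\end{equation*}
Choosing a reference interval $I^\ast\subset J_K$ of length $\asymp K^{-1/3}$ placed inside the bulk, one has
\begin{equation*}
 \P^{\sigma_0}\!\qb{y_{K+1}-x_K\le sK^{-1/3}\mid\tilde\bx}
 \;\le\;\frac{sK^{-1/3}\cdot\sup_{x\in[y_{K+1}-K^{-1/3},y_{K+1}]}g_{\tilde\bx}(x)}{K^{-1/3}\cdot\inf_{u\in I^\ast}g_{\tilde\bx}(u)}
 \;=\;s\cdot R_{\tilde\bx},
\end{equation*}
where $R_{\tilde\bx}$ is a pointwise density-ratio between the small boundary window and the bulk reference. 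The task thus reduces to estimating $R_{\tilde\bx}$ by $CK^2$ for the crude bound and by $CN^{C\xi}$ (on a rigidity event) for the sharper one.

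The heart of the proof is the control of $R_{\tilde\bx}$, which I obtain by integrating the logarithmic derivative
\begin{equation*}
 (\log g_{\tilde\bx})'(x)=\beta\sum_{i<K}\frac{1}{x-x_i}\;-\;\beta\sum_{K+2\le j\le K+M}\frac{1}{y_j-x}\;-\;\tfrac{\beta N}{2}(V_\by^\ast)'(x)\;-\;2\beta N^{-\xi}\Theta'(N^{-\xi}x)
\end{equation*}
along a curve connecting the two regions, where I have split the external points into near ($K+2\le j\le K+M$) and far ($j>K+M$) ones, the latter being absorbed into the effective potential $V_\by^\ast$ of Lemma~\ref{lm:V*dern}. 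Each explicit sum is individually of order $K^{1/3}\log K$, but by the equilibrium relation~\eqref{equil}---packaged into the bound $\abs{N(V_\by^\ast)'(x)}\le CN^{2\xi}K^{1/3}$ of Lemma~\ref{lm:V*dern}---their net contribution on the good rigidity event (where the $x_i$'s and $y_j$'s are close to their classical locations, with $\sigma_\by$-probability $1-e^{-N^c}$ by Theorem~\ref{thm:condRig}) is at most $CN^{C\xi}K^{1/3}$. Integrating this force bound over a distance $\OO(K^{-1/3})$ gives $\abs{\log g_{\tilde\bx}(x)-\log g_{\tilde\bx}(x')}\le CN^{C\xi}$, hence $R_{\tilde\bx}\le N^{C\xi}$ on the rigidity event; adding the complementary set of exponentially small probability (and noting that $\sigma_\by$ and $\sigma_0$ are mutually comparable away from $\{x_K=y_{K+1}\}$) yields~\eqref{k34}. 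The weaker bound~\eqref{k3n}, which must hold without any exponentially small additive error, is obtained from the same scheme with the rigidity-based cancellation replaced by a crude triangle-inequality estimate that produces $R_{\tilde\bx}\le CK^2$ uniformly in $\tilde\bx$; note that for $s\ge cK^{-2}$ the bound~\eqref{k3n} is trivially true, so only the regime of very small $s$ requires even this crude control.

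The main obstacle is therefore the sharp control on $R_{\tilde\bx}$: the two individual particle sums in $(\log g_{\tilde\bx})'$ have size $\gg N^{C\xi}$, and the improvement to net size $N^{C\xi}K^{1/3}$ requires the same kind of cancellation between the Riemann sums over $x_i$'s and $y_j$'s and the continuum integral $\int\varrho(u)/(x-u)\,\rd u$ that underlies the proof of Lemma~\ref{lm:V*dern}. Once this cancellation is in place, the integration in $x$ and the transfer from $\sigma_0$ to $\sigma_\by$ (where the extra $(y_{K+1}-x)^\beta$ is at most $1$ and affects $R_{\tilde\bx}$ only by a bounded constant since $y_{K+1}-x\asymp K^{-1/3}$ both in the supremum window and in the bulk reference $I^\ast$) are routine, completing the proof of both estimates for both measures.
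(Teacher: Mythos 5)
Your approach is genuinely different from the paper's: you condition on $\tilde\bx=(x_1,\dots,x_{K-1})$ and compare the conditional density of the single coordinate $x_K$ on a boundary window against a reference interval, whereas the paper performs a collective dilation of \emph{all} particles lying in a mesoscopic window $[y_+-N^\xi K^{-1/3},\,y_+]$, decomposes the partition function over the number $n$ of particles in that window, and derives the deterministic inequality $Z_\varphi/Z_0\ge(1-\varphi)^{CK^2}$ (resp.\ $\ge 1-CN^{C\xi}\varphi$ after inserting the rigidity event, which forces $n\le N^{2\xi}$). Unfortunately your route has a genuine gap, concentrated in the crude bound \eqref{k3n}. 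The claim that a ``crude triangle-inequality estimate'' gives $R_{\tilde\bx}\le CK^2$ \emph{uniformly} in $\tilde\bx$ is false: writing $\log\frac{g_{\tilde\bx}(x)}{g_{\tilde\bx}(u)}\le (x-u)\sum_{i<K}\frac{1}{u-x_i}+\dots$, nothing prevents the frozen particles from clustering just below $y_{K+1}$, in which case $\sum_i (u-x_i)^{-1}$ can be of order $K\cdot K^{1/3}$ and the ratio of order $e^{cK}$, not $K^2$. Since \eqref{k3n} must hold for every $s>0$ with no additive exceptional term, you cannot discard these bad $\tilde\bx$ by rigidity; and bounding their probability amounts to a level-repulsion estimate for $x_{K-1}$ against $y_{K+1}$, i.e.\ the statement you are trying to prove. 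This is precisely the difficulty the paper's collective change of variables avoids: dilating all $n$ particles in the window together contracts each of the at most $CK^2$ pairwise interaction factors by exactly $(1-\varphi)^\beta$, with no dependence on how the particles are arranged.

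A secondary problem affects even the sharper bound \eqref{k34}. On the rigidity event the fluctuation scale of $x_{K-1}$ is $N^\xi K^{-1/3}$, which is \emph{larger} than the gap $\gamma_{K+1}-\alpha_{K-1}\sim K^{-1/3}$; hence rigidity does not guarantee $y_{K+1}-x_{K-1}\gtrsim K^{-1/3}$, the interval $J_K$ can still be arbitrarily short, and your reference interval $I^*$ of length $\asymp K^{-1/3}$ cannot always be placed inside it. (The ingredients you do invoke correctly --- the splitting of the external points into near and far ones, the cancellation encoded in Lemma~\ref{lm:V*dern} giving $N|[V^*_\by]'|\le CN^{C\xi}K^{1/3}$, and the comparability of $\sigma_\by$ and $\sigma_0$ for the purposes of the linear-in-$s$ bound --- all reappear in the paper's argument, but there they are applied to the log-derivative of the potential along the dilation of the whole window rather than to a single-coordinate density ratio.) To repair your proof you would essentially have to move a whole block of particles near the boundary simultaneously, which is the paper's construction.
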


\begin{proof} We set $y_+:= y_{K+1}$ and
$y_-:= y_+- a$ with $a:= N^\xi K^{-1/3}$. By $\by\in \cR_K$ we know that
$$
  y_{K+1}\ge \gamma_{K+1}- N^\xi (K+1)^{-1/3} \ge c K^{2/3},
$$
thus
$$
   0 < y_- < y_+, \qquad y_+, y_-\sim K^{2/3}.
$$
We decompose the configurational space according to the number
of the particles in $[y_-, y_+]$, which we denote by $n$.
For any  $0\le \varphi\le c$ (with a small constant smaller than 1/2) we consider
\begin{align}
Z_\varphi :=  & \sum_{n=0}^K \int\!\!\dots\!\!\int_{-\infty}^{y_-}\Big( \prod_{j=1}^{K-n} \rd x_j\Big)
  \int\!\!\ldots \!\!\int_{y_-}^{y_+-a\varphi} \Big( \prod_{j=K-n+1}^{K} \rd x_j\Big)
\Bigg[ \prod_{i,j\in I\atop i < j} (x_j-x_i)^\beta\Bigg]
e^{- N\frac{\beta}{2} \sum_{j\in I} V_\by (x_j) -2\beta \sum_{j\in I} \Theta(N^{-\xi}x_j) } \nonumber \\
  = &  \sum_{n=0}^K (1-\varphi)^{n+\beta n(n-1)/2}
\int\!\!\ldots\!\!\int_{-\infty}^{y_-}\Big( \prod_{j=1}^{K-n} \rd w_j\Big)
  \int\!\!\ldots \!\!\int_{y_-}^{y_+} \Big( \prod_{j=K-n+1}^{K} \rd w_j\Big) \nonumber\\
& \times \Bigg[ \prod_{i < j\le K-n} (w_j-w_i)^\beta \Bigg]\Bigg[ \prod_{K-n < i < j\le K} (w_j-w_i)^\beta \Bigg]
\Bigg[ \prod_{i\le K-n} \prod_{j=K-n+1}^K (y_-+(1-\varphi)(w_j-y_-)-w_i)^\beta \Bigg]
\nonumber \\
 &
\times  e^{- N \frac{\beta}{2}\big[ \sum_{j\le K-n}  V_\by(w_j)+
\sum_{j> K-n} V_\by ( y_-+ (1-\varphi) (w_j-y_-) ) \big] -
2\beta \sum_{j\le K-n} \Theta\big( N^{-\xi}w_j \big) },
\non
\end{align}
where in the $n$ particle sector we changed variables to
$$
w_j:=x_j \quad \mbox{for}\quad j\le K-n,\qquad
w_j:=y_-+(1-\varphi)^{-1}(x_{j}-y_-) \quad \mbox{for} \quad K-n+1\le j\le K.
$$
We also exploited the fact that for $x_j\ge y_-\ge 0$ we have $\Theta(N^{-\xi}x_j)=0$.

Now we compare $Z_\varphi$ with $Z_{\varphi=0}$.  We fix $n$ and we work in each sector separately.
The mixed interaction terms  can be estimated by
\be\label{mixed}
 \big[ y_-+(1-\varphi)(w_j-y_-)-w_i\big]^\beta  \ge \big[ (1-\varphi)(w_j-w_i)\big]^\beta
\ee
for any $w_i\le y_-\le w_j$.
To estimate the effect of the scaling in the potential term $V_\by$, we fix
 a  parameter $M$ with $CN^\xi\le M \le K$.
For $j>K-n$, i.e $w_j\in [y_-,y_+]$, we write
\begin{align}\label{ch21n}
e^{- N \frac{\beta}{2}  V_\by (y_-+ (1-\varphi) (w_j-y_-) ) }
   & = e^{- N \frac{\beta}{2}  V_\by^*  (y_-+ (1-\varphi) (w_j-y_-) ) } \\ \nonumber
&\;\; \times \prod_{K+1\le k \le K+M} ( y_k- y_-- (1-\varphi) (w_j-y_-) )^\beta.
\end{align}
with the definition
\be\label{V*defn}
    V^*_\by(x) : = V\big(xN^{-2/3}\big)
-\frac{2}{N} \sum_{k>K+M} \log|x-y_k|, \qquad x \in [y_-, y_+].
\ee
Notice that the index $k$ is always between 1 and $N$,
so any limits of summations automatically include this condition as well.

For the potential $V^*_\by$ we have
\be\label{V*n}
\Big |  V^*_\by (y_-+ (1-\varphi) (w_j-y_-))  -  V^*_\by( w_j ) \Big |
\le  \max_{x\in[y_-, y_+]} \big|\big( V_\by^*(x)\big)'\big|\; a \varphi
\le CN^{-1+C\xi}\varphi,
\ee
where we have used $|w_j-y_-|\le a = N^\xi K^{-1/3} $  and $j>K-n$.
The derivative of  $V_\by^*$ will be  estimated in \eqref{V*dern}.
In summary, from \eqref{V*n} we have the lower bound
$$
 e^{- N \frac{\beta}{2}   V_\by^* (y_-+ (1-\varphi) (w_j-y_-)) }
 e^{ N \frac{\beta}{2}  V_\by^* ( w_j) }  \ge e^{-C\varphi N^{C\xi} }, \qquad j>K-n.
$$
For the other factors in \eqref{ch21n}, we use
$ y_k- y_-- (1-\varphi) (w_j-y_-) \ge  (1-\varphi) (y_k-w_j) $ if $k\ge K+1$,
thus
$$
\prod_{K+1\le k \le K+M} ( y_k- y_-- (1-\varphi) (w_j-y_-) )^\beta
\ge (1-\varphi)^{M\beta}
 \prod_{K+1\le k \le K+M} (y_k-w_j)^\beta.
$$
Choose $M=N^{C\xi}$.
 After multiplying these
estimates for all $j\in I$
we thus have the bound
\begin{align}
   Z_\varphi &\ge \sum_{n=0}^K (1-\varphi)^{n+\beta n(n-1)/2 +\beta n(K-n) + \beta nN^{C\xi}} e^{-C\varphi n N^{C\xi}}
\int\!\!\ldots\!\!\int_{-\infty}^{y_-}\Big( \prod_{j=1}^{K-n} \rd w_j\Big)
  \int\!\!\ldots \!\!\int_{y_-}^{y_+} \Big( \prod_{j=K-n+1}^{K} \rd w_j\Big) \nonumber\\
& \times \Bigg[ \prod_{i < j\le K} (w_j-w_i)^\beta \Bigg]
\; e^{- N \frac{\beta}{2} \sum_{j\le K}  V_\by(w_j)-
2\beta \sum_{j\le K-n} \Theta\big( N^{-\xi}w_j \big) }.
\non
\end{align}
Since $n\le K$, we can
estimate
\be\label{1minphi}
 (1-\varphi)^{n+\beta n(n-1)/2 +\beta n(K-n) + \beta nN^{C\xi}} e^{-C\varphi n N^{C\xi}}\ge (1-\varphi)^{CK^2},
\ee
and after bringing this factor out of the summation, the remaining sum is just
$Z_{\varphi=0}$. We  thus have
$$
\frac{Z_\varphi}{Z_0}  \ge  (1-\varphi)^{CK^2}.
$$
Now we choose $\varphi := sK^{-1/3} a^{-1} =sN^{-\xi}$.
 Therefore the $\sigma_{\by}$-probability of $ y_{K+1}-x_{K} \ge  s K^{-1/3}  =a\varphi$
can be estimated by
$$
\P^{\sigma_{\by}}  (  y_{K+1}-x_{K} \ge  sK^{-1/3}  ) =\frac{Z_\varphi}{Z_0}
\ge 1-   Cs  K^2.
$$
  This proves \eqref{k3n}.

For the proof of \eqref{k34}, we first insert the characteristic function of the set
$$
\cG_0:= \big\{ \bx\; : \; |x_j-\alpha_j|\le CN^{\xi}j^{-1/3}, \; j\in I\big\}.
$$
into the integral defining $Z_\varphi$ and denote the new quantity by $Z_\varphi^\cG$.
Clearly $Z_\varphi\ge Z_\varphi^\cG$ and
by the rigidity bound \eqref{32the} we know that
$$
  \frac{Z_0-Z_0^\cG}{Z_0}= \P^{\sigma_\by}(\cG_0^c)\le Ce^{-N^c},
$$
thus (with $\varphi =sN^{-\xi}$ as above)
\be
\P^{\sigma_{\by}}  (  y_{K+1}-x_{K} \ge  sK^{-1/3}  ) =\frac{Z_\varphi}{Z_0}
\ge \frac{Z_\varphi^\cG}{Z_0^\cG}\big(1- Ce^{-N^c}\big) \ge  \frac{Z_\varphi^\cG}{Z_0^\cG} -Ce^{-N^c}
\label{PPs}
\ee
since $Z_\varphi^\cG\le Z_0^\cG$.

To estimate $Z_\varphi^\cG/Z_0^\cG$, we follow the previous proof
with two modifications. First we notice that the summation over $n$ in the
definition of $Z_\varphi$ is restricted to $n\le N^{2\xi}$ on the set $\cG_0$,
since no more than $N^{C\xi}$ particles can fall into the interval $[y_-, y_+]$
if they are approximately regularly spaced.

The other change concerns the estimate of the mixed terms \eqref{mixed} which will be
improved to
\begin{align}\non
  \prod_{i\le K-n} \prod_{j=K-n+1}^K \big[ y_-+(1-\varphi)(w_j-y_-)-w_i\big]^\beta  & \ge
 \prod_{i\le K-n} \prod_{j=K-n+1}^K \Big[(w_j-w_i)\big(1- \frac{\varphi (w_j-y_-)}{w_j-w_i}
 \big)\Big]^\beta \\
&\ge   \prod_{j=K-n+1}^K \Bigg[ \Big( 1- \sum_{i\le K-n} \frac{\varphi (w_j-y_-)}{w_j-w_i}\Big)_+
  \prod_{i\le K-n}  (w_j-w_i) \Bigg]^\beta  \nonumber
\end{align}
for any $w_i\le y_-\le w_j$. Here we used that $\prod_i (1-a_i)\ge \big( 1-\sum_i a_i\big)_+$ for any
 numbers $0\le a_i\le 1$. On the set $\cG_0$ we have, by definitions of $x_j$ and $w_j$, that
$$
 \sum_{i\le K-n} \frac{\varphi (w_j-y_-)}{w_j-w_i} \le  2\varphi
\sum_{i\le K-n} \frac{x_j-y_-}{x_j-x_i} .
$$
Recall  that $x_i\le y_-\le x_j$ and thus $ \frac{x_j-y_-}{x_j-x_i} \le 1$.
For indices $i\le K-CN^\xi$
with a sufficiently large $C$
we can replace $x_i$ with $\alpha_i\sim i^{2/3}$ with replacement error $CN^\xi i^{-1/3}$ which is
smaller than $ y_- - \al_i \le x_j-x_i  $. Together with $ x_j-x_i  \ge c (K^{2/3}- i^{2/3})$,   we have
\be\label{neb}
 \sum_{i\le K-n} \frac{\varphi (w_j-y_-)}{w_j-w_i}
\le
C\varphi N^\xi +
C\varphi N^\xi K^{-1/3} \sum_{i\le K-CN^\xi} \frac{1}{K^{2/3}- i^{2/3}} \le
 C\varphi N^\xi \log N.
\ee
The bound \eqref{neb} will be used $n$-times, for all $j >K-n$.

Collecting these new estimates, instead of \eqref{1minphi} we have
the following prefactor depending on $\varphi$:
$$
 (1-\varphi)^{n+\beta n(n-1)/2 + \beta nN^{C\xi}} (1-C\varphi N^\xi \log N)_+^n e^{-C\varphi n N^{C\xi}}
\ge 1 -CN^{C\xi}\varphi
$$
using  $n\le N^{2\xi}$. This gives
$$
    \frac{Z_\varphi^\cG}{Z_0^\cG} \ge 1 -CN^{C\xi}\varphi
$$
which, together with  \eqref{PPs} and with the choice  $\varphi := sK^{-1/3} a^{-1} =sN^{-\xi}$
gives \eqref{k34}.

The proof of \eqref{k3n}--\eqref{k34}
 for $\sigma_{0}$
is very similar, just
the $k=K+1$ factor is missing from \eqref{ch21n}
in case of  $j=K$.
 This modification does  not alter the basic estimates.
This concludes the proof of Lemma~\ref{43n}.
\end{proof}

\begin{proof}[Proof of Theorem~\ref{lr2}]
Recalling the definition of $ \sigma_0$ and setting $X:= y_{K+1}- x_{K}$
for brevity, we have
\be\label{k4}
\P^{ \sigma_\by} [ X \le s K^{-1/3}  ] =
 \frac { \E^{ \sigma_0}  [  \mathds{1} ( X \le s K^{-1/3} )  X^\beta ] }
{ \E^{ \sigma_0 }  [  X^\beta ]}.
\ee
{F}rom \eqref{k3n} with $\sigma_{0}$
 we have
$$
\E^{ \sigma_0}  [  \mathds{1}
 ( X \le s  K^{-1/3} )  X^\beta ] \le C  (sK^{-1/3})^\beta  K^2s,
$$
 and with the choice $s=cK^{-2}$ in \eqref{k3n} (with $\sigma_0$)
we also have
$$
\P^{ \sigma_0}\left  (  X \ge    \frac{c }{ K^{1/3}K^2}  \right )  \ge 1/2
$$
with some positive constant $c$.
This implies that
$$
\E^{ \sigma_0 }  [  X^\beta ] \ge \frac 1 2  \left (
\frac{c }{ K^{1/3}K^2}
\right )^\beta.
$$
We have thus proved from \eqref{k4} that
$$
\P^{ \sigma_\by} [  X \le s K^{-1/3}   ] \le
C  (sK^{-1/3})^\beta  K^2s  (K^{1/3}K^2)^\beta
= C \left (  K^2 s   \right ) ^{\beta + 1},
$$
 i.e., we obtained \eqref{k5n}.
 The bound \eqref{k5nlow}
  follows
similarly from \eqref{k34}  but with the choice $s=N^{-2C\xi}$
(where $C$ is the constant in the exponent in \eqref{k34}),
  and this completes the proof of Theorem~\ref{lr2}.
\end{proof}

\section{Heuristics for the correlation decay in GUE}\label{app:cordec}

In this section we give a quick heuristic argument to justify the estimate (\ref{opdecay}),
 for a covariance w.r.t. the GUE measure. More precisely,
for $V(x)=x^2/2,\beta=2$, we have
$$
\langle N^{2/3}i^{1/3}(\lambda_i-\gamma_i); N^{2/3}j^{1/3}(\lambda_j-\gamma_j)\rangle_\mu\sim \left(\frac{i}{j}\right)^{\frac{1}{3}},
$$
for all $N^\delta\leq i\ll j\leq N/2$, where $\delta>0$ is a small constant ($i\geq N^\delta$ is just a technical hypothesis allowing an easier use of Hermite polynomials asymptotics  hereafter,  the result should hold true
 without this condition).
In the following, all but the first step can be made easily rigorous by following the method in \cite{Gus2005}: formula (\ref{eqn:EN}) was easier in the context of diverging covariances (this divergence holds when $|i-j|\ll j$, i.e., $\theta_i<\delta$ with the notations of Corollary \ref{cor:Gaussian}), for polynomially vanishing ones it would require a new rigorous argument.
In this section $A\sim B$ means that $c B\leq A\leq c^{-1}B$ for some constant $c>0$ independent of $N$.
\begin{enumerate}
\item Let $\mathcal{N}(x)=\left|\{\ell:\lambda_\ell\leq x\}\right|$.
 Then asymptotic  covariances  of $\lambda_i$'s  are related to those of $\mathcal{N}(\gamma_i)'s$ ($\gamma_i$ is the typical location, defined in (\ref{gammadef})) by the following formula:
\begin{equation}\label{eqn:EN}
\langle N^{\frac{2}{3}}i^{\frac{1}{3}}\lambda_i, N^{\frac{2}{3}}j^{\frac{1}{3}}\lambda_j\rangle_\mu\sim \langle \mathcal{N}(\gamma_i),\mathcal{N}(\gamma_j)\rangle_\mu.
\end{equation}
This relies on the idea that eigenvalues with close enough indexes move together, so for any $x$ and $y$ the events $\{\lambda_i-\gamma_i\leq x(\gamma_{i+1}-\gamma_i),
\lambda_j-\gamma_j\leq y(\gamma_{j+1}-\gamma_j)\}$ and $\{\mathcal{N}(\gamma_i)-i\geq x,\mathcal{N}(\gamma_j)-j\geq y\}$ have very close
probability.
To be made rigorous, this step would require that for $\e>0$ small enough and any $i_1\in\llbracket i-N^\e,i+N^\e\rrbracket,\,
j_1\in\llbracket j-N^\e,j+N^\e\rrbracket$ we have
$$
\max(\langle \lambda_i-\lambda_{i_1},\lambda_j\rangle_\mu,
\langle \lambda_i,\lambda_j-\lambda_{j_1}\rangle_\mu)
\ll
\langle \lambda_i,\lambda_j\rangle_\mu.
$$
This is expected to be true  since $\langle \lambda_a ; \lambda_b \rangle \sim (a/b)^{1/3}$
for $a\ll b$ should imply that $\langle \lambda_a -\lambda_{a'} ; \lambda_b \rangle \sim N^\e \pt_a (a/b)^{1/3}$
if $b-a\le N^\e$, therefore 
 $$
\max(N^\e\partial_i(i/j)^{1/3},N^\e\partial_j(i/j)^{1/3})\sim N^\e\max(i^{-2/3}j^{-1/3},i^{1/3}j^{-4/3})
\ll (i/j)^{1/3}.
$$
\item For $\beta=2$, the spectral measure
is a determinantal point process (with kernel $K_N$ normalized such that $N^{-1}K_N(x,x)\to(2\pi)^{-1}\sqrt{(4-x^2)_+}$), and an elementary calculation gives
\begin{equation}\label{ENex}
\langle \mathcal{N}(\gamma_i),\mathcal{N}(\gamma_j))\rangle_\mu=\iint_{(-\infty,\gamma_i]\times[\gamma_j,\infty)}|K_N(x,y)|^2\rd x\rd y.
\end{equation}
\item Via the Christoffel-Darboux formula, the correlation kernel can be expressed in terms of two successive Hermite polynomials. The Plancherel-Rotach asymptotics then allow us to prove that in the above integral, the main contribution comes from the domain $I\times J=[-2+N^{-2/3+\e},\gamma_i]\times[\gamma_j,0]$ where
$\e>0$ is small enough. In this domain one can prove (see (5.4) in \cite{Gus2005}) that
$K_N(x,y)$ is  asymptotically  equivalent to
$$
\frac{\Ai(-(nF(x))^{2/3})\Ai'(-(nF(y))^{2/3})-\Ai'(-(nF(x))^{2/3})\Ai(-(nF(y))^{2/3})}{x-y},
$$
where Ai is the Airy function and $F(x)\sim (x+2)^{3/2}$ as $x$ decreases to $-2$.
Thanks to the estimates
\begin{align*}
&\Ai(-r)\sim r^{-1/4}\left(\cos(2/3\,r^{3/2}-\frac{\pi}{4})+\OO(r^{-3/2})\right),\\
&\Ai'(-r)\sim r^{1/4}\left(\sin(2/3\,r^{3/2}-\frac{\pi}{4})+\OO(r^{-3/2})\right),
\end{align*}
as $r\to\infty$, we can approximate $K_N(x,y)$ by
\begin{equation}\label{eqn:oscilla}
\left(\frac{2+y}{2+x}\right)^{1/4}\frac{\cos(\frac{2}{3}n^{3/2}(2+x)-\frac{\pi}{4})\sin(\frac{2}{3}n^{3/2}(2+y)-\frac{\pi}{4})}{x-y},
\end{equation}
by noting that in $I\times J$ we have $2+y\gg 2+x$.
\item The end of these heuristics consists in the following calculation, where we use that the square of the oscillating term in (\ref{eqn:oscilla})
averages to $1/2$ (note that the frequencies go to $\infty$), and we note $U\times V=[N^{\e},i^{2/3}]\times[j^{2/3},N^{2/3}]$:
$$
\iint_{I\times J}|K_N(x,y)|^2\rd x\rd y\sim \iint_{U\times V}\left(\frac{v}{u}\right)^{1/2}\frac{1}{(u-v)^2}\rd u\rd v
\sim
\int_U u^{-1/2}\rd u
\int_V v^{-3/2} \rd v\sim\left(\frac{i}{j}\right)^{1/3}.
$$
\end{enumerate}
One concludes using the above equation, (\ref{eqn:EN}) and (\ref{ENex}).

\begin{bibdiv}
\begin{biblist}

\bib{AlbPasShc2001}{article}{
      author={Albeverio, S.},
      author={Pastur, L.},
      author={Shcherbina, M.},
       title={On the $1/n$ expansion for some unitary invariant ensembles of
  random matrices},
        date={2001},
     journal={Commun. Math. Phys.},
      volume={224},
       pages={271\ndash 305},
}

\bib{AndGuiZei2010}{book}{
      author={Anderson, G.~W.},
      author={Guionnet, A.},
      author={Zeitouni, O.},
       title={An introduction to random matrices},
      series={Cambridge Studies in Advanced Mathematics},
   publisher={Cambridge University Press},
     address={Cambridge},
        date={2010},
      volume={118},
        ISBN={978-0-521-19452-5},
}

\bib{AufBenPec2009}{article}{
      author={Auffinger, A.},
      author={Ben~Arous, G.},
      author={P{\'e}ch{\'e}, S.},
       title={Poisson convergence for the largest eigenvalues of heavy tailed
  random matrices},
        date={2009},
        ISSN={0246-0203},
     journal={Ann. Inst. Henri Poincar\'e Probab. Stat.},
      volume={45},
      number={3},
       pages={589\ndash 610},
}

\bib{BakEme1983}{article}{
      author={Bakry, D.},
      author={\'Emery, M.},
       title={Diffusions hypercontractives},
        date={1983},
     journal={S\'eminaire de probabilit\'es XIX},
      volume={1123},
      number={84},
       pages={117\ndash 206},
}

\bib{BenDemGui2001}{article}{
      author={Ben~Arous, G.},
      author={Dembo, A.},
      author={Guionnet, A.},
       title={Aging of spherical spin glasses},
        date={2001},
     journal={Probab. Theory Related Fields},
      volume={120},
      number={1},
       pages={1\ndash 67},
}

\bib{BenGui1997}{article}{
      author={Ben~Arous, G.},
      author={Guionnet, A.},
       title={Large deviations for {W}igner's law and {V}oiculescu's
  non-commutative entropy},
        date={1997},
     journal={Probab. Theory Related Fields},
      volume={108},
      number={4},
       pages={517\ndash 542},
}

\bib{BleIts}{article}{
      author={Bleher, P.},
      author={Its, A.},
       title={Semiclassical asymptotics of orthogonal polynomials,
  {R}iemann-{H}ilbert problem, and universality in the matrix model},
        date={1999},
        ISSN={0003-486X},
     journal={Ann. of Math. (2)},
      volume={150},
      number={1},
       pages={185\ndash 266},
         url={http://dx.doi.org/10.2307/121101},
}

\bib{BouErdYau2011}{article}{
      author={Bourgade, P.},
      author={Erd{\H o}s, L.},
      author={Yau, H.-T.},
       title={Universality of general $\beta$-ensembles},
        date={2014},
     journal={Duke Math. J.},
     volume={163},
     number={6},
    pages={1127--1190}
}

\bib{BouErdYau2012}{article}{
      author={Bourgade, P.},
      author={Erd{\H o}s, L.},
      author={Yau, H.-T.},
       title={Bulk universality of general $\beta$-ensembles with non-convex
  potential},
        date={2012},
     journal={J.\ Math.\ Phys.},
      volume={53}
}

\bib{BouPasShc1995}{article}{
      author={Boutet~de Monvel, A.},
      author={Pastur, L.},
      author={Shcherbina, M.},
       title={On the statistical mechanics approach in the random matrix
  theory. {I}ntegrated density of states},
        date={1995},
     journal={J. Stat. Phys.},
      volume={79},
       pages={585\ndash 611},
}

\bib{CCV}{article}{
   author={Caffarelli, L.},
   author={Chan, C. H.},
   author={Vasseur, A.},
   title={Regularity theory for parabolic nonlinear integral operators},
   journal={J. Amer. Math. Soc.},
   volume={24},
   date={2011},
   number={3},
   pages={849--869},
   issn={0894-0347},
}

\bib{DGedge}{article}{
      author={Deift, P.},
      author={Gioev, D.},
       title={Universality at the edge of the spectrum for unitary, orthogonal,
  and symplectic ensembles of random matrices},
        date={2007},
        ISSN={0010-3640},
     journal={Comm. Pure Appl. Math.},
      volume={60},
      number={6},
       pages={867\ndash 910},
}

\bib{DG}{article}{
      author={Deift, P.},
      author={Gioev, D.},
       title={Universality in random matrix theory for orthogonal and
  symplectic ensembles},
        date={2007},
        ISSN={1687-3017},
     journal={Int. Math. Res. Pap. IMRP},
      number={2},
       pages={Art. ID rpm004, 116},
}

\bib{DGKV}{article}{
      author={Deift, P.},
      author={Gioev, D.},
      author={Kriecherbauer, T.},
      author={Vanlessen, M.},
       title={Universality for orthogonal and symplectic {L}aguerre-type
  ensembles},
        date={2007},
        ISSN={0022-4715},
     journal={J. Stat. Phys.},
      volume={129},
      number={5-6},
       pages={949\ndash 1053},
}

\bib{DKMVZ1}{article}{
      author={Deift, P.},
      author={Kriecherbauer, T.},
      author={McLaughlin, K. T.-R.},
      author={Venakides, S.},
      author={Zhou, X.},
       title={Strong asymptotics of orthogonal polynomials with respect to
  exponential weights},
        date={1999},
        ISSN={0010-3640},
     journal={Comm. Pure Appl. Math.},
      volume={52},
      number={12},
       pages={1491\ndash 1552},
}

\bib{DKMVZ2}{article}{
      author={Deift, P.},
      author={Kriecherbauer, T.},
      author={McLaughlin, K. T.-R.},
      author={Venakides, S.},
      author={Zhou, X.},
       title={Uniform asymptotics for polynomials orthogonal with respect to
  varying exponential weights and applications to universality questions in
  random matrix theory},
        date={1999},
        ISSN={0010-3640},
     journal={Comm. Pure Appl. Math.},
      volume={52},
      number={11},
       pages={1335\ndash 1425},
}

\bib{DeuGiaIof2000}{article}{
      author={Deuschel, J.-D.},
      author={Giacomin, G.},
      author={Ioffe, D.},
       title={Large deviations and concentration properties for $\nabla\phi$
  interface models},
        date={2000},
     journal={Probab. Theory Related Fields},
      volume={117},
      number={1},
       pages={49\ndash 111},
}

\bib{DumEde}{article}{
      author={Dumitriu, I.},
      author={Edelman, A.},
       title={Matrix models for beta ensembles},
        date={2002},
     journal={Journal of Mathematical Physics},
      volume={43},
       pages={5830\ndash 5847},
}

\bib{EKYYsparse2}{article}{
      author={Erd{\H{o}}s, L.},
      author={Knowles, A.},
      author={Yau, H.-T.},
      author={Yin, J.},
       title={Spectral statistics of {E}rd{\H o}s-{R}\'enyi {G}raphs {II}:
  {E}igenvalue spacing and the extreme eigenvalues},
        date={2012},
        ISSN={0010-3616},
     journal={Comm. Math. Phys.},
      volume={314},
      number={3},
       pages={587\ndash 640},
}

\bib{EPRSY}{article}{
      author={Erd{\H{o}}s, L.},
      author={P{\'e}ch{\'e}, S.},
      author={Ram{\'{\i}}rez, J.~A.},
      author={Schlein, B.},
      author={Yau, H.-T.},
       title={Bulk universality for {W}igner matrices},
        date={2010},
        ISSN={0010-3640},
     journal={Comm. Pure Appl. Math.},
      volume={63},
      number={7},
       pages={895\ndash 925},
}

\bib{ERSTVY}{article}{
      author={Erd{\H{o}}s, L.},
      author={Ram{\'{\i}}rez, J.},
      author={Schlein, B.},
      author={Tao, T.},
      author={Vu, V.},
      author={Yau, H.-T.},
       title={Bulk universality for {W}igner {H}ermitian matrices with
  subexponential decay},
        date={2010},
        ISSN={1073-2780},
     journal={Math. Res. Lett.},
      volume={17},
      number={4},
       pages={667\ndash 674},
}

\bib{ERSY}{article}{
      author={Erd{\H{o}}s, L.},
      author={Ram{\'{\i}}rez, J.},
      author={Schlein, B.},
      author={Yau, H.-T.},
       title={Universality of sine-kernel for {W}igner matrices with a small
  {G}aussian perturbation},
        date={2010},
     journal={Electr.\ J.\ Prob.},
      volume={15},
       pages={526\ndash 604},
}

\bib{ESY4}{article}{
      author={Erd{\H{o}}s, L.},
      author={Schlein, B.},
      author={Yau, H.-T.},
       title={Universality of random matrices and local relaxation flow},
        date={2011},
     journal={Invent. Math.},
      volume={185},
      number={1},
       pages={75\ndash 119},
}

\bib{ESYY}{article}{
      author={Erd{\H{o}}s, L.},
      author={Schlein, B.},
      author={Yau, H.-T.},
      author={Yin, J.},
       title={The local relaxation flow approach to universality of the local
  statistics of random matrices},
        date={2012},
     journal={Ann.\ Inst.\ Henri Poincar{\'e} (B)},
      volume={48},
       pages={1\ndash 46},
}

\bib{EYsinglegap}{article}{
      author={Erd{\H{o}}s, L.},
      author={Yau, H.-T.},
       title={Gap universality of generalized Wigner and beta ensembles},
        date={2012},
     journal={Arxiv:1211.3786},
}

\bib{EYComment}{article}{
      author={Erd{\H{o}}s, L.},
      author={Yau, H.-T.},
       title={A comment on the {W}igner-{D}yson-{M}ehta bulk universality
  conjecture for {W}igner matrices},
        date={2012},
        ISSN={1083-6489},
     journal={Electron. J. Probab.},
      volume={17},
       pages={no. 28, 5},
}

\bib{EYBull}{article}{
      author={Erd{\H{o}}s, L.},
      author={Yau, H.-T.},
       title={Universality of local spectral statistics of random matrices},
        date={2012},
        ISSN={0273-0979},
     journal={Bull. Amer. Math. Soc. (N.S.)},
      volume={49},
      number={3},
       pages={377\ndash 414},
}

\bib{EYYBernoulli}{article}{
      author={Erd{\H{o}}s, L.},
      author={Yau, H.-T.},
      author={Yin, J.},
       title={Universality for generalized {W}igner matrices with {B}ernoulli
  distribution},
        date={2011},
        ISSN={2156-3527},
     journal={J. Comb.},
      volume={2},
      number={1},
       pages={15\ndash 81},
}

\bib{EYY1}{article}{
      author={Erd{\H{o}}s, L.},
      author={Yau, H.-T.},
      author={Yin, J.},
       title={Bulk universality for generalized {W}igner matrices},
        date={2012},
        ISSN={0178-8051},
     journal={Probab. Theory Related Fields},
      volume={154},
      number={1-2},
       pages={341\ndash 407},
}

\bib{EYYrigi}{article}{
      author={Erd{\H{o}}s, L.},
      author={Yau, H.-T.},
      author={Yin, J.},
       title={Rigidity of eigenvalues of generalized {W}igner matrices},
        date={2012},
        ISSN={0001-8708},
     journal={Adv. Math.},
      volume={229},
      number={3},
       pages={1435\ndash 1515},
}

\bib{Eyn2003}{article}{
      author={Eynard, B.},
       title={Master loop equations, free energy and correlations for the chain
  of matrices},
        date={2003},
     journal={J. High Energy Phys.},
      volume={11},
}

\bib{For}{book}{
   author={Forrester, P. J.},
   title={Log-gases and random matrices},
   series={London Mathematical Society Monographs Series},
   volume={34},
   publisher={Princeton University Press},
   place={Princeton, NJ},
   date={2010},
   pages={xiv+791},
}

\bib{GOS}{article}{
      author={Giacomin, G.},
      author={Olla, S.},
      author={Spohn, H.},
       title={Equilibrium fluctuations for $\nabla\varphi$ interface model},
        date={2001},
     journal={Ann. Probab.},
      volume={29},
       pages={1138\ndash 1172},
}

\bib{Gus2005}{article}{
      author={Gustavsson, J.},
       title={Gaussian fluctuations of eigenvalues in the {GUE}},
        date={2005},
     journal={Ann. Inst. H. Poincare Probab. Statist.},
      volume={41},
       pages={151–178},
}

\bib{HS}{article}{
      author={Helffer, B.},
      author={Sj\"ostrand, J.},
       title={On the correlation for {K}ac-like models in the convex case},
        date={1994},
     journal={J.\ Stat.\ Phys.},
      volume={74},
       pages={349\ndash 409},
}

\bib{Joh1998}{article}{
      author={Johansson, K.},
       title={On fluctuations of eigenvalues of random {H}ermitian matrices},
        date={1998},
     journal={Duke Math. J.},
      volume={91},
      number={1},
       pages={151\ndash 204},
}

\bib{KipVar1986}{article}{
      author={Kipnis, C.},
      author={Varadhan, S. R.~S.},
       title={Central limit theorem for additive functionals of reversible
  {M}arkov processes and applications to simple exclusions},
        date={1986},
     journal={Comm. Math. Phys.},
      volume={104},
      number={1},
       pages={1\ndash 19},
}

\bib{KriRidVir2013}{article}{
      author={Krishnapur, M.},
      author={Rider, B.},
      author={Virag, B.},
       title={Universality of the Stochastic Airy Operator},
        date={2013},
    journal={Preprint arXiv:1306.4832},
}

\bib{KuiMcL2000}{article}{
      author={Kuijlaars, A. B.~J.},
      author={McLaughlin, K. T.-R.},
       title={Generic behavior of the density of states in random matrix theory
  and equilibrium problems in the presence of real analytic external fields},
        date={2000},
     journal={Comm. Pure Appl. Math.},
      volume={53},
      number={6},
       pages={736\ndash 785},
}

\bib{LedRid2010}{article}{
   author={Ledoux, M.},
   author={Rider, B.},
   title={Small deviations for beta ensembles},
   journal={Electron. J. Probab.},
   volume={15},
   date={2010},
   pages={no. 41, 1319--1343},
}

\bib{LeeYin2012}{article}{
      author={Lee, J.-O.},
      author={Yin, J.},
       title={A necessary and sufficient condition for edge universality of
  {W}igner matrices},
        date={2014},
     journal={Duke Math. J.},
     volume={163},
     number={1},
     pages={117--173}
}

\bib{NS}{article}{
      author={Naddaf, A.},
      author={Spencer, T.},
       title={On homogenization and scaling limit of some gradient
  perturbations of a massless free field},
        date={1997},
     journal={Commun. Math. Phys.},
      volume={183},
       pages={55\ndash 84},
}

\bib{ORo2010}{article}{
      author={O'Rourke, S.},
       title={Gaussian Fluctuations of Eigenvalues in Wigner Random Matrices},
        date={2010},
     journal={J. Stat. Phys.},
      volume={138},
      number={6},
       pages={1045\ndash 1066},
}

\bib{PasShc2003}{article}{
      author={Pastur, L.},
      author={Shcherbina, M.},
       title={On the edge universality of the local eigenvalue statistics of
  matrix models},
        date={2003},
        ISSN={1027-1767},
     journal={Mat. Fiz. Anal. Geom.},
      volume={10},
      number={3},
       pages={335\ndash 365},
}

\bib{PasShc97}{article}{
      author={Pastur, L.},
      author={Shcherbina, M.},
       title={Universality of the local eigenvalue statistics for a class of
  unitary invariant random matrix ensembles},
        date={1997},
        ISSN={0022-4715},
     journal={J. Statist. Phys.},
      volume={86},
      number={1-2},
       pages={109\ndash 147},
}

\bib{PasShcBulk}{article}{
      author={Pastur, L.},
      author={Shcherbina, M.},
       title={Bulk universality and related properties of {H}ermitian matrix
  models},
        date={2008},
        ISSN={0022-4715},
     journal={J. Stat. Phys.},
      volume={130},
      number={2},
       pages={205\ndash 250},
}

\bib{PecSos2007}{article}{
      author={P{\'e}ch{\'e}, S.},
      author={Soshnikov, A.},
       title={Wigner random matrices with non-symmetrically distributed
  entries},
        date={2007},
        ISSN={0022-4715},
     journal={J. Stat. Phys.},
      volume={129},
      number={5-6},
       pages={857\ndash 884},
}

\bib{RamRidVir2011}{article}{
      author={Ram{\'{\i}}rez, J.~A.},
      author={Rider, B.},
      author={Vir{\'a}g, B.},
       title={Beta ensembles, stochastic {A}iry spectrum, and a diffusion},
        date={2011},
     journal={J. Amer. Math. Soc.},
      volume={24},
      number={4},
       pages={919\ndash 944},
}

\bib{Shc2009}{article}{
      author={Shcherbina, M.},
       title={Edge universality for orthogonal ensembles of random matrices},
        date={2009},
        ISSN={0022-4715},
     journal={J. Stat. Phys.},
      volume={136},
      number={1},
       pages={35\ndash 50},
}

\bib{Shc2011}{article}{
      author={Shcherbina, M.},
       title={Orthogonal and symplectic matrix models: universality and other
  properties},
        date={2011},
        ISSN={0010-3616},
     journal={Comm. Math. Phys.},
      volume={307},
      number={3},
       pages={761\ndash 790},
}

\bib{Sos1999}{article}{
      author={Soshnikov, A.},
       title={Universality at the edge of the spectrum in {W}igner random
  matrices},
        date={1999},
        ISSN={0010-3616},
     journal={Comm. Math. Phys.},
      volume={207},
      number={3},
       pages={697\ndash 733},
}

\bib{SteWei1958}{article}{
      author={Stein, E.~M.},
      author={Weiss, G.},
       title={Fractional integrals on $n$-dimensional {E}uclidean space},
        date={1958},
     journal={J. Math. Mech.},
      volume={7},
       pages={503\ndash 514},
}

\bib{taogap}{article}{
      author={Tao, T.},
       title={The asymptotic distribution of a single eigenvalue gap of a
  {W}igner matrix},
        date={2013},
     journal={Probab. Theory Related Fields},
     volume={157},
     number={1-2},
     pages={81--106}
}

\bib{TV2}{article}{
      author={Tao, T.},
      author={Vu, V.},
       title={Random matrices: {U}niversality of local eigenvalue statistics up
  to the edge},
        date={2010},
     journal={Comm.\ Math.\ Phys.},
      volume={298},
       pages={549\ndash 572},
}

\bib{TV1}{article}{
      author={Tao, T.},
      author={Vu, V.},
       title={Random matrices: {U}niversality of local eigenvalue statistics},
        date={2011},
     journal={Acta Math.},
      volume={206},
       pages={1\ndash 78},
}

\bib{TW}{article}{
      author={Tracy, C.},
      author={Widom, H.},
       title={Level-spacing distributions and the {A}iry kernel},
        date={1994},
     journal={Comm. Math. Phys.},
      volume={159},
       pages={151\ndash 174},
}

\bib{TW2}{article}{
      author={Tracy, C.},
      author={Widom, H.},
       title={On orthogonal and symplectic matrix ensembles},
        date={1996},
     journal={Comm. Math. Phys.},
      volume={177},
       pages={727\ndash 754},
}

\bib{ValVir}{article}{
      author={Valk\'o, B.},
      author={Vir\'ag, B.},
       title={Continuum limits of random matrices and the {B}rownian carousel},
        date={2009},
     journal={Invent. Math.},
      volume={177},
       pages={463\ndash 508},
}

\bib{Wid}{article}{
      author={Widom, H.},
       title={On the relation between orthogonal, symplectic and unitary matrix
  ensembles},
        date={1999},
        ISSN={0022-4715},
     journal={J. Statist. Phys.},
      volume={94},
      number={3-4},
       pages={347\ndash 363},
         url={http://dx.doi.org/10.1023/A:1004516918143},
}

\end{biblist}
\end{bibdiv}

\end{document}